\newcommand{\WWW}{\mathop{W}\limits^{\circ}{}}
\def\mid{|}
\newcommand{\rrvert}{\vert}
\newcommand{\llvert}{\vert}
\def\sphat{^{\wedge}}
\newcommand{\R}{{\mathbb R}}
\newcommand{\C}{{\mathbb C}}
\newcommand{\Z}{{\mathbb Z}}
\newcommand{\T}{{\mathbb T}}
\renewcommand{\d}{\partial}
\renewcommand{\Re}{\operatorname{Re}}
\renewcommand{\Im}{\operatorname{Im}}
\newcommand{\F}{{\mathcal F}}
\newcommand{\Prob}{\mathbb{P}}
\newcommand{\Ai}{{\operatorname{Ai}}}
\newcommand{\dn}{{\operatorname{dn}}}
\newcommand{\al}{\alpha}
\newcommand{\be}{\beta}
\newcommand{\Ga}{\Gamma}
\newcommand{\ep}{\varepsilon}
\newcommand{\De}{\Delta}
\newcommand{\Sg}{\Sigma}
\newcommand{\Om}{\Omega}
\renewcommand{\th}{\vartheta}
\newtheorem{thmm}{Theorem}[section]
\newtheorem{lem}[thmm]{Lemma}
\newtheorem{prop}[thmm]{Proposition}
\newcommand{\intZ}{\mathbb{Z}}
\newcommand{\bigO}{\mathcal{O}}
\newcommand{\compC}{\mathbb{C}}
\newcommand{\realR}{\mathbb{R}}
\newcommand{\unitC}{\mathbb{T}}
\newcommand{\Id}{\mathbf{1}}
\newcommand{\K}{\mathbf{K}}
\newcommand{\E}{\mathbf{E}}
\newcommand{\GF}{\mathbf{GF}}
\newcommand{\Pcal}{\mathcal{P}}
\newcommand{\Hankel}{\mathcal{H}}
\newcommand{\sgn}{\operatorname{sgn}}
\newcommand{\low}{\operatorname{lower}}
\newcommand{\upp}{\operatorname{upper}}
\newcommand{\Res}{\mathop{\operatorname{Res}}}
\newcommand{\major}{\operatorname{major}}
\newcommand{\minor}{\operatorname{minor}}
\newcommand{\local}{\operatorname{local}}
\newcommand{\Pearcey}{\operatorname{Pearcey}}
\newcommand{\tac}{\operatorname{tac}}
\newcommand{\Er}{\operatorname{Er}}
\newcommand{\eqref}[1]{(\ref{#1})}
\begin{document}
\begin{frontmatter}

\title{Nonintersecting Brownian motions on~the~unit~circle}
\runtitle{Nonintersecting Brownian motions}

%
\begin{aug}
\author[A]{\fnms{Karl}~\snm{Liechty}\ead[label=e1]{kliechty@depaul.edu}\ead[label=u1,url]{http://math.depaul.edu/kliechty/}}
\and
\author[B]{\fnms{Dong}~\snm{Wang}\corref{}\thanksref{T2}\ead[label=e2]{matwd@nus.edu.sg}\ead[label=u2,url]{http://www.math.nus.edu.sg/\textasciitilde matwd/}}
\runauthor{K. Liechty and D. Wang}
\affiliation{DePaul University and National University of Singapore}
\address[A]{Department of Mathematical Sciences\\
DePaul University\\
Chicago, Illinois 60614 \\
USA\\
\printead{e1}\\
\printead{u1}}

\address[B]{Department of Mathematics\\
National University of Singapore\\
Singapore, 119076 \\
\printead{e2} \\
\printead{u2}}
\thankstext{T2}{Supported in part by the startup Grant R-146-000-164-133.}
\end{aug}
%

%
\received{\smonth{7} \syear{2014}}
%
\revised{\smonth{12} \syear{2014}}

%
\begin{abstract}
We consider an ensemble of $n$ nonintersecting Brownian particles on
the unit circle with diffusion parameter $n^{-1/2}$, which are
conditioned to begin at the same point and to return to that point
after time $T$, but otherwise not to intersect. There is a critical
value of $T$ which separates the subcritical case, in which it is
vanishingly unlikely that the particles wrap around the circle, and the
supercritical case, in which particles may wrap around the circle. In
this paper, we show that in the subcritical and critical cases the
probability that the total winding number is zero is almost surely 1 as
$n\to\infty$, and in the supercritical case that the distribution of
the total winding number converges to the discrete normal distribution.
We also give a streamlined approach to identifying the Pearcey and
tacnode processes in scaling limits. The formula of the tacnode
correlation kernel is new and involves a solution to a Lax system for
the Painlev\'{e} II equation of size 2 $\times$ 2. The proofs are based
on the determinantal structure of the ensemble, asymptotic results for
the related system of discrete Gaussian orthogonal polynomials, and a
formulation of the correlation kernel in terms of a double contour integral.
\end{abstract}

%
\begin{keyword}[class=AMS]
\kwd[Primary ]{60J65}
\kwd[; secondary ]{35Q15}
\kwd{42C05}
\end{keyword}
\begin{keyword}
\kwd{Nonintersecting Brownian motions}
\kwd{determinantal process}
\kwd{discrete orthogonal polynomial}
\kwd{tacnode process}
\kwd{Pearcey process}
\kwd{Riemann--Hilbert problem}
\kwd{double contour integral formula}
\end{keyword}
%
\end{frontmatter}

\section{Introduction}\label{sec1}

The probability models of nonintersecting Brownian motions have been
studied extensively in last decade; see \citeauthor{Tracy-Widom04}
(\citeyear{Tracy-Widom04,Tracy-Widom06}),
\citet{Adler-vanMoerbeke05}, \citet{Adler-Orantin-vanMoerbeke10}, \citet{Delvaux-Kuijlaars-Zhang11}, \citet{Johansson13},
\citet{Ferrari-Veto12}, \citet{Katori-Tanemura07} and \citet
{Comtet-Majumdar-Schehr08}, for example. These models are closely
related to random matrix theory and (multiple) orthogonal polynomials;
see \citeauthor{Bleher-Kuijlaars04}
(\citeyear{Bleher-Kuijlaars04,Bleher-Kuijlaars07}), \citet{Aptekarev-Bleher-Kuijlaars05}
and \citet{Kuijlaars10}, for example. One
interesting feature is that as the number of particles $n \to\infty$,
under proper scaling the nonintersecting Brownian motions models
converge to \emph{universal} processes, like the sine, Airy, Pearcey
and tacnode processes. These processes are called universal since they
appear in many other probability problems; see
\citeauthor{Okounkov-Reshetikhin03}
(\citeyear{Okounkov-Reshetikhin03,Okounkov07}), \citet{Johansson05}, \citet{Baik-Suidan07}, \citet{Adler-vanMoerbeke-Wang11}, \citet
{Adler-Ferrari-vanMoerbeke13} and \citet
{Adler-Johansson-vanMoerbeke14}, for example. Usually the models of
nonintersecting Brownian motions turn out to be the most convenient
ones to use for study of these universal processes. In particular, the
Airy process appears ubiquitously in the Kardar--Parisi--Zhang (KPZ)
universality class [\citet{Corwin11}], an important class of interacting
particle systems and random growth models. The analysis of
nonintersecting Brownian motions greatly improves the understanding of
the Airy process and the KPZ universality class; see \citet
{Corwin-Hammond11}. Here, we remark that if we consider the
nonintersecting Brownian motions on the real line, in the simplest
models the Pearcey process does not occur, and the tacnode process only
occurs in models with sophisticated parameters. Thus, the analysis
of these universal processes becomes increasingly more difficult.

Due to technical difficulties, most studies of the limiting local
properties of the nonintersecting Brownian motions concern models
defined on the real line. A~model of nonintersecting Brownian motions
on a circle was considered by Dyson as a dynamical generalization of
random matrix models [\citet{Dyson62}], and physicists and probabilists
have been interested in the nonintersecting Brownian motions on a
circle and their discrete counterparts for various reasons; see \citet
{Forrester90}, \citet{Hobson-Werner96} and \citet{Cardy03}, for example.
The simplest model of nonintersecting Brownian motions on a circle such
that the particles start and end at the same common point is shown to
be related to Yang--Mills theory on the sphere [\citet
{Forrester-Majumdar-Schehr11}, \citet
{Comtet-Forrester-Majumdar-Schehr13}] and the partition function
(a.k.a. reunion probability) shows an interesting phase transition
phenomenon closely related to the Tracy--Widom distributions in random
matrix theory.

In this paper, we show that the Pearcey and (symmetric) tacnode
processes mentioned above occur as the limits of the simplest model of
nonintersecting Brownian motions on a circle, and give a streamlined
method to analyze them. We also consider the total winding number of
the particles, a quantity that has no counterpart in the models defined
on the real line, and show that its limiting distribution in the
nontrivial case is the discrete normal distribution [\citet
{Szablowski01}], a natural through perhaps not well-known discretization
of the normal distribution. We also show that in the supercritical
case, the Pearcey process occurs if the model is conditioned to have
fixed total winding number. Although the sine and Airy processes also
naturally occur, we omit the discussion on them to shorten the paper. A
detailed discussion can be found in the preprint [\citet{Liechty-Wang14-1}].

Technically, the study of nonintersecting Brownian motions has been
carried out in two distinct ways: by double contour integral formula,
and by the Riemann--Hilbert problem. In the present work, we introduce
a mixed approach, using both a double integral formula and the
interpolation problem for discrete Gaussian orthogonal polynomials
[\citet
{Liechty12}], which are discrete orthogonal polynomials analogous to
Hermite polynomials. In this paper, we analyze the dependence of the
discrete Gaussian orthogonal polynomials on the translation of the
lattice, which encodes the information of the winding number of the
Brownian paths.

\subsection{Statement of main results}

Let $\T= \{ e^{i\theta} \in\compC\}$ be the unit circle. Suppose
$x_1, x_2, \ldots, x_n$ are $n$ particles in independent Brownian
motions on the unit circle with continuous paths and diffusion
parameter $n^{-1/2}$, that is,
%
\begin{equation}
x_k(t) = e^{{iB_k(t)}/{\sqrt{n}}},\qquad i = 1, 2, \ldots, n,
\end{equation}
where $B_k(t)$ are independent Brownian motions with diffusion
parameter $1$ starting from arbitrary places. The \emph{nonintersecting
Brownian motions on the circle with $n$ particles}, henceforth denoted
as NIBM in this paper, is defined by the particles $x_1, \ldots, x_n$
conditioned to have nonintersecting paths, that is, $x_1(t), \ldots,
x_n(t)$ are distinct for any $t$ between the starting time and the
ending time. In this paper, we concentrate on the simplest model of
NIBM, such that the $n$ particles start from the common point $e^{i
\cdot0}$ at the starting time $t = 0$, and end at the same common
point $e^{i \cdot0}$ at the ending time $t = T$. We denote this model
as $\mathrm{NIBM}_{0 \to T}$.

Throughout this paper, we represent a point in $\T$ by an angular
variable $\theta\in\R$ with $\theta=\theta+2\pi k$ ($k\in\Z$) if there
is no possibility of confusion, and use $\theta\in[-\pi, \pi)$ as the
principal value of the angle. Let $P(a; b; t)$ be the transition
probability density of one particle in Brownian motion on $\T$ with
diffusion parameter $n^{-1/2}$, starting from point $a\in\T$ and
ending at point $b\in\T$ after time $t>0$, which is
%
\begin{equation}
\label{eq:def_oneD_trans_prob} P(a; b; t) =\sqrt{\frac{n}{2\pi t}}\sum
_{k\in\Z} e^{-
{n(b-a+2\pi
k)^2}/{(2t)}}.
\end{equation}
Now consider the transition probability density of NIBM. Let $A_n=\{
a_1, \ldots,\break a_n\}$ and $B_n=\{b_1, \ldots, b_n\}$ be two sets of $n$
distinct points in $\T$ such that $-\pi\leq a_1 < a_2 < \cdots< a_n <
\pi$ and $-\pi\leq b_1 < b_2 < \cdots< b_n < \pi$, and denote by
$P(A_n; B_n; t)$ the transition probability density of NIBM with the
particles starting at the points $A_n$ and ending at the points $B_n$
after time $t$. Note that we do not require that the particle which
started at point $a_k$ ends at point $b_k$, but only that it ends at
point $b_j$ for some $j=1,\ldots, n$. For $\tau\in\R$, introduce the notation
%
\begin{equation}
\label{eq:tau_def_one_particle} P(a; b; t; \tau):= \sqrt{\frac{n}{2\pi t}}\sum
_{k \in\Z} e^{-
{n(b-a+2\pi k)^2}/{(2t)}}e^{2k\pi\tau i},
\end{equation}
which reduces to \eqref{eq:def_oneD_trans_prob} when $\tau=0$.
Introduce also the notation
%
\begin{equation}
\label{eq:def_hsgn} \epsilon(n) = %
\cases{ 0, &\quad $\mbox{if $n$ is odd},$
\vspace*{2pt}
\cr
\frac{1}{2} ,&\quad  $\mbox{if $n$ is even}.$} %
\end{equation}
A determinantal formula for $P(A_n; B_n; t)$ is then given in the
following proposition.

\begin{prop}\label{prop:trans_prob_determinant}
The transition probability density function $P(A_n;\break B_n; t)$ is given
by the determinant of size $n\times n$,
%
\begin{equation}
P(A_n; B_n; t)= \det \bigl(P \bigl(a_i;
b_j; t; \epsilon(n) \bigr) \bigr)_{i,j=1}^n.
\end{equation}
\end{prop}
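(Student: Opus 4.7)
The plan is to combine the Karlin--McGregor formula for Brownian motion on $\R$ with a lifting of the circle paths to their universal cover.

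First, I would lift the $n$ nonintersecting paths on $\T$ to continuous paths $y_1,\dots,y_n$ on $\R$ with $y_i(0)=a_i\in[-\pi,\pi)$. The difference $y_j(t)-y_i(t)$ (for $i<j$) is continuous, starts in $(0,2\pi)$, and by the nonintersection hypothesis on $\T$ cannot cross any element of $2\pi\Z$; it therefore remains in $(0,2\pi)$ throughout $[0,T]$. Consequently the lifts are nonintersecting on $\R$ with total spread $y_n(T)-y_1(T)<2\pi$. Karlin--McGregor on $\R$ then yields, for each admissible sorted endpoint configuration $(c_1<\dots<c_n)$ with $\{c_j\bmod 2\pi\}=\{b_j\}$ and $c_n-c_1<2\pi$, a contribution $\det\bigl(g(a_i,c_j)\bigr)_{i,j=1}^n$, where $g(a,c)=\sqrt{n/(2\pi t)}\,e^{-n(c-a)^2/(2t)}$. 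Analysing the bounded-spread and non-crossing constraints shows that the admissible configurations are parameterised by a cyclic shift $r\in\{0,1,\dots,n-1\}$ and a common winding $w\in\Z$: in sector $(r,w)$ one has $c_i=b_{\rho_r(i)}+2\pi\omega_i$, with $\rho_r$ the cyclic shift by $r$ and $\omega_i=w+\mathbf{1}[i>n-r]$, so that
\[
P(A_n;B_n;t)=\sum_{r=0}^{n-1}\sum_{w\in\Z}\det\bigl(g(a_i,c_j(r,w))\bigr)_{i,j=1}^n.
\]

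Next, I would expand $\det\bigl(P(a_i;b_j;t;\tau)\bigr)$ by multilinearity, using the series $P(a;b;t;\tau)=\sum_{k\in\Z}g(a,b+2\pi k)\,e^{2\pi ik\tau}$ to obtain
\[
\det\bigl(P(a_i;b_j;t;\tau)\bigr)=\sum_{\vec k\in\Z^n}e^{2\pi i\tau\sum_j k_j}\det\bigl(g(a_i,b_j+2\pi k_j)\bigr)_{i,j}.
\]
For each $\vec k$, sorting the endpoints $(b_j+2\pi k_j)_j$ produces a unique sorted configuration $(c_i)$ and a sorting permutation $\pi(\vec k)$, so that the contribution equals $\sgn(\pi)\,e^{2\pi i\tau\sum_i\omega_i}\det(g(a_i,c_j))$. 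For a valid sector $(r,w)$, one finds $\pi=\rho_r$ with $\sgn(\rho_r)=(-1)^{n-\gcd(r,n)}$, and $\sum\omega_i=nw+r$. A short parity argument then shows $\sgn(\rho_r)\,e^{2\pi i\hsgn{n}(nw+r)}=+1$ for every pair $(r,w)$: in the odd-$n$ case ($\hsgn{n}=0$), $n-\gcd(r,n)$ is even because $n$ and each of its divisors are odd; in the even-$n$ case ($\hsgn{n}=1/2$), $n-\gcd(r,n)+r$ is even because $r$ and $\gcd(r,n)$ share parity when $n$ is even. Thus the valid sectors reproduce the sum from the first step.

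The main obstacle is to show that the contributions from \emph{invalid} multi-indices $\vec k$ (those whose sorted configuration has $c_n-c_1\ge 2\pi$ and hence does not descend to NIBM on $\T$) cancel exactly. I would handle this either via a sign-reversing involution on the invalid $\vec k$'s, or more conceptually by recognising $\det\bigl(P(a_i;b_j;t;\tau)\bigr)$ as the Karlin--McGregor determinant for a \emph{twisted} Brownian motion on $\T$ whose one-particle kernel weights each winding by the phase $e^{2\pi ik\tau}$. The value $\tau=\hsgn{n}$ is precisely the one at which the winding phases absorb the sign of the cyclic-shift permutations, so that the invalid sectors cancel while each valid topological sector contributes $+1$.
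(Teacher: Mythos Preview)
Your first step contains a real gap. After lifting to $\R$, nonintersection on $\T$ is equivalent to the lifted paths being nonintersecting on $\R$ \emph{and} satisfying the spread constraint $y_n(s)-y_1(s)<2\pi$ for \emph{every} $s\in[0,t]$, not just at the terminal time. The Karlin--McGregor determinant $\det\bigl(g(a_i,c_j(r,w))\bigr)$ on $\R$ enforces only the first condition, so it overcounts: it includes paths where $y_n-y_1$ hits $2\pi$ at some intermediate time, which project to intersecting paths on $\T$. Consequently your displayed identity $P(A_n;B_n;t)=\sum_{r,w}\det\bigl(g(a_i,c_j(r,w))\bigr)$ is false in general.

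This error propagates to your analysis of the ``invalid'' multi-indices. They do \emph{not} cancel among themselves to zero; rather, they carry the image-charge corrections that subtract the overcounting just described. (Check $n=2$: the first few invalid $\vec k$ all contribute with the same sign.) What is really going on is an affine-Weyl-group reflection principle for the alcove $\{y_1<\dots<y_n<y_1+2\pi\}$ of type $\tilde A_{n-1}$: the full sum over $\vec k\in\Z^n$, with the signs $\sgn(\pi(\vec k))\,e^{2\pi i\hsgn{n}\sum k_j}$, is precisely that reflection sum, and it equals the constrained probability summed over valid sectors. Your sign computation for the valid sectors is correct, but the argument only closes once you prove the alcove reflection identity, which your proposal does not attempt.

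Your second, ``more conceptual'' route---viewing $\det\bigl(P(a_i;b_j;t;\tau)\bigr)$ as a Karlin--McGregor determinant for a phase-twisted process on $\T$---is essentially how the paper proceeds, and avoids the spread-constraint issue entirely. The paper applies Karlin--McGregor \emph{directly on $\T$} (where nonintersection is the correct notion from the start), observes that only cyclic permutations survive, and notes that for odd $n$ all cyclic signs are $+1$. For even $n$, one cannot simply apply Karlin--McGregor to a complex-valued ``twisted'' kernel, so the paper instead passes through a discrete vicious-walker model on a cylinder with complex step weights $e^{\pm 2\pi i\tau/M}/2$, applies Lindstr\"om--Gessel--Viennot (which is a purely algebraic identity and does not require positivity), decomposes by total offset, and takes the continuum limit. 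The upshot is $\sum_o P_o(A_n;B_n;t)(-1)^o e^{2\pi i o\tau}=P(A_n;B_n;t;\tau)$, and $\tau=\tfrac12$ removes the unwanted signs.
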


This proposition follows immediately from the Karlin--McGregor formula
in the case that $n$ is odd. If $n$ is even then more care must be
taken to derive the formula, and in the limited knowledge of the
current authors
it has not appeared before in the literature. The proof
is presented in Section~\ref{tau-deformed transition probability
density of NIBM}.

Now we consider the model $\mathrm{NIBM}_{0 \to T}$. At a given time
$t\in[0,T]$, the
joint probability density function for the $n$ particles in $\mathrm
{NIBM}_{0 \to T}$ at
distinct points $-\pi\leq\theta_1 < \theta_2 < \cdots< \theta_n <
\pi
$ is given by
%
\begin{equation}
\label{eq:jpdf1} \mathop{\lim_{a_1, \ldots, a_n \to0 }}_{ b_1,\ldots, b_n \to0}
\frac
{P(A_n; \Theta_n; t)P(\Theta_n; B_n; T-t)}{P(A_n; B_n; T)} ,
\end{equation}
where $A_n=\{a_1, \ldots, a_n\}$, $B_n=\{b_1, \ldots, b_n\}$, and
$\Theta
_n = \{ \theta_1, \ldots, \theta_n \}$ describe the locations of the
$n$ particles at time $0$, $T$ and $t$, respectively. It is not
difficult to see that such a limit exists, and so that our model is
well defined (see Section~\ref{reunion_probability}).

The model $\mathrm{NIBM}_{0 \to T}$ is a determinantal process, meaning
that the
correlation functions of the particles may be described by a
determinantal formula [\citet{Soshnikov00}]. To define the determinantal
structure, fix $m$ times $0< t_1 < t_2 < \cdots<t_m<T$, and to each
time $t_i$, fix $k_i$ points on $\T$, $-\pi\leq\theta_1^{(i)}
<\theta
_2^{(i)}<\cdots<\theta_{k_i}^{(i)} < \pi$. The multi-time correlation
function is then defined as
%
\begin{eqnarray}
\label{eq:corr_function_defn_all_winding_number}
&& R^{(n)}_{0 \to T}\bigl(\theta^{(1)}_1,
\ldots, \theta^{(1)}_{k_1}; \ldots ; \theta
^{(m)}_1, \ldots, \theta^{(m)}_{k_m};
t_1, \ldots, t_m\bigr)\nonumber
\\
&&\qquad:=\lim_{\Delta x \to0} \frac{1}{(\Delta x)^{k_1 + \cdots+ k_m}} \Prob
\bigl( \mbox{there is a
particle in $\bigl[\theta^{(i)}_j, \theta ^{(i)}_j
+ \Delta x\bigr)$}\\
&&\hspace*{183pt} \mbox{for $j = 1, \ldots, k_i$ at time
$t_i$} \bigr).
\nonumber
\end{eqnarray}
Then there exists some kernel function $K_{t_i, t_j}(x,y)$ such that
%
\begin{eqnarray}
\label{eq:defn_Rmelon_special} &&R^{(n)}_{0 \to T}\bigl(\theta^{(1)}_1,
\ldots, \theta^{(1)}_{k_1}; \ldots ; \theta
^{(m)}_1, \ldots, \theta^{(m)}_{k_m};
t_1, \ldots, t_m\bigr)
\nonumber
\\[-8pt]
\\[-8pt]
\nonumber
&&\qquad= \det \bigl( K_{t_i, t_j}
\bigl(\theta^{(i)}_{l_i}, \theta^{(j)}_{l'_j}
\bigr) \bigr)_{i, j = 1, \ldots, m,
 l_i = 1, \ldots, k_i,  l'_j = 1,
\ldots, k_j};
\end{eqnarray}
see Section~\ref{subsec:tau_deformed_corr_functions}.

Intuitively, one can imagine the scenario of the model $\mathrm
{NIBM}_{0 \to T}$ as
follows. When the total time $T$ is small, it is very unlikely that the
particles will wrap around the circle before returning to $e^{i\cdot
0}$, and so the model is very close to the model of nonintersecting
Brownian bridges on the real line. For large $T$, the particles which
initially move in the positive direction and those which initially move
in the negative direction will eventually meet on the far side of the
circle, and the behavior of the model is very different. In this paper,
this heuristic argument is confirmed, and the critical value of $T$
which separates these two cases is pinpointed to be
%
\begin{equation}
\label{eq:first_defn_T_c} T_c=\pi^2.
\end{equation}
Accordingly, we divide the $\mathrm{NIBM}_{0 \to T}$ model into the
subcritical, critical
and supercritical cases, for $T < \pi^2$, $T = \pi^2$, and $T > \pi^2$,
respectively, as shown in Figure~\ref{fig:Global_picture}.

In the subcritical case $T<T_c$, the model is described asymptotically
by elementary functions. In the critical case $T=T_c$ and the
supercritical case $T>T_c$, the model is described asymptotically by
special functions: functions related to the Painlev\'{e} II equation
for $T=T_c$, and elliptic integrals for $T>T_c$. Let us define those functions.

\begin{figure}

\includegraphics{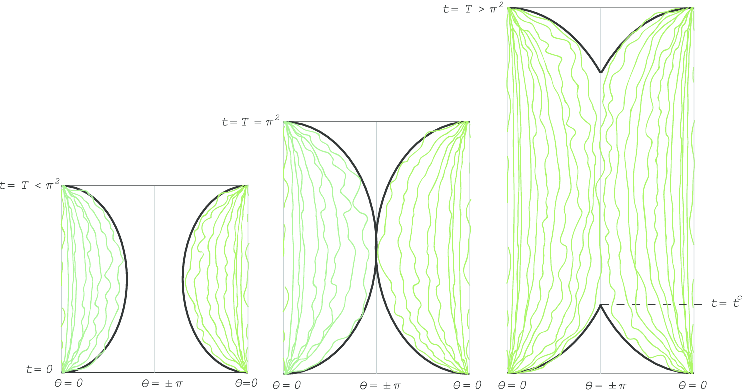}

\caption{Typical configurations of nonintersecting paths in the
subcritical (left), critical (middle) and supercritical (right) cases.
Time is on the vertical axis, and the angular variable $\theta$ on the
horizontal axis. At the initial time $t=0$ and the terminal time $t=T$,
the particles are at $\theta=0$, which is at both the left and right
ends of the figures. The far side of the circle, $\theta=\pm\pi$, is
marked by a light vertical line through the center of the figures. The
particles tend to stay within the thick curved lines. In the
supercritical case, the critical time $t^c$ is marked, when the
``leftmost'' and ``rightmost'' particles meet on the far side of the circle.}
\label{fig:Global_picture}
\end{figure}

\textit{Critical case: The Painlev\'{e} II equation, and the related
Lax pair}.
In the critical case, we consider the model $\mathrm{NIBM}_{0 \to T}$
in the scaling limit
%
\begin{equation}
\label{eq:T_scaling} T=\pi^2 \bigl(1-2^{-2/3}\sigma
n^{-2/3} \bigr),
\end{equation}
where $\sigma\in\R$ is a parameter.
In this case, the results of this paper involve a particular solution
to the Painlev\'e II equation, and a solution to a related Lax system.
Let us review these objects. The Hastings--McLeod solution [\citet
{Hastings-McLeod80}] to the homogeneous Painlev\'{e} II equation (PII) is
the solution to the differential equation
%
\begin{equation}
\label{cr2} q''(s)=sq(s)+2q(s)^3,
\end{equation}
which satisfies
%
\begin{equation}
\label{cr3} q(s)=\Ai(s) \bigl(1+o(1)\bigr)\qquad \mbox{as } s\to+\infty,
\end{equation}
where $\Ai(s)$ is the Airy function.
Let $q(s)$ be this particular solution to PII, and consider the $2
\times2$ matrix-valued solutions to the differential equation
%
\begin{equation}\qquad
\label{cr4} \frac{d}{d\zeta} \bolds\Psi(\zeta;s)= %
\pmatrix{ -4i
\zeta^2-i\bigl(s+2q(s)^2\bigr) & 4\zeta
q(s)+2iq'(s) \vspace *{2pt}
\cr
4\zeta q(s)-2iq'(s) &
4i\zeta^2 +i\bigl(s+2q(s)^2\bigr) } %
\bolds
\Psi(\zeta;s).
\end{equation}
This $2 \times2$ system was originally studied by \citet
{Flaschka-Newell80}. The differential equation \eqref{cr4}, together
with another one given in \eqref{equiv2}, form a Lax pair for the PII
equation, that is, the compatibility of the two differential equations
implies that $q(s)$ solves PII.
We will consider the particular solution to \eqref{cr4} which satisfies
%
\begin{equation}
\label{cr5} \bolds\Psi(\zeta; s)e^{i(({4}/{3}) \zeta^3 +s \zeta)\sigma_3} = I+O\bigl(
\zeta^{-1}\bigr),\qquad \zeta\to\pm\infty.
\end{equation}
The asymptotics \eqref{cr5} extend into the sectors $-\pi/3< \arg
\zeta<
\pi/3$, and $2\pi/3 < \arg\zeta< 4\pi/3$. Here, we note that the
uniqueness of the boundary value problem~\eqref{cr4} and \eqref{cr5} implies
%
\begin{equation}
\label{eq:symmetry_of_H-ML_solution} \Psi_{i, j}(-\zeta) =
\Psi_{3 - i, 3 - j}(\zeta),\qquad i, j
= 1, 2.
\end{equation}

\textit{Supercritical case: Elliptic integrals}.
In the supercritical case where $T > T_c = \pi^2$, we define a $t^c <
T/2$. To simplify the notation, we parametrize $T > \pi^2$ by $k \in
(0, 1)$. For each $k$, we have the elliptic integrals
%
\begin{eqnarray}
\label{eq:complete_elliptic} \K&:=& \K(k) = \int^1_0
\frac{ds}{\sqrt{(1 - s^2)(1 - k^2 s^2)}},
\nonumber
\\[-8pt]
\\[-8pt]
\nonumber
 \E&:= &\E(k) = \int^1_0
\frac{\sqrt{1 - k^2 s^2}}{\sqrt{1 - s^2}} \,ds.
\end{eqnarray}
We further define
%
\begin{equation}
\label{eq:def_tilde1} \tilde{k} := \frac{2\sqrt{k}}{1 + k},
\end{equation}
and denote
%
\begin{eqnarray}
\label{eq:def_tilde2} \tilde{\K} &:=& \K(\tilde{k}) = \int^1_0
\frac{ds}{\sqrt{(1 -
s^2)(1 -
\tilde{k}^2 s^2)}},
\nonumber
\\[-8pt]
\\[-8pt]
\nonumber
\tilde{\E} &:=& \E(\tilde{k}) = \int^1_0
\frac
{\sqrt{1 - \tilde{k}^2 s^2}}{\sqrt{1 - s^2}} \,ds.
\end{eqnarray}
$T$ is then parametrized as
%
\begin{equation}
\label{eq:parametrization_of_T_intro} T = 4\tilde{\K} \tilde{\E} = 4 \int^1_0
\frac{ds}{\sqrt{(1 -
s^2)(1 -
\tilde{k}^2 s^2)}} \int^1_0 \frac{\sqrt{1 - \tilde{k}^2
s^2}}{\sqrt{1 -
s^2}}
\,ds,
\end{equation}
where the well-definedness of the parametrization is given in Lemma~\ref
{lem:T_parametrized_by_k}, and $t^c$ is expressed as
%
\begin{equation}
\label{eq:T_c_in_introduction} t^c = \frac{4}{\tilde{k}^2} \tilde{\E} \bigl(\tilde{\E} -
\bigl(1 - \tilde {k}^2\bigr) \tilde{\K} \bigr) = 4 \int
^1_0 \frac{\sqrt{1 - \tilde{k}^2
s^2}}{\sqrt{1 - s^2}} \,ds \int
^1_0 \frac{\sqrt{1 - s^2}}{\sqrt{1 -
\tilde
{k}^2 s^2}} \,ds.
\end{equation}

The fundamental group of $\T$ has a canonical identification with $\Z$,
and so for any closed path on $\T$ we can define the \emph{winding
number} of the path as the integer representative of its homotopy
class. For a set of $n$ particles with continuous paths on $\T$ that
come back the initial position after some time, we can define their
\emph{total winding number} as the sum of the winding numbers of the
paths of the particles. The following theorem concerns the total
winding number of the particles in $\mathrm{NIBM}_{0 \to T}$. Let $q$ be
defined in terms
of the complete elliptic integral of the first kind as
%
\begin{equation}
\label{eq:def_elliptic_nome_intro} q := \exp \biggl(-\frac{\pi\mathbf K(\sqrt{1-k^2})}{2\mathbf
K(k)} \biggr) = \exp \biggl(-
\frac{\pi\mathbf K(\sqrt{1 - \tilde
{k}^2})}{\mathbf
K(\tilde{k})} \biggr),
\end{equation}
where $k$ and $\tilde k$ are related to $T$ via \eqref
{eq:complete_elliptic}--\eqref{eq:parametrization_of_T_intro}.

\begin{rmk} Note that we use the notation $q$ in two different
meanings. In the context of the critical asymptotics, $q$ is the
Hastings--McLeod solution to PII and is always written with its
argument $q(\sigma)$. In the context of the supercritical asymptotics, $q$
is written with no argument and represents the elliptic nome defined in
\eqref{eq:def_elliptic_nome_intro}. These are both standard notation,
and it should be clear throughout the paper to which object $q$ refers.
\end{rmk}

\begin{thmm}\label{winding_number_theorem}
In the $\mathrm{NIBM}_{0 \to T}$, as the number of particles $n \to
\infty$:
\begin{longlist}[(a)]
\item[(a)]
In the subcritical case $T<T_c=\pi^2$, the winding number is zero with
a probability that is exponentially close to 1. That is,
%
\begin{equation}
\label{wnsub} \mathbb{P}(\mbox{Total winding number equals } 0)= 1-\bigO
\bigl(e^{-cn}\bigr),
\end{equation}
where the constant $c>0$ may depend on $T$.
\item[(b)]
In the critical scaling \eqref{eq:T_scaling}, for any fixed $\sigma$,
%
\begin{eqnarray}
\label{wncrit} %
\mathbb{P}(\mbox{Total winding number
equals } 0)&=& 1-\frac{q(\sigma
)}{2^{1/3}n^{1/3}}+\frac{q(\sigma)^2}{2^{2/3}n^{2/3}}+\bigO\bigl(n^{-1}
\bigr) ,
\nonumber\\
\mathbb{P}(\mbox{Total winding number equals } 1 )&=& \mathbb{P}\bigl(\mbox{Total
winding number equals } (-1) \bigr)
\nonumber
\\[-4pt]
\\[-12pt]
\nonumber
&=& \frac{q(\sigma)}{2^{4/3}n^{1/3}}-\frac{q(\sigma
)^2}{2^{5/3}n^{2/3}}+\bigO \bigl(n^{-1}\bigr) ,
\\
\mathbb{P}\bigl(|\mbox{Total winding number}|> 1 \bigr)&=&\bigO\bigl(n^{-1}
\bigr).\hspace*{-20pt}\nonumber
\end{eqnarray}
\item[(c)]
For $T>T_c$ and for any $\omega\in\Z$,
%
\begin{equation}
\label{wnsuper} \mathbb{P}(\mbox{Total winding number equals } \omega)=
q^{\omega
^2} \sqrt {\frac{\pi}{2\tilde{\K}}} + \bigO\bigl(n^{-1}\bigr).
\end{equation}
\end{longlist}
\end{thmm}

The limiting distribution of the total winding number in the
supercritical case is the \emph{discrete normal distribution} defined
in \citet{Kemp97}, and the formula in the right-hand side of \eqref
{wnsuper} appears in \citet{Szablowski01}. See also \citet{Johnson-Kemp-Kotz05},
Section~10.8.3.

The Pearcey process is defined by the extended Pearcey kernel
[\citet{Tracy-Widom06},
Section~3],
%
\begin{equation}
\label{eq:Pearcey_kernel} K^{\Pearcey}_{s, t}(\xi, \eta) =
\tilde{K}^{\Pearcey}_{s, t}(\xi, \eta ) - \phi_{s, t}(\xi,
\eta),
\end{equation}
where
%
\begin{equation}
\label{eq:nonessential_Pearcey} \phi_{s, t}(\xi, \eta) = %
\cases{0, &\quad $\mbox{if } s \geq t$, \vspace*{2pt}
\cr
\displaystyle\frac{1}{\sqrt{2\pi(t - s)}}
e^{-{(\xi- \eta)^2}/{(2(t - s))}},
 &\quad $
\mbox{if $s < t$}$, } %
\end{equation}
and
%
\begin{equation}
\label{eq:int_formula_Pearcey} \tilde{K}^{\Pearcey}_{s, t}(\xi, \eta) =
\frac{i}{4 \pi^2} \oint _{\Sigma_P} \,dz \oint_{\Gamma_P} \,dw
\frac{e^{{z^4}/{4} + {s
z^2}/{2} + i\xi z}}{e^{{w^4}/{4} + {t w^2}/{2} + i\eta w}} \frac
{1}{z - w},
\end{equation}
where $\Sigma_P$ and $\Gamma_P$ are infinite, disjoint contours such
that the upper part of $\Sigma_P$ is from $e^{\pi i/4} \cdot\infty$ to
$e^{3\pi i/4} \cdot\infty$, the lower part of $\Sigma_P$ is from
$e^{5\pi i/4} \cdot\infty$ to $e^{7\pi i/4} \cdot\infty$, and
$\Gamma
_P$ is the leftward horizontal line. See Figure~\ref{fig:Sigma_P_Gamma_P} for the exact description. Our definition of the
Pearcey kernel is the same as that in \citet{Adler-Orantin-vanMoerbeke10},
Formula
1.2, up to a change of variables.

\begin{figure}

\includegraphics{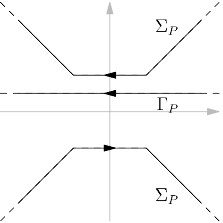}

\caption{The shape of contours $\Sigma_P$ and $\Gamma_P$. The upper
part of $\Sigma_P$ consists of the ray from $2 + 2i$ to $e^{\pi i/4}
\cdot\infty$, the line segment from to $2 + 2i$ to $-2 + 2i$, and the
ray from $-2 + 2i$ to $e^{3\pi i/4} \cdot\infty$. The lower part of
$\Sigma_P$ is the reflection of the upper part about the real axis.
$\Gamma_P$ is the horizontal line $\{ z = x + i \mid x \in\realR\}$.
Their orientations are shown in the figure.}
\label{fig:Sigma_P_Gamma_P}
\end{figure}

We now define the tacnode kernel. Denote by $\Psi_{ij}(\zeta;s)$ the
$(i,j)$ entry of the matrix $\bolds\Psi(\zeta;s)$ defined in \eqref{cr4}
and \eqref{cr5}.
It is convenient to also define the functions
%
\begin{eqnarray}
\label{tac18} f(u; s) &:=& %
\cases{ -\Psi_{12}(u;s), &\quad $
\mbox{if $\Im u>0$}$, \vspace*{2pt}
\cr
\Psi_{11}(u;s), &\quad $\mbox{if $\Im
u<0$}$, }
\nonumber
\\[-8pt]
\\[-8pt]
\nonumber
g(u,s) &:= &%
\cases{ -\Psi_{22}(u;s), & \quad $
\mbox{if $\Im u>0$},$ \vspace*{2pt}
\cr
\Psi_{21}(u;s), &\quad $\mbox{if $\Im
u<0$}$. } %
\end{eqnarray}
We then define the tacnode kernel as
%
\begin{equation}
\label{eq:tacnode_kernel} K^{\tac}_{s, t}(\xi, \eta; \sigma) =
\tilde{K}^{\tac}_{s, t}(\xi , \eta; \sigma) -
\phi_{s, t}(\xi, \eta),
\end{equation}
where $\phi_{s, t}(\xi, \eta)$ is as in \eqref
{eq:nonessential_Pearcey}, and
\begin{eqnarray}
\label{eq:essential_tacnode} \tilde{K}^{\tac}_{s, t}(\xi, \eta; \sigma)&:=&
\frac{1}{2\pi} \oint _{\Sg_T} \,du \oint_{\Sg_T} \,dv
e^{{s u^2}/{2} - {t v^2}/{2}} e^{-i(u\xi
- v\eta)}
\nonumber
\\[-8pt]
\\[-8pt]
\nonumber
&&{}\times\frac{f(u;\sigma)g(v;\sigma)-g(u;\sigma)f(v;\sigma
)}{2\pi i (u-v)}.
\end{eqnarray}
Here, $\Sg_T$ is a contour consisting of two pieces. One piece of $\Sg
_T$ lies entirely above the real line, and goes from $e^{\pi i/6} \cdot
\infty$ to $e^{5\pi i/6} \cdot\infty$. The other piece lies entirely
below the real line and goes from $e^{7\pi/6} \cdot\infty$ to
$e^{11\pi
/6} \cdot\infty$. See Figure~\ref{fig:Sigma_T} for the exact
description. The convergence of the integrals in \eqref
{eq:essential_tacnode} follows from the asymptotics~\eqref{cr5}. Let us
note that we could deform the two parts of the contour $\Sg_T$ to the
real line, and write \eqref{eq:essential_tacnode} as the sum of four
double integrals on $\R$. We prefer to write the integral on the
contour $\Sg_T$ because the integrand of \eqref{eq:essential_tacnode}
is in fact an $L^1$ function on $\Sg_T$, whereas convergence of the
integral over $\R$ is the result of rapid oscillations.

\begin{figure}

\includegraphics{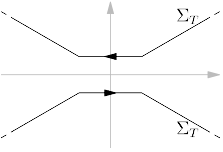}

\caption{The shape of contour $\Sg_T$. The upper part of $\Sigma_T$
consists of the ray from $\sqrt{3} + i$ to $e^{\pi i/6} \cdot\infty$,
the line segment from $\sqrt{3} + i$ to $-\sqrt{3} + i$, and the ray
from $-\sqrt{3} + i$ to $e^{5\pi i/6} \cdot\infty$. The lower part of
$\Sigma_T$ is the reflection of the upper part about the real axis. The
orientation is shown in the figure.}
\label{fig:Sigma_T}
\end{figure}

The convergence of $\mathrm{NIBM}_{0 \to T}$ to the universal processes
described above
is described in the following theorem.

\begin{thmm} \label{thmm:corr_kernel_intro}
In the $\mathrm{NIBM}_{0 \to T}$:
\begin{longlist}[(a)]
\item[(a)]
Assume $T>T_c$. There exists $d>0$ defined in \eqref{eq:Taylor_exp_upp_Pearcey}
such that when we scale $t_i$ and $t_j$ close to $t^c$, and $x$ and $y$
close to $-\pi$ as
%
\begin{eqnarray}
\label{eq:coefficients_Pearcey} t_i &=& t^c + \frac{d^2}{n^{{1}/{2}}}
\tau_i,\qquad t_j = t^c + \frac
{d^2}{n^{{1}/{2}}}
\tau_j,
\nonumber
\\[-8pt]
\\[-8pt]
\nonumber
 x& =& -\pi- \frac{d}{n^{
{3}/{4}}} \xi,\qquad y = -\pi-
\frac{d}{n^{{3}/{4}}} \eta,
\end{eqnarray}
the correlation kernel $K_{t_i, t_j}(x,y)$ has the limit
%
\begin{equation}
\lim_{n\to\infty} K_{t_i, t_j}(x,y) \biggl\llvert
\frac{dy}{d\eta} \biggr\rrvert = K^{\Pearcey}_{-\tau_j,-\tau_i}(\eta, \xi).
\end{equation}
\item[(b)]
Let $T$ be scaled close to $T_c=\pi^2$ as in \eqref{eq:T_scaling} with
$\sigma$ fixed, and let
%
\begin{equation}
\label{eq:defn_d_intro} d = 2^{-{5}/{3}}\pi.
\end{equation}
When we scale $t_i$ and $t_j$ close to $T/2$, and $x$ and $y$ close to
$-\pi$ as
%
\begin{eqnarray}
\label{tac6} t_i &=& \frac{T}{2} + \frac{d^2}{n^{{1}/{3}}}
\tau_i,\qquad t_j = \frac{T}{2} + \frac{d^2}{n^{{1}/{3}}}
\tau_j,
\nonumber
\\[-8pt]
\\[-8pt]
\nonumber
 x &=& -\pi- \frac
{d}{n^{{2}/{3}}} \xi,\qquad  y = -\pi-
\frac{d}{n^{{2}/{3}}} \eta,
\end{eqnarray}
the correlation kernel $K_{t_i, t_j}(x,y)$ has the limit
%
\begin{equation}
\label{tac17} \lim_{n\to\infty} K_{t_i, t_j}(x,y) \biggl\llvert
\frac{dy}{d\eta} \biggr\rrvert = K^{\tac}_{\tau_i, \tau_j}(\xi,\eta;
\sigma) = K^{\tac
}_{-\tau_j,
-\tau_i}(\eta, \xi; \sigma).
\end{equation}
\end{longlist}
\end{thmm}

\begin{rmk}
The identity $K^{\tac}_{\tau_i, \tau_j}(\xi,\eta; \sigma) =
K^{\tac
}_{-\tau_j, -\tau_i}(\eta, \xi; \sigma)$ in \eqref{tac17} is due
to the
symmetry of the kernel $K^{\tac}_{s, t}(\xi, \eta)$, which can be
checked by \eqref{eq:symmetry_of_H-ML_solution}.
\end{rmk}

In the supercritical case, we have finer result for the $\mathrm
{NIBM}_{0 \to T}$
conditioned to have fixed total winding number. Analogous to \eqref
{eq:corr_function_defn_all_winding_number}, we define the multi-time
correlation function for the $\mathrm{NIBM}_{0 \to T}$ with total
winding number $\omega
$ as
\begin{eqnarray}
&&\bigl(R^{(n)}_{0 \to T}\bigr)_{\omega}
\bigl(a^{(1)}_1, \ldots, a^{(1)}_{k_1};
\ldots; a^{(m)}_1, \ldots, a^{(m)}_{k_m};
t_1, \ldots, t_m\bigr)
\nonumber
\\[-8pt]
\\[-8pt]
\nonumber
&&\qquad:=\lim_{\Delta x \to0} \frac{1}{(\Delta x)^{k_1 + \cdots+ k_m}}
\Prob \left(
\begin{array}{c}
\mbox{there is a particle in $\bigl[a^{(i)}_j,
a^{(i)}_j + \Delta x\bigr)$}\\
\mbox{for $j = 1, \ldots,
k_i$ at time $t_i$,}\\
\mbox{and the total
winding number is $\omega$} %
\end{array}
 \right).\hspace*{-10pt}
\nonumber
\end{eqnarray}
If we consider the conditional $\mathrm{NIBM}_{0 \to T}$ such that the
total winding
number is fixed to be $\omega$, then the multi-time correlation function
of the conditional process should be
\begin{eqnarray}
&&\bigl(R^{(n)}_{0 \to T}\bigr)^{\sim}_{\omega}
\bigl(a^{(1)}_1, \ldots, a^{(1)}_{k_1};
\ldots; a^{(m)}_1, \ldots, a^{(m)}_{k_m};
t_1, \ldots, t_m\bigr)
\nonumber
\\[-8pt]
\\[-8pt]
\nonumber
&&\qquad :=\frac{(R^{(n)}_{0 \to T})_{\omega}(a^{(1)}_1, \ldots, a^{(1)}_{k_1};
\ldots;
a^{(m)}_1, \ldots, a^{(m)}_{k_m}; t_1, \ldots, t_m)}{\mathbb
{P}(\mathrm
{Total\ winding\ number\ equals\ } \omega)}.
\nonumber
\end{eqnarray}
Note that if the total winding number is fixed, then the conditional
$\mathrm{NIBM}_{0 \to T}$ is no longer a determinantal process. (The
reason is as
follows: In a determinantal process over time $[0, T]$, the movement of
particles between two times $t_1 < t_2 \in(0, T)$ only depends on the
positions of the particles at times $t_1$ and $t_2$, but not the
trajectories on $(0, t_1)$ or $(t_2, T)$. The conditional $\mathrm
{NIBM}_{0 \to T}$ with
fixed total winding number does not have this property.) Nevertheless,
we have results for the limiting $k$-correlation functions of the
process. The following theorem shows that with the condition of fixed
total winding number, the conditional $\mathrm{NIBM}_{0 \to T}$ has the
same local
limiting properties as the $\mathrm{NIBM}_{0 \to T}$ with free winding number.

\begin{thmm} \label{thmm:fixed_winding_number_correlations}
Assume $T>T_c=\pi^2$. Let $\omega$ be a fixed integer, $t_1, \ldots,
t_m \in(0, T)$ be times, and at each time $t_i$, let $x^{(i)}_1,
\ldots
, x^{(i)}_{k_i}$ be locations on $\T$ such that $k_1 + \cdots+ k_m =
k$. We consider the correlation function $(R^{(n)}_{0 \to T})^{\sim
}_{\omega} =
(R^{(n)}_{0 \to T})^{\sim}_{\omega}(x^{(1)}_1, \ldots,
x^{(1)}_{k_1}; \ldots;
x^{(m)}_1, \ldots, x^{(m)}_{k_m}; t_1, \ldots, t_m)$ in the conditional\break
$\mathrm{NIBM}_{0 \to T}$ with winding number $\omega$. Let
%
\begin{equation}
t_i = t^c + \frac{d^2}{n^{{1}/{2}}} \tau_i,\qquad
x^{(i)}_j = -\pi - \frac{d}{n^{{3}/{4}}} \xi^{(i)}_j,
\end{equation}
where $d$ is the same as in Theorem~\ref{thmm:corr_kernel_intro}\textup{(a)}.
The multi-time correlation function has
the limit
%
\begin{eqnarray}
\label{eq:corr_func_k_intro:cusp}&& \lim_{n\to\infty} \bigl(R^{(n)}_{0 \to T}
\bigr)^{\sim}_{\omega} \biggl( \frac
{d}{n^{{3}/{4}}}
\biggr)^k
\nonumber
\\[-8pt]
\\[-8pt]
\nonumber
&&\qquad = \det \bigl( K^{\Pearcey}_{-\tau_j, -\tau
_i} \bigl(
\xi^{(j)}_{l_j}, \xi^{(i)}_{l'_i} \bigr)
\bigr)_{{i, j = 1,
\ldots, m , l_i = 1, \ldots, k_i , l'_j = 1, \ldots, k_j}}.
\end{eqnarray}
\end{thmm}

\subsection{Comparison of $K^{\tac}$ with other tacnode kernels}
\label
{subsec:comparison_of_kernel_formulas}

The tacnode process was first studied by three different groups [\citet
{Adler-Ferrari-vanMoerbeke13}, \citet{Johansson13}, \citet
{Delvaux-Kuijlaars-Zhang11}], each using different methods and obtaining
different formulas for the tacnode process. The formulas obtained in
\citet{Adler-Ferrari-vanMoerbeke13} and \citet{Johansson13} each involve
Airy functions and related operators, whereas the formula of \citet
{Delvaux-Kuijlaars-Zhang11} involves a Lax system for the Painlev\'{e}
II equation of size $4 \times4$. As it turns out, the various matrix
entries of the $4\times4 $ Lax system appearing in \citet
{Delvaux-Kuijlaars-Zhang11} can be explicitly expressed in terms of
Airy functions and related operators [\citet{Delvaux13}] [see also \citet
{Kuijlaars14}], and the equivalence of the formulas in \citet
{Johansson13} and \citet{Delvaux-Kuijlaars-Zhang11} was recently proven
by Delvaux [\citet{Delvaux13}]. The equivalence of the two different Airy
formulas obtained in \citet{Johansson13} and \citet
{Adler-Ferrari-vanMoerbeke13} was proved in \citet
{Adler-Johansson-vanMoerbeke14}, although the proof is somewhat
indirect in that it relies on computing the limiting kernel from a
particular model in two different ways.

Indeed the formula for the tacnode kernel obtained in the $\mathrm
{NIBM}_{0 \to T}$ is
equivalent to the existing formulas. In order to state this equivalence
precisely, we define the kernel $\mathcal{L}^{\tac}$ obtained in
\citet
{Johansson13}, using some notation which was introduced in \citet
{Delvaux13} and \citet{Baik-Liechty-Schehr12}.
Let $\mathbf B_s$ be the integral operator defined in \citet{Baik-Liechty-Schehr12},
Formula
(3), which is denoted as $A_{\sigma}$ in \citet{Delvaux13},
Formula (4.1), acting on $L^2[0,\infty)$ with kernel
%
\begin{equation}
\mathbf B_s(x,y)=\Ai(x+y+s),
\end{equation}
and let $\mathbf A_s:=\mathbf B_s^2$ be the Airy operator, which is
defined in \citet{Baik-Liechty-Schehr12}, Formula (17) and is denoted as
$K_{\Ai, \sigma}$ in \citet{Delvaux13}, Formula (4.2). Define the
functions $Q_s$ and $R_s$ as in \citet{Baik-Liechty-Schehr12}, Formula~(18)
%
\begin{equation}
\label{equiv13} Q_s:=(\mathbf1-\mathbf A_s)^{-1}
\mathbf B_s \delta_0 ,\qquad R_s:=(\mathbf1-
\mathbf A_s)^{-1} \mathbf A_s
\delta_0 ,
\end{equation}
where the delta function $\delta_0$ is defined such that
%
\begin{equation}
\int_{[0,\infty)} f(x) \delta_0(x) \,dx = f(0),
\end{equation}
for functions $f(x)$ which are right-continuous at zero. Define also
the function
%
\begin{equation}
\label{equiv16} b_{\tau, z, \sigma}(x):=e^{-({2}/{3})\tau^3-\tau z - 2^{1/3} \tau
x -
2^{-2/3} \tau\sigma}\Ai\bigl(2^{1/3} x
+ z + 2^{-2/3} \sigma+ \tau^2\bigr) ,\hspace*{-10pt}
\end{equation}
which was introduced in \citet{Delvaux13}, Formula (2.16). Note that our
$b_{\tau, z, \sigma}(x)$ is equivalent to $b_{\tau, z}(x) = \tilde
{b}_{\tau, -z}(x)$ in \citet{Delvaux13}, Formula (2.16) with $\lambda
= 1$.
Then the symmetric tacnode kernel obtained in \citet{Johansson13} is
given by
%
\begin{equation}
\label{equiv20} \mathcal{L}_{\tac}(u,v;\sigma,\tau_1,
\tau_2)=\tilde{\mathcal {L}}_{\tac
}(u,v;\sigma,
\tau_1, \tau_2) - \phi_{2\tau_1, 2\tau_2}(u,v),
\end{equation}
where $\phi_{s, t}(u,v)$ is defined in \eqref{eq:nonessential_Pearcey}
and by \citet{Delvaux13}, Formula (2.29),
%
\begin{eqnarray}
\label{equiv20a}&&\tilde{\mathcal{L}}_{\tac}(u,v;\sigma,\tau_1,
\tau_2)
\nonumber
\\
&&\qquad=\frac{1}{2^{2/3}} \int_\sigma^\infty
\bigl(\hat{p}_1(u;s,\tau_1)\hat{p}_1(v;s,-
\tau _2)\\
&&\hspace*{78pt}{}+\hat {p}_1(-u;s,\tau_1)
\hat{p}_1(-v;s,-\tau_2) \bigr) \,ds,\nonumber
\end{eqnarray}
and the function $\hat{p}_1(z;s,\tau)$ is equivalent to $\hat{p}_1(z;
s, \tau)$ and $\hat{p}_2(-z; s, \tau)$ defined in \citet{Delvaux13},
Formula
(2.26), with $\lambda= 1$, and by \citet{Delvaux13},
Lemmas 4.2 and
4.3, it has the expression,
%
\begin{equation}
\label{equiv19} \hat{p}_1(z; s,\tau) := \langle b_{\tau, -z,s},
R_s+\delta_0 \rangle_0-\langle
b_{\tau
, z,s}, Q_s \rangle_0,
\end{equation}
where $\langle\cdot, \cdot\rangle_0$ is the inner product on
$L^2[0,\infty)$.
The kernels $\mathcal{L}_{\tac}$ and $K^{\tac}$ are related in the
following proposition.

\begin{prop}\label{4541521548452154}
%
\begin{equation}
\label{equiv21} K_{\tau_i, \tau_j}^{\tac}(\xi,\eta; \sigma)=2^{-
{2}/{3}}
\mathcal {L}_{\tac}\bigl(2^{-{2}/{3}}\xi,2^{-{2}/{3}}\eta; \sigma,
2^{-
{7}/{3}}\tau_i, 2^{-{7}/{3}}\tau_j\bigr).
\end{equation}
\end{prop}

The proof of this proposition is given in Appendix~\ref{sec:tacnode_equivalence}.

\subsection{Organization of the paper}

In Section~\ref{sec:algebra}, we derive the exact formulas for the
transition probability density of NIBM, the so-called reunion
probability of $\mathrm{NIBM}_{0 \to T}$, and the correlation kernel of
$\mathrm{NIBM}_{0 \to T}$. We also
derive the $\tau$-deformed version of the formulas to analyze the
conditional $\mathrm{NIBM}_{0 \to T}$ with fixed total winding number.
In Section~\ref{sec:asymptotics}, we summarize the results about discrete Gaussian
orthogonal polynomials that are necessary for the asymptotic analysis
in this paper. In Section~\ref{sec:winding_number}, we prove Theorem~\ref{winding_number_theorem}. In Section~\ref{sec:correlation}, we
prove Theorems \ref{thmm:corr_kernel_intro} and \ref
{thmm:fixed_winding_number_correlations}. Section~\ref{sec:Riemann-Hilbert} is on the interpolation problem and
Riemann--Hilbert problem associated to Gaussian discrete orthogonal
polynomials, and we prove there the technical results stated in Section~\ref{sec:asymptotics}. Appendix~\ref{sec:critical_pt} contains
technical results needed in the asymptotic analysis of Section~\ref{sec:correlation}, and Appendix~\ref{sec:tacnode_equivalence} gives a
proof of Proposition~\ref{4541521548452154}.

\section{Nonintersecting Brownian motion on the unit circle and
discrete Gaussian orthogonal polynomials} \label{sec:algebra}

In this section, we derive the transition probability density of NIBM,
and the joint correlation function and the correlation kernel of
$\mathrm{NIBM}_{0 \to T}$. For all the probabilistic quantities, we derive the $\tau$-deformed
versions, which have no direct probabilistic meaning, but are
generating functions of the corresponding probabilistic quantities with
fixed offset/winding number.

\subsection{\texorpdfstring{$\tau$}{tau}-deformed transition probability density of NIBM}
\label{tau-deformed transition probability density of NIBM}

Let $P(a; b; t)$ be the transition probability density of one particle
in Brownian motion on $\T$ with diffusion parameter $n^{-1/2}$,
starting from point $a$ and ending at point $b$ after time $t$ as given
in \eqref{eq:def_oneD_trans_prob}. For $n$ labeled particles in NIBM
starting at $\vec{a} = (a_1, \ldots, a_n)$ and ending\vspace*{1pt} at $\vec{b} =
(b_1, \ldots, b_n)$ after time $t$, we denote the transition
probability density $P(\vec{a}; \vec{b}; t)$. By labeled particles, we
mean that the particle beginning at the point $a_j$ must end at the
point $b_j$ for each $j=1,\ldots,n$. Since the Brownian motion on $\T$
is a stationary strong Markov process with continuous transition
probability density, we apply the celebrated Karlin--McGregor formula
[\citet{Karlin-McGregor59}, Theorem~1 and assertion D], and have
%
\begin{eqnarray}
\label{eq:Karlin_McGregor} \sum_{\sigma\in S_n} \sgn(\sigma) P \bigl(
\vec{a}; \vec{b}(\sigma); t \bigr) = \det \bigl[P(a_i;
b_j; t) \bigr]^n_{i, j = 1}
\nonumber
\\[-8pt]
\\[-8pt]
\eqntext{\mbox {where } \vec{b}(
\sigma) = (b_{\sigma(1)}, \ldots, b_{\sigma(n)}).}
\end{eqnarray}
Below we assume that $-\pi\leq a_1 < a_2 < \cdots< a_n < \pi$ and
$-\pi\leq b_1 < b_2 < \cdots< b_n < \pi$. Then $P(\vec{a}; \vec
{b}(\sigma); t)$ is nonzero only if $\sigma$ is a cyclic permutation.
For $\ell\in\{1, \ldots, n \}$, we use the notation $[\ell]$ to
denote the cyclic permutation which shifts by~$\ell$. That is, $[\ell]
\in\intZ/n\intZ\subseteq S_n$ acts on the set $\{1,\ldots, n\}$ as
$[\ell](k) = k + \ell$ or $k + \ell- n$ in $\{1, \ldots, n \}$. Hence,
\eqref{eq:Karlin_McGregor} becomes
%
\begin{equation}
\label{eq:cyclic_Karlin_McGregor} \sum_{[\ell] \in\intZ/n\intZ\subseteq S_n} \sgn\bigl([\ell]\bigr) P
\bigl(\vec {a}; \vec{b}\bigl([\ell]\bigr); t \bigr) = \det \bigl[P(a_i;
b_j; t) \bigr]^n_{i,
j = 1}.
\end{equation}
Now let $A_n=\{a_1, \ldots, a_n\}$ and $B_n=\{b_1, \ldots, b_n\}$ be two
unlabeled sets of points in $\T$, and let $P(A_n; B_n; t)$ be the
transition probability for NIBM on $\T$ with the particles starting at
the points $A_n$ and ending at the points $B_n$, as described in the
paragraph preceding \eqref{eq:tau_def_one_particle}. Then $P(A_n; B_n;
t)$ is obtained from $P(\vec{a}; \vec{b}(\sigma); t)$ via the relation
%
\begin{equation}
\label{eq:cyclic_sum} P(A_n; B_n; t) = \sum
_{\sigma\in S_n} P \bigl(\vec{a}; \vec {b}(\sigma ); t \bigr) = \sum
_{[\ell] \in\intZ/n\intZ\subseteq S_n} P \bigl(\vec {a}; \vec{b}\bigl([\ell]\bigr); t
\bigr).
\end{equation}

In the case that $n$ is odd, we have $\sgn([\ell]) = 1$ for all
$[\ell]
\in\intZ/n\intZ$, and then \eqref{eq:cyclic_sum} and \eqref
{eq:cyclic_Karlin_McGregor} yield
%
\begin{equation}
\label{eq:K_MG_odd_case} P(A_n; B_n; t) = \det
\bigl[P(a_i; b_j; t) \bigr]^n_{i, j = 1}.
\end{equation}
In the case that $n$ is even, the situation is more complicated. The
determinantal formula of $P(A_n; B_n; t)$ has not appeared before in
the literature as far as the current authors can tell, but a discrete
analogue was solved by \citet{Fulmek04}. We summarize Fulmek's result
below, and take the continuum limit to obtain the result for NIBM.

Consider the cylindrical lattice $\intZ_M \times\intZ= \{ ([m], n)
\mid m = -M/2, \ldots, M/2 - 1, n \in\intZ\}$, where $M$ is assumed
to be even, and we take the canonical representation for $\intZ_M$ to
be the integers between (and including) $-M/2$ and $M/2 - 1$.
We define a step to the left as the edge from $([m], n)$ to $([m - 1],
n + 1)$, and a step to the right as the edge from $([m], n)$ to $([m +
1], n + 1)$. We assign weight the $x$ to each step to the left and
weight $y$ to each step to the right, so that
%
\begin{equation}\quad
\label{eq:weight_of_each_step} w(e) := %
\cases{ x, &\quad $\mbox{if $e = \bigl[\bigl([m], n
\bigr) \to\bigl([m - 1], n + 1\bigr)\bigr]$ is a step to the left}$, \vspace*{2pt}
\cr
y, &\quad $\mbox{if $e = \bigl[\bigl([m], n\bigr) \to\bigl([m + 1], n + 1\bigr)
\bigr]$ is a step to the right}$. }\hspace*{-20pt} %
\end{equation}
A path on the lattice is defined as a sequence of adjacent steps,
either to the left or to the right. We define the weight of a path as
the product of the weights of its edges, so that
%
\begin{equation}
w\bigl(p = (e_1, \ldots, e_N)\bigr) := \prod
^N_{i = 1} w(e_i),
\end{equation}
and for an arbitrary $n$-tuple of paths $(p_1, \ldots, p_n)$, define
its weight as $w ((p_1,\break \ldots,  p_n) ) = \prod^n_{i = 1} w(p_i)$.
Furthermore, for a set of objects whose weights are defined, we define
the generating function of these weighted objects as the sum of their
weights, so that
%
\begin{equation}
\GF(A) := \sum_{a \in A} w(a).
\end{equation}
Let $-M/2 \leq\alpha_1 < \alpha_2 < \cdots< \alpha_n < M/2$ and $M/2
\leq\beta_1 < \beta_2 < \ldots< \beta_n < M/2$ such that $\al_i,
\be
_i$ are all even, and $N$ be an even integer. We denote $\Pcal(\alpha
_i; \beta_j; N)$ as the set of paths connecting $([\alpha_i], 0)$ and
$([\beta_j], N)$. For any $\sigma\in S_n$, denote $\Pcal(\vec
{\alpha};
\vec{\beta}(\sigma); N)$ as the set of the $n$-tuples of
nonintersecting paths $(p_1, \ldots, p_n)$ such that $p_i$ connects
$([\alpha_i], 0)$ and $([\beta_{\sigma(i)}], N)$.

The celebrated Lindstr\"{o}m--Gessel--Viennot formula [\citet{lindstrom73},
\citet{Gessel-Viennot85}] yields that
%
\begin{eqnarray}
\label{eq:Lindstrom-Gessel-Viennot_original} %
&&\sum_{\sigma\in\intZ/n\intZ\subseteq S_n} \sgn(\sigma)
\GF \bigl(\Pcal \bigl(\vec{\alpha}; \vec{\beta}(\sigma); N \bigr) \bigr) \nonumber\\
&&\qquad= \sum
_{[\ell] \in S_n} \sgn\bigl([\ell]\bigr) \GF \bigl(\Pcal \bigl(
\vec {\alpha}; \vec{\beta}\bigl([\ell]\bigr); N \bigr) \bigr)
\\
&&\qquad= \det \bigl(\GF \bigl(\Pcal(\alpha_i; \beta_j; N)
\bigr) \bigr)^n_{i,
j = 1},\nonumber %
\end{eqnarray}
where in the first identity we have used that there are no
nonintersecting paths connecting $([\alpha_i], 0)$ and $([\beta
_{\sigma
(i)}], N)$ for all $i$ unless $\sigma$ is a cyclic permutation.

With the weights $x = y = 1/2$, we find that $\GF (\Pcal(\alpha_i;
\beta_j; N) )$ is the probability that a random walker on $\intZ_M$
that starts at $[\alpha_i]$ will end at $[\beta_j]$ after time~$N$.
Similarly $\GF(\Pcal(\vec{\alpha}; \vec{\beta}(\sigma); N))$ is the
probability that $n$ labeled vicious walkers (i.e., their paths do not
intersect) on $\intZ_M$ which start at $[\alpha_1], \ldots, [\alpha_n]$
will end at $[\beta_{\sigma(1)}], \ldots, [\beta_{\sigma(n)}]$,
respectively. By Donsker's theorem [\citet{Durrett10}] the path of a
random walk converges to the path of Brownian motion in the sense of
weak convergence as the step length becomes small and the number of
steps becomes large. Similarly, the paths of $n$ vicious walkers on the
circle converge to the paths of NIBM in the weak sense. A rigorous
proof of this intuitively clear convergence result, together with a
bound of convergence rate, is given by \citet{Baik-Suidan07} in the
setting of nonintersecting Brownian motion on the real line. We do not
repeat the proof here. One consequence of the convergence is the
following convergence of the transition probability density. Let $M, N
\to\infty$ such that
%
\begin{equation}
\label{eq:Brownian_scaling} \frac{\alpha_i}{M} \to\frac{a_i}{2\pi},\qquad
 \frac{\beta_i}{M} \to
\frac{b_i}{2\pi}, \qquad
\frac{N}{M^2} \to\frac{t}{4\pi^2 n},
\end{equation}
and the arrays of $a_i$'s and $b_i$'s are distinct, respectively. Then
%
\begin{eqnarray}
\label{eq:Lindstrom_Gessel_Viennot} \frac{M}{4\pi} \GF \bigl(\Pcal(\alpha_i;
\beta_j; N) \bigr) &\to& P(a_i, b_j; t)\quad
\mbox{and}
\nonumber
\\[-8pt]
\\[-8pt]
\nonumber
\biggl( \frac{M}{4\pi} \biggr)^n \GF \bigl(\Pcal \bigl(
\vec{\alpha}; \vec{\beta}(\sigma); N \bigr) \bigr) &\to& P \bigl(\vec{a}; \vec{b}(
\sigma); t \bigr),
\end{eqnarray}
and the discrete identity \eqref{eq:Lindstrom_Gessel_Viennot} implies
\eqref{eq:cyclic_Karlin_McGregor} as the continuous limit.

We now introduce the phase parameter $\tau$, and consider
%
\begin{equation}
\label{eq:weight_with_tau} x =
\frac{1}{2} e^{-({2\pi i}/{M})\tau},\qquad y = \frac{1}{2}
e^{({2\pi i}/{M})\tau}.
\end{equation}
To analyze the information carried by $\tau$, we recall the \emph
{offset} of the trajectory of a particle moving on $\T$. Suppose a
particle $\theta$ moves on $\T$ such that $\theta(t_1) = e^{ai}$ and
$\theta(t_2) = e^{bi}$ where $a, b \in[-\pi, \pi)$, and the trajectory
of $\theta$ is expressed as $\theta(t) = e^{ix(t)}$ where $x(t)\dvtx [t_1,
t_2] \rightarrow\R$ is continuous for $t \in[t_1, t_2]$. Then the
offset of the trajectory of $\theta$ is defined as $[(x(t_2) - x(t_1))
- (b - a)]/(2\pi)$. If $a = b$, the offset is more commonly called the
\emph{winding number}. To consider the path on the lattice $\intZ_M
\times\intZ$, we identity the first coordinate $[m_1] \in\intZ_M$ as
the discrete point $e^{2m_1\pi i/M}$ on $\T$, and consider the second
coordinate $m_2 \in\intZ$ as the discrete time $4\pi^2 n m_2/M^2$.
Then a path on the lattice connecting $([\alpha_i], 0)$ and $([\beta
_j], N)$ is identified as a trajectory of a particle $\theta$ on $\T$
such that $\theta(0) = e^{{2\alpha_i \pi i}/{M}}$, $\theta(4\pi^2
nN/M^2) = e^{{2\beta_j \pi i}/{M}}$, and $\theta(t) = e^{ix(t)}$
where $x(t)$ is continuous on $[0, 4\pi^2 nN/M^2]$. Furthermore, we can
require $x(0) = \frac{2\alpha_i \pi}{M}$ and $x(4\pi^2 nN/M^2) =
\frac
{2\beta_j \pi}{M} + 2\pi o$ where $o \in\intZ$. Then we say that $o$
is the offset of the path.

Express
%
\begin{equation}
\label{eq:separation_by_offset_one_particle} \Pcal(\alpha_i; \beta_j; N) = \bigcup
_{o \in\intZ} \Pcal _o(\alpha_i;
\beta_j; N),
\end{equation}
where
%
\begin{equation}
\Pcal_o(\alpha_i; \beta_j; N) = \bigl\{
\mbox{paths connecting $\bigl([\alpha _i], 0\bigr)$ and $\bigl([
\beta_j], N\bigr)$ with offset $o$} \bigr\}.\hspace*{-20pt}
\end{equation}
Then the paths in $\Pcal_o(\alpha_i; \beta_j; N)$ on the lattice
$\intZ
_M \times\intZ$ have a canonical 1--1 correspondence with paths on
$\intZ\times\intZ$ that connect $(\alpha_i, 0)$ and $(\beta_j + oM,
N)$ and are made of adjacent steps either to the left or to the right.
Here, by steps to the left (resp., to the right), we mean edges
connecting $(m_1, m_2)$ and $(m_1 - 1, m_2 + 1)$ [resp., edges
connecting $(m_1, m_2)$ and $(m_1 + 1, m_2 + 1)$].

Letting
%
\begin{eqnarray}
\label{eq:transition_prob_ZxZ} \Prob_o(\alpha_i; \beta_j;
N) &:=& \mbox{transition probability of random walk on $\intZ$ from $
\alpha_i$}
\nonumber
\\[-8pt]
\\[-8pt]
\nonumber
&&\mbox{to $\beta_j + oM$ after time $N$},
\end{eqnarray}
we have that
%
\begin{eqnarray}
\label{eq:decomp_of_one_particle_trans_prob_offset}
\GF \bigl(\Pcal(\alpha_i; \beta_j; N)
\bigr) &=& \sum_{o \in\intZ
} \GF \bigl(\Pcal_o(
\alpha_i; \beta_j; N) \bigr)
\nonumber
\\[-8pt]
\\[-8pt]
\nonumber
& =& \sum
_{o \in\intZ} \Prob _o(\alpha_i;
\beta_j; N) e^{(\beta_j - \alpha_i){2\tau\pi
i}/{M} +
2o\tau\pi i}.
\end{eqnarray}

Consider $n$ nonintersecting paths that connect $ ([\alpha_i],
0 )$ to $ ([\beta_i], N )$, respectively, for $i = 1,
\ldots, n$. We find that the total offset of these paths has to be $kn$
($k \in\intZ$), since all the paths have the same offset. Similarly,
letting $\sigma= [\ell] \in\intZ/n\intZ$, the total offset of $n$
nonintersecting paths that connect $([\alpha_i], 0)$ to $[\beta
_{\sigma
(i)}], N)$, respectively, for $i = 1, \ldots, n$ has to be $kn + \ell$
($k \in\intZ$). Similar to \eqref
{eq:separation_by_offset_one_particle}, we write for $\sigma= [\ell]$,
%
\begin{equation}
\Pcal \bigl(\vec{\alpha}; \vec{\beta}\bigl([\ell]\bigr); N \bigr) = \bigcup
_{o
\in n\intZ+ \ell} \Pcal_o \bigl(\vec{\alpha}; \vec{
\beta}\bigl([\ell]\bigr); N \bigr),
\end{equation}
where
%
\begin{eqnarray}
&&\Pcal_o\bigl(\vec{\alpha}; \vec{\beta}\bigl([\ell]\bigr); N\bigr)\hspace*{-20pt}\nonumber 
\\
&&\qquad:=
\bigl\{ \mbox {$n$-tuples of nonintersecting paths connecting $\bigl([
\alpha_i], 0\bigr)$ to $\bigl([\beta_{[\ell](i)}], N\bigr)$}\hspace*{-20pt}
\\
&&\hspace*{205pt}\mbox{($i = 1, \ldots, n$) with total offset $o$} \bigr\}.\hspace*{-20pt}\nonumber
\end{eqnarray}
Then, similar to the paths in $\Pcal_o(\alpha_i; \beta_j; N)$, the
$n$-tuples of nonintersecting paths in $\Pcal_o(\alpha_1, \ldots,
\alpha_n; \beta_{[\ell](1)}, \ldots, \beta_{[\ell](n)}; N)$ on the
lattice $\intZ_M \times\intZ$ have the canonical 1--1 correspondence
with the $n$-tuples of paths $(x_1(t), \ldots, x_n(t))$ on $\intZ
\times\intZ$ such that they connect $(\alpha_1, 0)$ to $(\beta
_{\ell+
1} + knM, N), \ldots, (\alpha_{n - \ell}, 0)$ to $(\beta_n + knM, N)$,
$(\alpha_{n - \ell+ 1}, 0)$ to $(\beta_1 + k(n + 1)M, N)$, \ldots,
$(\alpha_n, 0)$ to $(\beta_\ell+ k(n + 1)M, N)$, respectively, and
satisfy $x_n(t) - x_1(t) < M$ for all $t = 0, \ldots, N$. Similar to
\eqref{eq:transition_prob_ZxZ}, let us denote
%
\begin{eqnarray}
\label{eq:defn_Prob_o_two_time} 
\Prob_o\bigl(\vec{\alpha}; \vec{\beta}\bigl([\ell]
\bigr); N\bigr) &:=& \mbox{transition probability of $n$ vicious walkers
$x_1(t), \ldots, x_n(t)$}\hspace*{-10pt}\nonumber
\\
&&{}\mbox{on $\intZ$ such that }
\mbox{$x_i(0) = \alpha_i$, $\displaystyle x_i(N) =
\beta_{[\ell](i)} + \biggl[\frac{o
+ i - 1}{n} \biggr]M$}\hspace*{-10pt}
\\
&&\mbox{and
$x_n(t) - x_1(t) < M$ for all $t = 0, \ldots , N$}.\hspace*{-10pt}
\nonumber
\end{eqnarray}
Then, similar to \eqref{eq:decomp_of_one_particle_trans_prob_offset}, we
have that
%
\begin{eqnarray}
\label
{eq:decomp_of_nonintersecting_particles_trans_prob_offset}
\GF \bigl(\Pcal \bigl(\vec{\alpha}; \vec{\beta}\bigl([\ell]
\bigr); N \bigr) \bigr) &=& \sum_{o \in n\intZ+ \ell} \GF \bigl(
\Pcal_o \bigl(\vec{\alpha}; \vec{\beta}\bigl([\ell]\bigr); N \bigr)
\bigr)
\nonumber
\\[-8pt]
\\[-8pt]
\nonumber
&=& \sum_{o \in n\intZ+ \ell} \Prob_o \bigl(\vec{
\alpha}; \vec {\beta }\bigl([\ell]\bigr); N \bigr) e^{\sum^n_{k = 1}
 (\beta_k - \alpha_k)
{2\tau
\pi i}/{M} + 2o\tau\pi i}. %
\end{eqnarray}

Note that if $n$ is even and $[\ell] \in\intZ/n\intZ\subseteq S_n$,
then for any $k \in\intZ$, $\sgn([\ell]) = (-1)^{kn + \ell}$.
Thus, by
\eqref{eq:decomp_of_one_particle_trans_prob_offset} and \eqref
{eq:decomp_of_nonintersecting_particles_trans_prob_offset}, the
determinantal identity \eqref{eq:Lindstrom-Gessel-Viennot_original} implies
%
\begin{eqnarray}
\label{eq:LGV_with_offset} %
&& e^{\sum^n_{k = 1} (\beta_k - \alpha_k)( {2\tau\pi i}/{M})} \sum_{o \in\intZ}
\Prob_o \bigl(\vec{\alpha}; \vec{\beta}\bigl([o \bmod n]\bigr); N
\bigr) (-1)^o e^{2o\tau\pi i}\nonumber
\\
&&\qquad= \sum_{o \in\intZ} (-1)^o \GF \bigl(
\Pcal_o \bigl( \vec{\alpha}; \vec{\beta}\bigl([o \bmod n]\bigr); N
\bigr) \bigr)
\\
&&\qquad= \det \biggl(\sum_{o \in\intZ} \Prob_o(
\alpha_i; \beta_j; N) e^{(\beta_j - \alpha_i)
({2\tau\pi i}/{M}) + 2o\tau\pi i}
\biggr)^n_{i, j = 1}.\nonumber %
\end{eqnarray}
In the scaling limit $M, N \to\infty$ given in \eqref
{eq:Brownian_scaling} with distinct arrays of $a_i$'s and $b_i$'s,
respectively, the random walk converges to Brownian motion with
diffusion parameter $n^{-1/2}$. Therefore, analogous to \eqref
{eq:Lindstrom_Gessel_Viennot} we obtain
%
\begin{eqnarray}
\label{eq:defn_of_P_density_o} \frac{M}{4\pi} \Prob_o(\alpha_i;
\beta_j; N)& \to&\frac{\sqrt
{n}}{\sqrt
{2\pi t}} e^{-{n(b_j - a_i + 2o\pi)^2}/{(2t)}} \quad\mbox{and}
\nonumber
\\[-8pt]
\\[-8pt]
\nonumber
\biggl( \frac{M}{4\pi} \biggr)^n \Prob_o\bigl(\vec{
\alpha}; \vec {\beta}\bigl([o \bmod n]\bigr); N\bigr)& \to& P_o(A_n;
B_n; t),
\end{eqnarray}
where $P_o(A_n; B_n; t)$ is the transition probability of NIBM with
fixed offset $o$, defined as
%
\begin{eqnarray}
&&P_o(A_n; B_n; t)
\nonumber
\\[-6pt]
\\[-8pt]
\nonumber
&&\qquad:= \lim
_{\Delta x \to0} \frac{1}{(\Delta x)^n} \Prob \left( %
\begin{array}{c}
\mbox{$n$ particles in NIBM start at $a_1, \ldots, a_n$}\\
\mbox{and after time $t$ end in}\\
\mbox{$[b_1, b_1 +
\Delta x), \ldots, [b_n, b_n + \Delta x)$ with total
offset $o$} %
\end{array}
 \right).\hspace*{-20pt}
\end{eqnarray}

Denote
%
\begin{equation}
\label{eq:defnPAnBnttau} P(A_n; B_n; t; \tau) := \det
\bigl(P(a_i; b_j; t; \tau) \bigr)^n_{i, j = 1},
\end{equation}
where $P(a; b; t; \tau)$ is defined in \eqref{eq:tau_def_one_particle}.
We now take \eqref{eq:LGV_with_offset} in the scaling limit \eqref
{eq:Brownian_scaling}, and derive that if $n$ is even
%
\begin{eqnarray}
\label{eq:decompPoABteven}&& e^{\sum^n_{k = 1} (b_k - a_k)\tau i}
 \sum_{o \in\intZ}
P_o(A_n; B_n; t) (-1)^o
e^{2o\tau\pi i}
\nonumber
\\[-8pt]
\\[-8pt]
\nonumber
&&\qquad = e^{\sum^n_{k = 1} (b_k - a_k)\tau i}P(A_n; B_n; t; \tau).
\end{eqnarray}
With $\tau=1/2$, \eqref{eq:decompPoABteven} implies
%
\begin{equation}
\label{eq:trans_prob_even} P(A_n; B_n; t) = \sum
_{o \in\intZ} P_o(A_n; B_n; t) =
P \biggl(A_n; B_n; t; \frac{1}{2} \biggr),
\end{equation}
for $n$ even. For $n$ odd, we have a similar formula in \eqref
{eq:K_MG_odd_case}, which can be written as
%
\begin{equation}
\label{eq:trans_prob_odd} P(A_n; B_n; t) = \sum
_{o \in\intZ} P_o(A_n; B_n; t) =
P(A_n; B_n; t; 0).
\end{equation}
The two formulas \eqref{eq:trans_prob_even} and \eqref
{eq:trans_prob_odd} are combined to give Proposition~\ref
{prop:trans_prob_determinant}.

In what follows we consider $P(A_n; B_n; t; \tau)$ for a general $\tau
\in\R$. To get the transition probability density for NIBM, we simply
let $\tau= 0$ or $\tau= 1/2$ depending on the parity of the number of
particles. One advantage of working with $P(A_n; B_n; t; \tau)$ with
general $\tau$ is that $P(A_n; B_n; t; \tau)$ is a generating function
for $P_o(A_n; B_n; t)$. We call $P(A_n; B_n; t; \tau)$ the \emph
{$\tau
$-deformed transition probability density} of NIBM.

\subsection{\texorpdfstring{$\tau$}{tau}-deformed reunion probability}\label{reunion_probability}

Now we consider the limiting case that $a_1, \ldots, a_n$ are close to
$0$ and/or $b_1, \ldots, b_n$ are close to $0$. In the case that $a_i
\to0$ and $b_i$ are fixed and distinct, by l'H\^{o}pital's rule,
%
\begin{eqnarray}
\label{eq:det_a_to_0} P(A_n; B_n; t; \tau) &=&
\frac{\prod_{1 \leq j < k \leq n} (a_k -
a_j)}{\prod^{n - 1}_{j = 0} j!}
\nonumber
\\[-8pt]
\\[-8pt]
\nonumber
&&{}\times
\det \biggl( \frac{d^{j -
1}}{dx^{j - 1}} P(x; b_k; t; \tau)
\bigg\vert_{x = 0} \biggr) \bigl(1 + \bigO \bigl(\max\bigl(\vert a_i
\vert\bigr) \bigr) \bigr).
\end{eqnarray}
Similarly, in the case that $b_i \to0$ and $a_i$ are fixed and distinct,
%
\begin{eqnarray}
\label{eq:det_b_to_0} P(A_n; B_n; t; \tau)& =& \frac{\prod_{1 \leq j < k \leq n} (b_k -
b_j)}{\prod^{n - 1}_{j = 0} j!}
\nonumber
\\[-8pt]
\\[-8pt]
\nonumber
&&{}\times
\det \biggl( \frac{d^{j -
1}}{dx^{j - 1}} P(a_k; x; t; \tau)
\bigg\vert_{x = 0} \biggr) \bigl(1 + \bigO \bigl(\max\bigl(\vert b_i
\vert\bigr) \bigr) \bigr).
\end{eqnarray}
In the case that both $a_i \to0$ and $b_i \to0$, we define
%
\begin{equation}
R_n(t; \tau) = \det \biggl( \frac{d^{j + k - 2}}{dx^{j + k - 2}} P(0; x; t; \tau)
\bigg\vert_{x = 0} \biggr),
\end{equation}
and have the \emph{$\tau$-deformed reunion probability}
%
\begin{eqnarray}
\label{eq:PABttaua0b0}&& P(A_n; B_n; t; \tau)
\nonumber
\\[-8pt]
\\[-8pt]
\nonumber
&&\qquad=
\frac{\prod_{1 \leq j < k \leq n} (a_j -
a_k)(b_k - b_j)}{\prod^{n - 1}_{j = 0} j!^2}
R_n(t; \tau) \bigl(1 + \bigO \bigl(\max\bigl(\vert a_i
\vert, \vert b_i \vert\bigr) \bigr) \bigr).
\end{eqnarray}
The transition probability density $P(A_n; B_n; t; \epsilon(n))$ of the
particles in NIBM with starting point $a_i \to0$ and ending point $b_i
\to0$ is called the \emph{reunion probability} in \citet
{Forrester-Majumdar-Schehr11}. In \citet{Forrester-Majumdar-Schehr11},
the \emph{normalized reunion probability} is defined in the setting of
our paper as
%
\begin{equation}
\tilde{G}_n(L) = \frac{ ({2\pi}/{L}  )^{2n^2} R_n
 (
{4\pi^2 n}/{L^2}, \epsilon(n)  )}{\lim_{t \to0} t^{n^2} R_n(nt,
\epsilon(n))}.
\end{equation}
Note that the normalized reunion probability is not real probability
since it can exceed $1$.

In our paper, we are interested in the $\tau$-deformed transition
probability $P(A_n; B_n; t; \tau)$ and $R_n(t; \tau)$ because they
contain information on the total winding number in NIBM with common
starting point and the same common ending point. By \eqref
{eq:PABttaua0b0}, as $a_1, \ldots, a_n \to0$ and $b_1,\ldots, b_n
\to0$,
%
\begin{eqnarray}
P_{\omega}(A_n; B_n; t)& = &\frac{\prod_{1 \leq j < k \leq n} (a_j -
a_k)(b_k - b_j)}{\prod^{n - 1}_{j = 0} j!^2}
e^{2\pi\epsilon(n)
\omega i} R_{n, \omega}(t)
\nonumber
\\[-8pt]
\\[-8pt]
\nonumber
&&{}\times \bigl(1 + \bigO \bigl(\max\bigl(\vert
a_i \vert, \vert b_i \vert\bigr) \bigr) \bigr),
\end{eqnarray}
where $R_{n, \omega}(t)$ is defined as
%
\begin{equation}
\label{Rno_def} R_{n, \omega}(t) = \int^1_0
R_n(t; \tau) e^{-2\omega\tau\pi i} \,d\tau.
\end{equation}
Note that the ratio
%
\begin{equation}
\label{eq:R_n_ratio} \frac{e^{2\pi\epsilon(n) \omega i} R_{n, \omega}(t)}{R_n(t;
\epsilon(n))} = \mathop{\lim_{a_1, \ldots, a_n \to0 }}_{ b_1,\ldots, b_n \to0}
\frac
{P_{\omega}(A_n; B_n; t)}{P(A_n; B_n; t)},
\end{equation}
is the probability that the total winding number of the $n$ particles
in NIBM starting at a common point and ending at the same common point
is $\omega$.

To evaluate $R_n(t; \tau)$ and the determinants on the right-hand sides
of \eqref{eq:det_a_to_0} and~\eqref{eq:det_b_to_0}, we consider the
Fourier series of entries of these determinants. Introduce the lattice
%
\begin{equation}
L_{n, \tau}:= \biggl\{ \frac{k + \tau}{n} \bigg\mid k \in\intZ \biggr\}.
\end{equation}
By the Poisson resummation formula, we find
%
\begin{eqnarray}
\label{eq:formulaofPabttau} %
P(a; \theta; t; \tau) &=& \frac{\sqrt{n}}{\sqrt{2\pi t}} \sum
_{l
\in
\intZ} e^{-{n(\theta- a + 2l\pi)^2}/{(2t)}} e^{2l\pi\tau i}
\nonumber\\
&=& \frac{\sqrt{n}}{\sqrt{2\pi t}} \sum_{k \in\intZ} \int
^{\infty
}_{-\infty} e^{-{n(\theta- a+2\xi\pi)^2}/{(2t)}} e^{-2\pi i\xi
(k-\tau
)} \,d
\xi
\nonumber
\\[-8pt]
\\[-8pt]
\nonumber
&=& \frac{1}{2\pi} \sum_{k \in\intZ} e^{-{t(k - \tau)^2}/{(2n)} }
e^{i(\theta-a)(k-\tau)}
\\
&=& \frac{1}{2\pi} \sum_{x \in L_{n, \tau}} e^{-{tn x^2}/{2}}
e^{-inx(\theta- a)}. \nonumber%
\end{eqnarray}
It follows that
%
\begin{equation}
\label{eq:derivative_of_P_1st_variable} \frac{d^j}{d\theta^j}
 P(a; \theta; t; \tau) = \frac{(-ni)^j}{2\pi
}
\sum_{x \in L_{n, \tau}} x^j e^{-{tn x^2}/{2}}
e^{-inx(\theta- a)}.
\end{equation}
Similarly,
%
\begin{eqnarray}
\label{eq:derivative_of_P_2nd_variable} P(\theta; b; t; \tau)
&= &\frac{1}{2\pi} \sum
_{x \in L_{n, \tau}} e^{-{tn x^2}/{2}} e^{inx(\theta- b)},
\\
\frac{d^j}{d\theta^j} P(\theta; b; t; \tau) &=& \frac{(ni)^j}{2\pi} \sum
_{x \in L_{n, \tau}} x^j e^{-{tn x^2}/{2}} e^{inx(\theta- b)},
\end{eqnarray}
and in particular
%
\begin{equation}
\frac{d^j}{d\theta^j} P(0; \theta; t; \tau)\bigg \vert _{\theta= 0} =
\frac{(-ni)^j}{2\pi} \sum_{x \in L_{n, \tau}} x^j
e^{-
{tn x^2}/{2}}.
\end{equation}

Now setting $t=T$, we find that
%
\begin{eqnarray}
\label{eq:defnRttau} R_n(T; \tau) = (-1)^{{n(n - 1)}/{2}}
\frac{n^{n^2}}{(2\pi)^n} \Hankel_n(T; \tau)
\nonumber
\\[-8pt]
\\[-8pt]
\eqntext{\mbox{where }
\displaystyle \Hankel_n(T; \tau) := \det \biggl( \frac{1}{n} \sum
_{x \in L_{n, \tau}} x^{j + k - 2} e^{-{Tn x^2}/{2}}
\biggr)^n_{j, k = 1}.}
\end{eqnarray}
Note that $\Hankel_n(t; \tau)$ is the Hankel determinant with respect
to the discrete measure on the lattice $L_{n,\tau}$,
%
\begin{equation}
\frac{1}{n} \sum_{y \in L_{n, \tau}} e^{-Tn x^2/2}
\delta(x-y).
\end{equation}

\begin{rmk}
Formula \eqref{eq:defnRttau} was obtained in \citet
{Forrester-Majumdar-Schehr11} and \citet
{Comtet-Forrester-Majumdar-Schehr13} with $\tau= 0$ and more recently
in \citet{Dupic-PerezCastillo13} with $\tau= \epsilon(n)$. We note that
the $\mathrm{NIBM}_{0 \to T}$ model is related to Yang--Mills theory on
the sphere, as
shown in \citet{Forrester-Majumdar-Schehr11}, and a similar formula was
derived in the Yang--Mills theory setting in \citet{Douglas-Kazakov93}
with $\tau= \epsilon(n)$.
\end{rmk}

By a standard result for Hankel determinants, we can express $\Hankel
_n(T; \tau)$ using the \emph{discrete Gaussian orthogonal polynomials}.
Let $p^{(T; \tau)}_{n, j}(x)$ be the monic polynomial of degree $j$
that satisfies
%
\begin{equation}
\label{eq:defn_of_discrete_Gaussian_OP} \frac{1}{n} \sum_{x \in L_{n, \tau}}
p^{(T; \tau)}_{n, j}(x) p^{(T;
\tau
)}_{n, k}(x)e^{-Tn x^2/2}
= 0 \qquad\mbox{if $j \neq k$}.
\end{equation}
We then have [see e.g., \citet{Bleher-Liechty13}, Proposition~2.2.2],
%
\begin{equation}
\Hankel_n(T; \tau) = \prod^{n - 1}_{j = 0}
h^{(T; \tau)}_{n,j},
\end{equation}
where
%
\begin{equation}
\label{eq:defn_of_h_nk} h^{(T; \tau)}_{n, k} := \frac{1}{n} \sum
_{x \in L_{n, \tau}} p^{(T;
\tau
)}_{n, k}(x)^2
e^{-Tn x^2/2}.
\end{equation}
The orthogonal polynomials \eqref{eq:defn_of_discrete_Gaussian_OP}
satisfy the three term recurrence equation [see \citet{Szego75}],
%
\begin{equation}
\label{eq:three_term_recurrence} xp^{(T; \tau)}_{n, j}(x)=p^{(T; \tau)}_{n, j+1}(x)+
\be^{(T; \tau)}_{n,
j}p^{(T; \tau)}_{n, j}(x)+ \bigl(
\gamma^{(T; \tau)}_{n, j} \bigr)^2p^{(T;
\tau)}_{n, j-1}(x),
\end{equation}
where $\{\be^{(T; \tau)}_{n, j}\}_{j=0}^\infty$ is a sequence of real
constants, and
%
\begin{equation}
\label{eq:defn_of_gamma_nk} \gamma^{(T; \tau)}_{n, j} := \biggl(
\frac{h^{(T; \tau)}_{n,
j}}{h^{(T; \tau
)}_{n, j-1}} \biggr)^{1/2}.
\end{equation}

\subsection{\texorpdfstring{$\tau$}{tau}-deformed multi-time correlation functions}\label
{subsec:tau_deformed_corr_functions}

Next, we consider the joint probability density of $n$-particles in
NIBM at times $t_1, \ldots, t_m$ such that $0 < t_1 < \cdots< t_m < T$
with the initial condition that they start from the common position $0
\in[-\pi, \pi) = \unitC$ at time $0$ and end at the same common
position at $T$. That is, we consider the joint probability density in
$\mathrm{NIBM}_{0 \to T}$. We also want to extract the information of
joint probability
density for each fixed total offset/winding number of the
$n$-particles. Thus, we consider the $\tau$-deformed joint probability
density function for the Brownian particles. This density function is
the one given in \eqref{eq:jpdf1} in the physical setting. In order to
get the $\tau$-deformed version, we start with the discrete model as in
Section~\ref{tau-deformed transition probability density of NIBM}.

Let $N_0 = 0 < N_1 < \cdots< N_m < N_{m + 1} = N$ be even integers and
$\alpha^{(k)}_i$ be even integers for $k = 0, \ldots, m + 1$ and $i =
1, \ldots, n$ such that for all $k = 0, \ldots, m + 1$,
%
\begin{equation}
-\frac{M}{2} \leq\alpha^{(k)}_1 <
\alpha^{(k)}_2 < \cdots< \alpha ^{(k)}_n
< \frac{M}{2}.
\end{equation}
Let $\sigma_1, \ldots, \sigma_{m + 1} \in S_n$ be permutations. Denote
$\Pcal(\vec{\alpha}^{(0)}; \vec{\alpha}^{(1)}(\sigma_1); \ldots;\break
\vec
{\alpha}^{(m + 1)}(\sigma_{m + 1}); N_1; \ldots; N_{m + 1})$ be the set
of $n$-tuples of nonintersecting paths $(p_1, \ldots, p_n)$ such that
$p_i$ connects $([\alpha^{(0)}_i], 0)$, $([\alpha^{(1)}_{\sigma_1(i)}],
N_1), \ldots, ([\alpha^{(m + 1)}_{\sigma_{m + 1}(i)}],\break  N_{m + 1})$
successively, and denote $\Pcal^{(\sigma)}(\vec{\alpha}^{(0)};
\ldots;
\vec{\alpha}^{(m)}; \vec{\alpha}^{(m + 1)}; N_1; \ldots; N_{m +
1})$ as
the union of $\Pcal(\vec{\alpha}^{(0)}; \vec{\alpha}^{(1)}(\sigma_1);
\ldots; \vec{\alpha}^{(m)}(\sigma_m); \vec{\alpha}^{(m +
1)}(\sigma);
N_1; \ldots; N_{m + 1})$ for all $\sigma_1, \ldots, \sigma_m \in S_n$.
Note that we only need to consider cyclic permutations $\sigma_k \in
\intZ/n\intZ\subseteq S_n$ due to the nonintersecting assumption.
Using the Lindstr\"{o}m--Gessel--Viennot formula repeatedly, we have,
as a
generalization of \eqref{eq:Lindstrom-Gessel-Viennot_original},
%
\begin{eqnarray}
\label{eq:LGV_formula_multtime} %
&& \sum_{[\ell] \in\intZ/n\intZ\subseteq S_n} \sgn\bigl([
\ell]\bigr) \GF \bigl( \Pcal^{[\ell]}\bigl(\vec{\alpha}^{(0)};
\vec{\alpha}^{(1)}; \ldots; \vec {\alpha}^{(1)}; \vec{
\alpha}^{(m + 1)}; N_1; \ldots; N_{m + 1}\bigr) \bigr)\nonumber
\\
&&\qquad= \sum_{\sigma_1, \ldots, \sigma_m, [\ell] \in\intZ/n\intZ
\subseteq S_n} \sgn\bigl([\ell]\bigr) \GF \bigl(
\Pcal\bigl(\vec{\alpha}^{(0)}; \vec {\alpha}^{(1)}(
\sigma_1); \ldots; \vec{\alpha}^{(m)}(\sigma_m);
\nonumber
\\[-8pt]
\\[-8pt]
\nonumber
&&\hspace*{190pt}\vec {\alpha}^{(m + 1)}\bigl([\ell]\bigr); N_1; \ldots;
N_{m + 1}\bigr) \bigr)
\\
&&\qquad= \prod^{m + 1}_{k = 1} \det \bigl( \GF\bigl(
\Pcal\bigl(\alpha^{(k - 1)}_i; \alpha^{(k)}_j;
N_k - N_{k - 1}\bigr)\bigr) \bigr)^n_{i, j = 1}.\nonumber
\end{eqnarray}

Let the weight for each step in \eqref{eq:weight_of_each_step} be given
by $x = e^{-2\pi\tau i/M}/2$ and $y = e^{2\pi\tau i/M}/2$ as in
\eqref
{eq:weight_with_tau}. Similar to \eqref{eq:defn_Prob_o_two_time},
suppose $o = kn + \ell$ where $\ell= 0, \ldots, n - 1$, we denote
%
\begin{eqnarray}
&&\Prob_o\bigl(\vec{\alpha}^{(0)}; \ldots; \vec{
\alpha}^{(m)}; \vec {\alpha }^{(m + 1)}; N_1; \ldots;
N_m; N_{m + 1}\bigr)\nonumber\\
&&\qquad := \mbox{transition probability of $n$
vicious walkers}\nonumber
\\
&&\quad\qquad\mbox{$x_1(t), \ldots, x_n(t)$ on $\intZ$ such that
$x_i(0) = \alpha ^{(0)}_i$,}
\nonumber
\\[-8pt]
\\[-8pt]
\nonumber
&&\qquad\quad\mbox{$\displaystyle x_i(N_{m + 1}) = \alpha^{(m + 1)}_{[\ell](i)} +
\biggl[\frac
{o + i - 1}{n} \biggr]M$,}
\\
&&\quad\qquad\mbox{$x_i(N_j) = \alpha^{(j)}_l
+ c^{(j)}_l M$ for some $l = 1, \ldots , n$ and
$c^{(j)}_l \in\intZ$,}\nonumber
\\
&&\quad\qquad\mbox{and $x_n(t) - x_1(t) < M$ for all $t = 0, \ldots,
N$.}
\nonumber
\end{eqnarray}
Then, similar to \eqref
{eq:decomp_of_nonintersecting_particles_trans_prob_offset}, we have
%
\begin{eqnarray}
\label{eq:decomp_of_GF_multtime}
&&\GF\bigl(\Pcal^{[\ell]}\bigl(\vec{\alpha}^{(0)};
\vec{\alpha}^{(1)}; \ldots; \vec {\alpha}^{(1)}; \vec{
\alpha}^{(m + 1)}; N_1; \ldots; N_{m + 1}\bigr)\bigr)\nonumber\\
&&\qquad =
\sum_{o \in n\intZ+ \ell} \Prob_o\bigl(\vec{
\alpha}^{(0)}; \ldots; \vec {\alpha}^{(m)}; \vec{
\alpha}^{(m + 1)}; N_1; \ldots; N_{m + 1}\bigr)\\
&&\hspace*{30pt}\qquad\quad{}\times e^{\sum
^n_{k = 1} (\alpha^{(m + 1)}_k - \alpha^{(0)}_k) ({2\tau\pi
i}/{M}) +
2o\tau\pi i}.
\nonumber
\end{eqnarray}

In the limit that $M, N \to\infty$ such that analogous to \eqref
{eq:Brownian_scaling},
%
\begin{equation}
\frac{\alpha^{(j)}_i}{M} \to\frac{a^{(j)}_i}{2\pi}, \qquad\frac
{N_j}{M^2} \to
\frac{t_j}{4\pi^2 n},
\end{equation}
where $0 = t_0 < t_1 < \cdots< t_{m + 1} = T$, and $-\pi\leq
a^{(j)}_1 < \cdots< a^{(j)}_n < \pi$ for each $j = 0, \ldots, m + 1$,
we obtain, similar to \eqref{eq:defn_of_P_density_o},
%
\begin{eqnarray}
&&\biggl( \frac{M}{4\pi} \biggr)^{mn} \Prob_o\bigl(
\vec{\alpha}^{(0)}; \ldots; \vec{\alpha}^{(m)}; \vec{
\alpha}^{(m + 1)}; N_1; \ldots; N_{m +
1}\bigr)
\nonumber
\\[-8pt]
\\[-8pt]
\nonumber
&&\qquad\to
P_o\bigl(A^{(0)}; \ldots; A^{(m + 1)};
t_1; \ldots; t_{m + 1}\bigr),
\end{eqnarray}
where
%
\begin{eqnarray}
&&P_o\bigl(A^{(0)}; \ldots; A^{(m + 1)};
t_1; \ldots; t_{m + 1}\bigr)\nonumber \\
&&\qquad:= \lim_{\Delta x \to0}
\frac{1}{(\Delta x)^{mn}}
\\
&&\qquad\quad{}\times\Prob \left( %
\begin{array}{c}
\mbox{$n$ particles in NIBM start at
$a^{(0)}_1, \ldots, a^{(0)}_n$ at
time $0$},\\
\mbox{stay in $\bigl[a^{(k)}_1, a^{(k)}_1
+ \Delta x\bigr), \ldots,\bigl[a^{(k)}_n,
a^{(k)}_n + \Delta x\bigr)$ at time $t_k$}\\
\mbox{($k = 1,
\ldots , m + 1$) with total offset $o$ at time $t_{m + 1}$.}
\end{array}
 \right).
\nonumber
\end{eqnarray}
Thus, similar to \eqref{eq:decompPoABteven}, equations \eqref
{eq:decomp_of_GF_multtime} and \eqref{eq:LGV_formula_multtime} imply
that the \emph{$\tau$-deformed joint transition probability density} of
$n$ particles in NIBM is [here $\epsilon(n)$ accommodates both even and
odd $n$]
%
\begin{eqnarray}
&&\sum_{o \in\intZ} P_o\bigl(A^{(0)};
\ldots; A^{(m + 1)}; t_1; \ldots; t_{m
+ 1}\bigr)
e^{2\pi\epsilon(n) oi} e^{2o\tau\pi i}
\nonumber
\\[-8pt]
\\[-8pt]
\nonumber
&&\qquad = \prod^{m + 1}_{j = 1}
P\bigl(A^{(j - 1)}; A^{(j)}; t_j - t_{j - 1};
\tau\bigr),
\end{eqnarray}
where $P(A^{(j - 1)}; A^{(j)}; t_j - t_{j - 1}; \tau)$ is defined by
\eqref{eq:defnPAnBnttau} with $A_n, B_n$ replaced by $A^{(j - 1)},
A^{(j)}$. Letting $\tau= \epsilon(n)$, we have the \emph{joint transition
probability density} in NIBM, which is the sum of all $P_o(A^{(0)};
\ldots; A^{(m + 1)}; t_1; \ldots; t_{m + 1})$, expressed as
%
\begin{eqnarray}
&&\sum_{o \in\intZ} P_o \bigl(A^{(0)};
\ldots; A^{(m + 1)}; t_1; \ldots; t_{m + 1} \bigr)
\nonumber
\\[-8pt]
\\[-8pt]
\nonumber
&&\qquad=
\prod^{m + 1}_{j = 1} P \bigl(A^{(j - 1)};
A^{(j)}; t_j - t_{j - 1}; \epsilon(n) \bigr).
\end{eqnarray}

In the limiting case $a^{(0)}_i \to0$ and/or $a^{(m + 1)}_i \to0$, we
have the result similar to \eqref{eq:det_a_to_0}, \eqref{eq:det_b_to_0}
and \eqref{eq:PABttaua0b0}. For $\mathrm{NIBM}_{0 \to T}$ we are
interested in the
ratio between the $\tau$-deformed transition probability density of the
particles from $A^{(0)}$ to $A^{(1)}, \ldots, A^{(m + 1)}$ successively
and the $\tau$-deformed transition probability (i.e., the $\tau
$-deformed reunion probability) of the particles from $A^{(0)}$ to
$A^{(m + 1)}$, as $a^{(0)}_i \to0, a^{(m + 1)}_i \to0$. After
changing the notation $t_{m + 1}$ into $T$, we have the \emph{$\tau
$-deformed joint probability density} in $\mathrm{NIBM}_{0 \to T}$,
%
\begin{eqnarray}
\label{eq:Pmelon} %
&& P_{0 \to T}\bigl(A^{(1)}, \ldots,
A^{(m)}; t_1, \ldots, t_m; \tau\bigr)
\nonumber\\
&&\qquad:= \lim_{a^{(0)}_i \to0, a^{(m + 1)}_i \to0} \frac{\prod^{m +
1}_{j = 1} P(A^{(j - 1)}; A^{(j)}; t_j - t_{j - 1}; \tau)}{P(A^{(0)};
A^{(m + 1)}; t_{m + 1}; \tau)}
\nonumber\\
&&\qquad= \frac{1}{R_n(T; \tau)} \det \biggl( \frac{d^{j - 1}}{dx^{j
- 1}} P\bigl(x;
a^{(1)}_k; t_1; \tau\bigr) \bigg\vert_{x = 0}
\biggr)_{j,k=1}^n \\
&&\qquad\quad{}\times \det \biggl( \frac{d^{j - 1}}{dx^{j - 1}} P
\bigl(a^{(m)}_k; x; T - t_m; \tau\bigr)
\bigg\vert_{x = 0} \biggr)_{j,k=1}^n
\nonumber\\
&&\qquad\quad{} \times\prod^m_{j = 2} P
\bigl(A^{(j
- 1)}; A^{(j)}; t_j - t_{j - 1};
\tau \bigr). \nonumber%
\end{eqnarray}
Note that for any $\tau$, the denominator $R_n(T; \tau)$ is a nonzero
real number, by \eqref{eq:defnRttau} and \eqref{eq:defn_of_h_nk}.
With $\tau= \epsilon(n)$, $P_{0 \to T}(A^{(1)}, \ldots, A^{(m)}; t_1,
\ldots,
t_m; \epsilon(n))$ gives the \emph{joint transition probability density}
of particles in $\mathrm{NIBM}_{0 \to T}$. With the help of Fourier
expansion, $P_{0 \to T}
(A^{(1)}, \ldots, A^{(m)}; t_1, \ldots, t_m; \tau)$ yields the
conditional joint transition probability density with fixed total
winding number. To be precise, we have
%
\begin{eqnarray}
&&\frac{R_n(T; \tau)}{R_n(T, \epsilon(n))} P_{0 \to T}\bigl(A^{(1)}, \ldots,
A^{(m)}; t_1, \ldots, t_m; \tau\bigr)
\nonumber
\\[-8pt]
\\[-8pt]
\nonumber
&&\qquad = \sum
_{\omega\in\intZ} (P_{0
\to T} )_{\omega}
\bigl(A^{(1)}, \ldots, A^{(m)}; t_1, \ldots,
t_m\bigr) e^{2\pi
\omega
(\tau- \epsilon(n))i},
\end{eqnarray}
where
%
\begin{eqnarray}
&&(P_{0 \to T})_{\omega}\bigl(A^{(1)}, \ldots,
A^{(m)}; t_1, \ldots, t_m\bigr)
\nonumber
\\[-4pt]
\\[-12pt]
\nonumber
&&\qquad=\lim_{\Delta x \to0} \frac{1}{(\Delta x)^{mn}} \Prob \left(
\begin{array}{c}
\mbox{$n$ particles in $\mathrm{NIBM}_{0 \to T}$ with
total winding}
\\
\mbox{number $\omega$, there is a particle
in}\\
\mbox{ $\bigl[a^{(i)}_j, a^{(i)}_j + \Delta x
\bigr)$ at time $t_i$} %
\end{array}
 \right).
\end{eqnarray}
Note that although $P_{0 \to T}(A^{(1)}, \ldots, A^{(m)}; t_1, \ldots
, t_m;
\tau)$ may not be nonnegative-valued, it is normalized in the sense
that total integral over all possible positions of $a^{(k)}_j$ is $1$.

By \eqref{eq:Pmelon}, we find that $P_{0 \to T}(A^{(1)}, \ldots, A^{(m)};
t_1, \ldots, t_m; \tau)$ has properties similar to the joint
probability density function of a determinantal process, and thus is
characterized by a reproducing kernel. We apply the Eynard--Mehta
theorem [\citet{Eynard-Mehta98}], to $P_{0 \to T}(A^{(1)}, \ldots, A^{(m)};
t_1, \ldots,\break t_m; \tau)$, following the notational conventions in
\citet
{Borodin-Rains05}.

Denote for $k = 1, \ldots, m - 1$ and $j = 1, \ldots, n$,
%
\begin{eqnarray}
W_k(x, y)& :=& P(x; y; t_{k + 1} - t_k; \tau),
\label{eq:W_k_kernel}
\\
\phi_j(x) &:= &\mbox{linear combination of } \biggl\{
\frac
{d^{l}}{dy^{l}} P(y; x; t_1; \tau) \bigg\vert_{y = 0} \biggr\}
\nonumber
\\[-8pt]
\\[-8pt]
\eqntext{\mbox{for } l =  0, \ldots, j-1 ,}
\\
\psi_j(x)& :=& \mbox{linear combination of } \biggl\{
\frac
{d^{l}}{dy^{l}} P(x; y; T - t_m; \tau) \bigg\vert_{y = 0}
\biggr\}
\nonumber
\\[-8pt]
\\[-8pt]
\eqntext{\mbox{for } l = 0, \ldots, j-1 ,}
\end{eqnarray}
where we suppress the dependence on $\tau$, and the concrete formulas
for $\phi_j(x)$ and $\psi_j(x)$ are to be fixed later in \eqref
{eq:phi_and_psi_in_polynomials} and \eqref
{eq:explicit_expression_of_f_g}. Then we define the operator $\Phi:
L^2(\unitC) \to\ell^2(n)$ as
%
\begin{equation}
\Phi\bigl(f(\theta)\bigr) = \biggl( \int^{\pi}_{-\pi}
f(\theta) \phi _1(\theta) \,d\theta, \ldots, \int^{\pi}_{-\pi}
f(\theta) \phi_n(\theta) \,d\theta \biggr)^T,
\end{equation}
the operator $\Psi: \ell^2(n) \to L^2(\unitC)$ as
%
\begin{equation}
\Psi\bigl((v_1, \ldots, v_n)^T\bigr) = \sum
^n_{k = 1} v_k
\psi_k(\theta),
\end{equation}
and define the operator $W_k\dvtx L^2(\unitC) \to L^2(\unitC)$ by the
kernel function $W_k(x, y)$ in~\eqref{eq:W_k_kernel}. Furthermore, we
define the operators
%
\begin{eqnarray}
\label{eq:Wi,jwithcirc} W_{[i, j)} &:=& %
\cases{ W_i \cdots
W_{j - 1}, &\quad $\mbox{for $i < j$}$, \vspace*{2pt}
\cr
\Id,& \quad$\mbox{for $i =
j$}$, \vspace*{2pt}
\cr
0, &\quad $\mbox{for $i >j$}$, } \qquad
\mbox{and}
\nonumber
\\[-8pt]
\\[-8pt]
\nonumber
\WWW_{[i, j)} &:=& %
\cases{ W_i \cdots
W_{j - 1}, & \quad$\mbox{for $i < j$}$, \vspace*{2pt}
\cr
0, &\quad $\mbox{for $i \geq
j$}$. }
\end{eqnarray}
%
We also define the operator $M\dvtx \ell^2(n) \to\ell^2(n)$ as
%
\begin{equation}
\label{eq:defn_M} M := \Phi W_{[1, m)} \Psi,
\end{equation}
which is represented by the $n \times n$ matrix
%
\begin{eqnarray}
M_{ij}& = &\int\cdots\int_{\unitC^m}
\phi_i(\theta_1) W_1(\theta_1,
\theta_2) \cdots
\nonumber
\\[-8pt]
\\[-8pt]
\nonumber
&&\hspace*{40pt}{}\times W_{m - 1}(\theta_{m - 1},
\theta_m) \psi_j(\theta_m) \,d
\theta_1 \cdots d\theta_m.
\end{eqnarray}
Then for any $k_1, \ldots, k_m \leq n$, we define the \emph{$\tau
$-deformed joint correlation function} as
%
\begin{eqnarray}
\label{eq:defn_tau_deformed_corr_func} &&\hspace*{-4pt}R^{(n)}_{0 \to T}\bigl(a^{(1)}_1,
\ldots, a^{(1)}_{k_1}; \ldots; a^{(m)}_1,
\ldots, a^{(m)}_{k_m}; t_1, \ldots,
t_m; \tau\bigr)\nonumber\\
&&\hspace*{-6pt}\qquad = \prod^m_{j = 1}
\frac
{n!}{(n -
k_j)!}
\nonumber
\\[-8pt]
\\[-8pt]
\nonumber
&&\hspace*{-6pt}\qquad\quad\times\int_{[-\pi, \pi)^{mn - (k_1 + \cdots+ k_m)}} P_{0 \to T}\bigl(A^{(1)},
\ldots, A^{(m)};\\
&&\hspace*{152pt}{} t_1, \ldots, t_m; \tau\bigr) \,d
a^{(1)}_{k_1 + 1} \cdots d a^{(1)}_n\, d
a^{(2)}_{k_2 + 1} \cdots d a^{(m)}_n,\hspace*{-20pt}
\nonumber
\end{eqnarray}
and the Eynard--Mehta theorem gives the determinantal formula
%
\begin{eqnarray}
\label{eq:defn_Rmelon}&& R^{(n)}_{0 \to T}\bigl(a^{(1)}_1,
\ldots, a^{(1)}_{k_1}; \ldots; a^{(m)}_1,
\ldots, a^{(m)}_{k_m}; t_1, \ldots,
t_m; \tau\bigr)
\nonumber
\\[-8pt]
\\[-8pt]
\nonumber
&&\qquad = \det \bigl( K_{t_i,
t_j}
\bigl(a^{(i)}_{l_i}, a^{(j)}_{l'_j} \bigr)
\bigr)_{{i, j = 1,
\ldots, m, l_i = 1, \ldots, k_i , l'_j = 1, \ldots, k_j}} ,
\end{eqnarray}
where the \emph{$\tau$-deformed correlation kernel} is defined as
%
\begin{eqnarray}
\label{eq:two_parts_of_K} K_{t_i, t_j}(x, y)& =& \tilde{K}_{t_i, t_j}(x, y) -
\WWW_{[i, j)} \quad\mbox{and}
\nonumber
\\[-8pt]
\\[-8pt]
\nonumber
 \tilde{K}_{t_i, t_j}(x, y) &=&
W_{[i, m)} \Psi M^{-1} \Phi W_{[1, j)}.
\end{eqnarray}

\begin{rmk} \label{rmk:correlation_kernel}
The kernel $K_{t_i, t_j}(x, y)$ depends on $\tau$, but we suppress it
for notational simplicity. If we let $\tau= \epsilon(n)$, we obtain the
\emph{correlation kernel} for $\mathrm{NIBM}_{0 \to T}$ in \eqref
{eq:defn_Rmelon_special}.
\end{rmk}

Our next task is to find an expression for $\tilde{K}_{t_i, t_j}(x, y)$
which is convenient for analysis. We note that by \eqref
{eq:formulaofPabttau}, \eqref{eq:derivative_of_P_1st_variable} and
\eqref{eq:derivative_of_P_2nd_variable},
%
\begin{eqnarray}
\label{eq:phi_and_psi_in_polynomials} \phi_j(x) &=& \sum_{k \in\intZ+\tau}
f_{j - 1}(k) e^{-{t_1
k^2}/{(2n)}} e^{-ikx},
\nonumber
\\[-8pt]
\\[-8pt]
\nonumber
 \psi_j(x)& =&
\sum_{k \in\intZ+\tau} g_{j -
1}(k) e^{-{(T - t_m)k^2}/{(2n)}}
e^{ikx},
\end{eqnarray}
where $f_i, g_i$ are polynomials of degree exactly $i$ (with possibly
complex coefficients). Note that $W_j(x, y)$ depends only on $x - y$,
and so we can write $W_j(x, y) = h_j(x - y)$. Thus, we see that $W_j$
is a convolution operator,
%
\begin{equation}
(W_j f) (x) = \int^{\pi}_{-\pi}
h_j(x - y)f(y) \,dy =: (h_j * f) (x),
\end{equation}
where by \eqref{eq:W_k_kernel} and \eqref{eq:formulaofPabttau},
%
\begin{equation}
\label{eq:h_hat_formula} h_j(x) = \sum_{k\in\intZ+\tau}
\hat{h}_j(k) e^{ikx} ,\qquad \hat{h}_j(k) =
\frac{1}{2\pi} e^{-{(t_{j + 1} - t_j)k^2}/{(2n)}}.
\end{equation}
Here, and in what follows, we use the notation $\hat{h}(k)$ for the
$k$th coefficient in the \emph{$\tau$-shifted Fourier series}, defined
by the first equation in \eqref{eq:h_hat_formula}. As with the usual
Fourier series, we have that for $i < j$,
%
\begin{equation}
\label{eq:Wi,jFourier} W_{[i, j)}(x, y) = (h_i * h_{i + 1}
* \cdots* h_{j - 1}) (x - y),
\end{equation}
where $h_i * h_{i + 1} * \cdots* h_{j - 1}$ has the $\tau$-shifted
Fourier series
%
\begin{equation}
(h_i * \cdots* h_{j - 1})\sphat (k) = (2
\pi)^{j - i - 1} \prod^{j -
1}_{l = i}
\hat{h}_l(k) = \frac{1}{2\pi} e^{-{(t_j - t_i) k^2}/{(2n)}}.
\end{equation}
Furthermore, as $W_{[i, m)} \Psi$ is an operator from $\ell^2(n)$ to
$L^2(\unitC)$, it is represented by an $n$-dimensional row vector. Its
$l$th component is
%
\begin{equation}\qquad
(W_{[i, m)} \Psi)_l(x) = \int^\pi_{-\pi}
W_{[i, m)}(x, y) \psi _l(y) \,dy = (h_i * \cdots*
h_{m - 1}) * \psi_l(x),
\end{equation}
whose $\tau$-shifted Fourier series is
%
\begin{eqnarray}
&&\bigl(\widehat{(W_{[i, m)} \Psi)_l}\bigr) (k)\nonumber\\
&&\qquad=
\bigl((h_i * \cdots* h_{m - 1}) * \psi _l\bigr)
\sphat (k) = 2\pi(h_i * \cdots* h_{m - 1})\sphat (k) \hat {
\psi }_l(k)
\\
&&\qquad= g_{l - 1}(k) e^{-{(T - t_i)k^2}/{(2n)}}.\nonumber
\end{eqnarray}
Similarly, $\Phi W_{[1, j)}$ is an operator from $L^2(\unitC)$ to
$\ell
^2(n)$, and is then represented by an $n$ dimensional column vector.
Its $l$th component,
%
\begin{equation}
(\Phi W_{[1, j)})_l(x) = \int^{\pi}_{-\pi}
\phi_l(y) W_{[1, j)}(y, x) \,dy,
\end{equation}
satisfies
%
\begin{eqnarray}
(\Phi W_{[1, j)})_l(-x) = \tilde{\phi}_l *
(h_1 * \cdots* h_{j -
1}) (x)
\nonumber
\\[-8pt]
\\[-8pt]
\eqntext{\mbox{where } \tilde{
\phi}_l(x) = \phi_l(-x),}
\end{eqnarray}
and the $\tau$-shifted Fourier series is
%
\begin{eqnarray}
&&\bigl(\widehat{(\Phi W_{[1, j)})_l}\bigr) (-k)\nonumber \\
&&\qquad= \bigl(
\tilde{\phi}_l * (h_1 * \cdots * h_{j - 1})\bigr)
\sphat (k) = 2\pi\hat{\phi}_l(-k) (h_1 * \cdots*
h_{j -
1})\sphat (k)
\\
&&\qquad = f_{l - 1}(k) e^{-{t_j k^2}/{(2n)}}.\nonumber
\end{eqnarray}
Also for the $(i, j)$ entry of the matrix $M$ defined in \eqref
{eq:defn_M}, we have
%
\begin{eqnarray}
\label{eq:M_ij_in_Fourier_series_sum} %
M_{ij} &=& \int^{\pi}_{-\pi}
\int^{\pi}_{-\pi} \phi_i(x)
W_{[1,
m)}(x, y) \psi_j(y) \,dx \,dy
\nonumber\\
&=& (2\pi)^2 \sum_{k \in\intZ+\tau} \hat{
\phi}_j(-k) (h_1 * \cdots * h_{m - 1})\sphat (k)
\hat{\psi}_j(k)
\\
&= &2\pi\sum_{k \in\intZ+\tau} f_{i - 1}(k)
g_{j - 1}(k) e^{-
{Tk^2}/{(2n)}}.\nonumber %
\end{eqnarray}
To simplify the expression of $\tilde{K}_{t_i, t_j}(x, y)$, we fix the
formula \eqref{eq:phi_and_psi_in_polynomials} for $\phi_j(x)$ and
$\psi
_j(x)$ as
%
\begin{equation}
\label{eq:explicit_expression_of_f_g} f_j(k) = g_j(k) = p^{(T; \tau)}_{n, j}
\biggl( \frac{k }{n} \biggr),
\end{equation}
where $p^{(T; \tau)}_{n, j}$ is the discrete Gaussian orthogonal
polynomial defined in \eqref{eq:defn_of_discrete_Gaussian_OP}. Then~\eqref{eq:M_ij_in_Fourier_series_sum} yields
%
\begin{equation}
M_{ij} = %
\cases{ 2\pi n h^{(T; \tau)}_{n, j},
& \quad $\mbox{if $i = j$}$, \vspace*{2pt}
\cr
0, & \quad $\mbox{otherwise}$, } %
\end{equation}
where $h^{(T; \tau)}_{n, j}$ is defined in \eqref{eq:defn_of_h_nk}. Thus,
%
\begin{eqnarray}
\label{eq:corr_kernel} %
\tilde{K}_{t_i, t_j}(x, y)& =& \sum
^{n - 1}_{l = 0} \biggl( \sum
_{k
\in\intZ+\tau} g_l(k) e^{-{(T - t_i)k^2}/{(2n)}} e^{ikx}
\biggr) \nonumber\\
&&{}\times\frac{1}{2\pi n h^{(T; \tau)}_{n, l}} \biggl( \sum_{k \in\intZ
+\tau}
f_l(k) e^{-{t_jk^2}/{(2n)}} e^{-iky} \biggr)
\\
&=& \frac{n}{2\pi} \sum^{n - 1}_{k = 0}
\frac{1}{h^{(T; \tau)}_{n,
k}} S_{k, T - t_i}(x) S_{k, t_j}(-y),\nonumber %
\end{eqnarray}
where
%
\begin{equation}
\label{eq:Sla} S_{k, a}(x) = \frac{1}{n} \sum
_{s \in L_{n, \tau}} p^{(T;\tau)}_{n,k}(s) e^{-{an s^2}/{2}}
e^{ixns}.
\end{equation}
At last, by \eqref{eq:Wi,jwithcirc}, \eqref{eq:h_hat_formula},
\eqref{eq:Wi,jFourier}, we have that
%
\begin{equation}
\label{eq:Wcirc_ij_exact_formula} \WWW_{[i, j)}(x, y) =
\frac{1}{2\pi} \sum
_{s \in L_{n, \tau}} e^{-
{(t_j - t_i)ns^2}/{2} - in(y - x)s}.
\end{equation}

After arriving at a computable formula of $R^{(n)}_{0 \to T}(a^{(1)}_1,
\ldots,
a^{(1)}_{k_1}; \ldots;\break a^{(m)}_1, \ldots,  a^{(m)}_{k_m}; t_1, \ldots,
t_m; \tau)$ defined in \eqref{eq:defn_Rmelon}, we go back to examine
its probabilistic meaning. The special choice that $\tau= \epsilon(n)$
gives us the correlation function of the $\mathrm{NIBM}_{0 \to T}$, namely
%
\begin{eqnarray}
&&R^{(n)}_{0 \to T}\bigl(a^{(1)}_1, \ldots,
a^{(1)}_{k_1}; \ldots; a^{(m)}_1, \ldots,
a^{(m)}_{k_m}; t_1, \ldots, t_m;
\epsilon(n)\bigr)\nonumber\\
&&\qquad = \lim_{\Delta x \to0} \frac
{1}{(\Delta x)^{k_1 + \cdots+ k_m}}
\\
&&\quad\qquad{}\times\Prob \left(
\begin{array}{c}
\mbox{$n$ particles in $\mathrm{NIBM}_{0 \to T}$, there
is a particle in}\\
\mbox{$\bigl[a^{(i)}_j, a^{(i)}_j
+ \Delta x\bigr)$ for $j = 1, \ldots, k_i$ at time
$t_i$}
\end{array} \right).
\nonumber
\end{eqnarray}
Letting $\tau$ vary, the Fourier coefficients of $R^{(n)}_{0 \to
T}
(a^{(1)}_1, \ldots, a^{(1)}_{k_1}; \ldots; a^{(m)}_1, \ldots,
a^{(m)}_{k_m};\break t_1, \ldots, t_m; \tau )$ encode the correlation
functions of particles in $\mathrm{NIBM}_{0 \to T}$ with fixed total
winding number, so that
%
\begin{eqnarray}
\label{eq:sum_of_corr_func_over_winding_number} &&
\frac{R_n(T; \tau)}{R_n(T; \epsilon(n))} R^{(n)}_{0 \to
T}
\bigl(a^{(1)}_1, \ldots, a^{(1)}_{k_1};
\ldots; a^{(m)}_1, \ldots, a^{(m)}_{k_m};
t_1, \ldots, t_m; \tau\bigr)
\nonumber\\
&&\qquad=\sum_{\omega\in\intZ} \bigl(R^{(n)}_{0 \to T}
\bigr)_{\omega}\bigl(a^{(1)}_1, \ldots,
a^{(1)}_{k_1}; \ldots; a^{(m)}_1, \ldots,
a^{(m)}_{k_m} t_1, \ldots, t_m\bigr)\\
&&\qquad\quad{}\times e^{2\pi o(\tau+ \epsilon(n))i},
\nonumber
\end{eqnarray}
where
%
\begin{eqnarray}
&&\bigl(R^{(n)}_{0 \to T}\bigr)_{\omega}
\bigl(a^{(1)}_1, \ldots, a^{(1)}_{k_1};
\ldots; a^{(m)}_1, \ldots, a^{(m)}_{k_m};
t_1, \ldots, t_m\bigr)\nonumber\\
&&\qquad =
\lim_{\Delta x \to0} \frac{1}{(\Delta x)^{k_1 + \cdots+ k_m}}\\
&&\qquad\quad{}\times \Prob \left(
\begin{array}{c}
\mbox{$n$ particles in $\mathrm{NIBM}_{0 \to T}$ with
total}\\
\mbox{winding number $\omega$, there is a particle
in}\\
\mbox{ $\bigl[a^{(i)}_j, a^{(i)}_j + \Delta x
\bigr)$ for $j = 1, \ldots, k_i$ at time $t_i$}
\end{array}
 \right).
\nonumber
\end{eqnarray}

\section{Asymptotic results for discrete Gaussian orthogonal
polynomials} \label{sec:asymptotics}

In this section, we state the asymptotic results for the discrete
Gaussian orthogonal polynomials \eqref{eq:defn_of_discrete_Gaussian_OP}
which will be used in Sections~\ref{sec:winding_number} and \ref
{sec:correlation}. The results are derived from the interpolation
problem and the corresponding Riemann--Hilbert problem associated with
the discrete orthogonal polynomials, and the proofs are outlined in
Section~\ref{sec:Riemann-Hilbert} unless otherwise stated.

\subsection{The equilibrium measure and the $g$-function}\label
{results_eq_measure}
A key ingredient in the Riemann--Hilbert analysis of orthogonal
polynomials is the equilibrium measure associated with the weight
function. The equilibrium measure associated with the weight
$e^{-nTx^2/2}$ for the discrete Gaussian orthogonal polynomials defined
on the lattice $L_{n, \tau}$ is the unique probability measure which
minimizes the functional,
%
\begin{equation}
\label{eq1} H(\nu)=\int\int\log\frac{1}{|x-y|}\,d\nu(x)\,d\nu(y)+\int
\frac{Tx^2}{2} \,d\nu(x),
\end{equation}
over the set of probability measures $\nu$ on $\R$ satisfying
%
\begin{equation}
\label{eq:constraint} \,d\nu(x) \leq \,dx,
\end{equation}
where $dx$ denotes the differential with respect to Lebesgue measure.
It is well known [\citet{Kuijlaars00}] that there is a unique solution to
\eqref{eq1} satisfying \eqref{eq:constraint}, and we call it the
\emph
{equilibrium measure} for the discrete Gaussian orthogonal polynomials.
The upper constraint \eqref{eq:constraint} implies that the equilibrium
measure is absolutely continuous with respect to Lebesgue measure and,
therefore, has an associated density. Let us denote this density by
$\rho_T(x)$.

We define the $g$-function associated with the discrete Gaussian
orthogonal polynomials as the log transform of the equilibrium measure:
%
\begin{equation}
\label{g1} g(z) := \int\log(z-x) \rho_T(x)\,dx,
\end{equation}
where we take the principal branch for the logarithm. Then the
Euler--Lagrange variational conditions for the equilibrium problem
\eqref{eq1} are
%
\begin{equation}
\label{eq:variational_condition} g_+(x) + g_-(x) - \frac{Tx^2}{2} - l %
\cases{ = 0,
&\quad $\mbox{if $0 < \rho_T(x) < 1$}$, \vspace*{2pt}
\cr
\leq0, &\quad $
\mbox{if  $\rho_T(x) = 0$}$, \vspace*{2pt}
\cr
\geq0, &\quad  $\mbox{if $
\rho_T(x) = 1$}$, } %
\end{equation}
where $g_+$ and $g_-$ refer to the limiting values from the upper and
lower half-planes, respectively, and $l\in\R$ is a constant Lagrange
multiplier. Since the external potential $Tx^2/2$ is convex and even,
the equilibrium measure is supported on a single interval $[-\be,\be]$.
We have for all $x \in(-\infty, \beta)$,
%
\begin{equation}
\label{eq:differencegggg} g_+(x) - g_-(x) = 2\pi i \int^{\beta}_x
\rho_T(x).
\end{equation}

Without the upper constraint \eqref{eq:constraint}, it is well known
that the solution $\nu_T$ to the minimization problem \eqref{eq1} is
given by the Wigner semicircle law [\citet{Deift99}, Section~6.7]. That
is, $\nu_T$ is supported on a single interval $[-\beta, \beta]$ and
%
\begin{eqnarray}
\label{eq2} d\nu_T(x) = \rho_T(x) {\bolds
\chi}_{[-\beta, \beta]}(x) \,dx
\nonumber
\\[-8pt]
\\[-8pt]
\eqntext{\mbox {where } \displaystyle\beta= \frac{2}{\sqrt{T}},
\rho_T(x) = \frac
{T}{2\pi} \sqrt{\frac{4}{T}-x^2}.}
\end{eqnarray}
Clearly, this $\rho_T(x)$ has its maximum value at $x=0$ and $\rho
_T(0)=\sqrt{T}/\pi$. It follows that \eqref{eq2} satisfies the
variational problem \eqref{eq1} with constraint \eqref{eq:constraint}
if and only if $0<T\le\pi^2$. We therefore denote the critical value
$T_c := \pi^2$ as in \eqref{eq:first_defn_T_c}, and we have the
following proposition.

\begin{prop}
For $T \leq T_c = \pi^2$, the equilibrium measure for the discrete
Gaussian orthogonal polynomials is given by the Wigner semicircle law
\eqref{eq2}.
\end{prop}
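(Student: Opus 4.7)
\smallskip
\noindent
\textbf{Proof plan.} The strategy is to compare the constrained minimization problem to its unconstrained counterpart, and to show that when $T \leq \pi^2$ the unconstrained minimizer automatically satisfies the upper constraint \eqref{eq:constraint}, whence the two minimizers coincide. Throughout we use that the functional $H(\nu)$ in \eqref{eq1} is strictly convex on the convex set of probability measures of finite logarithmic energy, so that any convex constraint yields a unique minimizer (the fact already cited for the constrained problem from \cite{Kuijlaars00}).

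Step 1: Drop the upper constraint and consider the unconstrained variational problem for $H(\nu)$ over all probability measures $\nu$ on $\R$. As noted in the excerpt (see e.g.\ \cite[Section 6.7]{Deift99}), the unique minimizer is the Wigner semicircle law $d\nu_T(x)=\rho_T(x)\,\chi_{[-\beta,\beta]}(x)\,dx$ with $\beta=2/\sqrt{T}$ and
\begin{equation}
\rho_T(x)=\frac{T}{2\pi}\sqrt{\frac{4}{T}-x^2}.
\end{equation}
This measure satisfies the Euler--Lagrange conditions \eqref{eq:variational_condition} with the middle/lower cases only (since without the upper constraint the third case is vacuous).

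Step 2: Check that for $T\leq\pi^2$, the density $\rho_T$ lies pointwise below $1$. Since $\rho_T$ is concave on $[-\beta,\beta]$ and even, its maximum is at $x=0$ with value $\rho_T(0)=\sqrt{T}/\pi$. The inequality $\sqrt{T}/\pi\leq 1$ is equivalent to $T\leq\pi^2=T_c$, so the unconstrained minimizer $\nu_T$ is admissible for the constrained problem.

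Step 3: Conclude by a standard uniqueness argument. Let $\nu^\ast$ denote the unique minimizer of $H$ subject to \eqref{eq:constraint}. Since admissible measures for the constrained problem form a subset of all probability measures, we have $H(\nu^\ast)\geq H(\nu_T)$. But $\nu_T$ is itself admissible by Step 2, so $H(\nu_T)\geq H(\nu^\ast)$. Therefore $H(\nu^\ast)=H(\nu_T)$, and by uniqueness of the minimizer under either problem we obtain $\nu^\ast=\nu_T$. This yields the proposition. The only step that requires any genuine input is Step 1, which is classical and is invoked by reference; the rest is a one-line comparison, so there is no substantial obstacle.
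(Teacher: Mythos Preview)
Your proof is correct and follows essentially the same approach as the paper: observe that the unconstrained minimizer (the semicircle law) has maximal density $\rho_T(0)=\sqrt{T}/\pi\le 1$ precisely when $T\le\pi^2$, so it is admissible for the constrained problem, and then invoke uniqueness. The only cosmetic difference is that the paper phrases the last step as ``the semicircle law satisfies the Euler--Lagrange conditions \eqref{eq:variational_condition}, hence by the uniqueness result of \cite{Kuijlaars00} it is the constrained equilibrium measure,'' whereas you argue directly via the inequality $H(\nu^\ast)\ge H(\nu_T)\ge H(\nu^\ast)$; both are standard and equivalent here.
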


For $T > T_c$, the probability measure given by the Wigner semicircle
law \eqref{eq2} does not satisfy the constraint \eqref{eq:constraint}.
In this case the equilibrium measure is still supported on a single
interval $[-\be, \be]$, but now there is a \emph{saturated region}
$[-\al, \al]$, where $0<\al<\be$, on which the density $\rho_T(x)$ is
identically 1. Since $\rho_T(x)$ is an even function and has total
integral $1$, \eqref{eq:differencegggg} then implies that for $x
\in(-\alpha, \alpha)$ we have
%
\begin{equation}
\label{eq:differencegcentralll} g_+(x) - g_-(x) = i\pi- 2\pi i x.
\end{equation}
To present the solution of the minimization problem \eqref{eq1} and
\eqref{eq:constraint}, we introduce a parameter $k \in(0, 1)$ and use
elliptic integrals with parameter $k$, defined as
%
\begin{equation}
F(z; k) = \int^z_0 \frac{ds}{\sqrt{(1 - s^2)(1 - k^2 s^2)}},\qquad  E(z;
k) = \int^z_0 \frac{\sqrt{1 - k^2 s^2}}{\sqrt{1 - s^2}} \,ds.\hspace*{-20pt}
\label{eq:defn_of_K} 
\end{equation}
In the definitions of $F(z; k)$ and $E(z; k)$ we assume $z \in\compC
\setminus\{(-\infty, 1) \cup(1, \infty)\}$. We also use the complete
elliptic integrals $\mathbf K$ and $\mathbf E$ defined in \eqref
{eq:complete_elliptic}. Given any $k\in(0,1)$, we express the
endpoints of the support and saturated region of the equilibrium
measure $\alpha$ and $\beta$ as
%
\begin{equation}
\label{eq:defn_of_beta_alpha} \beta= \beta(k) = \bigl(2\E- \bigl(1 - k^2\bigr)\K
\bigr)^{-1},\qquad \alpha= \alpha(k) = k\beta(k).
\end{equation}
Note that by \citet{Erdelyi-Magnus-Oberhettigner-Tricomi81}, Table~4 on page
319, and notation defined in~\eqref{eq:def_tilde2},
%
\begin{eqnarray}
\label{eq:Gauss_transform_KE} \tilde{\K}& =& \K \biggl( \frac{2\sqrt{k}}{1 + k} \biggr) = (1 + k) \K
(k),
\nonumber
\\[-8pt]
\\[-8pt]
\nonumber
\tilde{\E} &=& \E \biggl( \frac{2\sqrt{k}}{1 + k} \biggr) = \frac{2\E(k) - (1 - k^2)\K(k)}{1 + k},
\end{eqnarray}
and so we have
%
\begin{equation}
\label{eq:alternative_beta} \beta= \frac{1}{(1 + k)\tilde{\E}}.
\end{equation}
Using \eqref{eq:Gauss_transform_KE}, we parametrize $T$ by $k$ as in
\eqref{eq:parametrization_of_T_intro},
%
\begin{equation}
\label{eq:T_parametrized_by_k} T = T(k) = 4\K\beta^{-1} = 4 \tilde{\K} \tilde{\E}.
\end{equation}

By the following lemma, the parametrization is well defined.

\begin{lem} \label{lem:T_parametrized_by_k}
$\K(k) \E(k)$ is a strictly increasing function of $k \in[0,1)$ and
%
\begin{equation}
\label{eq:limitsofTk} \lim_{k \to0_+} \K(k) \E(k) = T_c =
\pi^2, \qquad \lim_{k \to1} \K(k) \E(k) = +\infty.
\end{equation}
\end{lem}

Now we can state the result of the equilibrium measure for $T > T_c$.

\begin{prop}
For $T > T_c = \pi^2$, $T = T(k)$ is parametrized by $k \in(0, 1)$ as
in \eqref{eq:T_parametrized_by_k}, and the equilibrium measure for the
discrete Gaussian orthogonal polynomials is supported on a single
interval $[-\beta, \beta]$ with a saturated region $[-\al, \al]$ where
$\beta= \beta(k)$ and $\al=\al(k)$ are defined in \eqref
{eq:defn_of_beta_alpha}. The density $\rho_T(x)$ for the equilibrium
measure is given by the formula
%
\begin{equation}
\label{rho_T_formula} \rho_T(x) = %
\cases{ 1, &\quad $\mbox{if $x
\in[-\alpha, \alpha]$,}$ \vspace*{2pt}
\cr
\displaystyle\frac{2}{\pi\alpha} \biggl[ \E\int
^{\beta}_x \frac
{ds}{\sqrt{(\alpha^{-2} s^2 - 1)(1 - \beta^{-2} s^2)}}&\vspace*{2pt}\cr
\hspace*{58pt}{} - \displaystyle\K\int
^{\beta
}_x \frac{\sqrt{1 - \beta^{-2} s^2}}{\sqrt{\alpha^{-2} s^2 - 1}} \,ds \biggr], &\quad $
\mbox{if $x \in(\alpha, \beta)$,}$ \vspace*{2pt}
\cr
\rho_T(-x) ,& \quad $
\mbox{if $x \in(-\beta, -\alpha)$,}$ \vspace *{2pt}
\cr
0 ,&\quad  $\mbox{otherwise}$. }\hspace*{-40pt}
\end{equation}
\end{prop}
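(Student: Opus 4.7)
The plan is to solve the Riemann--Hilbert-type problem for $g'(z)$ under the natural ansatz for the support and saturated region, and then verify the Euler--Lagrange conditions \eqref{eq:variational_condition}. Since the external field $Tx^2/2$ is strictly convex and even and, for $T>T_c$, the unconstrained semicircle density of \eqref{eq2} exceeds $1$ on a neighborhood of the origin, the general theory of constrained equilibrium problems \cite{Kuijlaars00} forces the minimizer to be even with support a single interval $[-\beta,\beta]$, and the saturated region $\{\rho_T=1\}$ to be a single nonempty subinterval $[-\alpha,\alpha]$ with $0<\alpha<\beta$. The task reduces to determining $\alpha$, $\beta$, and $\rho_T$ on the two bands $\pm(\alpha,\beta)$.

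The key step is to set $\phi(z):=g'(z)-\tfrac{T}{2}z$ and use the variational conditions to read off its jumps. Differentiating $g_++g_-=Tx^2/2+l$ on the bands yields $\phi_++\phi_-=0$ there, while \eqref{eq:difference_g_+-_central} gives $\phi_+-\phi_-=-2\pi i$ on $(-\alpha,\alpha)$. Introduce $R(z)=((z^2-\alpha^2)(z^2-\beta^2))^{1/2}$, cut along $\pm(\alpha,\beta)$ and normalized so $R(z)\sim z^2$ as $z\to\infty$. Then $\phi/R$ is analytic across both bands and has only the single jump $(\phi/R)_+-(\phi/R)_-=-2\pi i/R(x)$ on the saturated interval, where $R$ is real. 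The expansion $g(z)=\log z+O(z^{-2})$ at infinity forces $\phi(z)=-\tfrac{T}{2}z+z^{-1}+O(z^{-3})$, so $\phi/R\to 0$ at infinity. Plemelj combined with Liouville then gives
\begin{equation*}
  \phi(z)=R(z)\left[-\int_{-\alpha}^{\alpha}\frac{ds}{R(s)(s-z)}+Q(z)\right],
\end{equation*}
where the entire function $Q$ is forced to be a polynomial of degree at most $1$; matching the two leading coefficients of the $z\to\infty$ expansion determines $Q$ and imposes two scalar equations on the pair $(\alpha,\beta)$.

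From $\phi$ one extracts the density via $g'_+-g'_-=-2\pi i\rho_T$ on the bands. The Plemelj integral, after the substitution $s=\alpha u$, is a complete elliptic integral of the first kind with modulus $k=\alpha/\beta$, while the polynomial term $Q(z)R(z)$, rewritten as an integral from $x$ to $\beta$, contributes the $\mathbf E$-combination; together they organize into exactly the bracket in \eqref{rho_T_formula} times the prefactor $2/(\pi\alpha)$. The two matching conditions at infinity, combined with the boundary values $\rho_T(\alpha^+)=1$ and $\rho_T(\beta^-)=0$, collapse to the closed-form relations $\beta^{-1}=2\mathbf{E}-(1-k^2)\mathbf{K}$ and $T\beta=4\mathbf{K}$, i.e.\ \eqref{eq:defn_of_beta_alpha} and \eqref{eq:T_parametrized_by_k}. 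Finally, the integrands in \eqref{rho_T_formula} are strictly positive on $(\alpha,\beta)$ and monotonically interpolate between $1$ at $\alpha^+$ and $0$ at $\beta^-$, so $\rho_T\in[0,1]$ there, and the variational conditions \eqref{eq:variational_condition} hold; uniqueness of the constrained minimizer then finishes the proof.

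The main obstacle is the elliptic-integral bookkeeping: identifying the Plemelj integral $\int_{-\alpha}^{\alpha}ds/(R(s)(s-z))$ and the polynomial correction as the specific combination of complete and incomplete elliptic integrals appearing in \eqref{rho_T_formula}, and verifying that the two moment conditions at $z=\infty$ simplify to the compact relations \eqref{eq:defn_of_beta_alpha}, \eqref{eq:T_parametrized_by_k}. The structural pieces — reduction to a one-cut support with saturated sub-interval, Plemelj/Liouville for $\phi/R$, and verification of the variational inequalities from monotonicity and endpoint values — are routine given the general theory; the elliptic computation is where the precise analytic form of $\rho_T$ must be pinned down.
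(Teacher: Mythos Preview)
Your approach is correct and is the standard ``resolvent'' construction for constrained equilibrium problems, but it proceeds along a genuinely different line from the paper. The paper does not derive $g'$ via Plemelj; instead it writes down the candidate
\[
  g'(z)=\frac{T}{2}z+2\E\,F\!\left(\frac{z}{\alpha};k\right)-2\K\,E\!\left(\frac{z}{\alpha};k\right)\mp\pi i
\]
directly in terms of incomplete elliptic integrals, checks from the known jump relations of $F$ and $E$ that \eqref{eq:jump_g'_sides} and \eqref{eq:jump_g'_center} hold, and then expands $F,E$ at infinity (with Legendre's relation killing the constant term) to enforce $g'(z)=z^{-1}+O(z^{-2})$; this is what produces \eqref{eq:defn_of_beta_alpha} and \eqref{eq:T_parametrized_by_k}. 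The density \eqref{rho_T_formula} then drops out as $-\tfrac{1}{\pi}\Im g'_+$, and the variational inequalities are treated separately (Lemma~\ref{lem:variational_verification}) via the Heuman Lambda representation \eqref{eq:alternative_rho_T}. Your route is more constructive---no ansatz for $g'$ is needed---while the paper's route avoids the step you flag as the main obstacle, namely identifying the Plemelj integral over $(-\alpha,\alpha)$ with the explicit combination of $F$ and $E$.

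Two small points on your write-up. First, since $\phi/R\to0$ at infinity and the Cauchy integral also vanishes there, your entire function $Q$ is forced to be identically $0$, not merely of degree $\leq1$; the two scalar conditions on $(\alpha,\beta)$ come purely from matching the $z$- and $z^{-1}$-coefficients of $\phi$ (and they do indeed give $T=4\K/\beta$ and $\beta\bigl(2\E-(1-k^2)\K\bigr)=1$). Second, the last verification step is not quite as immediate as you suggest: the bracket in \eqref{rho_T_formula} is a difference $\E\cdot(\text{positive})-\K\cdot(\text{positive})$ with $\K>\E$, so positivity of the individual integrands does not by itself give $0<\rho_T<1$. Monotonicity of $\rho_T$ on $(\alpha,\beta)$ does hold (it reduces to $\E>(1-x^2/\beta^2)\K$), and $\rho_T(\alpha^+)=1$ follows by continuity of the density, but the strict inequalities in \eqref{eq:variational_condition} on $[0,\alpha)$ and on $(\beta,\infty)$ still require the separate explicit check that the paper carries out in Lemma~\ref{lem:variational_verification}.
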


Note that for $x \in(\alpha, \beta)$, using formulas
\citet{Gradshteyn-Ryzhik07},
3.152-10,
page 280 and 3.169-17, page 309 and \citet{Byrd-Friedman71},
413.01, page~228, $\rho_T(x)$ can be expressed in a more compact form
%
\begin{eqnarray}
\label{eq:alternative_rho_T} %
\rho_T(x)& =& \frac{2}{\pi} \biggl[ (
\E- \K)F \biggl( \sqrt{\frac{1 -
{x^2}/{\beta^2}}{1 - k^2}}; k' \biggr) + \K E
\biggl( \sqrt {\frac{1 -
{x^2}/{\beta^2}}{1 - k^2}}; k' \biggr) \biggr]
\nonumber\\
&=& \Lambda_0 \biggl( \sqrt{\frac{1 - {x^2}/{\beta^2}}{1 - k^2}}; k \biggr)
\\
&= &\frac{2}{\pi\beta x} \sqrt{\bigl(\beta^2 - x^2\bigr)
\bigl(x^2 - \alpha^2\bigr)} \Pi _1 \biggl( -
\frac{\alpha^2}{x^2}, k \biggr),\nonumber %
\end{eqnarray}
where $k' = \sqrt{1 - k^2}$, $\Lambda_0(x; k)$ is the Heuman's Lambda
function [see \citet{Byrd-Friedman71}, 150.03, page 36, and note that our
$x$ corresponds to $\sin\beta$ in \citet{Byrd-Friedman71},
150.03, page
36], and the $\Pi_1$ denotes the complete elliptic
integral of the third kind [in the notational
conventions of \citet{Erdelyi-Magnus-Oberhettigner-Tricomi81},
Section~13.8 (3), page 317],
%
\begin{equation}
\Pi_1(\nu, k) = \int^1_0
\frac{dx}{(1 + \nu x^2) \sqrt{(1 - x^2)(1 -
k^2 x^2)}}.
\end{equation}
The formulas \eqref{eq:alternative_rho_T} have appeared several times
in the physics literature in the context of Yang--Mills theory [\citet
{Douglas-Kazakov93}, \citet{Gross-Matytsin95}].

In our asymptotic analysis of $\mathrm{NIBM}_{0 \to T}$, the function
$g(z)$ defined in
\eqref{g1} plays an important role. In particular, we must use the
derivative of this function to locate critical points. The following
proposition gives an explicit formula of $g'(z)$.

\begin{prop}
For $T \leq T_c$,
%
\begin{equation}
\label{g13} g'(z)=\frac{T}{2} \biggl(z-
\sqrt{z^2-\frac{4}{T}} \biggr),
\end{equation}
and for $T > T_c$,
%
\begin{eqnarray}
\label{eq:formulagsuperrr} %
 g'(z)& =& 2 \biggl[
\frac{\K}{\beta}z - \frac{\K}{\alpha} \int^z_0
\frac{\sqrt{1 - \beta^{-2} s^2}}{\sqrt{1 - \alpha^{-2} s^2}} \,ds \nonumber\\
&&\hspace*{11pt}{}+ \frac
{\E}{\alpha} \int^z_0
\frac{ds}{\sqrt{(1 - \alpha^{-2} s^2)(1 -
\beta
^{-2} s^2)}} \mp\frac{\pi i}{2} \biggr]
\nonumber
\\[-8pt]
\\[-8pt]
\nonumber
&= &2 \biggl[ \frac{\K}{\beta}z - \K E \biggl( \frac{z}{\alpha}; k \biggr) +
\E F \biggl( \frac{z}{\alpha}; k \biggr) \mp\frac{\pi i}{2} \biggr],
\\
\eqntext{\mbox{for $\pm\Im z >0$}.}
\end{eqnarray}
\end{prop}

Note that $g(z)$ is single valued on $(\beta, +\infty)$. This is clear
in \eqref{g13}, and we may write \eqref{eq:formulagsuperrr} in the form
%
\begin{equation}
\label{eq:formulagsuperrr_alt} g'(z) =\frac{2\K z}{\be} -2\E\be\int
_{\be}^z \frac{ds}{\sqrt
{(s^2-\al
^2)(s^2-\be^2)}} -
\frac{2\K}{\be}\int_\be^z
\frac{\sqrt
{s^2-\be
^2}}{\sqrt{s^2-\al^2}} \,ds ,\hspace*{-20pt}
\end{equation}
where the square roots are positive for $s>\be$ and have cuts on
$(-\be
,-\al) \cup(\al,\be)$.

With the notation defined in this section, we rewrite $t^c$ defined in
\eqref{eq:T_c_in_introduction} for the supercritical case of $\mathrm
{NIBM}_{0 \to T}$ as
[by \eqref{eq:differencegcentralll}, $g''(z)$ is well defined in a
neighborhood of $0$]
%
\begin{eqnarray}
\label{eq:defn_T_c:elliptic} %
t^c &:=& g''(0)
= \frac{T}{2} - \frac{2}{\alpha}(\K- \E) = \frac
{2}{\alpha}\bigl(\E-
(1 - k)\K\bigr)
\nonumber
\\[-8pt]
\\[-8pt]
\nonumber
&=& \frac{(1 + k)^2}{k} \E \biggl( \frac{2\sqrt{k}}{1 + k} \biggr) \biggl( \E \biggl(
\frac{2\sqrt{k}}{1 + k} \biggr) - \biggl( \frac
{1 -
k}{1 + k} \biggr)^2 \K
\biggl( \frac{2\sqrt{k}}{1 + k} \biggr) \biggr).
\end{eqnarray}

The formulas \eqref{g13} and \eqref{eq:formulagsuperrr} can be
integrated to obtain expressions for $g(z)$, where the constant of
integration is determined by the condition $g(z)\sim\log(z)$ as $z\to
\infty$. Then the Lagrange multiplier $l$ in \eqref
{eq:variational_condition} can be determined from the equality in
\eqref
{eq:variational_condition}. Although they are not indispensable in this
paper, for completeness we present the formulas for $g(z)$ and $l$
below. In the subcritical case $0<T<T_c = \pi^2$, explicit calculations
give that
%
\begin{eqnarray}
\label{g12} g(z)& =& \frac{T}{4} z \biggl( z - \sqrt{z^2 -
\frac{4}{T}} \biggr) - \log \biggl( z - \sqrt{z^2 -
\frac{4}{T}} \biggr)\nonumber\\
&&{} - \frac{1}{2} + \log2 - \log T\quad \mbox{and}\\
e^l&=&\frac{1}{Te}.\nonumber
\end{eqnarray}
In the supercritical case $T>T_c$, we present the formula for $g(z)$
and the Lagrange multiplier in the following proposition.

\begin{prop}
For $T > T_c = \pi^2$ the function $g(z)$ is given by
%
\begin{eqnarray}
\label{eq:formula_gfunction_supercrit} g(z)&=&zg'(z)-\frac{\K z^2}{\be}+
\frac{\K}{\be}\sqrt{
\bigl(z^2-\be^2\bigr) \bigl(z^2-
\al^2\bigr)}\nonumber
\\[-8pt]
\\[-8pt]
\nonumber&&{}+\log \bigl(\sqrt{z^2-\be^2}+
\sqrt{z^2-\al^2} \bigr)
+\frac{\K\be
}{2}
\bigl(1+k^2\bigr)-1-\log 2 ,
\end{eqnarray}
where $g'(z)$ is as in \eqref{eq:formulagsuperrr_alt} and the
principal branches are taken for the square roots and logarithms.
The Lagrange multiplier $l$ in the Euler--Lagrange variational
conditions \eqref{eq:variational_condition} is given by
%
\begin{equation}
\label{eq:formula_Lagrange_mult_supercrit} l=\log\bigl(\be^2-\al^2\bigr)+\K\be
\bigl(1+k^2\bigr)-2(1+\log2).
\end{equation}
\end{prop}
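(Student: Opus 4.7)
The plan is to derive the formula for $g(z)$ by integrating $g'(z)$ from \eqref{eq:formula_g'_super_alt}, fix the constant of integration using the asymptotic $g(z)=\log z + \bigO(z^{-1})$ as $z\to\infty$ (which follows from the definition \eqref{g1} since $\rho_T$ is a probability density), and then obtain the Lagrange multiplier $l$ from the Euler--Lagrange equality in \eqref{eq:variational_condition} evaluated at the edge $x=\be$.

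For the integration step, the polynomial term $2\K z/\be$ in \eqref{eq:formula_g'_super_alt} integrates to $\K z^2/\be$. Writing $I_1(z)=\int_\be^z ds/\sqrt{(s^2-\al^2)(s^2-\be^2)}$ and $I_2(z)=\int_\be^z \sqrt{s^2-\be^2}/\sqrt{s^2-\al^2}\,ds$ for the two iterated integrals, I apply integration by parts with $u=I_j(z)$, $dv=dz$ to each $\int I_j(z)\,dz$; the boundary pieces $zI_j(z)$ combine with the polynomial part to reconstruct the $zg'(z)$ contribution in the stated formula, while the residual integrals $\int z\,I_j'(z)\,dz$ reduce via the substitution $u=z^2$ together with the elementary identity $\int du/\sqrt{(u-\al^2)(u-\be^2)}=2\log(\sqrt{u-\al^2}+\sqrt{u-\be^2})$ to the logarithm and square-root terms in the claimed formula. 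Crucially, the coefficient of the logarithm simplifies to exactly $1$ via $\be(2\E-(1-k^2)\K)=1$, which is built into the definition \eqref{eq:defn_of_beta_alpha} of $\be$.

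To fix the constant of integration, I expand the assembled expression at $z\to\infty$. Since $\rho_T$ is an even probability density, $g(z)=\log z + \bigO(z^{-2})$ and so $zg'(z)\to 1$; further $\sqrt{(z^2-\be^2)(z^2-\al^2)}=z^2-(\al^2+\be^2)/2+\bigO(z^{-2})$ and $\log(\sqrt{z^2-\be^2}+\sqrt{z^2-\al^2})=\log z+\log 2+\bigO(z^{-2})$. Equating the $\bigO(1)$ part of the expansion to $0$ yields the stated additive constant $\K\be(1+k^2)/2-1-\log 2$, after using $\al^2+\be^2=\be^2(1+k^2)$.

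For $l$, I evaluate the Euler--Lagrange equality in \eqref{eq:variational_condition} at $x=\be$: the equation $g_+(\be)+g_-(\be)=T\be^2/2+l$ holds by continuity since $\rho_T(\be)=0$ marks the boundary of the band, and since $g(z)$ is real-analytic at $z=\be$ from outside the support, $g_+(\be)=g_-(\be)=g(\be)$. Substituting $z=\be$ into the derived formula, $\sqrt{(z^2-\be^2)(z^2-\al^2)}$ vanishes, the logarithm reduces to $\tfrac{1}{2}\log(\be^2-\al^2)$, and $zg'(z)|_{z=\be}=2\K\be$ directly from \eqref{eq:formula_g'_super_alt} since $I_1(\be)=I_2(\be)=0$. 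Combined with $T\be^2/2=2\K\be$ from the parametrization $T=4\K/\be$, solving $2g(\be)=T\be^2/2+l$ produces the claimed formula for $l$. The main obstacle will be the careful algebraic bookkeeping in the integration step, in particular invoking $\be(2\E-(1-k^2)\K)=1$ at exactly the right moment to collapse the log coefficient to $1$; everything else is elementary antiderivative work or asymptotic expansion.
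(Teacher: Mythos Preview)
Your approach is essentially the same as the paper's: the paper writes $g(z)=zg'(z)-\int zg''(z)\,dz+\text{const}$, integrates $zg''(z)$ directly from \eqref{eq:formula_g'_super_alt}, fixes the constant by $g(z)\sim\log z$, and then extracts $l$ from $l=2g(\be)-T\be^2/2$; your integration-by-parts on each $\int I_j(z)\,dz$ is exactly the same computation term-by-term, and you have even spelled out the key identity $\be(2\E-(1-k^2)\K)=1$ that the paper leaves implicit. One small bookkeeping point: the polynomial antiderivative $\K z^2/\be$ together with the boundary terms $-2\E\be\,zI_1-\tfrac{2\K}{\be}zI_2$ gives $zg'(z)-\K z^2/\be$ rather than $zg'(z)$ exactly, so the leftover $-\K z^2/\be$ must be grouped with your residual integrals before matching to the claimed square-root and logarithm terms.
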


\begin{pf}
Using integration by parts, we have
%
\begin{equation}
g(z)=zg'(z)-\int zg''(z) \,dz +
\mbox{const.}
\end{equation}
The second term in this formula can be integrated directly using \eqref
{eq:formulagsuperrr_alt}, and this determines $g(z)$ up to the
constant term, which is obtained by the condition $g(z) \sim\log(z)$
as $z\to\infty$. This proves \eqref{eq:formula_gfunction_supercrit}. To
obtain \eqref{eq:formula_Lagrange_mult_supercrit}, we use \eqref
{eq:variational_condition} at $x=\be$, which implies
%
\begin{equation}
l=2g(\be)-\frac{T\be^2}{2} ,
\end{equation}
which we evaluate using \eqref{eq:formula_gfunction_supercrit}.
\end{pf}

\subsection{Asymptotics of the discrete Gaussian orthogonal polynomials}

We now summarize the asymptotics of the discrete Gaussian
orthogonal polynomials \eqref{eq:defn_of_discrete_Gaussian_OP} and
their discrete Cauchy transforms used in this paper. For a real
function $f(x)$, define its discrete Cauchy transform $Cf$ on the
weighted lattice $L_{n,\tau}$ as
%
\begin{equation}
\label{eq:def_Cauchy_trans} Cf(z):= \frac{1}{n}\sum_{x\in L_{n,\tau}}
\frac{f(x) e^{-
({nT}/{2})x^2}}{z-x}.
\end{equation}
In the subcritical case $T<T_c$, the discrete Gaussian orthogonal
polynomials are exponentially close, as $n\to\infty$, to the rescaled
Hermite polynomials, for which there are exact formulas. To present the
asymptotics in the supercritical case, we first fix some notation.
Define the function
%
\begin{equation}
\label{m4} \gamma(z) := \biggl(\frac{(z+\be)(z-\al)}{(z-\be)(z+\al)} \biggr)^{1/4},
\end{equation}
with a cut on $[-\be,-\al]\cup[\al,\be]$, taking the branch such that
$\gamma(z) \sim1$ as $z\to\infty$.
Recall the elliptic nome $q$ defined in \eqref
{eq:def_elliptic_nome_intro} for $T>T_c$. We will use the Jacobi theta
functions with elliptic nome $q$,
%
\begin{eqnarray}
\label{eq:def_Jacobi_theta} \th_3(z) &:=& \th_3(z; q)= 1+2\sum
_{j=1}^\infty q^{j^2} \cos(2 j z) ,
\nonumber
\\[-8pt]
\\[-8pt]
\nonumber
\th_4(z) &:= &\th_4(z; q)= 1+2\sum
_{j=1}^\infty(-1)^j q^{j^2}
\cos (2 j z).
\end{eqnarray}
We will also use the notation $\tilde{k}$, $\tilde{\mathbf K}$ and
$\tilde
{\mathbf E}$ defined in \eqref{eq:def_tilde1} and \eqref{eq:def_tilde2}, as
well as the function
%
\begin{equation}
\label{m5} u(z):= \frac{\pi(\al+\be)}{4\tilde{\mathbf K}} \int_\be^z
\frac
{dx}{\sqrt{(x^2-\al^2)(x^2-\be^2)}}.
\end{equation}

Fix some $0\le\delta<1$ and $\ep>0$. Define the domain $D(\delta
,\ep,n)$ as
%
\begin{equation}
\label{eq:defnDden} D(\delta,\ep,n) = \bigl\{z \mid\vert z \pm\al\vert>\ep, \vert z
\pm \be\vert>\ep, \vert\Im z \vert> \ep n^{-\delta}\bigr\}.
\end{equation}
We then have the following proposition which describes the asymptotics
of the discrete Gaussian orthogonal polynomials on the domain $D(\delta
,\ep,n)$.

\begin{prop}\label{asymptotics_of_OPs}
For any $T > T_c$, as $n \to\infty$, the discrete Gaussian orthogonal
polynomials \eqref{eq:defn_of_discrete_Gaussian_OP} satisfy
%
\begin{eqnarray}\label{asub1}
p^{(T; \tau)}_{n, n}(z)& =& e^{ng(z)} M_{11}(z)
\bigl(1+\Er_{11}(n,z)\bigr),
\nonumber
\\[-8pt]
\\[-8pt]
\nonumber
\label{asub1a}
\frac{p^{(T; \tau)}_{n, n-1}(z)}{h^{(T; \tau)}_{n, n-1}}& = &e^{n(g(z)-l)}
M_{21}(z) \bigl(1+\Er_{21}(n,z)\bigr),
\\
\bigl(Cp^{(T; \tau)}_{n, n} \bigr) (z)& =& e^{-n(g(z)-l)}
M_{12}(z) \bigl(1+\Er _{12}(n,z)\bigr),
\nonumber
\\[-8pt]
\\[-8pt]
\nonumber
\frac{ (Cp^{(T; \tau)}_{n, n} )(z)}{h^{(T;
\tau)}_{n, n-1}}
&=& e^{-ng(z)} M_{22}(z) \bigl(1+\Er_{22}(n,z)\bigr),
\end{eqnarray}
where
%
\begin{eqnarray}
M_{11}(z)& =& \frac{1}{2} \biggl(\gamma(z)+\frac{1}{\gamma(z)}
\biggr)\frac
{\th_3(0)\th_3(u(z)-\pi/4-\pi(\tau-\epsilon(n)))}{\th_3(\pi
(\tau
-\epsilon(n)))\th_3(u(z)-\pi/4)}, \label{eq:M_11_formula_intro}
\\
M_{21}(z) &= &\frac{1}{4\pi} \biggl(\gamma(z)-\frac{1}{\gamma
(z)}
\biggr)\frac
{\th_3(0)\th_3(u(z)+\pi/4-\pi(\tau-\epsilon(n)))}{\th_3(\pi
(\tau
-\epsilon(n)))\th_3(u(z)+\pi/4)}, \label{eq:M_21_formula_intro}
\\
M_{12}(z)& =& \pi \biggl(\gamma(z)-\frac{1}{\gamma(z)} \biggr)
\frac
{\th
_3(0)\th_3(u(z)+\pi/4+\pi(\tau-\epsilon(n)))}{\th_3(\pi(\tau
-\epsilon(n)))\th
_3(u(z)+\pi/4)}, \label{eq:M_12_formula_intro}
\\
M_{22}(z) &= &\frac{1}{2} \biggl(\gamma(z)+\frac{1}{\gamma(z)}
\biggr)\frac
{\th_3(0)\th_3(u(z)-\pi/4+\pi(\tau-\epsilon(n)))}{\th_3(\pi
(\tau
-\epsilon(n)))\th_3(u(z)-\pi/4)}. \label{eq:M_22_formula_intro}
\end{eqnarray}
These asymptotics are uniform in $\tau$ and for $z\in D(\delta,\ep
,n)$ in
the following sense. There exists a constant $C(\ep)>0$ such that for
each $0<\delta<1$, the errors in \eqref{asub1} and \eqref{asub1a} satisfy
%
\begin{equation}
\sup_{z\in D(\delta,\ep,n)} \bigl\vert\Er_*(n,z) \bigr\vert< C(\ep) n^{-(1-\delta
)}\qquad
\mbox{where } * = 11, 21, 12, 22.
\end{equation}
\end{prop}

A similar result with a weaker error holds in the critical case $T=T_c
+ \bigO(n^{-2/3})$. We have the following proposition.

\begin{prop} \label{prop:asymptotics_of_OPs_critical}
Fix $\ep>0$ and $0\le\delta<1/3$. For $T=T_c(1-2^{-2/3}\sigma n^{-2/3})$,
the discrete Gaussian orthogonal polynomials \eqref
{eq:defn_of_discrete_Gaussian_OP} satisfy the asymptotics \eqref{asub1}
in the domain $\{ z \mid\vert z \pm\beta\vert> \varepsilon, \vert
\Im z \vert> \ep n^{-\delta}\}$, where the function $g(z)$ is defined in~\eqref{g12}, the functions $M_{11}(z)$ and $M_{21}(z)$ are given by
%
\begin{eqnarray}
M_{11}(z) = \frac{1}{2} \biggl( \gamma(z) + \frac{1}{\gamma(z)}
\biggr), \qquad M_{21}(z) = \frac{1}{4\pi} \biggl( \gamma(z) -
\frac{1}{\gamma(z)} \biggr)
\nonumber
\\[-8pt]
\\[-8pt]
\eqntext{ \mbox{where } \displaystyle\gamma(z) = \biggl( \frac{z + \beta}{z -
\beta}
\biggr)^{{1}/{4}},}
\end{eqnarray}
such that $\beta$ is defined as in \eqref{eq2} and $\gamma$ is defined
with a cut $[-\beta, \beta]$ and the branch $\gamma(z) \sim1$ as $z
\to\infty$. The errors $\Er_{11}(n,z), \Er_{21}(n,z)$ are of the order
$n^{-(1/3 -\delta)}$.
\end{prop}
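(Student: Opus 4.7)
The plan is to prove Proposition \ref{prop:asymptotics_of_OPs_critical} by the Deift--Zhou nonlinear steepest-descent method applied to the interpolation/Riemann--Hilbert problem for the discrete Gaussian orthogonal polynomials, in close parallel with the proof of Proposition \ref{asymptotics_of_OPs} outlined in Section \ref{sec:Riemann-Hilbert}, but with two modifications: (i) the elliptic equilibrium measure is replaced by the unconstrained Wigner semicircle \eqref{eq2}, since the critical scaling $T=T_c(1-2^{-2/3}\sigma n^{-2/3})$ still satisfies the upper constraint $\rho_T(0)=\sqrt{T}/\pi<1$ to leading order, and (ii) a new local parametrix at $z=0$, built from the Flaschka--Newell Lax system \eqref{cr4}--\eqref{cr5}, must be installed because the upper constraint is nearly active there. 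The subcriticality of $T$ in this scaling explains why the global model \eqref{asub1} has no theta-function factors: the outer parametrix is simply the rational matrix determined by the single-interval endpoint structure at $\pm\beta$, whose $(1,1)$ and $(2,1)$ entries are exactly the $M_{11}(z)$ and $M_{21}(z)$ stated in the proposition.

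I would proceed as follows. First, set up the discrete interpolation problem on $L_{n,\tau}$, convert it to a continuous RH problem, and undress with the $g$-function \eqref{g12}; the variational conditions \eqref{eq:variational_condition} reduce the jumps on the bulk $(-\beta,\beta)$ to unimodular form and those on $(-\infty,-\beta)\cup(\beta,\infty)$ to $I+$(exponentially small). Next, open lenses around $(-\beta,\beta)$, construct the global parametrix from the Szeg\H{o} function built out of $\gamma(z)=((z+\beta)/(z-\beta))^{1/4}$, and install the usual Airy parametrices in fixed discs around $\pm\beta$, which contribute matching errors of order $n^{-1}$ on the disc boundaries. The central step is the construction of a local parametrix in a fixed disc $|z|<\epsilon$ around the origin: under the rescaling $\zeta=c\,n^{2/3}z$ with a constant $c$ determined by $g''(0)$ and the $T$-scaling \eqref{eq:T_scaling}, the jumps of the local RH problem converge to those of the Flaschka--Newell system for the Hastings--McLeod solution $q(\sigma)$ of PII, so one sets the local parametrix equal to an analytic prefactor times $\mathbf{\Psi}(\zeta(z);\sigma)$ conjugated by the appropriate $g$-function factors.

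The main obstacle is the matching of this local Painlev\'e II parametrix with the global parametrix on $|z|=\epsilon$. Using \eqref{cr5}, the large-$\zeta$ asymptotics yield $\mathbf{\Psi}(\zeta;\sigma)e^{i(\frac{4}{3}\zeta^3+\sigma\zeta)\sigma_3}=I+O(\zeta^{-1})$, and on the disc boundary $\zeta\asymp n^{2/3}$, so the naive mismatch is of order $n^{-2/3}$; however, the presence of residual transverse scaling $|\Im z|>\epsilon n^{-\delta}$, combined with the fact that the error is not uniform all the way down to the real line but only down to this transverse scale, promotes the effective error to $n^{-(1/3-\delta)}$ via the small-norm argument for the final error RH problem. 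This is the origin of both the weaker error term and the restriction $\delta<1/3$: inside the Painlev\'e disc, $|\zeta|$ grows like $n^{2/3-\delta}$ only when $\delta<2/3$, and the tighter requirement $\delta<1/3$ is what ensures that the jumps of $R(z)$ restricted to the disc boundary, together with the contribution from the Cauchy operator, produce a uniform $O(n^{-(1/3-\delta)})$ estimate.

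Once the three local parametrices (Airy at $\pm\beta$, Painlev\'e II at $0$) are matched with the global one, the ratio $R(z)$ of the dressed RH solution to the combined parametrix satisfies a small-norm RH problem whose jump matrix is $I+O(n^{-1/3})$ on the boundary of the disc at the origin and $I+O(n^{-1})$ elsewhere, giving $R(z)=I+O(n^{-(1/3-\delta)})$ uniformly in $D(\delta,\epsilon,n)$ with the origin disc excluded. Unwinding the transformations then recovers \eqref{asub1} with the stated $M_{11}(z),M_{21}(z)$ and error bound. The uniformity in $\tau$ follows because $\tau$ enters only through the density of the lattice $L_{n,\tau}$ and through the $\theta_3$-free global parametrix, and causes no additional $n$-dependent growth.
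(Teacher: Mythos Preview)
Your overall strategy is correct and coincides with the paper's (which defers the critical-case Riemann--Hilbert analysis to \cite{Liechty12}): reduce the interpolation problem to a RHP, normalise with the semicircle $g$-function \eqref{g12}, use the rational outer parametrix built from $\gamma(z)=((z+\beta)/(z-\beta))^{1/4}$, install Airy parametrices at $\pm\beta$, and install a Flaschka--Newell Painlev\'e~II parametrix at the origin where the upper constraint is critically active.

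However, your local scaling at the origin is wrong. You write $\zeta=c\,n^{2/3}z$; the correct scaling is $\zeta=d\,n^{1/3}z$ with $d=2^{-5/3}\pi$, exactly as in \eqref{critical_CD_kernel}. The relevant phase near $0$ is $n\bigl(g_+(x)-g_-(x)-i\pi+2\pi i x\bigr)=2\pi i\,n\int_0^x(1-\rho_T(s))\,ds$; since $1-\rho_T(0)=1-\sqrt{T}/\pi=O(n^{-2/3})$ while $\rho_T''(0)\neq0$, this phase has the form $n(c_1 n^{-2/3}x+c_2 x^3)$, and matching it to the Lax-pair phase $\tfrac{4}{3}\zeta^3+\sigma\zeta$ forces $\zeta\sim n^{1/3}z$, not $n^{2/3}z$. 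The constant is fixed by $\rho_T''(0)$ (equivalently the third derivative of $g_+-g_-$ at $0$), not by $g''(0)$ as you claim.

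This error makes your derivation of the $n^{-(1/3-\delta)}$ bound inconsistent: with your scaling the mismatch at points of height $n^{-\delta}$ would be $O(n^{-(2/3-\delta)})$, not $O(n^{-(1/3-\delta)})$. With the correct scaling the mechanism is clean. Note that the domain in the proposition does \emph{not} exclude a neighbourhood of $0$; one must verify \eqref{asub1} at points $z$ with $|\Im z|\sim\epsilon n^{-\delta}$ that lie inside the Painlev\'e disc. There $|\zeta|\asymp n^{1/3-\delta}$, and by \eqref{cr5} the local parametrix agrees with the outer one only to $O(\zeta^{-1})=O(n^{-(1/3-\delta)})$. This, together with the $O(n^{-1/3})$ matching on $\partial D(0,\epsilon)$ and the same close-contour Cauchy-operator effect that produced $n^{-(1-\delta)}$ in the supercritical case, is the true source of both the error bound and the restriction $\delta<1/3$.
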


In the critical case, the asymptotic formulas for the discrete Gaussian
orthogonal polynomials close to the origin are described in terms of
the matrix function $\bolds\Psi(\zeta, s)$ defined in \eqref{cr4} and
\eqref{cr5}. We do not describe these asymptotics in full generality,
but do give the following formula for the Christoffel--Darboux kernel
in a small neighborhood of the origin and a rough estimate of the
orthogonal polynomials.

\begin{prop} \label{prop:asymptotics_of_OPs_origin}
Fix $\ep>0$ and $0 < \delta<1/3$, and let $T=T_c(1-\break 2^{-2/3}\sigma
n^{-2/3})$. For all $z,w\in\{z \in\C\vert |z|<\ep n^{-\delta}\}$ the
following asymptotic formula holds:
%
\begin{eqnarray}
\label{critical_CD_kernel}&&
e^{-({nT}/{4})(z^2+w^2)} \frac{p^{(T; \tau)}_{n, n}(z)p^{(T; \tau
)}_{n, n-1}(w)-p^{(T; \tau)}_{n, n-1}(z)p^{(T; \tau)}_{n, n}(w)}{h^{(T;
\tau)}_{n, n-1}(z-w)} \nonumber\\
&&\qquad=
\frac{1}{2\pi i (z-w)} %
\pmatrix{ -e^{-i\pi(nz-\tau)}\vspace*{2pt}
\cr
e^{i\pi(nz-\tau)} } %
^T \bolds\Psi\bigl(d n^{{1}/{3}}
z ; \sigma\bigr)^{-1}\bolds\Psi\bigl(d n^{
{1}/{3}} w ; \sigma
\bigr) \\
&&\qquad\quad{}\times
\pmatrix{ e^{i\pi(nw-\tau)} \vspace*{2pt}
\cr
e^{-i\pi(nw-\tau)} }
 \bigl(1+\bigO\bigl(n^{-(1/3-\delta)}\bigr) \bigr),\nonumber
\end{eqnarray}
where $d = 2^{-5/3}\pi$ is defined in \eqref{eq:defn_d_intro}. Also the
following estimate holds uniformly in $\{z \in\C\vert |z|<\ep
n^{-\delta}\}$:
%
\begin{equation}
\label{eq:rough_estimate_critical_zero} p^{(T; \tau)}_{n, n}(z) = \bigO
\bigl(e^{ng(z)}\bigr), \qquad \frac{p^{(T; \tau
)}_{n, n - 1}(z)}{h^{(T; \tau)}_{n, n - 1}} = \bigO\bigl(e^{n(g(z) - l)}
\bigr).
\end{equation}
\end{prop}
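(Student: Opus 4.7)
The plan is to run the Deift--Zhou nonlinear steepest descent analysis on the Riemann--Hilbert problem for the discrete Gaussian orthogonal polynomials developed in Section~\ref{sec:Riemann-Hilbert}. In the critical scaling $T=T_c(1-2^{-2/3}\sg n^{-2/3})$ the semicircle density touches the upper constraint at the origin ($\rho_{T_c}(0)=1$), so in addition to the usual Airy parametrices at the soft edges $\pm\be$, a new local parametrix must be built near $z=0$, modeled on the Flaschka--Newell solution $\mathbf\Psi(\z;\sg)$ of \eqref{cr4}--\eqref{cr5}. The global steps---the $g$-function transformation with $g$ as in \eqref{g12}, the opening of lenses around $[-\be,\be]$, and the outer parametrix built from $\ga(z)$---are actually simpler than in the supercritical case because no saturated region is present and no theta functions appear.

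The crux of the argument is the construction of the local parametrix on the shrinking disk $\{|z|<\ep n^{-\de}\}$. After the standard transformations, Taylor-expanding $n\bigl(g_+(z)+g_-(z)-\tfrac{T}{2}z^{2}-l\bigr)$ at $z=0$, together with the oscillatory factor $e^{-i\pi(nz-\tau)}$ coming from the discreteness of $\lattice$ and the $\tau$-shift, reveals that the effective local phase has the form
\begin{equation*}
\pm i\Bigl(\tfrac{4}{3}(dn^{1/3}z)^{3}+\sg(dn^{1/3}z)\Bigr)+\bigO(n^{-1/3})
\end{equation*}
with $d=2^{-5/3}\pi$ as in \eqref{eq:defn_d_intro}. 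Rescaling $\z=dn^{1/3}z$ therefore reduces the local jump conditions to precisely the Flaschka--Newell problem whose unique solution satisfying \eqref{cr5} is $\mathbf\Psi(\z;\sg)$. The local parametrix can then be defined as
\begin{equation*}
P^{(0)}(z)=E(z)\,\mathbf\Psi(dn^{1/3}z;\sg)\,\diag\!\left(e^{ng(z)-i\pi(nz-\tau)},\ e^{-ng(z)+i\pi(nz-\tau)}\right),
\end{equation*}
with a holomorphic prefactor $E(z)$ chosen so that $P^{(0)}$ matches the global parametrix on $\{|z|=\ep n^{-\de}\}$ with error $\bigO(n^{-(1/3-\de)})$.

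Once this parametrix is installed, the usual small-norm argument gives $R(z)=I+\bigO(n^{-(1/3-\de)})$ uniformly on the disk. Unraveling the transformations expresses the RHP matrix $Y$ near the origin explicitly through $\mathbf\Psi(dn^{1/3}z;\sg)$ and the diagonal conjugation above. Plugging this into the standard Christoffel--Darboux representation
\begin{equation*}
\frac{p^{(T;\tau)}_{n,n}(z)p^{(T;\tau)}_{n,n-1}(w)-p^{(T;\tau)}_{n,n-1}(z)p^{(T;\tau)}_{n,n}(w)}{h^{(T;\tau)}_{n,n-1}(z-w)}=\frac{1}{2\pi i(z-w)}\begin{pmatrix}0&1\end{pmatrix}Y(w)^{-1}Y(z)\begin{pmatrix}1\\0\end{pmatrix},
\end{equation*}
the $e^{\pm ng(z)}$ and $e^{\pm ng(w)}$ scalar factors combine with the prefactor $e^{-nT(z^{2}+w^{2})/4}$ so as to cancel, leaving only the surviving $e^{\pm i\pi(nz-\tau)}$ and $e^{\pm i\pi(nw-\tau)}$ contributions inside the row and column vectors, which is precisely the formula \eqref{critical_CD_kernel}. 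The rough estimate \eqref{eq:rough_estimate_critical_zero} follows from the same representation combined with the asymptotics \eqref{cr5} of $\mathbf\Psi$ applied in the range $|\z|\leq d\ep n^{1/3-\de}$.

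The main obstacle is the matching computation at the origin: one must verify that the cubic expansion of $g$, the quadratic weight, and the $\tau$-dependent discrete oscillation combine to reproduce \emph{exactly} the cubic-plus-linear exponent $\tfrac{4}{3}\z^{3}+\sg\z$ dictated by \eqref{cr5}. This single computation simultaneously fixes $d=2^{-5/3}\pi$, determines the prefactor $E(z)$, and yields the precise matching rate $\bigO(n^{-(1/3-\de)})$ on the boundary of the shrinking disk, thereby explaining the restriction $0<\de<1/3$ in the hypothesis.
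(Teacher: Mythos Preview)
Your outline is correct and follows essentially the same route as the paper: the paper does not give a self-contained proof here but cites the Riemann--Hilbert analysis of \cite{Liechty12}, noting that \eqref{critical_CD_kernel} appears (in slightly different form) as \cite[equation (6.12)]{Liechty12}. What you have written is a faithful sketch of precisely that analysis---the $g$-function normalization with \eqref{g12}, the Flaschka--Newell local parametrix at the origin in the variable $\z=dn^{1/3}z$, the $\bigO(n^{-(1/3-\de)})$ matching on the shrinking disk, and the unraveling through the standard $Y^{-1}(w)Y(z)$ representation of the Christoffel--Darboux kernel---so there is nothing to correct.
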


Proposition~\ref{prop:asymptotics_of_OPs_origin} follows from the
Riemann--Hilbert analysis of \citet{Liechty12}. The fomula \eqref
{critical_CD_kernel} appears in a slightly different form in
\citet{Liechty12},
equation~(6.12). 

We will also need asymptotic results for the discrete Gaussian
orthogonal polynomials on $\R$ outside of the support of the
equilibrium measure. The following proposition extends the asymptotics
of Proposition~\ref{asymptotics_of_OPs} to this region. The Cauchy
transforms in \eqref{asub1a} have poles on $L_{n,\tau}$, so we must
exclude the points in this lattice from the formulation of the
asymptotic result. Define the regions
%
\begin{eqnarray}
\label{eq:defnEeandEent} E(\ep) &= &\bigl\{(-\infty, -\be-\ep] \cup[\be+\ep, \infty
)\bigr\} \times [-i\ep , i\ep],
\nonumber
\\[-8pt]
\\[-8pt]
\nonumber
E(\ep; n, \tau) &=& E(\ep){}\Big \backslash{}\bigcup
_{x \in
L_{n, \tau}} \biggl\{ z \Big| \vert z-x \vert<
\frac{\ep}{n} \biggr\}.
\end{eqnarray}
Then we have a result parallel to Proposition~\ref{asymptotics_of_OPs}.

\begin{prop} \label{prop:asymptotics_of_OPs_outside}
Fix $\ep>0$. Then the asymptotics \eqref{asub1} are valid on $E(\ep)$,
and the asymptotics \eqref{asub1a} are valid on $E(\ep; n, \tau)$. In
both cases, the errors are of the order $n^{-1}$.
\end{prop}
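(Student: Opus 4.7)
The plan is to piggyback on the Riemann--Hilbert analysis that already yields Proposition \ref{asymptotics_of_OPs}. Recall that after the standard sequence of transformations applied to the interpolation problem for the discrete Gaussian orthogonal polynomials (normalization by the $g$-function, opening of lenses along the support $[-\beta,\beta]$, and introduction of local parametrices near the endpoints $\pm\alpha,\pm\beta$ and, in the subcritical case, near $\pm\beta$), one arrives at a final matrix $R(z)$ whose jump contour $\Sigma_R$ is confined to an arbitrarily small neighborhood of $[-\beta,\beta]$ together with small circles about the endpoints. The small-norm theorem gives $R(z)=I+\bigO(n^{-1})$ uniformly on $\C\setminus\Sigma_R$.

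My first step is to exploit the geometric freedom in choosing the lens contours and the local parametrix disks: pick the opening of the lenses so that they are contained in $\{|\Im z|<\ep/2\}\cap\{-\beta-\ep/2<\Re z<\beta+\ep/2\}$, and take the parametrix disks of radius strictly less than $\ep$. With these choices, $\Sigma_R$ is disjoint from $E(\ep)$, hence $R(z)$ is analytic on $E(\ep)$ and the small-norm estimate yields $R(z)=I+\bigO(n^{-1})$ uniformly on $E(\ep)$. Unwinding the chain of transformations---so that the polynomials and their Cauchy transforms are expressed in terms of $R(z)$ and of the outer parametrix $M(z)$ (which is analytic on $\C\setminus[-\beta,\beta]$ and therefore on $E(\ep)$)---produces the formulas \eqref{asub1} with error $\bigO(n^{-1})$.

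The second step handles the discrete Cauchy transforms in \eqref{asub1a}. Here the second column of the original interpolation matrix $Y$ has simple poles at each $x\in\lattice$, inherited by $(Cp^{(T;\tau)}_{n,n})(z)$. After the transformations these poles are either removed (for lattice points in the bulk of the support, where the removal technique is applied) or remain (for lattice points outside the support, where the Gaussian factor $e^{-nTx^2/2}$ renders the associated residues exponentially small and no removal is needed). The latter poles persist in $E(\ep)$, which is why one must excise $\ep/n$-neighborhoods of each $x\in\lattice$; on the complement $E(\ep;n,\tau)$ the small-norm estimate for $R(z)$ still applies and the formulas \eqref{asub1a} follow, with error $\bigO(n^{-1})$ because the residues outside the support are $\bigO(e^{-cn})$ and are absorbed into the error.

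The improvement from $\bigO(n^{-(1-\de)})$ in Proposition \ref{asymptotics_of_OPs} to $\bigO(n^{-1})$ here is explained by the fact that the $n^{-\de}$ factor arose as the distance from $z$ to $\Sigma_R$ when $z$ was allowed to approach the real axis inside the support; on $E(\ep)$ one is bounded away from $\Sigma_R$ by a constant. The main obstacle is the geometric bookkeeping for the Cauchy transforms: one must confirm that the pointwise ``removal'' of lattice poles inside the support leaves behind no spurious contributions in $E(\ep;n,\tau)$, and that the residues at lattice points outside the support are genuinely of the claimed exponentially small order after all transformations. Both are standard checks within the framework of Section \ref{sec:Riemann-Hilbert} and introduce no new analytic difficulty beyond what is already deployed there.
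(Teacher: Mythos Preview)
Your proposal is correct and follows essentially the same route as the paper. The paper's own argument is a single sentence: one inverts the explicit transformations \eqref{tt1} and \eqref{eq:explicit_formula_S} on the region $E(\ep)$, taking $\de=0$ so that the contours $\Gamma_\pm$ sit at fixed distance $\ep$ from the real axis, and then the small-norm estimate \eqref{tt19} delivers the $\bigO(n^{-1})$ error; your write-up fleshes out exactly this mechanism, including the reason one must excise $\ep/n$-disks about lattice points for the Cauchy transforms.
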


The functions $M_{11}(z), M_{21}(z), M_{12}(z)$, and $M_{22}(z)$ in
Proposition~\ref{asymptotics_of_OPs} are entries of the $2 \times2$
matrix 
$ \bigl(
{
{1 \atop 0}\enskip
{0 \atop-2\pi i}}
 \bigr)^{-1}
\mathbf M(z) \bigl ({
{1 \atop 0}\enskip
{0 \atop-2\pi i}}
 \bigr)$ as in \eqref{m1}, where $\mathbf M(z)$ in
defined in Section~\ref{sec:Model_RHP}; see formula \eqref{eq314}. By the
Riemann--Hilbert problem satisfied by $\mathbf M(z)$, we have that
$\det\mathbf M(z) = 1$, and so
%
\begin{equation}
\label{eq:Mijdet1} M_{11}(z) M_{22}(z) - M_{12}(z)
M_{22}(z) = 1 ,
\end{equation}
for all $z$ where they are defined. The jump condition for the $2
\times2$ matrix Riemann--Hilbert problem for $\mathbf M(z)$ given in
Section~\ref{sec:Model_RHP} implies that for $x \in(-\alpha, \alpha)$,
%
\begin{eqnarray}
\label
{eq:jump_M_11_21}(M_{11})_+(x) &= &(M_{11})_-(x) e^{2\pi i(\tau+ \epsilon(n))},
\nonumber
\\[-8pt]
\\[-8pt]
\nonumber
(M_{21})_+(x) &= &(M_{21})_-(x) e^{2\pi i(\tau+ \epsilon(n))},
\\
\label{eq:jump_M_12_22}(M_{12})_+(x) &= &(M_{12})_-(x) e^{-2\pi i(\tau+ \epsilon(n))},
\nonumber
\\[-8pt]
\\[-8pt]
\nonumber
(M_{22})_+(x) &=& (M_{22})_-(x) e^{-2\pi i(\tau+ \epsilon(n))}.
\end{eqnarray}

We now summarize the asymptotic formulas for the recurrence
coefficients and the normalizing constants. In \eqref{gamma_super}, we
use the Jacobi elliptic function $\dn(u,\tilde k)$; see, for example,
\citet{Watson-Whittaker96}.

\begin{prop}\label{asymptotics_normalizing_constants} As $n \to\infty$
the recurrence coefficients $ (\gamma^{(T; \tau)}_{n, n}
)^2$ in
\eqref{eq:three_term_recurrence} satisfy the following asymptotic formulas:
\begin{longlist}[(a)]
\item[(a)] In the subcritical case $T<T_c = \pi^2$,
%
\begin{equation}
\label{gamma_sub} \bigl(\gamma^{(T; \tau)}_{n, n}
\bigr)^2=\frac{1}{T} +\bigO \bigl(e^{-cn}\bigr) ,
\end{equation}
where $c>0$ is a constant which depends on $T$.
\item[(b)] In the critical case $T=T_c(1-2^{-2/3}\sigma n^{-2/3})$, as $n
\to
\infty$,
%
\begin{eqnarray}
\label{gamma_critical} &&\bigl(\gamma^{(T; \tau)}_{n, n}
\bigr)^2 = \frac{1}{T} \biggl(1-\frac
{2^{5/3}}{n^{1/3}}q(\sigma)
\cos \bigl(2\pi\bigl(\tau+\epsilon(n)\bigr) \bigr)
\nonumber
\\[-8pt]
\\[-8pt]
\nonumber
&&\hspace*{70pt}{}+\frac
{2^{4/3}}{n^{2/3}}q(
\sigma)^2\cos(4\pi\tau)+\bigO\bigl(n^{-1}\bigr) \biggr).
\end{eqnarray}
\item[(c)] In the supercritical case $T>T_c = \pi^2$,
%
\begin{eqnarray}
\bigl(\gamma^{(T; \tau)}_{n, n} \bigr)^2 =
\frac{\dn^2(2\tilde
{\mathbf K}(\tau+1/2+\epsilon(n)),\tilde{k})}{4\tilde{\mathbf E}^2} +\bigO \bigl(n^{-1}\bigr). \label{gamma_super}
\end{eqnarray}
\end{longlist}
\end{prop}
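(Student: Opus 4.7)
The proof rests on a unifying identity: from \eqref{eq:defn_of_gamma_nk}, $\bigl(\ga^{(T;\tau)}_{n,n}\bigr)^2 = h^{(T;\tau)}_{n,n}/h^{(T;\tau)}_{n,n-1}$, and both normalizing constants can be extracted from the $z\to\infty$ asymptotics \eqref{asub1}--\eqref{asub1a}. By symmetry of the weight, $(Cp^{(T;\tau)}_{n,n})(z)=h^{(T;\tau)}_{n,n}z^{-n-1}+O(z^{-n-3})$ and $g(z)-\log z=O(z^{-2})$; matching against $e^{-n(g(z)-l)}M_{12}(z)$ gives $h^{(T;\tau)}_{n,n}=e^{nl}M_{12,1}$, where $M_{12,1}:=\lim_{z\to\infty}zM_{12}(z)$. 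Similarly, matching the top coefficient $z^{n-1}/h^{(T;\tau)}_{n,n-1}$ of $p^{(T;\tau)}_{n,n-1}(z)/h^{(T;\tau)}_{n,n-1}$ against $e^{n(g(z)-l)}M_{21}(z)$ yields $h^{(T;\tau)}_{n,n-1}=e^{nl}/M_{21,1}$. Combining,
\begin{equation*}
\bigl(\ga^{(T;\tau)}_{n,n}\bigr)^2 = M_{12,1}\cdot M_{21,1},
\end{equation*}
with errors inherited from $\Er_{12},\Er_{21}$. I will apply this uniform identity in all three regimes.

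Part (a) is immediate: since the upper constraint \eqref{eq:constraint} is strict by a positive margin, the interpolation/Riemann--Hilbert analysis of Section \ref{sec:Riemann-Hilbert} shows that $p^{(T;\tau)}_{n,j}$ differs from the rescaled monic Hermite polynomial (weight $e^{-Tnx^2/2}$ on $\R$) by at most $O(e^{-cn})$, and the latter satisfies $\ga_n^2=1/T$ identically. Part (c) is a direct substitution of \eqref{eq:M_12_formula_intro}--\eqref{eq:M_21_formula_intro}: the expansion $\ga(z)-1/\ga(z)=(\be-\al)/z+O(z^{-2})$, the value $u(\infty)=\pi/4$ (obtained by $x=\be/\sin\th$ in \eqref{m5}), the shift $\th_3(z+\pi/2)=\th_4(z)$, and evenness of $\th_4$ give
\begin{equation*}
\bigl(\ga^{(T;\tau)}_{n,n}\bigr)^2 = \frac{(\be-\al)^2}{4}\cdot\frac{\th_3(0)^2\,\th_4\bigl(\pi(\tau-\hsgn{n})\bigr)^2}{\th_3\bigl(\pi(\tau-\hsgn{n})\bigr)^2\,\th_4(0)^2}+O(n^{-1}).
\end{equation*}
The Jacobi representation $\dn(2\tilde{\K}w,\tilde k)=(\th_4(0)/\th_3(0))\th_3(\pi w)/\th_4(\pi w)$ with $w=\tau+1/2+\hsgn{n}$, the $\pi$-periodicity of $\th_3,\th_4$, together with $\th_4(0)^2/\th_3(0)^2=\sqrt{1-\tilde k^2}$ and $(\be-\al)\tilde{\E}=\sqrt{1-\tilde k^2}$ (the latter from $\be=[(1+k)\tilde{\E}]^{-1}$, $\al=k\be$, $\tilde k^2=4k/(1+k)^2$), then yields \eqref{gamma_super}.

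Part (b) is the technical heart. The outer parametrix coincides with the subcritical one, so the leading contribution to $M_{12,1}M_{21,1}$ remains $1/T$; the $n^{-1/3}$ and $n^{-2/3}$ corrections come from the local Painlev\'e~II parametrix constructed in \cite{Liechty12}. Its matching with the outer parametrix involves the phases $e^{\pm i\pi(nz-\tau)}$ appearing in \eqref{critical_CD_kernel}, together with the subleading coefficients of
\begin{equation*}
\mathbf\Psi(\z;\sg)e^{i(\tfrac{4}{3}\z^3+\sg\z)\sg_3}=I+\z^{-1}\mathbf\Psi_1(\sg)+\z^{-2}\mathbf\Psi_2(\sg)+O(\z^{-3}),
\end{equation*}
whose entries are explicit functions of $q(\sg)$ and its integral from the standard tail expansion of the PII Lax pair. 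Pulling these corrections through the small-norm transformation $\mathbf R(z)=I+O(n^{-1/3})$ modifies $M_{12,1}$ and $M_{21,1}$ at orders $n^{-1/3}$ and $n^{-2/3}$. The two off-diagonal entries of $\mathbf\Psi_1(\sg)$ bring in opposite phases $e^{\pm 2\pi i(\tau-\hsgn{n})}$ whose symmetric combination via \eqref{eq:symmetry_of_H-ML_solution} condenses into $\cos(2\pi(\tau+\hsgn{n}))$ at order $n^{-1/3}$, while iterating once more yields $\cos(4\pi\tau)$ at order $n^{-2/3}$. Multiplying $M_{12,1}M_{21,1}$ then produces \eqref{gamma_critical}. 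The main obstacle is precisely this bookkeeping in (b): two successive subleading coefficients of $\mathbf\Psi(\z;\sg)$ at infinity must be extracted explicitly in terms of $q(\sg)$, and combined coherently with the $\tau$-shifted lattice phases carried by the matching to the outer parametrix. The trigonometric $\tau$-dependence is not incidental but is the precise signature of the interference between the two branches of the PII Lax solution selected by \eqref{eq:symmetry_of_H-ML_solution}.
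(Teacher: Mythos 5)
Your approach is essentially the paper's: the paper proves this proposition by inverting the transformations of the interpolation problem and reading off $h^{(T;\tau)}_{n,n}$, $h^{(T;\tau)}_{n,n-1}$ from the $z\to\infty$ expansion of the (conjugated) model matrix, i.e.\ formulas \eqref{IP4}--\eqref{IP5} together with $\mathbf M_1$ in \eqref{m20}, which is exactly your identity $\bigl(\ga^{(T;\tau)}_{n,n}\bigr)^2 = M_{12,1}M_{21,1}$; your part~(c) computation (the values $u(\infty)=\pi/4$, the shift $\th_3(z+\pi/2)=\th_4(z)$, the identities $(\be-\al)\tilde{\E}=\th_4(0)^2/\th_3(0)^2 = \sqrt{1-\tilde k^2}$, and the $\dn$-representation) is correct and reproduces \eqref{gamma_super}. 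For parts~(a) and~(b) the paper simply cites \cite[Appendix~B]{Liechty12} and \cite{Liechty12} respectively, so your somewhat more detailed sketch of the Painlev\'e~II parametrix bookkeeping in~(b) is more than the paper offers, plausible in its mechanism, but not a complete derivation — which is acceptable since the paper also treats \eqref{gamma_critical} as a quoted result.
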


The formula \eqref{gamma_sub} states that in the subcritical case, the
recurrence coefficients are exponentially close as $n\to\infty$ to the
recurrence coefficients for the rescaled Hermite polynomials; see, for
example, \citet{Liechty12}, Appendix B. The asymptotic formula \eqref
{gamma_critical} was proved in \citet{Liechty12}, and formula
\eqref
{gamma_super} follows from the Riemann--Hilbert analysis presented in
Section~\ref{sec:Riemann-Hilbert}.

\section{Distribution of winding numbers} \label{sec:winding_number}

In this section, we prove Theorem~\ref{winding_number_theorem}. For the
proof of this theorem, we will use the formulas \eqref{Rno_def} and
\eqref{eq:R_n_ratio}. They state that the total winding number for $n$
particles in $\mathrm{NIBM}_{0 \to T}$ is given by the formula
%
\begin{equation}
\label{winu1} \mathbb{P}(\mbox{Total winding number equals }
\omega)=e^{2\pi
i\omega
\epsilon(n)} \int_0^1
\frac{R_n(T;\tau) e^{-2\pi i \omega\tau
}}{R_n(T;\epsilon(n))} \,d\tau,\hspace*{-30pt}
\end{equation}
which according to \eqref{eq:defnRttau} is
%
\begin{eqnarray}
\label{winu2} %
\mathbb{P}(\mbox{Total winding
number equals } \omega)&=&e^{2\pi i
\omega
\epsilon(n)} \int_0^1
\frac{\Hankel_n(T;\tau) e^{-2\pi i \omega
\tau
}}{\Hankel
_n(T;\epsilon(n))} \,d\tau
\nonumber
\\[-8pt]
\\[-8pt]
\nonumber
&=&\int_0^1 \frac{\Hankel_n(T;\tau-\epsilon(n)) e^{-2\pi i \omega
\tau
}}{\Hankel_n(T;\epsilon(n))} \,d\tau.
\end{eqnarray}
In order to evaluate this integral, we will use the following
deformation equation for $\Hankel_n(T;\tau)$ with respect to $\tau$.

\begin{prop}\label{deftau}
The Hankel determinant $\Hankel_n(T;\tau)$ satisfies the differential equation
%
\begin{equation}
\label{deftaueq} \frac{\d^2}{\d\tau^2} \log\Hankel_n(T;\tau) =
T^2 \bigl(\gamma ^{(T; \tau
)}_{n, n}
\bigr)^2 -T ,
\end{equation}
where the recurrence coefficient $\gamma^{(T; \tau)}_{n, n}$ is
defined in
\eqref{eq:three_term_recurrence}.
\end{prop}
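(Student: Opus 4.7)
The plan is to pull the entire $\tau$-dependence of the lattice $\lattice$ onto a single fixed lattice, so that the deformation equation reduces to a standard Toda-type computation where everything is differentiated through the weight. Specifically, I would introduce the shifted polynomials $\tilde{p}_j(y) := p^{(T; \tau)}_{n, j}(y + \tau/n)$, which are monic of degree $j$ in $y$ and orthogonal on the \emph{fixed} lattice $L_{n, 0}$ with respect to the $\tau$-dependent weight $w(y; \tau) := e^{-Tn(y + \tau/n)^2/2}$, with the same norms $h^{(T; \tau)}_{n, j}$. Substituting $z = y + \tau/n$ into \eqref{eq:three_term_recurrence} gives a three-term recurrence for $\tilde{p}_j$ with diagonal coefficient $\tilde{\be}_j = \be^{(T; \tau)}_{n, j} - \tau/n$ and off-diagonal coefficient $\tilde{\ga}_j^2 = (\ga^{(T; \tau)}_{n, j})^2$. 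Let $\langle f, g \rangle_\tau := \frac{1}{n}\sum_{y \in L_{n, 0}} f(y) g(y) w(y; \tau)$.

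For the first derivative, I would differentiate $h^{(T; \tau)}_{n, j} = \langle \tilde{p}_j, \tilde{p}_j \rangle_\tau$ in $\tau$. The term where $\partial_\tau$ lands on the polynomial is $2 \langle \tilde{p}_j, \partial_\tau \tilde{p}_j \rangle_\tau$ and vanishes, since monicness of $\tilde p_j$ forces $\partial_\tau \tilde{p}_j$ to have degree at most $j-1$. Using $\partial_\tau w = -T(y + \tau/n) w$, the weight term gives $-T \langle \tilde{p}_j, (y + \tau/n) \tilde{p}_j \rangle_\tau = -T \be^{(T; \tau)}_{n, j} h^{(T; \tau)}_{n, j}$ by the three-term recurrence. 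Summing over $j = 0, \dotsc, n-1$ and using that $S_n := \sum_{j=0}^{n-1} \be^{(T; \tau)}_{n, j}$ is the negative of the $z^{n-1}$-coefficient of $p^{(T; \tau)}_{n, n}(z)$ yields
\[
\partial_\tau \log \Hankel_n(T; \tau) = -T S_n.
\]

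For the second derivative it suffices to compute $\partial_\tau S_n$. Noting that $\tilde{S}_n := S_n - \tau$ is the negative of the $y^{n-1}$-coefficient of $\tilde{p}_n(y)$, I would differentiate the orthogonality relations $\langle \tilde{p}_n, y^k \rangle_\tau = 0$ for $k = 0, \dotsc, n-1$. This yields $\langle \partial_\tau \tilde{p}_n, y^k \rangle_\tau = T \langle \tilde{p}_n, y^k(y + \tau/n) \rangle_\tau$, whose right-hand side vanishes for $k < n-1$ by degree considerations and equals $T h^{(T; \tau)}_{n, n}$ for $k = n-1$ (using $y^n = \tilde{p}_n + \text{lower}$). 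Since $\partial_\tau \tilde{p}_n$ has degree at most $n-1$, these relations force $\partial_\tau \tilde{p}_n = T (\ga^{(T; \tau)}_{n, n})^2\, \tilde{p}_{n - 1}$. Matching coefficients of $y^{n-1}$ on both sides gives $\partial_\tau \tilde{S}_n = -T (\ga^{(T; \tau)}_{n, n})^2$, and therefore $\partial_\tau S_n = 1 - T (\ga^{(T; \tau)}_{n, n})^2$. Substituting into $\partial_\tau^2 \log \Hankel_n(T; \tau) = -T \partial_\tau S_n$ then yields \eqref{deftaueq}.

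The only delicate point is cleanly separating the intrinsic $\tau$-dependence of the polynomials from the $\tau$-dependence of the lattice: trying to differentiate the original orthogonality on $\lattice$ directly introduces boundary-like discrete sums that are awkward. After the change of variables $z = y + \tau/n$, both derivatives reduce to a single use of orthogonality (first derivative) followed by differentiating the orthogonality relation itself (second derivative).
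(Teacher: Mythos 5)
Your proof is correct and rests on the same key idea as the paper's: the $\tau$-shift of the lattice $\lattice$ is equivalent, after the change of variables $y = z - \tau/n$ (i.e.\ completing the square), to a linear deformation $e^{-T\tau y}$ of the Gaussian weight on the \emph{fixed} lattice $L_{n,0}$. The only difference is that the paper invokes the known Toda-type deformation equation $\partial_t^2\log\Hankel_n = T^2\gamma_{n,n}^2$ for the linearly perturbed weight and transfers $t$-derivatives to $\tau$-derivatives via the identity $\Hankel_n(T;\tau;t)=e^{Tt^2/2}\Hankel_n(T;\tau+t;0)$, whereas you re-derive that Toda relation from scratch by differentiating the norms $h_{n,j}^{(T;\tau)}$ and the orthogonality conditions directly, which amounts to the same computation filled in explicitly.
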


\begin{pf}
Introducing a linear term into the exponent of the symbol for the
Hankel determinant, we define
%
\begin{equation}
\label{pf2} \Hankel_n(T;\tau; t):=\det \biggl( \frac{1}{n}
\sum_{x \in L_{n,
\tau}} x^{j + k - 2} e^{-({nT}/{2}) (x^2+{2tx}/{n} )}
\biggr)^n_{j, k = 1} ,
\end{equation}
and the monic orthogonal polynomials
%
\begin{equation}
\label{op2} \frac{1}{n}\sum_{x\in L_{n,\tau}}
p_{n,j}^{(T;\tau;t)}(x)p_{n,
l}^{(T;\tau;t)}(x)
e^{-({nT}/{2}) (x^2+{2tx}/{n} )} =h_{n,j}^{(T;\tau;t)} \delta_{jl}.
\end{equation}
It is well known then [see, e.g., \citet{Bleher-Liechty13},
Theorem~2.4.3] that this Hankel determinant satisfies
%
\begin{eqnarray}
\label{pf3} \frac{\d^2}{\d t^2} \log\Hankel_n(T;\tau; t)=
\frac{T^2
h_{n,n}^{(T,\tau;t)}}{h_{n,n-1}^{(T,\tau;t)}} = T^2 \bigl(\gamma ^{(T; \tau
; t)}_{n, n}
\bigr)^2,
\nonumber
\\[-8pt]
\\[-8pt]
\eqntext{\mbox{where } \displaystyle\gamma^{(T; \tau; t)}_{n,
j} :=
\biggl(\frac{h^{(T; \tau; t)}_{n, j}}{h^{(T; \tau; t)}_{n,
j-1}} \biggr)^{1/2}.}
\end{eqnarray}
Completing the square in \eqref{pf2}, we find that
%
\begin{eqnarray}
\label{pf4} %
 \Hankel_n(T;\tau; t)&=&\det
\biggl( \frac{e^{{Tt^2}/{(2n)}}}{n} \sum_{x \in L_{n, \tau}}
x^{j + k - 2} e^{-({nT}/{2}) (x+
{t}/{n} )^2} \biggr)^n_{j, k = 1}
\nonumber
\\[-8pt]
\\[-8pt]
\nonumber
&=&e^{{T t^2}/{2}} \Hankel_n(T;\tau+t; 0).
\end{eqnarray}
Taking the logarithm and differentiating twice with respect to $t$, we obtain
%
\begin{equation}
\label{pf5} \frac{\d^2}{\d t^2} \log\Hankel_n(T;\tau; t)=T+
\frac{\d^2}{\d t^2} \log\Hankel_n(T;\tau+t; 0) ,
\end{equation}
and combining \eqref{pf3} with \eqref{pf5} gives
%
\begin{equation}
\label{pf6} \frac{\d^2}{\d t^2} \log\Hankel_n(T;\tau+t; 0) =
T^2 \bigl(\gamma^{(T;
\tau; t)}_{n, n}
\bigr)^2-T.
\end{equation}
Now replacing $\d^2/\d t^2$ with $\d^2/\d\tau^2$ on the left-hand side
of \eqref{pf6} and plugging in $t=0$ gives \eqref{deftaueq}, and the
proposition is proved.
\end{pf}

We can now use this proposition to write an integral equation for the
ratio in equation \eqref{winu2}.
For $\epsilon(n)=0$ or $\epsilon(n)=1/2$, it is clear that $\Hankel
_n(T,\tau
)$ satisfies the symmetries
%
\begin{equation}
\Hankel_n\bigl(T;\epsilon(n)+\tau\bigr) = \Hankel_n
\bigl(T;\epsilon(n)-\tau\bigr) = \Hankel _n\bigl(T;\tau- \epsilon(n)
\bigr).
\end{equation}
Therefore, we have
%
\begin{equation}
\frac{\d}{\d\tau} \log\Hankel_n(T;\tau) \bigg|_{\tau=\epsilon
(n)}=0 ,
\end{equation}
and then Proposition~\ref{deftau} implies the integral formula
%
\begin{eqnarray}
\label{inteq} \log\frac{\Hankel_n(T; \tau-\epsilon(n))}{\Hankel_n(T; \epsilon
(n))} &=& \log \frac{\Hankel_n(T; \epsilon(n) + \tau)}{\Hankel_n(T; \epsilon
(n))}
\nonumber
\\[-8pt]
\\[-8pt]
\nonumber
& =& \int
_{\epsilon(n)}^{\epsilon(n) + \tau} \int_{\epsilon(n)}^u
\bigl(T^2 \bigl(\gamma^{(T;
v)}_{n, n}
\bigr)^2 -T \bigr) \,dv \,du.
\end{eqnarray}

\textit{Subcritical case}.
In the subcritical case $T< T_c$ we can apply the asymptotic formula
\eqref{gamma_sub} for $ (\gamma^{(T; v)}_{n, n} )^2$. Then
combining \eqref{winu2} and \eqref{inteq} gives \eqref{wnsub}.

\textit{Supercritical case}.
In the supercritical case $T> T_c$, We will use the notation~$\tilde
{k}$, $\tilde{\mathbf K}$ and $\tilde{\mathbf E}$ introduced in
\eqref
{eq:def_tilde1} and \eqref{eq:def_tilde2}, as well as the elliptic nome
$q$ introduced in~\eqref{eq:def_elliptic_nome_intro}. We apply the
asymptotic formula \eqref{gamma_super} to the integral equation \eqref
{inteq}, giving
%
\begin{eqnarray}
\label{ratiosuper} %
&& \log\frac{\Hankel_n(T,\tau-\epsilon(n))}{\Hankel_n(T,\epsilon
(n))}\hspace*{-30pt}\nonumber 
\\
&&\qquad= \int
_{\epsilon(n)}^{\epsilon(n) + \tau} \int_{\epsilon(n)}^u
\biggl(\frac
{T^2}{4\tilde{\mathbf E}^2}\dn^2 \biggl( 2\tilde{\mathbf K} \biggl( v+
\frac{1}{2} +\epsilon(n) \biggr),\tilde{k} \biggr) -T \biggr) \,dv \,du + \bigO
\bigl(n^{-1}\bigr)\hspace*{-30pt}
\nonumber
\\[-8pt]\hspace*{-30pt}
\\[-8pt]
\nonumber
&&\qquad= \int^{\tau}_0 \int^u_0
\biggl(\frac{T^2}{4\tilde{\mathbf
E}^2}\dn ^2 \biggl( 2\tilde{\mathbf K} \biggl(
v+ \frac{1}{2} + 2\epsilon(n) \biggr),\tilde{k} \biggr) -T \biggr) \,dv \,du +
\bigO\bigl(n^{-1}\bigr)\hspace*{-30pt}
\\
&&\qquad= \int^{\tau}_0 \int^u_0
\biggl(\frac{T^2}{4\tilde{\mathbf
E}^2}\dn ^2 \biggl( 2\tilde{\mathbf K} \biggl(
v+ \frac{1}{2} \biggr),\tilde{k} \biggr) -T \biggr) \,dv \,du + \bigO
\bigl(n^{-1}\bigr),\hspace*{-30pt}\nonumber
\end{eqnarray}
where we use that $\dn(u, \tilde{k})$ has period $2\tilde{\K}$ as a
function of $u$ [\citet{Erdelyi-Magnus-Oberhettigner-Tricomi81},
Table~5 on page
341]. Let us discuss how to
compute the integral
%
\begin{equation}
\label{doubleint1} \int_0^\tau\int
_0^u \dn^2 \biggl( 2\tilde{\mathbf
K} \biggl( v + \frac
{1}{2} \biggr), \tilde{k} \biggr) \,dv \,du.
\end{equation}
The inner integral can be written as
%
\begin{equation}
\frac{1}{2\tilde{\mathbf K}} \biggl[\int_0^{2\tilde{\mathbf K}
u+\tilde
{\mathbf K}}
\dn^2(t,\tilde{k}) \,dt-\int_0^{\tilde{\mathbf K}}\dn
^2(t,\tilde{k}) \,dt \biggr].
\end{equation}
The above integrals can be written in terms of the Jacobi Zeta function
$Z(u,\tilde{k})$ [\citet{Erdelyi-Magnus-Oberhettigner-Tricomi81},
Section~13.16], which can be expressed
by the Jacobi theta function as [\citet{Watson-Whittaker96},
Sections~22.731, 21.11,
21.62],
%
\begin{equation}
\label{eq:Jacobi_zeta_in_theta} Z(t,\tilde{k})= \frac{\d}{\d t} \log\Theta(t)\qquad
 \mbox{where }
\Theta(t)= \th_4 \biggl(\frac{\pi t}{2\tilde{\mathbf K}} \biggr).
\end{equation}
Using \citet{Erdelyi-Magnus-Oberhettigner-Tricomi81},
Section~13.16, Formulas (12) and
(14), we have
%
\begin{equation}
\int_0^u \dn^2(t,\tilde{k})
\,dt=Z(u,\tilde{k})+ \frac{\tilde
{\mathbf
E}}{\tilde{\mathbf K}} u,
\end{equation}
and then
%
\begin{equation}
\int_0^u \dn^2\bigl(2\tilde{
\mathbf K}(v+1/2),\tilde{k}\bigr) \,dv=\frac
{1}{2\tilde{\mathbf K}} \bigl[Z(2\tilde{\mathbf
K} u+\tilde{\mathbf K},\tilde{k})+2\tilde{\mathbf E}u \bigr] ,
\end{equation}
where we have used that $Z(\tilde{\mathbf K},\tilde{k})=0$ by \eqref
{eq:Jacobi_zeta_in_theta} and that $\th'_4(\pi/2) = 0$ [see \citet{Watson-Whittaker96},
Section~21.11]. The integral \eqref{doubleint1}
is thus
%
\begin{eqnarray}
\label{int1} &&\int_0^\tau\int
_0^u \dn^2\bigl(2\tilde{\mathbf
K}(v+1/2),\tilde{k}\bigr) \,dv \,du
\nonumber
\\[-8pt]
\\[-8pt]
\nonumber
&&\qquad=\frac{1}{2\tilde{\mathbf K}} \biggl[\frac{1}{2\tilde{\mathbf
K}}
\int_{\tilde{\mathbf K}}^{2\tilde{\mathbf K} \tau+\tilde{\mathbf K}} Z(t, \tilde k) \,dt+\int
_0^\tau2\tilde{\mathbf E}u \,du \biggr].
\end{eqnarray}
Integrating the right-hand side of \eqref{int1}, we obtain
%
\begin{eqnarray}
&&\int_0^\tau\int_0^u
\dn^2\bigl(2\tilde{\mathbf K}(v+1/2),\tilde{k}\bigr) \,dv \,du
\nonumber
\\[-8pt]
\\[-8pt]
\nonumber
&&\qquad=
\frac{1}{2\tilde{\mathbf K}} \biggl[\frac{1}{2\tilde{\mathbf
K}}\log \biggl(\frac{\Theta(2\tilde{\mathbf K} \tau+\tilde{\mathbf
K})}{\Theta
(\tilde{\mathbf K})}
\biggr)+\tilde{\mathbf E}\tau^2 \biggr].
\end{eqnarray}
Combining with \eqref{inteq} and \eqref{ratiosuper}, we obtain
%
\begin{eqnarray}
&&\log\frac{\Hankel_n(T,\tau-\epsilon(n))}{\Hankel_n(T,\epsilon
(n))}
\nonumber
\\[-8pt]
\\[-8pt]
\nonumber
&&\qquad= \frac
{T^2}{8\tilde{\mathbf K}\tilde{\mathbf E}^2} \biggl[\frac{1}{2\tilde
{\mathbf K}}\log
\biggl(\frac{\Theta(2\tilde{\mathbf K} \tau+\tilde
{\mathbf K})}{\Theta(\tilde{\mathbf K})} \biggr)+\tilde{\mathbf E}\tau ^2 \biggr]-
\frac{T}{2}\tau^2+\bigO\bigl(n^{-1}\bigr).
\end{eqnarray}
The parametrization $T = 4\tilde{\K} \tilde{\E}$ in \eqref
{eq:T_parametrized_by_k} then implies
[\citet{Watson-Whittaker96}, Section~21.11],
%
\begin{eqnarray}
 \log\frac{\Hankel_n(T,\tau-\epsilon(n))}{\Hankel_n(T,\epsilon(n))} &=&\log \biggl(
\frac{\Theta(2\tilde{\mathbf K} \tau+\tilde{\mathbf
K})}{\Theta
(\tilde{\mathbf K})} \biggr)+\bigO\bigl(n^{-1}\bigr)
\nonumber
\\[-8pt]
\\[-8pt]
\nonumber
&=&\log \biggl(\frac{\th_3(\pi\tau)}{\th_3(0)} \biggr)+\bigO \bigl(n^{-1}\bigr).
\end{eqnarray}
Then the Fourier series \eqref{eq:def_Jacobi_theta} for the function
$\th_3$ and the identity [\citet{Watson-Whittaker96}, Section~21.8],
%
\begin{equation}
\th_3(0)^2 =\frac{2\tilde{\mathbf K}}{\pi},
\end{equation}
imply \eqref{wnsuper}.

\textit{Critical case}.
We now consider the critical case $T=T_c(1-2^{-2/3}\sigma n^{-2/3})$. In
this part of the proof, we use the notation $q(s)$ for the
Hastings--McLeod solution to the Painlev\'{e} equation \eqref{cr2} and
\eqref{cr3}. Inserting the asymptotic formula \eqref{gamma_critical}
into this integral equation \eqref{inteq} yields
%
\begin{eqnarray}
&&\log\frac{\Hankel_n(T; \tau-\epsilon(n))}{\Hankel_n(T; \epsilon
(n))}
\nonumber
\\
&&\qquad=
T2^{4/3}\int_{\epsilon(n)}^{\epsilon(n) + \tau} \int
_{\epsilon
(n)}^u \biggl(\frac{2^{1/3}}{n^{1/3}}q(\sigma)\cos
\bigl(2\pi\bigl(v+\epsilon (n)\bigr) \bigr)\\
&&\hspace*{125pt}{}+\frac
{1}{n^{2/3}}q(
\sigma)^2\cos(4\pi v)+\bigO\bigl(n^{-1}\bigr) \biggr) \,dv \,du
,\nonumber
\end{eqnarray}
which is integrated to obtain
%
\begin{eqnarray}
&&\log\frac{\Hankel_n(T; \tau-\epsilon(n))}{\Hankel_n(T; \epsilon
(n))}
\nonumber
\\
&&\qquad=T2^{4/3} \biggl(-\frac{2^{1/3}q(\sigma)}{4\pi^2n^{1/3}} \bigl(1-
\cos (2\pi\tau) \bigr)\\
&&\hspace*{65pt}{}+\frac{q(\sigma)^2}{16\pi^2n^{2/3}} \bigl(1-\cos (4\pi\tau ) \bigr)
\biggr)+\bigO\bigl(n^{-1}\bigr).\nonumber
\end{eqnarray}
Using the scaling \eqref{eq:T_scaling} for $T$, we find
%
\begin{eqnarray}
&&\log\frac{\Hankel_n(T; \tau-\epsilon(n))}{\Hankel_n(T; \epsilon
(n))}
\nonumber
\\[-8pt]
\\[-8pt]
\nonumber
&&\qquad=-\frac
{q(\sigma)}{2^{1/3}n^{1/3}} \bigl(1-\cos(2\pi\tau) \bigr)+
\frac
{2^{1/3}q(\sigma)^2}{8n^{2/3}} \bigl(1-\cos(4\pi\tau) \bigr)+\bigO \bigl(n^{-1}
\bigr) ,
\end{eqnarray}
which we exponentiate to obtain
%
\begin{eqnarray}
&&\frac{\Hankel_n(T; \tau-\epsilon(n))}{\Hankel_n(T; \epsilon
(n))}
\nonumber
\\[-8pt]
\\[-8pt]
\nonumber
&&\qquad=1-\frac
{q(\sigma)}{2^{1/3}n^{1/3}} \bigl(1-\cos(2\pi\tau) \bigr)+
\frac
{q(\sigma
)^2}{2^{2/3}n^{2/3}} \bigl(1-\cos(2\pi\tau) \bigr)+\bigO\bigl(n^{-1}
\bigr) ,
\end{eqnarray}
and the formulas \eqref{wncrit} follow immediately from \eqref{winu2}.

Theorem~\ref{winding_number_theorem} is thus proved.

\section{Correlation function of particles} \label{sec:correlation}

In this section, we do asymptotic analysis to the $\tau$-deformed
correlation kernel $K_{t_i, t_j}(x, y)$ in \eqref{eq:corr_kernel}, and
prove Theorems~\ref{thmm:corr_kernel_intro} and~\ref
{thmm:fixed_winding_number_correlations} for the limiting behavior of
$\mathrm{NIBM}_{0 \to T}$ in the critical and supercritical cases. In
the critical case,
we simply let $\tau= \epsilon(n)$ and the asymptotics of $K_{t_i,
t_j}(x, y)$ gives Theorem~\ref{thmm:corr_kernel_intro}(b); see Remark~\ref{rmk:correlation_kernel}.
In the supercritical case, we need the following technical result.

\begin{thmm} \label{thmm:tau_deformed_correlation_kernels}
Assume $T>T_c$. There exists $d>0$ defined in \eqref{eq:Taylor_exp_upp_Pearcey}
such that when we scale $t_i$ and $t_j$ close to $t^c$, and $x$ and $y$
close to $-\pi$ as in \eqref{eq:coefficients_Pearcey}, the $\tau
$-deformed correlation kernel $K_{t_i, t_j}(x,y)$ has the limit
independent of the parameter~$\tau$
%
\begin{equation}
\label{eq:tau_deformed_cusp} \lim_{n\to\infty} K_{t_i, t_j}(x,y) \biggl
\llvert \frac{dy}{d\eta} \biggr\rrvert = K^{\Pearcey}_{-\tau_j,-\tau_i}(\eta,
\xi).
\end{equation}
\end{thmm}

Theorem~\ref{thmm:tau_deformed_correlation_kernels} yields Theorem~\ref
{thmm:corr_kernel_intro}(a) as $\tau=
\epsilon(n)$, while in Section~\ref{sec:proof_of_Theorem_fixed_winding_number}
it is shown that Theorem~\ref{thmm:fixed_winding_number_correlations} also follows from Theorem~\ref{thmm:tau_deformed_correlation_kernels}.

In Section~\ref{sec:algebraic_contour_integrals}, we lay out the
contour integral formulas to do asymptotic analysis, and the
supercritical and critical cases are undertaken in Sections~\ref{sec:kernel_cusp} and \ref{sec:kernel_tacnode}, respectively.
Throughout this section, we simplify the notation for the orthogonal
polynomials \eqref{eq:defn_of_discrete_Gaussian_OP} a bit, writing
$p_k(x)$ for $p_{n,k}^{(T;\tau)}(x)$ when there is no possibility of confusion.

\subsection{Contour integral formula of the \texorpdfstring{$\tau$}{tau}-deformed correlation
kernel} \label{sec:algebraic_contour_integrals}

First, we express the function $S_{k, a}(x)$ defined in \eqref
{eq:Sla} in contour integral formulas that are convenient for
asymptotic analysis. Under some circumstances, it is convenient to
express $S_{k, a}(x)$ by an integral over an infinite contour. Consider
the function
%
\begin{equation}
\label{eq:tilde_S_defined} P_{k, a}(z; x) = \pi p_k(z)
e^{-{anz^2}/{2}} \frac{e^{i(x - \pi)nz
+ i\tau\pi}}{\sin(\pi nz - \tau\pi)}.
\end{equation}
It is straightforward to check that $P_{k, a}(z; x)$ has poles only at
lattice points of $L_{n, \tau}$, and
%
\begin{equation}
\Res_{z = s \in L_{n, \tau}} P_{k, a}(z; x) = \frac{1}{n}
p_k(s) e^{-
{ans^2}/{2}} e^{ixns}.
\end{equation}
Since $a$ is assumed to be positive, $P_{k, a}(z; x)$ vanishes
exponentially fast as $z \to\infty$ in the direction $0$ or direction
$\pi$. Thus, if $\Sigma^+$ is a contour in the upper half-plane and
from $e^0 \cdot\infty$ to $e^{\pi i} \cdot\infty$, and $\Sigma^-$ is
a contour in the lower half-plane and from $e^{\pi i} \cdot\infty$ to
$e^0 \cdot\infty$, we have
%
\begin{eqnarray}
\label{eq:Skapolys} S_{k, a}(x) = \frac{1}{2\pi i} \oint_{\Sigma}
P_{k, a}(z; x) \,dz = \oint _{\Sigma} p_k(z)
e^{-{anz^2}/{2}} \frac{e^{ixnz}}{e^{2\pi i nz -
2\tau\pi i} - 1} \,dz
\nonumber
\\[-8pt]
\\[-8pt]
\eqntext{\mbox{where } \Sigma= \Sigma^+ \cup
\Sigma^-.}
\end{eqnarray}

Under some other circumstances, it is convenient to express $S_{k,
a}(x)$ as the sum of a contour integral and a remainder that is
negligible in the asymptotic analysis. For any $M > 0$ such that $\pm
M$ are not lattice points in $L_{n, \tau}$, we write
%
\begin{eqnarray}
S_{k, a}(x) = \frac{1}{n} \mathop{\sum
_{s \in L_{n, \tau}}}_{ \vert s
\vert\leq M} p_k(s) e^{-{ans^2}/{2}}
e^{ixns} + s^{(M)}_{k,
a}(x)
\nonumber
\\[-8pt]
\\[-8pt]
\eqntext{\mbox{where }
\displaystyle s^{(M)}_{k, a}(x) = \frac{1}{n} \mathop{\sum
_{s \in L_{n, \tau}}}_{ \vert s \vert> M} p_k(s) e^{-
{ans^2}/{2}}
e^{ixns}.}
\end{eqnarray}
Recall the discrete Cauchy transform $Cp_k(z)$ defined in \eqref
{eq:def_Cauchy_trans}. Let $\Gamma$ be a closed contour such that the
part of $L_{n, \tau}$, $\{ s \in L_{n, \tau}\mid\vert s \vert\leq M
\}$
is enclosed in $\Gamma$ while the rest of $L_{n, \tau}$ is outside of
$\Gamma$. By the calculation of residues,
%
\begin{equation}
\label{eq:SkapolyandCauchy} S_{k, a}(x) = \frac{1}{2\pi i} \oint_{\Gamma}
Cp_k(z) e^{{(T -
a)nz^2}/{2}} e^{ixnz} \,dz + s^{(M)}_{k, a}(x).
\end{equation}

Therefore, by \eqref{eq:Skapolys} and \eqref
{eq:SkapolyandCauchy}, we can write \eqref{eq:corr_kernel} as
%
\begin{eqnarray}
\label{KKmajorKminor} \tilde{K}_{t_i, t_j}(x, y) &=& K^{\major}_{t_i, t_j}(x,
y) + K^{\minor
}_{t_i, t_j}(x, y)
\nonumber
\\[-8pt]
\\[-8pt]
\nonumber
&=& K^{\major}_{t_i, t_j}(x,
y; M) + K^{\minor}_{t_i,
t_j}(x, y; M),
\end{eqnarray}
where $K^{\major}_{t_i, t_j}(x, y; M)$ and $K^{\minor}_{t_i, t_j}(x, y;
M)$ depend on the positive constant $M$ which we suppress if there is
no possibility of confusion. They are defined as
%
\begin{eqnarray}
\label{eq:Kmajor} K^{\major}_{t_i, t_j}(x, y) &=& \frac{n}{4\pi^2 i}
\oint_{\Gamma} \,dz \oint_{\Sigma} \,dw \Biggl( \sum
^{n - 1}_{k = 0} \frac{1}{h^{(T; \tau
)}_{n, k}} Cp_k(z)
p_k(w) \Biggr) e^{{t_i nz^2}/{2} - {t_j
nw^2}/{2}}
\nonumber
\\[-8pt]
\\[-8pt]
\nonumber
&&{}\times\frac{-e^{ixnz - iynw}}{1 - e^{2\pi i nw - 2\tau\pi i}},\\
\label{eq:Kminor} %
K^{\minor}_{t_i, t_j}(x, y) &=&
\frac{n}{2\pi} \oint_{\Sigma} \,dw \Biggl( \sum
^{n - 1}_{k = 0} \frac{1}{h^{(T; \tau)}_{n, k}} s^{(M)}_{k,
T - t_i}(x)
p_k(w) \Biggr) e^{-{t_j nw^2}/{2}} \nonumber\\
&&{}\times\frac{-e^{-iynw}}{1
- e^{2\pi i nw - 2\tau\pi i}}
\\
\nonumber
&=& \frac{1}{2\pi} \oint_{\Sigma} \,dw \mathop{\sum
_{s \in L_{n, \tau}
}}_{ \vert s \vert> M} \Biggl( \sum
^{n - 1}_{k = 0} \frac{1}{h^{(T;
\tau)}_{n, k}} p_k(s)
p_k(w) \Biggr) e^{-{(T - t_i)ns^2}/{2} -
{t_j nw^2}/{2}} \\
&&{}\times \frac{-e^{ixns - iynw}}{1 - e^{2\pi i nw - 2\tau\pi i}}.\nonumber %
\end{eqnarray}
In \eqref{eq:Kmajor} and \eqref{eq:Kminor}, we assume that the
contour $\Gamma$ is the same as in \eqref{eq:SkapolyandCauchy},
$\Sigma$ is the same as in \eqref{eq:Skapolys}, and $\Gamma$ and
$\Sigma$ are disjoint.

Recall the well-known Christoffel--Darboux formula [\citet{Szego75},
Chapter~3.2]
%
\begin{eqnarray}
\label{eq:C-D} &&\sum^{n - 1}_{k = 0}
\frac{1}{h^{(T; \tau)}_{n, k}} p_k(z) p_k(w)
\nonumber
\\[-8pt]
\\[-8pt]
\nonumber
&&\qquad = \frac{1}{h^{(T; \tau)}_{n, n - 1}}
\frac{p_n(z) p_{n - 1}(w) - p_{n -
1}(z) p_n(w)}{z - w}.
\end{eqnarray}
We derive its straightforward variation
%
\begin{eqnarray}
\label{eq:C-D_variation} %
&& \sum^{n - 1}_{k = 0}
\frac{1}{h^{(T; \tau)}_{n, k}} Cp_k(z) p_k(w)
\nonumber\\
&&\qquad= \sum^{n - 1}_{k = 0} \frac{1}{nh^{(T; \tau)}_{n, k}} \sum
_{s \in
L_{n, \tau}} \frac{p_k(s) e^{-{Tns^2}/{2}}}{z - s} p_k(w)
\nonumber\\
&&\qquad= \sum_{s \in L_{n, \tau}} \frac{1}{n} \frac{e^{-{Tns^2}/{2}}}{z
- s}
\sum^{n - 1}_{k = 0} \frac{1}{h^{(T; \tau)}_{n, k}}
p_k(s) p_k(w)
\nonumber\\
&&\qquad= \sum_{s \in L_{n, \tau}} \frac{1}{h^{(T; \tau)}_{n, n - 1}}
\frac
{p_n(s) p_{n - 1}(w) - p_{n - 1}(s) p_n(w)}{n(z - s)(s - w)} e^{-
{Tns^2}/{2}}
\nonumber\\
&&\qquad= \frac{1}{h^{(T; \tau)}_{n, n - 1}} \frac{1}{z - w} \sum_{s \in
L_{n, \tau}}
\frac{1}{n} \frac{p_n(s) e^{-{Tns^2}/{2}}}{z - s} p_{n -
1}(w) \nonumber
\\
&&\hspace*{88pt}\qquad\quad{}- \frac{1}{n}
\frac{p_n(s) e^{-{Tns^2}/{2}}}{w - s} p_{n -
1}(w)
\nonumber\\
&&\hspace*{88pt}\qquad\quad{} - \frac{1}{n} \frac{p_{n - 1}(s) e^{-{Tns^2}/{2}}}{z
- s} p_n(w)\nonumber\\
&&\hspace*{88pt}\qquad\quad{} +
\frac{1}{n} \frac{p_{n - 1}(s) e^{-{Tns^2}/{2}}}{w -
s} p_n(w)
\\
&&\qquad= \frac{1}{h^{(T; \tau)}_{n, n - 1}} \biggl( \frac{Cp_n(z) p_{n -
1}(w) - Cp_{n - 1}(z) p_n(w)}{z - w} \nonumber\\
&&\hspace*{36pt}\qquad\quad{}- \frac{Cp_n(w) p_{n - 1}(w) -
Cp_{n - 1}(w) p_n(w)}{z - w} \biggr).\nonumber
\end{eqnarray}
Using \eqref{eq:C-D_variation} and \eqref{eq:C-D} and noting that
$\oint
_{\Gamma} \frac{dz}{z - w}  ( Cp_n(w) p_{n - 1}(w) -\break  Cp_{n - 1}(w)
p_n(w)  ) = 0$ for $w \in\Sigma$, we simplify \eqref{eq:Kmajor}
and \eqref{eq:Kminor} as
\begin{eqnarray}
\label{eq:Kmajorcontourint} K^{\major}_{t_i, t_j}(x, y) &=&
 \frac{n}{4\pi^2 i}
\oint_{\Gamma} \,dz \oint_{\Sigma} \,dw \frac{1}{h^{(T; \tau)}_{n, n - 1}}
\frac{Cp_n(z) p_{n
- 1}(w) - Cp_{n - 1}(z) p_n(w)}{z - w}\hspace*{-30pt}
\nonumber
\\[-8pt]\hspace*{-30pt}
\\[-8pt]
\nonumber
&&{}\times e^{{t_i nz^2}/{2} - {t_j nw^2}/{2}} \frac{-e^{ixnz -
iynw}}{1 - e^{2\pi i nw - 2\tau\pi i}},\hspace*{-30pt}
\nonumber
\end{eqnarray}
%
\begin{eqnarray}
\label{eq:Kminorcontourint} K^{\minor}_{t_i, t_j}(x, y) &= &\frac{1}{2\pi}
\oint_{\Sigma} \,dw \mathop{\sum_{s \in L_{n, \tau}}}_{ \vert s \vert> M}
\frac{1}{h^{(T; \tau
)}_{n, n - 1}} \frac{p_n(s) p_{n - 1}(w) - p_{n - 1}(s) p_n(w)}{s - w}
\nonumber
\\[-8pt]
\\[-8pt]
\nonumber
&&{}\times e^{-{(T - t_i)ns^2}/{2} - {t_j nw^2}/{2}} \frac
{-e^{ixns - iynw}}{1 - e^{2\pi i nw - 2\tau\pi i}}.
\end{eqnarray}
These formulas are convenient in the derivation of the Pearcey kernel.
For the tacnode kernel, however, it is more convenient to write \eqref
{eq:corr_kernel} in the form
%
\begin{eqnarray}
\label{eq:double_countour_for_tacnode} 
&&\frac{n}{2\pi} \oint_\Sg \,dz \oint_{\Sg} \,dw\,
e^{-({n}/{2})[t_j w^2+(T-t_i)z^2]} e^{in(xz-yw)} \Biggl(\sum_{k=0}^{n-1}
\frac{p_k(z)p_k(w)}{h_{n,k}^{(T;\tau)}} \Biggr)\nonumber
\\
&&\quad{} \times\frac{e^{2\pi i(nz-\tau)}}{(e^{2\pi i(nz-\tau
)}-1)(e^{2\pi i(nw-\tau)}-1)}\nonumber
\\
&&\qquad=\frac{n}{2\pi} \oint_\Sg \,dz \oint_{\Sg} \,dw\,
e^{-({n}/{2})[t_j
w^2+(T-t_i)z^2]} e^{in(xz-yw)}
\\
&&\qquad\quad{} \times \biggl(\frac
{p_n(z)p_{n-1}(w)-p_{n-1}(z)p_n(w)}{h_{n,n-1}^{(T;\tau)}(z-w)} \biggr)\nonumber\\
&&\qquad\quad{}\times\frac{e^{2\pi i(nz-\tau)}}{(e^{2\pi i(nz-\tau)}-1)(e^{2\pi
i(nw-\tau
)}-1)},\nonumber
\end{eqnarray}
by \eqref{eq:Skapolys} and \eqref{eq:C-D}, noting that the term
$e^{ixnz}$ can be replaced by $e^{ix nz + 2\pi i (nz - \tau)}$ in
\eqref
{eq:Skapolys}.

\subsection{Limiting Pearcey process} \label{sec:kernel_cusp}

In this subsection, we assume that $t_i, t_j, x, y$ are defined by
\eqref{eq:coefficients_Pearcey}, and the parameter $d$ in \eqref
{eq:coefficients_Pearcey} is to be determined later in \eqref{eq:d_in_Pearcey}.

To evaluate $K^{\major}_{t_i, t_j}(x, y)$ in \eqref
{eq:Kmajorcontourint}, we define some notation. We denote for any $z
\in\compC\setminus(-\infty, \beta)$
%
\begin{eqnarray}
\label{eq:defn_I_tildeI} I(z) &=& g(z) - \frac{t_c z^2}{2} + i\pi z,
\nonumber
\\[-8pt]
\\[-8pt]
\nonumber
 \tilde{I}(z) &=&
\cases{ \displaystyle g(z) - \frac{t_c z^2}{2} + i\pi z = I(z), &\quad $\mbox{if $\Im z
> 0$}$, \vspace*{2pt}
\cr
\displaystyle g(z) - \frac{t_c z^2}{2} - i\pi z, &\quad  $\mbox{if $\Im z <
0$}$. } %
\end{eqnarray}
Although $I(z)$ and $\tilde{I}(z)$ are generally not well defined on
the real line, we define
%
\begin{equation}
I(x) = \lim_{y \to0^+} I(x + iy),\qquad \tilde{I}(x) = \lim
_{y \to
0^+} \tilde{I}(x + iy)\qquad \mbox{for $x \in\realR$}.
\end{equation}
Note that by the relation \eqref{eq:differencegcentralll} of
$g_+(x)$ and $g_-(x)$ for $x \in(-\alpha, \alpha)$, the $g$-function
defined on $\compC_+$ can be analytically continued to $\compC_-$
through the interval $(-\alpha, \alpha)$. This analytic continuation is
well defined on $\compC\setminus((-\infty, -\alpha) \cup(\alpha,
\infty))$, and we denote it as $\tilde{g}(z)$. By \eqref
{eq:differencegcentralll}, we have
%
\begin{equation}
\label{eq:defn_tilde_g} \tilde{g}(z) = %
\cases{ g(z), & \quad $\mbox{if $\Im z >
0$}$, \vspace*{2pt}
\cr
g(z) + i\pi- 2\pi i z, & \quad $\mbox{if $\Im z < 0$}$. }
\end{equation}
Thus, we can express $\tilde{I}(z)$ as
%
\begin{equation}
\label{eq:alternative_formula_tilde_g} \tilde{I}(z) = %
\cases{\displaystyle  \tilde{g}(z) -
\frac{t_c z^2}{2} + i\pi z, &\quad  $\mbox{if $\Im z > 0$}$, \vspace*{2pt}
\cr
\displaystyle \tilde{g}(z) - \frac{t_c z^2}{2} + i\pi z - i\pi,& \quad $\mbox{if $\Im z < 0$}$. }
\end{equation}
We also define the function $F(z, w)$ for $z, w \in\compC\setminus
\realR$ as
%
\begin{eqnarray}
\label{eq:occurance_of_F} F(z, w)& = &\frac{e^{n(g(z) - g(w))}}{h^{(T; \tau)}_{n, n - 1}} \bigl( Cp_n(z)
p_{n - 1}(w) - Cp_{n - 1}(z) p_n(w) \bigr)
\nonumber
\\[-8pt]
\\[-8pt]
\nonumber
&&{}\times
\frac{-1}{1 -
e^{2\pi i nw - 2\tau\pi i}}.
\end{eqnarray}
Then we write \eqref{eq:Kmajorcontourint} as
%
\begin{eqnarray}
\label{eq:K^major_Pearcey} K^{\major}_{t_i, t_j}(x, y) &=& \frac{n}{4\pi^2 i}
\oint_{\Gamma} \,dz \oint_{\Sigma} \,dw e^{-nI(z) + n\tilde{I}(w)}
\nonumber
\\[-8pt]
\\[-8pt]
\nonumber
&&{}\times \frac{e^{n^{{1}/{2}}
({d^2}/{2})(\tau_i z^2 - \tau_j w^2) - i n^{{1}/{4}} \,d(\xi z -
\eta w)}}{z - w} F(z, w).
\end{eqnarray}

In Appendix~\ref{sec:critical_pt}, we construct the contour $\tilde
{\Sigma}$ that intersects the real axis at $0$ and lies above the real
axis elsewhere, such that $\Re I(z)$ attains its global maximum on
$\tilde{\Sigma}$ uniquely at $0$, and construct the contour $\tilde
{\Gamma}$ that lies above or on the real axis, passes through $0$,
overlaps with the real axis in the vicinity of $0$, starts at $M$ and
ends at $-M$, where $M > \beta$ such that $\Re I(z)$ attains its global
minimum on $\tilde{\Gamma}$ uniquely at $0$. We define $\Sigma^{\upp}$
by a deformation of $\tilde{\Sigma}$ such that $\Sigma^{\upp}$ is
identical to $\tilde{\Sigma}$ outside of the region $\{ z \in\compC
\mid\vert z \vert< 4 d^{-1} n^{-1/4} \}$, and in this region the
corner of $\tilde{\Sigma}$ is leveled to be a horizontal base that is
above $0$ by $2 d^{-1} n^{-1/4}$. We also define $\Gamma^{\upp}$ by a
deformation of $\tilde{\Gamma}$ as follows. First, we shift $\tilde
{\Gamma}$ upward by $d^{-1} n^{-1/4}$, and then connect the two end
points of the shifted $\tilde{\Gamma}$, namely $\pm M + i d^{-1}
n^{-1/4}$, to $\pm M$, respectively, by vertical bars of length $d^{-1}
n^{-1/4}$. $\Gamma^{\upp}$ is the result of the deformation. At last,
we construct $\Sigma^{\low}$ and $\Gamma^{\low}$ by a reflection of
$\Sigma^{\upp}$ and $\Gamma^{\upp}$, respectively, about the real axis,
and define
%
\begin{equation}
\Sigma= \Sigma^{\upp} \cup\Sigma^{\low}, \qquad\Gamma=
\Gamma^{\upp
} \cup\Gamma^{\low},
\end{equation}
with the orientation prescribed for $\Sigma$ and $\Gamma$. See Figures~\ref{fig:Sigma_Gamma_local_Pearcey} and \ref{fig:schematic_Pearcey}. We
assume, without loss of generality, that $\pm M$ defined in Appendix~\ref{sec:critical_pt} are not lattice points of $L_{n, \tau}$, otherwise
we deform the contour around $\pm M$ by $\bigO(n^{-1})$.
\begin{figure}

\includegraphics{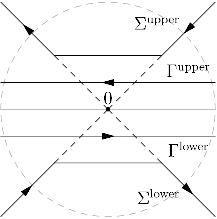}

\caption{$\Sigma^{\upp}$ and $\Gamma^{\upp}$ are deformed from
$\tilde
{\Sigma}$ and $\tilde{\Gamma}$, respectively. The circled region is the
neighborhood $\{z \in\compC\mid\vert z \vert< 4d^{-1} n^{-1/4} \}$.}
\label{fig:Sigma_Gamma_local_Pearcey}
\end{figure}

\begin{figure}[b]

\includegraphics{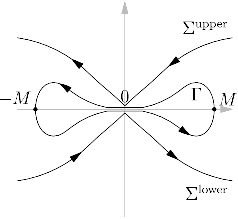}

\caption{Schematic figures of $\Sigma$ and $\Gamma$. They are close at
$0$ but do not intersect.}
\label{fig:schematic_Pearcey}
\end{figure}




Then we denote
%
\begin{eqnarray}
\Gamma_{\local} = \Gamma\cap N_{n^{-{2}/{9}}}(0),\qquad \Sigma
_{\local} = \Sigma\cap N_{n^{-{2}/{9}}}(0)
\nonumber
\\[-8pt]
\\[-8pt]
\eqntext{\mbox{where }
N_{n^{-{2}/{9}}}(0) = \bigl\{ z \in\compC\mid\vert z \vert< n^{-{2}/{9}}
\bigr\},}
\end{eqnarray}
and divide $\Gamma_{\local}$ and $\Sigma_{\local}$ into upper and lower
parts, respectively, as
%
\begin{eqnarray}
\Gamma^{\upp}_{\local} &=& \Gamma_{\local} \cap\compC_+,\qquad
\Gamma ^{\low}_{\local} = \Gamma_{\local} \cap\compC_-,
\nonumber
\\[-8pt]
\\[-8pt]
\nonumber
\Sigma^{\upp
}_{\local} &=& \Sigma_{\local} \cap\compC_+,\qquad
\Sigma^{\low}_{\local
} = \Sigma_{\local} \cap\compC_-.
\end{eqnarray}
By \eqref{asub1}, \eqref{asub1a}, \eqref{eq:Mijdet1}, \eqref
{eq:jump_M_11_21} and \eqref{eq:jump_M_12_22}, we have that for $z \in
\Gamma_{\local}$ and $w \in\Sigma_{\local}$,
%
\begin{equation}
\label{eq:Fzwatcut} F(z, w) = %
\cases{ \bigl(1 + \bigO
\bigl(n^{-{3}/{4}}\bigr)\bigr) \bigl(1 + \bigO\bigl(\vert z \vert\bigr) + \bigO\bigl( \vert
w \vert\bigr)\bigr), \vspace*{2pt}\cr
\qquad  \mbox{if $z \in\Gamma^{\upp}_{\local}$ and $w
\in \Sigma^{\upp}_{\local}$}, \vspace*{2pt}
\cr
\bigl(-e^{2\tau\pi i} + \bigO\bigl(n^{-{3}/{4}}\bigr)\bigr) \bigl(1 +
\bigO\bigl(\vert z \vert\bigr) + \bigO\bigl( \vert w \vert\bigr)\bigr),\vspace*{2pt}\cr
\qquad  \mbox{if $z \in
\Gamma^{\low}_{\local}$ and $w \in\Sigma^{\low}_{\local}$},
\vspace*{2pt}
\cr
\bigl((-1)^n + \bigO\bigl(n^{-{3}/{4}}\bigr)
\bigr) \bigl(1 + \bigO\bigl(\vert z \vert\bigr) +
\bigO\bigl( \vert w \vert\bigr)\bigr),\vspace*{2pt}\cr
\qquad \mbox{if $z
\in\Gamma^{\upp}_{\local}$ and $w \in \Sigma^{\low}_{\local}$},
\vspace*{2pt}
\cr
\bigl((-1)^n e^{4\tau\pi i} + \bigO
\bigl(n^{-{3}/{4}}\bigr)\bigr) \bigl(1 + \bigO\bigl(\vert z \vert\bigr) +
\bigO\bigl( \vert
w \vert\bigr)\bigr), \vspace*{2pt}\cr
\qquad \mbox{if $z \in\Gamma^{\low
}_{\local}$ and $w
\in\Sigma^{\upp}_{\local}$}. } %
\end{equation}

Note that for $z$ in the upper half-plane around $0$, $0$ is a triple
zero of $I'(z)$ by Lemma~\ref{lem:triple_zero_I}, and the Taylor
expansions of $I(z)$ and $\tilde{I}(z)$ are
%
\begin{equation}
\label{eq:Taylor_exp_upp_Pearcey} I(z) = \tilde{I}(z) = I(0) + \tfrac{1}{24}
\tilde{g}^{(4)}(0) z^4 + \bigO\bigl(z^5\bigr),
\end{equation}
where $\tilde{g}(z)$ is defined as the analytic continuation of $g(z)$
across $(-\alpha, \alpha)$ as defined in \eqref{eq:defn_tilde_g}; see
Lemma~\ref{lem:triple_zero_I}. By \eqref{eq:formulagsuperrr},
%
\begin{eqnarray}
\label{eq:4th_derivative_of_g} \tilde{g}^{(4)}(0) &=& \frac{1}{\alpha^3}\bigl(\bigl(1
+ k^2\bigr) \E- \bigl(1 - k^2\bigr) \K \bigr)
\nonumber
\\[-8pt]
\\[-8pt]
\nonumber
& =&
\frac{k^2}{\alpha^3} \int^1_0
\frac{(1 - s^2) + (1 - k^2
s^2)}{\sqrt{(1
- s^2)(1 - k^2 s^2)}} \,ds > 0.
\end{eqnarray}
For $z$ in the lower half-plane around $0$, the Taylor expansion of
$I(z)$ and $\tilde{I}(z)$ are, by \eqref
{eq:alternative_formula_tilde_g} and \eqref{eq:defn_tilde_g},
%
\begin{eqnarray}
\label{eq:Taylor_exp_low_Pearcey} \tilde{I}(z)& =& I(0) - \pi i + \tfrac{1}{24}
\tilde{g}^{(4)}(0) z^4 + \bigO\bigl(z^5\bigr),
\nonumber
\\[-8pt]
\\[-8pt]
\nonumber
I(z) &=& I(0) - \pi i + 2\pi i z + \tfrac{1}{24} \tilde {g}^{(4)}(0)
z^4 + \bigO\bigl(z^5\bigr).
\end{eqnarray}
We make the change of variables
%
\begin{equation}
z = \bigl( \tfrac{1}{6} g^{(4)}(0) \bigr)^{- {1}/{4}}
n^{-
{1}/{4}} u,\qquad  w = \bigl( \tfrac{1}{6} g^{(4)}(0)
\bigr)^{-
{1}/{4}} n^{-{1}/{4}} v,
\end{equation}
and let
%
\begin{equation}
\label{eq:d_in_Pearcey} d = \bigl( \tfrac{1}{6} g^{(4)}(0)
\bigr)^{{1}/{4}}.
\end{equation}
Then by the Taylor expansions \eqref{eq:Taylor_exp_upp_Pearcey} and
\eqref{eq:Taylor_exp_low_Pearcey}, for $z \in\Gamma^{\upp}_{\local}$
and $w \in\Sigma^{\upp}_{\local}$,
%
\begin{eqnarray}
\label{eq:asy_crit_upper_Pearcey} \tilde{I}(w) &=& I(0) + \frac{1}{4n} v^4 +
\bigO \biggl( \frac
{v^5}{n^{{5}/{4}}} \biggr),
\nonumber
\\[-8pt]
\\[-8pt]
\nonumber
 I(z) &= &I(0) + \frac{1}{4n}
u^4 + \bigO \biggl( \frac{u^5}{n^{{5}/{4}}} \biggr),
\end{eqnarray}
and for $z \in\Gamma^{\low}_{\local}$ and $w \in\Sigma^{\low
}_{\local
}$, noting that $\Im z = -d^{-1}n^{-1/4}$ for $z \in\Gamma^{\low
}_{\local}$,
%
\begin{eqnarray}
\label{eq:asy_crit_lower_Pearcey} \tilde{I}(w)& =& I(0) + \frac{1}{4n} v^4 +
\bigO \biggl( \frac
{v^5}{n^{{5}/{4}}} \biggr) - \pi i,
\nonumber
\\[-8pt]
\\[-8pt]
\nonumber
 I(z) &=& I(0) +
\frac{2\pi
}{d n^{{1}/{4}}} + \frac{1}{4n} u^4 + \bigO \biggl(
\frac
{u^5}{n^{{5}/{4}}} \biggr) + (2\pi\Re z - \pi)i.
\end{eqnarray}

By the asymptotics \eqref{eq:asy_crit_upper_Pearcey}, \eqref
{eq:asy_crit_lower_Pearcey} and \eqref{eq:Fzwatcut}, together with
\eqref{eq:int_formula_Pearcey}, we have that
%
\begin{eqnarray}
\label{eq:essential_Pearcey} %
&& \oint_{\Gamma^{\upp}_{\local}} \,dz \oint_{\Sigma_{\local}} \,dw
e^{-nI(z) + n\tilde{I}(w)} \frac{e^{n^{{1}/{2}}
({d^2}/{2})(\tau
_i z^2 - \tau_j w^2) - i n^{{1}/{4}} \,d(\xi z - \eta w)}}{z - w} F(z, w)
\nonumber\\
&&\qquad= dn^{{1}/{4}} \oint_{\Gamma^{\upp}_{\local}} \,dz \oint _{\Sigma
_{\local}} \,dw
\frac{e^{{v^4}/{4} - ({\tau_j}/{2}) u^2 +
i\eta
v}}{e^{{u^4}/{4} - ({\tau_i}/{2}) u^2 + i\xi u}}\nonumber\\
&&\qquad\quad{}\times \frac{1 +
\bigO
 ( {u}/{n^{{1}/{4}}}  ) + \bigO (
{v}/{n^{{1}/{4}}}  )}{u - v}
\\
&&\qquad= \frac{1}{dn^{{1}/{4}}} \biggl( \oint_{\Gamma_P} \,du \oint _{\Sigma_P} \,dv
\frac{e^{{v^4}/{4} - ({\tau_j}/{2}) u^2 +
i\eta
v}}{e^{{u^4}/{4} - ({\tau_i}/{2}) u^2 + i\xi u}} \frac{1}{u - v} +
 \bigO\bigl(n^{-{1}/{4}}\bigr)
\biggr)
\nonumber\\
&&\qquad= \frac{4\pi^2 i}{dn^{{1}/{4}}} \bigl( \tilde{K}^{\Pearcey
}_{-\tau_j, -\tau_i}(\eta,
\xi) + \bigO\bigl(n^{-{1}/{4}}\bigr) \bigr).\nonumber %
\end{eqnarray}
On the other hand, from the comparison of formulas \eqref
{eq:asy_crit_upper_Pearcey} and \eqref{eq:asy_crit_lower_Pearcey}, the
formula of $\Re I(z)$ on $\Gamma^{\low}_{\local}$ has a term $2\pi
/(dn^{1/4})$ that does not appear in the formula of $\Re I(z)$ on
$\Gamma^{\upp}_{\local}$, we have
%
\begin{eqnarray}
\label{eq:local_nonessential_Pearcey}\quad&& \oint_{\Gamma^{\low}_{\local}} \,dz \oint_{\Sigma_{\local}} \,dw
e^{-nI(z)
+ n\tilde{I}(w)} \frac{e^{n^{{1}/{2}} ({d^2}/{2})(\tau_i z^2 -
\tau_j w^2) - i n^{{1}/{4}} \,d(\xi z - \eta w)}}{z - w} F(z, w)
\nonumber
\\[-4pt]
\\[-12pt]
\nonumber
&&\qquad =
\frac{4\pi^2 i}{dn^{{1}/{4}}} \bigO \bigl(
e^{-{2\pi n^{{4}/{3}}}/{d}} \bigr).
\end{eqnarray}

For $z \in\Gamma^{\upp} \setminus\Gamma^{\upp}_{\local}$ and $w
\in
\Sigma^{\upp} \setminus\Sigma^{\upp}_{\local}$, by the property that
$\Re I(z)$ attains its global minimum on $\tilde{\Gamma}$ at $0$ and
$\Re\tilde{I}(z) = \Re I(z)$ attains its global maximum on $\tilde
{\Sigma}$ at $0$, and the local behavior of $I(z) = \tilde{I}(z)$ at
$0$ in the upper half-plane, we have that
%
\begin{eqnarray}
\Re I(z) &> &\Re I(z_0) + \ep n^{-{8}/{9}} \qquad\mbox{for $z \in z
\in\Gamma^{\upp} \setminus\Gamma^{\upp}_{\local}$},
\label
{eq:estIwnotlocal1Pearcey}
\\
\Re\tilde{I}(w)& <& \Re I(z_0) - \ep n^{-{8}/{9}}\qquad \mbox{for
$w \in z \in\Sigma^{\upp} \setminus\Sigma^{\upp}_{\local}$}.
\label
{eq:estIwnotlocalPearcey}
\end{eqnarray}
For $z \in\Gamma^{\low}$ and $w \in\Sigma^{\low}$, on the other hand,
by the formula \eqref{eq:defn_I_tildeI} of $I(z)$ and $\tilde{I}(w)$
and the property that $\Re g(z) = \Re g(\bar{z})$ that follows from the
definition~\eqref{g1} of $g(z)$, we obtain that
%
\begin{equation}
\label{eq:CineqPearceycase} \Re I(z) > \Re I(\bar{z}), \qquad
\Re\tilde{I}(w) = \Re\tilde{I}(\bar
{w}) \qquad\mbox{for $z, w \in\compC_-$},
\end{equation}
and it applies for all $w \in\Sigma^{\low}$ and $z \in\Gamma^{\upp}$
except for $z = \pm M$. Also we have the estimate for $F(z, w)$ that
for all $z \in D(\delta, \ep, n) \cup E(\ep; n, \tau)$ and $w \in
D(\delta, \ep, n) \cup E(\ep)$, where $\delta\in[0, 1)$, $\ep> 0$ is
a small positive number, and $D(\delta, \ep, n), E(\ep), E(\ep; n,
\tau
)$ are defined in \eqref{eq:defnDden} and \eqref
{eq:defnEeandEent}, by Propositions \ref{asymptotics_of_OPs}
and \ref{prop:asymptotics_of_OPs_outside},
%
\begin{eqnarray}
\label{eq:roughestimateFzw} F(z, w) = \bigO(1)
\nonumber
\\[-8pt]
\\[-8pt]
\eqntext{\mbox{if $z \in D(\delta, \ep, n) \cup E(\ep;
n, \tau)$ and $w \in D(\delta, \ep, n) \cup E(\ep)$}.}
\end{eqnarray}
Then using \eqref{eq:estIwnotlocal1Pearcey}, \eqref
{eq:estIwnotlocalPearcey}, \eqref{eq:CineqPearceycase} and
\eqref{eq:roughestimateFzw}, we have that for some $\varepsilon> 0$
%
\begin{eqnarray}
\label{eq:KmajornonessentialPearcey}&& \oint_{\Gamma} \,dz \oint_{\Sigma} \,dw
e^{-nI(z) + n\tilde{I}(w)} \frac
{e^{n^{{1}/{2}} ({d^2}/{2})(\tau_i z^2 - \tau_j w^2) - i
n^{
{1}/{4}} \,d(\xi z - \eta w)}}{z - w} F(z, w)
\nonumber
\\
&&\qquad =
\oint_{\Gamma_{\local}} \,dz \oint_{\Sigma_{\local}} \,dw e^{-nI(z) +
n\tilde{I}(w)}
\\
&&\qquad\quad{}\times \frac{e^{n^{{1}/{2}} ({d^2}/{2})(\tau_i z^2 -
\tau
_j w^2) - i n^{{1}/{4}} \,d(\xi z - \eta w)}}{z - w} F(z, w) + \frac
{1}{dn^{{1}/{4}}} o\bigl(e^{-\ep n^{{1}/{9}}}\bigr).
\nonumber
\end{eqnarray}

Next, we estimate $K^{\minor}_{t_i, t_j}(x, y)$. Using the fact that
$\Re I(z)$ attains its global minimum on $\tilde{\Gamma}$ at $0$ and
$\pm M$ are the ends of $\tilde{\Gamma}$, there is a $c_1 > 0$ such that
%
\begin{equation}
\label{eq:additional_label} \Re\tilde{I}(0) = \Re I(0) = \Re I(M) - c_1 = \Re
I(-M) - c_1.
\end{equation}
By the approximation \eqref{eq:Taylor_exp_low_Pearcey} for $w \in
\Sigma
_{\local}$ of $\tilde{I}(w)$, the estimate \eqref
{eq:estIwnotlocalPearcey} and \eqref{eq:CineqPearceycase}
for $w \in\Sigma\setminus\Sigma_{\local}$, and \eqref
{eq:additional_label}, using the asymptotic formula \eqref{asub1} of
$p_n(s)$, we have that for all $w \in\Sigma$ and $t_i, t_j, x, y$
expressed by \eqref{eq:coefficients_Pearcey},
%
\begin{eqnarray}
&&p_n(w) e^{-{t_j nw^2}/{2}} \frac{-e^{-iynw}}{1 - e^{2\pi i nw -
2\tau\pi i}}
\nonumber
\\[-8pt]
\\[-8pt]
\nonumber
&&\qquad = e^{{(T - 2t_c)nM^2}/{4}}
\bigO\bigl(e^{n (g(M) -
{TM^2}/{4} - c_1 + \ep)}\bigr),
\end{eqnarray}
where $\ep$ is an arbitrarily small positive number. Similarly, for $s
\in\realR\setminus[-M, M]$, using the asymptotics formula \eqref
{asub1} of $p_{n - 1}(s)$ and Proposition~\ref
{prop:asymptotics_of_OPs_outside}, we have
%
\begin{eqnarray}
&&\frac{1}{h^{(T; \tau)}_{n, n - 1}} p_{n - 1}(s) e^{-{(T -
t_i)ns^2}/{2}} e^{ixns}
\nonumber
\\[-8pt]
\\[-8pt]
\nonumber
&&\qquad=
e^{-{(T - 2t_c)ns^2}/{4}} \bigO\bigl(e^{n(\Re
g_+(s) - {Ts^2}/{4} - l + \ep')}\bigr),
\end{eqnarray}
where $\ep'$ is an arbitrarily small positive number. By the
inequalities \eqref{eq:variational_condition},
%
\begin{equation}
\Re g_+(s) - \frac{Ts^2}{4} \leq\frac{l}{2} \qquad\mbox{for $s \in
\realR\setminus[-M, M]$}.
\end{equation}
Hence, for all $w \in\Sigma$ and $s \in\realR\setminus[-M, M]$,
%
\begin{eqnarray}
\label{eq:estpn1spnwPearcey} &&\frac{1}{h^{(T; \tau)}_{n, n - 1}} p_{n - 1}(s) p_n(w)
e^{-{(T -
t_i)ns^2}/{2} - {t_j nw^2}/{2}} \frac{-e^{ixns - iynw}}{1 -
e^{2\pi
i nw - 2\tau\pi i}}
\nonumber
\\[-8pt]
\\[-8pt]
\nonumber
&&\qquad= \bigO\bigl(e^{n(- c_1 + \ep+ \ep')}\bigr).
\end{eqnarray}
If the factor $p_{n - 1}(s) p_n(w)$ in \eqref
{eq:estpn1spnwPearcey} is changed into $p_n(s) p_{n - 1}(w)$,
the estimate~\eqref{eq:estpn1spnwPearcey} still holds. So for
all $w \in\Sigma$ and $s \in\realR\setminus[-M, M]$,
%
\begin{eqnarray}
&&\frac{1}{h^{(T; \tau)}_{n, n - 1}} \frac{p_n(s) p_{n - 1}(w) - p_{n -
1}(s) p_n(w)}{s - w} e^{-{(T - t_i)ns^2}/{2} -
{t_j nw^2}/{2}} \nonumber\\
&&\quad{}\times \frac{-e^{ixns - iynw}}{1 - e^{2\pi i nw - 2\tau\pi i}}\\
&&\qquad =
\bigO\bigl(e^{n(-
c_1 + \ep+ \ep')}\bigr).\nonumber
\end{eqnarray}
Note that the integrand in \eqref{eq:Kminor} vanishes rapidly as $w
\in\infty$ along $\Sigma$. Thus, we have
%
\begin{equation}
\label{eq:K^minor_Pearcey} K^{\minor}_{t_i, t_j}(x, y) = \bigO
\bigl(e^{n(- c_1 + \ep+ \ep')}\bigr).
\end{equation}
The asymptotics \eqref{eq:essential_Pearcey}, \eqref
{eq:local_nonessential_Pearcey}, \eqref
{eq:KmajornonessentialPearcey} and \eqref{eq:K^minor_Pearcey},
together with \eqref{eq:K^major_Pearcey} and \eqref
{KKmajorKminor}, yield
%
\begin{equation}
\label{eq:asy_tilde_K_Pearcey} \tilde{K}_{t_i, t_j}(x, y) =
\frac{n^{{3}/{4}}}{d} \bigl(
\tilde {K}^{\Pearcey}_{-\tau_j, -\tau_i}(\eta, \xi) + \bigO\bigl(n^{-{1}/{4}}
\bigr) \bigr).
\end{equation}

At last, $\WWW_{[i, j)}(x, y)$ is defined in \eqref
{eq:Wi,jwithcirc} with explicit formula given in \eqref
{eq:Wcirc_ij_exact_formula}. It is $0$ when $t_j \leq t_i$ and when
$t_j > t_i$, a standard approximation technique gives that
%
\begin{equation}
\label{eq:asy_Wcirc_Pearcey} \WWW_{[i, j)}(x, y) =
\frac{n^{{3}/{4}}}{d}
\frac{1}{\sqrt
{2\pi
(\tau_j - \tau_i)}} e^{-{(\eta- \xi)^2}/{(2(\tau_j - \tau
_i))}} \bigl(1 + \bigO\bigl(n^{-{1}/{4}}\bigr)
\bigr).
\end{equation}

Comparing \eqref{eq:asy_tilde_K_Pearcey} and \eqref
{eq:asy_Wcirc_Pearcey} with \eqref{eq:Pearcey_kernel} and \eqref
{eq:nonessential_Pearcey}, we obtain \eqref{eq:tau_deformed_cusp}.

\subsection{Limiting tacnode process} \label{sec:kernel_tacnode}

With notation defined in \eqref{tac6}, we write \eqref
{eq:double_countour_for_tacnode} as
%
\begin{equation}
\label{tac8} \tilde{K}_{t_i, t_j}(x,y) = \frac{n}{2\pi}
\oint_{\Sg} \,dz \oint _{\Sg} \,dw J(z, w),
\end{equation}
where
%
\begin{eqnarray}
J(z, w)& =& \biggl( e^{-({nT}/{4})(z^2 + w^2)} \frac{p_n(z)p_{n - 1}(w)
- p_{n - 1}(z)p_n(w)}{h^{(T; \tau)}_{n, n - 1}(z - w)} \biggr)\nonumber\\
&&{}\times e^{n^{{2}/{3}}( {d^2}/{2}) (\tau_i z^2 - \tau_j w^2)}
e^{-in^{{1}/{3}} \,d (\xi z - \eta w)}
\\
&&{}\times\frac{e^{\pi i(nz-\tau)} e^{\pi i(nw-\tau)}}{(e^{2\pi
i(nz-\tau
)}-1)(e^{2\pi i(nw-\tau)}-1)}.
\nonumber
\end{eqnarray}

In this section, we define the shape of $\Sigma$ as follows. First, the
part of $\Sigma$ in the first quadrant consists of a horizontal ray
from $\infty\cdot e^{0}$ to $1 + i$, a line segment from $\sqrt{3} +
i$ to $(\sqrt{3} + i) d^{-1} n^{-1/3}$, and a line segment from
$(\sqrt
{3} + i) d^{-1} n^{-1/3}$ to $i d^{-1} n^{-1/3}$. The part of $\Sigma$
in the second quadrant is a reflection of that in the first quadrant
about the imaginary axis, and the part of $\Sigma$ in the lower
half-plane is a reflection of that in the upper half-plane about the
real axis. $\Sigma\cap\compC_+$ is oriented from right to left, and
$\Sigma\cap\compC_-$ is from left to right. See Figure~\ref{fig:Sigma_T_finite}. We denote $\Sigma_{\local}, \Sigma^{\upp
}_{\local
}, \Sigma^{\low}_{\local}$ as
%
\begin{eqnarray}
\Sigma_{\local} &= &\Sigma\cap\bigl\{ z \in\compC\mid\vert z \vert<
n^{-{1}/{4}} \bigr\}, \qquad\Sigma^{\upp}_{\local} =
\Sigma_{\local} \cap\compC_+,
\nonumber
\\[-8pt]
\\[-8pt]
\nonumber
 \Sigma^{\low}_{\local} &=&
\Sigma_{\local} \cap \compC_-.
\end{eqnarray}
To make the discussion about the apparent singularity $(z - w)^{-1}$
easier, we integrate $z$ on $\Sigma$ and $w$ on $\Sigma+ \frac{i}{2}
d^{-1} n^{-2/3}$ that is obtained by shifting $\Sigma$ above by $\frac
{i}{2} d^{-1} n^{-1/3}$; see Figure~\ref{fig:Sigma_T_local_shift}.
%

\begin{figure}

\includegraphics{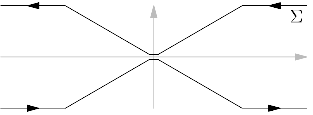}

\caption{Schematic figure of $\Sigma$.}
\label{fig:Sigma_T_finite}
\end{figure}

\begin{figure}

\includegraphics{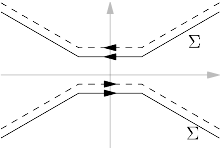}

\caption{Locally around $0$ with scaling $n^{-1/3}$. The solid curve
is $\Sigma$ and the dashed curve is $\Sigma+ \frac{i}{2} d^{-1} n^{-2/3}$.}
\label{fig:Sigma_T_local_shift}
\end{figure}

Applying the asymptotic formula \eqref{critical_CD_kernel} to the
integrand of \eqref{tac8} and taking the change of variables
%
\begin{equation}
\label{tac11} z=\frac{u}{dn^{1/3}} ,\qquad  w=\frac{v}{dn^{1/3}}
\end{equation}
we have
%
\begin{eqnarray}
\label{tac10} &&\frac{n}{2\pi} \oint_{\Sg_{\local}} \,dz
\oint_{\Sg_{\local} +
({i}/{2}) d^{-1} n^{-2/3}} \,dw J(z, w)\nonumber \\
&&\qquad= \frac{n^{{2}/{3}}}{4\pi^2 i d} \oint_{\Sg_T^*} \,du
\oint_{\Sg_T^* + {i}/{2}} \,dv \frac
{e^{
{1}/{2}(\tau_i u^2 - \tau_j v^2) - i(\xi u - \eta v)}}{u - v}
\nonumber
\\[-8pt]
\\[-8pt]
\nonumber
&&\qquad\quad{}\times %
\pmatrix{\displaystyle \frac{1}{1 - e^{2\pi i(nz - \tau)}} \vspace*{2pt}
\cr
\displaystyle\frac{1}{1 - e^{-2\pi i(nz - \tau)}} } %
^T \bolds\Psi(u;
\sigma)^{-1}\bolds\Psi(v; \sigma) \\
&&\qquad\quad{}\times\pmatrix{
\displaystyle\frac{1}{1 - e^{-2\pi i(nw - \tau)}} \vspace*{2pt}
\cr
\displaystyle\frac{-1}{1 - e^{2\pi i(nw - \tau)}} } %
 \bigl(1 +
\bigO\bigl(n^{-{1}/{4}}\bigr) \bigr),
\nonumber
\end{eqnarray}
where $\Sigma_T^*$ is the large but finite contour
%
\begin{equation}
\Sigma_T^* = \Sigma_T \cap N_{dn^{1/12}}(0)\qquad
\mbox{where $N_{dn^{1/12}}(0) = \bigl\{ z \mid\vert z \vert<
dn^{1/12} \bigr\}$},
\end{equation}
and $\Sigma_T$ is shown in Figure~\ref{fig:Sigma_T}.
Note that for $z \in\Sigma^{\upp}_{\local}$, or equivalently, $u
\in
\Sigma_T^* \cap\compC_+$,
%
\begin{eqnarray}
\label{eq:upper_factor} \frac{1}{1 - e^{2\pi i(nz - \tau)}} &=&
 1 + \bigO\bigl(e^{-{2n^{
{2}/{3}}}/{d}}\bigr),
\nonumber
\\[-8pt]
\\[-8pt]
\nonumber
\frac{1}{1 - e^{-2\pi i(nz - \tau)}}& =&
\bigO \bigl(e^{-{2n^{{2}/{3}}}/{d}} \bigr),
\end{eqnarray}
and for $z \in\Sigma^{\low}_{\local}$, or equivalently, $u \in
\Sigma
_T^* \cap\compC_-$,
%
\begin{eqnarray}
\label{eq:lower_factor} \frac{1}{1 - e^{2\pi i(nz - \tau)}} &=&
\bigO\bigl(e^{-{2n^{
{2}/{3}}}/{d}}\bigr),
\nonumber
\\[-8pt]
\\[-8pt]
\nonumber
\frac{1}{1 - e^{-2\pi i(nz - \tau)}} &=& 1 + \bigO
 \bigl(e^{-{2n^{{2}/{3}}}/{d}} \bigr).
\end{eqnarray}
For $w \in\Sigma^{\upp}_{\local} + \frac{i}{2} d^{-1} n^{-2/3}$ or
$\Sigma^{\upp}_{\local} + \frac{i}{2} d^{-1} n^{-2/3}$, we have
analogous result for $(1 - e^{\pm2\pi i(nw - \tau)})^{-1}$, and omit
the explicit formulas. Substituting \eqref{eq:upper_factor} and \eqref
{eq:lower_factor} and their counterparts for $w$ into \eqref{tac10} and
using the fact that $\det\bolds\Psi(\zeta;\sigma)\equiv1$, we
find that
%
\begin{eqnarray}
\label{eq:local_est_K_tac} %
&& \frac{n}{2\pi} \oint_{\Sg_{\local}} \,dz
\oint_{\Sg_{\local} +
({i}/{2}) d^{-1} n^{-2/3}} \,dw J(z, w)
\nonumber\\
&&\qquad= \frac{n^{{2}/{3}}}{4\pi^2 i d} \oint_{\Sg_T^*} \,du \oint _{\Sg
_T^* + {i}/{2}} \,dv
e^{(({\tau_i}/{2}) u^2 - ({\tau_j}/{2}) v^2)
- i(\xi u - \eta v)}
\nonumber
\\[-8pt]
\\[-8pt]
\nonumber
&&\qquad\quad{}\times{}\frac{f(u; \sigma)g(v;\sigma) - g(u; \sigma)f(v;
\sigma)}{(u - v)} \bigl(1 + \bigO \bigl(n^{-{1}/{4}}
\bigr) \bigr)
\\
&&\qquad= \frac{n^{{2}/{3}}}{d} K^{\tac}_{\tau_i, \tau_j}(\xi, \eta) \bigl(1 +
\bigO \bigl(n^{-{1}/{4}} \bigr) \bigr),\nonumber %
\end{eqnarray}
where $f(u; \sigma)$ and $g(u; \sigma)$ are defined in \eqref{tac18}.

By the estimates of $p_n(z)$ and $p_{n - 1}(z)/h^{(T; \tau)}_{n, n -
1}$ in Proposition~\ref{prop:asymptotics_of_OPs_critical} and \eqref
{eq:rough_estimate_critical_zero} in Proposition~\ref
{prop:asymptotics_of_OPs_origin}, we have that for all $z \in\Sigma$
and $w \in\Sigma+ \frac{i}{2} d^{-1} n^{-2/3}$
%
\begin{equation}
\frac{p_n(z)p_{n - 1}(w) - p_{n - 1}(z)p_n(w)}{h^{(T; \tau)}_{n, n -
1}(z - w)} = e^{ng(z) + ng(w) - nl} \bigO\bigl(n^{{2}/{3}}\bigr),
\end{equation}
where $g(z)$ is defined in \eqref{g12} and the $n^{2/3}$ factor comes
from $(z - w)^{-1}$. Hence, for all $z \in\Sigma$ and $w \in\Sigma+
\frac{i}{2} d^{-1} n^{-2/3}$,
%
\begin{eqnarray}
\bigl\vert J(z, w) \bigr\vert&= &e^{n(\tilde{g}(z) -( {T}/{4}) z^2 + \pi iz)
+ n(\tilde{g}(w) - ({T}/{4}) w^2 + \pi iw)} e^{n^{{2}/{3}} d^2
(\tau_i z^2 - \tau_j w^2)}
\nonumber
\\[-8pt]
\\[-8pt]
\nonumber
&&{}\times e^{-in^{{1}/{3}} d (\xi z - \eta w)} \bigO
\bigl(n^{{2}/{3}}\bigr),
\end{eqnarray}
where $\tilde{g}$ is defined by $g$ in \eqref{eq:defn_tilde_g}. By
direct calculation, we have that for $z \in\Sigma\setminus\Sigma
_{\local}$, $\Re(\tilde{g}(z) - T^2 z^2/4 + \pi iz)$ decreases as $z$
moves away from $0$. Hence, by standard argument of steepest-descent
method and the result of \eqref{eq:local_est_K_tac}, we have that
%
\begin{eqnarray}
\label{tac17a} %
\tilde{K}_{t_i, t_j}(x,y)& =& \frac{n}{2\pi}
\oint_{\Sg} \,dz \oint _{\Sg+ ({i}/{2}) d^{-1} n^{-2/3}} \,dw J(z, w)\nonumber
\\
&=& \frac{n}{2\pi} \oint_{\Sg_{\local}} \,dz \oint_{\Sg_{\local} +
({i}/{2}) d^{-1} n^{-2/3}} \,dw J(z, w) +
\bigO\bigl(e^{-cn^{{1}/{4}}}\bigr)
\\
&=& \frac{n^{{2}/{3}}}{d} \tilde{K}^{\tac}_{\tau_i, \tau_j}(\xi ,\eta;
\sigma) \bigl(1 + \bigO \bigl(n^{-{1}/{4}} \bigr) \bigr),\nonumber %
\end{eqnarray}
where $c$ is a positive constant.

At last, $\WWW_{[i, j)}(x, y)$ is defined in \eqref
{eq:Wi,jwithcirc} with explicit formula given in \eqref
{eq:Wcirc_ij_exact_formula}. It is $0$ when $t_j \leq t_i$ and when
$t_j > t_i$, a standard approximation technique gives that
%
\begin{equation}\quad
\label{tac19} \WWW_{[i, j)}(x, y) = \frac{n^{{2}/{3}}}{d}
\frac{1}{\sqrt
{2\pi
(\tau_j - \tau_i)}} e^{-{(\eta- \xi)^2}/{(2(\tau_j - \tau
_i))}} \bigl(1 + \bigO\bigl(n^{-{1}/{3}}\bigr)
\bigr).
\end{equation}
Comparing \eqref{tac17a} and \eqref{tac19} with \eqref
{eq:tacnode_kernel} and \eqref{eq:nonessential_Pearcey}, we prove
\eqref{tac17}.

\subsection{Proof of Theorem \texorpdfstring{\protect\ref{thmm:fixed_winding_number_correlations}}{1.4}} \label
{sec:proof_of_Theorem_fixed_winding_number}

For notational simplicity, we only consider the limiting
$2$-correlation functions, such that $t_1, t_2 \in(0, T)$ are two
times and $x, y$ are two locations on $\T$. We assume that $t_1, t_2,
x, y$ are expressed by \eqref{eq:coefficients_Pearcey} with $i = 1$, $j
= 2$, and then
%
\begin{eqnarray}
&&\bigl(R^{(n)}_{0 \to T}\bigr)_{\omega}(x; y;
t_1, t_2)
\nonumber
\\[-4pt]
\\[-12pt]
\nonumber
&&\qquad= \lim_{\Delta x \to0}
\frac
{1}{(\Delta x)^2} \Prob \left( %
\begin{array}{c}
\mbox{there is a particle
in $[x, x + \Delta x)$ at time $t_1$,} \\
\mbox{there is
a particle in $[y, y + \Delta x)$ at time $t_2$,} \\
\mbox{and the total winding number is $\omega$} %
\end{array}
 \right).
\end{eqnarray}
From \eqref{eq:sum_of_corr_func_over_winding_number}, we have
%
\begin{eqnarray}
\label{eq:special_case_sum_of_corr_func_winding_number}&&
 \lim_{ n \to\infty} \sum_{\omega\in\intZ}
\bigl(R^{(n)}_{0 \to
T}\bigr)_{\omega
}(x; y;
t_1, t_2) e^{2\pi\omega(\tau+ \epsilon(n))i} \biggl( \frac
{d}{n^{
{3}/{4}}}
\biggr)^2
\nonumber
\\[-8pt]
\\[-8pt]
\nonumber
&&\qquad= \lim_{n \to\infty} \frac{R_n(T; \tau)}{R_n(T;
\epsilon(n))}
R^{(n)}_{0 \to T}(x; y; t_1, t_2; \tau)
\biggl( \frac
{d}{n^{
{3}/{4}}} \biggr)^2,
\end{eqnarray}
where $R^{(n)}_{0 \to T}(x; y; t_1, t_2; \tau)$ is a special case of
the $\tau
$-deformed joint correlation function defined in \eqref
{eq:defn_tau_deformed_corr_func}.

By the determinantal formula \eqref{eq:defn_Rmelon} and the asymptotic
result \eqref{eq:tau_deformed_cusp}, we have for all $\tau\in[0, 1]$,
%
\begin{equation}
\lim_{ n \to\infty} R^{(n)}_{0 \to T}(x; y;
t_1, t_2; \tau) = %
\left|\matrix{
K^{\Pearcey}_{-\tau_1, -\tau_1}(\xi, \xi) & K^{\Pearcey}_{-\tau_2,
-\tau_1}(
\eta, \xi)\vspace*{2pt}
\cr
K^{\Pearcey}_{-\tau_1, -\tau_2}(\xi, \eta) &
K^{\Pearcey}_{-\tau_2,
-\tau_2}(\eta, \eta) } \right|%
,
\end{equation}
and on the other hand by \eqref{winu1} and \eqref{wnsuper}, we have
%
\begin{eqnarray}
&&\lim_{n \to\infty} \frac{R_n(T; \tau)}{R_n(T; \epsilon(n))}\nonumber\\
&&\qquad= \lim
_{n \to\infty} \sum_{\omega\in\intZ} \mathbb{P}(
\mbox{Total winding number equals } \omega) e^{2\pi\omega(\tau+ \epsilon(n))i}
\\
&&\qquad= \sum_{\omega\in\intZ} q^{\omega^2} \sqrt{
\frac{\pi}{2 \tilde
{\K
}}} e^{2\pi\omega(\tau+ \epsilon(n))i}.\nonumber %
\end{eqnarray}
Hence, a comparison of Fourier coefficients on both sides of \eqref
{eq:special_case_sum_of_corr_func_winding_number} shows that
%
\begin{eqnarray}
&&\lim_{ n \to\infty} \bigl(R^{(n)}_{0 \to T}
\bigr)_{\omega}(x; y; t_1, t_2) \biggl(
\frac
{d}{n^{{3}/{4}}} \biggr)^2
\nonumber
\\[-8pt]
\\[-8pt]
\nonumber
&&\qquad= q^{\omega^2} \sqrt{
\frac{\pi}{2
\tilde
{\K}}} %
\left|\matrix{ K^{\Pearcey}_{-\tau_1, -\tau_1}(\xi,
\xi) & K^{\Pearcey}_{-\tau_2,
-\tau_1}(\eta, \xi) \vspace*{2pt}
\cr
K^{\Pearcey}_{-\tau_1, -\tau_2}(\xi, \eta) & K^{\Pearcey}_{-\tau_2,
-\tau_2}(
\eta, \eta) }\right| %
,
\end{eqnarray}
which is the desired result. Thus, we prove Theorem~\ref
{thmm:fixed_winding_number_correlations} in the $n = 2$ case.

\section{Interpolation problem and Riemann--Hilbert problem associated
to discrete Gaussian orthogonal polynomials} \label{sec:Riemann-Hilbert}

\subsection{Equilibrium measure and the $g$-function}
In this subsection, we prove the results presented in Section~\ref{results_eq_measure} for the supercritical case $T>T_c$.
The existence and uniqueness of the equilibrium measure associated to
the potential $Tx^2/2$ that satisfies the minimization problem \eqref
{eq1} and \eqref{eq:constraint} is proved in \citet{Kuijlaars00}, along
with several analytic properties. Thus, if we find a probability
measure $\nu_T$ with continuous density function $\rho_T(x)$ such that
the associated $g$-function satisfies the variational condition \eqref
{eq:variational_condition}, then it is the unique equilibrium measure.
For $T \leq T_c = \pi^2$, it is straightforward to verify that the
well-known semicircle law \eqref{eq2} and the $g$-function \eqref{g12}
satisfy the variational condition \eqref{eq:variational_condition}, so
the equilibrium measure is given by \eqref{eq2}. Thus, this subsection
is dedicated to the construction of the equilibrium measure and the
derivative of the $g$-function for $T > T_c = \pi^2$. The $g$-function
is then determined by its derivative up to the Lagrange multiplier $l$.
Our strategy is to construct a probability measure $\nu_T$ with
continuous density $d\nu_T(x) = \rho_T(x) \,dx$ together with the
derivative of the associated $g$-function, such that $\nu_T$ is
supported on an interval $[-\beta, \beta]$, and has a saturated region
$[-\alpha, \alpha]$, that is, $\rho_T(x) = 0$ for $x \in\realR
\setminus(-\beta, \beta)$, $\rho_T(x) = 1$ for $x \in[-\alpha,
\alpha
]$ and $0 < \rho_T(x) < 1$ for $x \in(-\beta, \alpha) \cup(\alpha,
\beta)$, and then verify that the probability measure satisfies the
variational condition \eqref{eq:variational_condition}. Therefore, we
conclude that the construction of the equilibrium measure is valid.%

The derivative of the $g$-function is expressed as
%
\begin{equation}
g'(z) = \int^{\beta}_{-\beta}
\frac{1}{z - x} \rho_T(x) \,dx,\qquad z \in\compC\setminus[-\beta,
\beta],
\end{equation}
and so the equilibrium measure $\nu_T = \rho_T(x) \bolds\chi
_{[-\beta
, \beta]}(x) \,dx$ is given as
%
\begin{equation}
\rho_T(x) = \frac{-1}{\pi} \Im g'_+(x) =
\frac{1}{\pi} \Im g'_-(x)\qquad \mbox{for $x \in[-\beta, \beta]$},
\end{equation}
where $g'_+(x)$ and $g'_-(x)$ are the limiting values from the upper
and lower half-planes, respectively.
That the measure $\nu_T$ has total measure $1$ is equivalent to
%
\begin{equation}
\label{eq:gatinfinity} g'(z) = \frac{1}{z} + \bigO
\bigl(z^{-2}\bigr)\qquad \mbox{as $z \to\infty$}.
\end{equation}
The variational problem \eqref{eq:variational_condition} implies
%
\begin{eqnarray}
g'_+(x) + g'_-(x) &=& Tx\qquad\mbox{for $x \in(-\beta,
-\alpha) \cup(\alpha, \beta)$}, \label{eq:jumpgsides}
\\
g'_+(x) - g'_-(x)& =& -2\pi i\qquad \mbox{for $x \in(-
\alpha, \alpha )$}. \label{eq:jumpgcenter}
\end{eqnarray}

To construct $g'(z)$, we use the incomplete elliptic integrals $F(z;
k)$ and $E(z; k)$ and the complete elliptic integrals $\K= \K(k)$ and
$\E= \E(k)$ introduced in \eqref{eq:defn_of_K} and \eqref
{eq:complete_elliptic}. They have the properties that
%
\begin{eqnarray}\label{eq:jump_FE_right}
F_+(x; k) + F_-(x; k) &= &2\K,\qquad  E_+(x; k) + E_-(x; k) = 2\E
\nonumber
\\[-8pt]
\\[-8pt]
\eqntext{\mbox{for $x \in
\bigl(1, k^{-1}\bigr)$}, }
\\
\label{eq:jump_FE_left}
F_+(x; k) + F_-(x; k) &=& -2\K,\qquad E_+(x; k) + E_-(x; k) = -2\E
\nonumber
\\[-8pt]
\\[-8pt]
\eqntext{\mbox{for $x \in
\bigl(-k^{-1}, -1\bigr)$}, }
\\
\label{eq:jump_FE_outer}
F_+(x; k) - F_-(x; k) &=& 2i\K',\qquad E_+(x; k) - E_-(x; k) = 2i\bigl(\K
' - \E'\bigr)
\nonumber
\\[-8pt]
\\[-8pt]
\eqntext{ \mbox{for $x \in\realR\setminus
\bigl(-k^{-1}, k^{-1}\bigr)$}.}
\end{eqnarray}
Here, we use the notation
%
\begin{equation}
\K' = \K\bigl(k'\bigr),\qquad \E' = \E
\bigl(k'\bigr) \qquad\mbox{where } k' = \sqrt {1 -
k^2}.
\end{equation}
Identities \eqref{eq:jump_FE_right} and \eqref{eq:jump_FE_left} can be
checked by straightforward computation, and \eqref{eq:jump_FE_outer}
can be checked with the help of \citet{Gradshteyn-Ryzhik07},
3.152-9, page 280 and 3.169-17, page
209. The identity \eqref{eq:jump_FE_outer} also
comes from the imaginary periods of $F(z; k)$ and $E(z; k)$; see \citet{Erdelyi-Magnus-Oberhettigner-Tricomi81},
Section~13.7, page~314. For
fixed $\alpha$ and $\beta$, let
%
\begin{equation}
k = \frac{\alpha}{\beta}.
\end{equation}
With the help of Legendre's relation
[\citet{Erdelyi-Magnus-Oberhettigner-Tricomi81},
Section~13.8, page 320,
Formula (15)],
%
\begin{equation}
\label{eq:Legendre} \K\E' + \K' \E- \K\K'
= \frac{\pi}{2},
\end{equation}
we find that when $g'(z)$ is given by
%
\begin{equation}
\label{eq:requirement_of_g'} g'(z) = %
\cases{\displaystyle \frac{Tz}{2} + 2
\E F\biggl(\frac{z}{\alpha}; k\biggr) - 2\K E\biggl(\frac{z}{\alpha
}; k
\biggr) - \pi i, &\quad  $\mbox{for $z \in\compC_+$}$, \vspace*{2pt}
\cr
\displaystyle\frac{Tz}{2} + 2\E F\biggl(\frac{z}{\alpha}; k\biggr) - 2\K E\biggl(
\frac{z}{\alpha
}; k\biggr) + \pi i, &\quad  $\mbox{for $z \in\compC_-$}$, }\hspace*{-30pt}
\end{equation}
it satisfies \eqref{eq:jumpgsides} and \eqref{eq:jumpgcenter},
and it is also well defined on $(-\infty, -\beta) \cup(\beta,
\infty)$
by analytic continuation. To make \eqref{eq:gatinfinity} hold, we
need to choose the correct values for $\alpha$ and $\beta$. As $z \to
\infty$, the asymptotic behaviors of $F(z; k)$ and $E(z; k)$ are
%
\begin{eqnarray}
F(z; k)& = &i\K' + \frac{1}{kz} + \bigO\bigl(z^{-2}
\bigr), \label
{eq:F_expansion_infty}
\\
E(z; k)& =& kz + i\bigl(\K' - \E'\bigr) +
\frac{k^{-1} - k}{2z} + \bigO \bigl(z^{-2}\bigr). \label{eq:E_expansion_infty}
\end{eqnarray}
The constant term in \eqref{eq:F_expansion_infty} is obtained by
%
\begin{eqnarray}
\mathop{\lim_{z \to\infty}}_{ z \in\compC_+} F(z; k)& = &\int
^{i
\cdot\infty}_0 \frac{ds}{\sqrt{(1 - s^2)(1 - k^2 s^2)}}\nonumber\\
& =& i \int
^{\infty}_0 \frac{dt}{\sqrt{(1 + t^2)(1 + k^2 t^2)}}
\\
& = & i F\bigl(1;
\sqrt {1 - k^2}\bigr) = i\K',\nonumber
\end{eqnarray}
where evaluation of the elliptic integral is done by \citet{Gradshteyn-Ryzhik07},
3.152-2, page
279. The $z^{-1}$ term in \eqref
{eq:F_expansion_infty} follows the asymptotics of the integrand in the
defining formula \eqref{eq:defn_of_K} of $F(z; k)$. The $z$ term in
\eqref{eq:E_expansion_infty} is obvious, and the constant term is
given by
%
\begin{eqnarray}
\label{eq:elliptic_int_with_k}
 \mathop{\lim_{z \to\infty}}_{ z \in\compC_+} E(z; k) -
kz &=& \int^{i \cdot\infty}_0 \biggl(\sqrt{
\frac{1 - k^2 s^2}{1 - s^2}} - k \biggr) \,ds
\nonumber
\\[-8pt]
\\[-8pt]
\nonumber
&=& \frac{i}{2} \int
^{\infty}_{-\infty} \biggl(\sqrt{\frac{1 + k^2
t^2}{1 + t^2}} - k
\biggr) \,dt.
\end{eqnarray}
To evaluate the integral on the right-hand side of \eqref
{eq:elliptic_int_with_k}, we define the pair of contours (see Figure~\ref{fig:contour_ellipt_int}),
%
%
\begin{figure}

\includegraphics{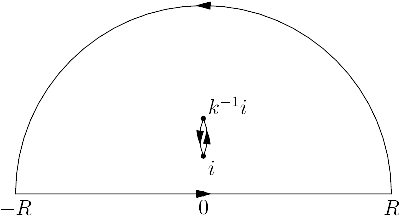}

\caption{The contours $C_R$ and $C_{1, k^{-1}}$.}
\label{fig:contour_ellipt_int}
\end{figure}
%
\begin{eqnarray}
C_R &=& [-R, R] \cup\bigl\{ R e^{i\theta} \mid\theta
\in[0, \pi] \bigr\}\qquad \mbox{counterclockwise},\hspace*{-20pt}
\nonumber\\
C_{1, k^{-1}} &=& \mbox{contour starting from $i$, along the right-hand side
of the}\hspace*{-20pt}
\nonumber
\\[-8pt]\hspace*{-20pt}
\\[-8pt]
\nonumber
&&\mbox{imaginary axis, to $k^{-1}i$, and then along the left-hand
side}\hspace*{-20pt}
\\
&&\mbox{of the imaginary axis, back to $i$}.\hspace*{-20pt}\nonumber
\end{eqnarray}
Then by the contour integral technique and \citet{Gradshteyn-Ryzhik07},
3.169-17, page
309,
%
\begin{eqnarray}
\label{eq:E-ikt_evaluated} %
&&\int^{\infty}_{-\infty} \sqrt{
\frac{1 + k^2 t^2}{1 + t^2}} - k\, dt \nonumber\\
&&\qquad= \lim_{R \to\infty} \oint_{C_R}
\sqrt{\frac{1 + k^2 t^2}{1 + t^2}} \,dt = \oint_{C_{1, k^{-1}}} \sqrt{\frac{1 + k^2 t^2}{1 + t^2}}
\,dt
\nonumber
\\[-8pt]
\\[-8pt]
\nonumber
&&\qquad= 2 \int^{k^{-1}}_1 \sqrt{\frac{1 - k^2 t^2}{1 - t^2}} \,dt \\
&&\qquad=
2\bigl(F\bigl(1; \sqrt{1 - k^2}\bigr) - E\bigl(1; \sqrt{1 -
k^2}\bigr)\bigr) = 2\bigl(\K' - \E'
\bigr),\nonumber %
\end{eqnarray}
and we get the result. The $z^{-1}$ term of \eqref
{eq:E_expansion_infty} is obtained analogously to the $z^{-1}$ term of
\eqref{eq:F_expansion_infty}.

Then as $z \to\infty$ in $\compC_+$,
%
\begin{eqnarray}
g'(z) &=& \biggl( \frac{T}{2} - \frac{2k\K}{\alpha} \biggr) z
+ 2i \biggl( \K' \E+ \K\E' - \K\K' -
\frac{\pi}{2} \biggr)
\nonumber
\\[-8pt]
\\[-8pt]
\nonumber
&&{}+ 2\alpha \biggl( \frac
{\E}{k} -
\frac{(1 - k^2)\K}{2k} \biggr) \frac{1}{z} + \bigO\bigl(z^{-2}
\bigr).
\end{eqnarray}
Note that the constant term of $g'(z)$ vanishes automatically by
Legendre's relation~\eqref{eq:Legendre}. For $k = \alpha/\beta$, the
identity \eqref{eq:gatinfinity} is satisfied when $\al$ and $\be$
are given by \eqref{eq:defn_of_beta_alpha} and \eqref{eq:T_parametrized_by_k}.

By Lemma~\ref{lem:T_parametrized_by_k}, the relation \eqref
{eq:T_parametrized_by_k} is a 1--1 correspondence between $T > T_c = \pi
^2$ and $k \in(0, 1)$. Thus, for each $T = T(k) > T_c$, there are
well-defined $\alpha, \beta$ and $\rho_T$ given by \eqref
{eq:defn_of_beta_alpha} and \eqref{eq:gatinfinity}. By the
construction of $\rho_T$, especially \eqref{eq:jumpgsides}, we have
that the measure $d\nu_T$ with density $\rho_T(x)\bolds\chi
_{[-\beta,
\beta]}(x)$ has total measure $1$, and satisfies the variational
condition on $[\alpha, \beta]$ given that the Lagrange multiplier $l$
is properly chosen and $0< \rho_T(x) < 1$ for all $x \in(\alpha,
\beta
)$. By the symmetry of $\nu_T$ about the origin, we finish the
verification that $\nu_T$ is the equilibrium measure. Additionally, we
have the following lemma, which states that the equilibrium measure is
\emph{regular} in the sense of \citet{Bleher-Liechty11}.

\begin{lem} \label{lem:variational_verification}
\textup{(a)} 
$0< \rho_T(x) < 1$ for all $x \in(\alpha, \beta)$.\vspace*{-6pt}
\begin{longlist}[(b)]
\item[(b)]
$g_+(x) + g_-(x) -\frac{Tx^2}{2} -l > 0$ for $x \in[0, \alpha)$.
\item[(c)]
$2g(x) -\frac{Tx^2}{2} -l < 0$ for $x \in(\beta, \infty)$.
\item[(d)]
There exist constants
$c_1$ and $c_2$ such that
%
\begin{equation}
\rho_T(x) = c_1\sqrt{\be-x} \bigl(1+\bigO\bigl((\be-x)
\bigr) \bigr)\qquad \mbox {as } x\to\be\mbox{ from the left}
\end{equation}
and
%
\begin{eqnarray}
1-\rho_T(x) = c_2\sqrt{x-\al} \bigl(1+\bigO\bigl((x-
\al)\bigr) \bigr)
\nonumber
\\[-8pt]
\\[-8pt]
\eqntext{\mbox{as } x\to\al\mbox{ from the right}.}
\end{eqnarray}
\end{longlist}
\end{lem}

We finish this subsection by the proof of Lemmas \ref
{lem:T_parametrized_by_k} and \ref{lem:variational_verification}.

\begin{pf*}{Proof of Lemma~\ref{lem:T_parametrized_by_k}}
The two limits in \eqref{eq:limitsofTk} are straightforward to
check from the integral formulas \eqref{eq:complete_elliptic} of $\K$
and $\E$. To see the monotonicity, we use \citet{Byrd-Friedman71},
710.00 and 710.02, page
282, and have
%
\begin{eqnarray}
\frac{d}{dk} \bigl(\K(k) \E(k)\bigr) &=&
\frac{\E^2 - (1 - k^2) \K^2}{k(1 - k^2)}
\nonumber
\\[-8pt]
\\[-8pt]
\nonumber
&=&
\frac{1}{k(1 - k^2)} \int^1_0
\frac{\sqrt{1 - k^2 s^2} - \sqrt{1 -
k^2}}{\sqrt{(1 - s^2)(1 - k^2 s^2)}} \,ds > 0,
\end{eqnarray}
which proves the monotonicity.
\end{pf*}

\begin{pf*}{Proof of Lemma~\ref{lem:variational_verification}}
For part (a), since $\rho_T(x)$
on $(0, \alpha)$ is expressed by $\Lambda_0(x; k)$ for $x \in(0, 1)$
in \eqref{eq:alternative_rho_T}, we only need to show that $\Lambda
_0(x; k) \in(0, 1)$ for $x \in(0, 1)$. By \citet{Byrd-Friedman71},
151.01, page
36, we have that $\Lambda_0(0; k) = 0$ and $\Lambda
_0(1; k) = 1$. We need only to show that $\Lambda_0(x; k)$ is strictly
increasing on $(0, 1)$. By \citet{Byrd-Friedman71},
730.04, page 284, this
is implied by the inequality $\E- (1 - k^2)x^2 \K> 0$ for all $x \in
(0, 1)$. The inequality is proved as
%
\begin{equation}
\E- \bigl(1 - k^2\bigr)x^2 \K= \int^1_0
\frac{\sqrt{1 - k^2 s^2} - x^2\sqrt{1
- k^2}}{\sqrt{(1 - s^2)(1 - k^2 s^2)}} > 0.
\end{equation}

For parts (b) and (c), we note that since $g_+(x) +
g_-(x) - Tx^2/2 - l$, which becomes $2g(x) - Tx^2/2 -l$ for $x > \beta
$, is a continuous function and $l$ is chosen so that it vanishes for
$x \in[\alpha, \beta]$, it suffices to show the inequalities
%
\begin{eqnarray}\label{eq:decreasing_g'-Tx_saturated}\qquad
\frac{1}{4}\bigl(g'_+(x) + g'_-(x) - Tx
\bigr) &=& \E F \biggl(\frac{x}{\alpha}; k \biggr) - \K E \biggl(
\frac{x}{\alpha}; k \biggr) < 0,
\nonumber
\\[-8pt]
\\[-8pt]
\eqntext{ x \in(0, \alpha),}
\\
\label{eq:decreasing_g'-Tx_outside}\frac{1}{4}\bigl(2g'(x) - Tx\bigr) &=& -\E\int
^{{x}/{\alpha}}_{k^{-1}} \frac
{ds}{\sqrt{(s^2 - 1)(k^2 s^2 - 1)}}
\nonumber
\\[-8pt]
\\[-8pt]
\nonumber
&&{} - \K\int
^{{x}/{\alpha
}}_{k^{-1}} \frac{\sqrt{k^2 s^2 - 1}}{\sqrt{s^2 - 1}} \,ds < 0, \qquad x \in (
\beta, \infty).
\end{eqnarray}
The inequality \eqref{eq:decreasing_g'-Tx_outside} obviously holds. To
prove \eqref{eq:decreasing_g'-Tx_saturated}, we use \citet{Byrd-Friedman71},
414.01, page
229,
%
\begin{eqnarray}
&&\E F \biggl(\frac{x}{\alpha}; k \biggr) - \K E \biggl(
\frac{x}{\alpha}; k \biggr)\nonumber\\
&&\qquad = \frac{1}{\beta x} \sqrt{\bigl(\beta-
x^2\bigr) \bigl(\alpha^2 - x^2\bigr)} \biggl(
\K- \Pi_1 \biggl( -\frac{x^2}{\beta^2}; k \biggr) \biggr)
\\
&&\qquad= \frac{-1}{\beta x} \sqrt{\bigl(\beta- x^2\bigr) \bigl(
\alpha^2 - x^2\bigr)} \int^1_0
\frac{({x^2}/{\beta^2}) s^2 \,ds}{(1 - ({x^2}/{\beta^2}) s^2)
\sqrt
{(1 - s^2)(1 - k^2 s^2)}},\nonumber
\end{eqnarray}
which is clearly negative for $x \in(0, \alpha)$.

Part (d) is easy to check using
formula \eqref{rho_T_formula}.
\end{pf*}

\subsection{Interpolation problem and outline of the steepest descent analysis}
The orthogonal polynomials \eqref{eq:defn_of_discrete_Gaussian_OP} are
encoded in the following interpolation problem (IP).
For a given $n=0,1,\ldots,$ find a $2\times2$ matrix-valued function
$\mathbf P_n(z)=(\mathbf P_n(z)_{ij})_{1\le i,j\le2}$ with the
following properties:
\begin{longlist}[1.]
\item[1.]
{\textit{Analyticity}}: $\mathbf P_n(z)$ is an analytic function of $z$ for
$z\in\C\setminus L_{n,\tau}$.
\item[2.]
{\textit{Residues at poles}}: At each node $x\in L_{n,\tau}$, the elements
$\mathbf P_n(z)_{11}$ and
$\mathbf P_n(z)_{21}$ of the matrix $\mathbf P_n(z)$ are analytic
functions of $z$, and the elements $\mathbf P_n(z)_{12}$ and
$\mathbf P_n(z)_{22}$ have a simple pole with the residues,
%
\begin{equation}
\label{IP1} \mathop{\mathrm{ Res}}_{z=x} \mathbf P_n(z)_{j2}=
\frac{1}{n} e^{-
{nTx^2}/{2}} \mathbf P_n(x)_{j1},\qquad
j=1,2.
\end{equation}
\item[3.]
{\textit{Asymptotics at infinity}}: There exists a function $r(x)>0$ on
$L_{n,\tau}$ such that
%
\begin{equation}
\label{IP2a} \lim_{x\to\infty} r(x)=0,
\end{equation}
and such that as $z\to\infty$, $\mathbf P_n(z)$ admits the asymptotic
expansion,
%
\begin{eqnarray}
\label{IP2} \mathbf P_n(z)\sim \biggl( I+\frac{\mathbf P_1}{z}+
\frac{\mathbf
P_2}{z^2}+\cdots \biggr) %
\pmatrix{ z^n & 0
\vspace*{2pt}
\cr
0 & z^{-n} } %
,
\nonumber
\\[-8pt]
\\[-8pt]
  \eqntext{\displaystyle z\in\C{}\Big\backslash{} \Biggl[
\bigcup_{x\in L_{n, \tau}}^\infty D \bigl(x,r(x) \bigr)
\Biggr],}
\end{eqnarray}
where $D(x,r(x))$ denotes a disk of radius $r(x)>0$ centered at $x$ and
$I$ is the identity matrix.
\end{longlist}

The unique solution to
the IP is
%
\begin{equation}
\label{IP3} \mathbf P_n(z)= %
\pmatrix{
p_{n,n}^{(T;\tau)}(z) & \bigl(Cp_{n,n}^{(T;\tau)}
\bigr) (z) \vspace *{2pt}
\cr
\bigl(h^{(T; \tau)}_{n,n-1}
\bigr)^{-1}p_{n,n-1}^{(T;\tau)}(z) & \bigl(h^{(T; \tau
)}_{n,n-1}
\bigr)^{-1} \bigl(Cp_{n,n-1}^{(T;\tau)} \bigr) (z) }
,
\end{equation}
where the weighted discrete Cauchy transform $C$ is defined in \eqref
{eq:def_Cauchy_trans}. The normalizing constants in \eqref
{eq:defn_of_h_nk} and the recurrence coefficients \eqref
{eq:three_term_recurrence} are encoded in the matrices $\mathbf P_1$
and $\mathbf P_2$ in the expansion \eqref{IP2}. Namely, we have
%
\begin{equation}
\label{IP4} h^{(T; \tau)}_{n, n}=[\mathbf P_1]_{12}
, \qquad\bigl(h^{(T; \tau
)}_{n, n-1} \bigr)^{-1}=[\mathbf
P_1]_{21}
\end{equation}
and
%
\begin{equation}
\label{IP5} \be^{(T; \tau)}_{n, n-1}=\frac{[\mathbf P_2]_{21}}{[\mathbf
P_1]_{21}}-[\mathbf
P_1]_{11}.
\end{equation}

\begin{figure}[b]

\includegraphics{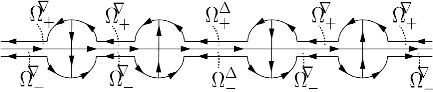}

\caption{The contour $\Sg_S$. The horizontal line is $\R$ and the
vertical segments pass through $\pm\al$ and $\pm\be$. The remaining
pieces of the contour are $\Ga_{\pm}$ which pass close to $\R$ at a
distance of $\ep n^{-\delta}$ except close to the turning points $\pm
\al$
and $\pm\be$. The regions $\Om_{\pm}^\De$ are bounded by the real line
and $\Ga_{\pm}$ with $|\Re z|<\al$, and the regions $\Om_{\pm
}^\nabla$
are bounded by the real line and $\Ga_{\pm}$ with $|\Re z|>\al$.}
\label{Contour_Sg_S}
\end{figure}

The steepest descent analysis of the IP for a general class of
orthogonal polynomials is described in \citet{Bleher-Liechty11} in the
case $\tau=0$ [see also \citet{Baik-Kriecherbauer-McLaughlin-Miller07}
for polynomials orthogonal on a finite lattice]. For the discrete
Gaussian orthogonal polynomials the analysis for a general $\tau$ was
given in \citet{Liechty12} for the case $T=T_c+o(1)$ as $n\to\infty$.
The analysis consists of a sequence of transformations
%
\begin{equation}
\mathbf P_n \rightarrow\mathbf R_n \rightarrow\mathbf
T_n \rightarrow \mathbf S_n \rightarrow\mathbf
X_n.
\end{equation}
The first transformation $\mathbf P_n \rightarrow\mathbf R_n$ reduces
the IP to a Riemann--Hilbert problem (RHP). The second transformation
$\mathbf R_n \rightarrow\mathbf T_n$ uses the $g$-function to give a
RHP which approaches the identity matrix as $z\to\infty$. The third
transformation $\mathbf T_n \rightarrow\mathbf S_n$ is local and
involves transformations only close to the support of the equilibrium
measure. The RHP for $\mathbf S_n$ can be approximated by RHPs for
which we can write explicit solutions in different regions of the
complex plane, and $\mathbf X_n$ is uniformly close to the identity matrix.

In the supercritical case $T>T_c$, one can make the reduction to a RHP
in the following way. Fix some $\ep>0$ and some $0<\delta<1$. Let
$\Ga_+$
(resp., $\Ga_-$) be a contour from $e^{i0} \cdot\infty$ to $e^{i\pi}
\cdot\infty$ (resp., $e^{-i \pi}\cdot\infty$ to $e^{i 0}\cdot
\infty$)
which lies in the upper (resp., lower) half-plane and sits at a
distance $\ep n^{-\delta}$ from the real line except close to the turning
points $\pm\al$ and $\pm\be$, where it maintains a fixed distance
$\ep
$ from these points; see Figure~\ref{Contour_Sg_S}. We let $\Om_{\pm
}^\De$ be the region bounded by the real line and $\Ga_{\pm}$ with
$|\Re z|<\al$, and $\Om_{\pm}^\nabla$ the region bounded by the real
line and $\Ga_{\pm}$ with $|\Re z|>\al$. We make the reduction of the
IP to a RHP and the transformations to the RHP as in \citet{Liechty12};
see \citet{Liechty12}, Figure~2 and equations (4.27), (4.28), (4.32).
Note that the lattice shift parameter which we call $\tau$ is called
$(-\al)$ in \citet{Liechty12}.

Let us briefly describe the explicit transformations involved in the
steepest descent analysis. Introduce the functions
%
\begin{equation}
\label{red1} \Pi(z):=\frac{\sin(n\pi z-\tau\pi)}{n\pi} , \qquad G(z):=g_+(x)-g_-(z) ,
\end{equation}
where $g_\pm(z)$ are defined first on $\R$ as the limiting values of
the $g$-function from $\C_{\pm}$, and then extended to a small
neighborhood of $\R$ by analytic continuation. Notice that the function
$G(z)$ is also given by the integral formula \eqref{eq:differencegggg}.
The transformations described above involve the matrices
%
\begin{eqnarray}
\mathbf D^u_{\pm}(z) &=&
\pmatrix{ 1 & -\displaystyle\frac{e^{-({nT}/{2})z^2}}{n\Pi(z)}e^{\pm i\pi(n z-\tau)} \vspace*{2pt}
\cr
0
& 1 } %
,\nonumber \\
\mathbf D^l_{\pm}(z)& =& %
\pmatrix{ \Pi(z)^{-1} & 0 \vspace*{2pt}
\cr
-ne^{({nT}/{2})z^2}e^{\pm i\pi(n z-\tau)}
& \Pi(z) } %
,
\nonumber
\\[-8pt]
\\[-8pt]
\nonumber
j_\pm(z)&=& %
\pmatrix{ 1 & 0 \vspace*{2pt}
\cr
e^{\mp nG(z)} & 1 } %
,\\
 \mathbf A_\pm(z) &=& %
\pmatrix{\displaystyle \mp\frac{1}{2n\pi i}e^{\mp i\pi(n z-\tau)} &
 0 \vspace *{2pt}
\cr
0 & \mp2n
\pi i e^{\pm i\pi(n z-\tau)} } \nonumber %
.
\end{eqnarray}
After the first two transformations of the IP, the matrix $\mathbf
S_n(z)$ is defined as
%
\begin{eqnarray}\qquad
\label{eq:explicit_formula_S} \mathbf S_n(z)=\cases{ %
e^{-({nl}/{2})\sigma_3}
\pmatrix{ 1 & 0 \vspace*{2pt}
\cr
0 & -2\pi i } %
\mathbf
P_n(z) \mathbf D_{\pm}^l(z) %
\pmatrix{ 1 & 0 \vspace*{2pt}
\cr
0 & -2\pi i } %
^{-1}\vspace*{2pt}\cr
\quad{}\times
e^{-n(g(z)-{l}/{2})\sigma_3}\mathbf A_\pm(z),\vspace*{2pt}\cr
\qquad \mbox{for } z\in
\Om_\pm^\De,
\vspace*{2pt}\cr
e^{-({nl}/{2})\sigma_3} %
\pmatrix{ 1 & 0 \vspace*{2pt}
\cr
0 & -2\pi i }
\mathbf P_n(z) \mathbf D_{\pm}^u(z)
\pmatrix{ 1 & 0 \vspace*{2pt}
\cr
0 & -2\pi i } %
^{-1}\vspace*{2pt}\cr
\quad{}\times
e^{-n(g(z)-{l}/{2})\sigma_3}j_\pm(z)^{\mp1},
\vspace*{2pt}\cr
\qquad \mbox{for } z\in\Om_\pm^\nabla \mbox{ and } \al \le|
\Re z| \le\be,
\vspace*{2pt}\cr
e^{-({nl}/{2})\sigma_3} %
\pmatrix{ 1 & 0 \vspace*{2pt}
\cr
0 & -2\pi i }
\mathbf P_n(z) \mathbf D_{\pm}^u(z)
\pmatrix{ 1 & 0 \vspace*{2pt}
\cr
0 & -2\pi i } %
^{-1}
\vspace*{2pt}\cr
\quad{}\times
e^{-n(g(z)-{l}/{2})\sigma_3},
\vspace*{2pt}\cr
\qquad \mbox{for } z\in\Om_\pm^\nabla \mbox{ and } |\Re z|
\ge\be,
\vspace*{2pt}\cr
e^{-({nl}/{2})\sigma_3} %
\pmatrix{ 1 & 0 \vspace*{2pt}
\cr
0 & -2\pi i }
\mathbf P_n(z) %
\pmatrix{ 1 & 0 \vspace*{2pt}
\cr
0 & -2\pi i } %
^{-1} e^{-n(g(z)-{l}/{2})\sigma_3},\vspace*{2pt}\cr
\qquad \mbox{otherwise},}\hspace*{-20pt}
\end{eqnarray}
where $\sigma_3 =  \bigl(
{{ 1 \atop 0} \enskip{ 0 \atop -1}}
 \bigr)$ is the third Pauli matrix.
This matrix function satisfies the following RHP.
\begin{itemize}
\item
$\mathbf S_n(z)$ is an analytic function of $z$ for $z\in\C\setminus
\Sg
_S$, where $\Sg_S$ consists $\R$, $\Ga_+$, and $\Ga_-$, along with the
four vertical segments $[\pm\be-i\ep, \pm\be+i\ep]$ and $[\pm\al
-i\ep, \pm\al+i\ep]$, oriented as shown in Figure~\ref{Contour_Sg_S}.
\item For $z\in\Sg_S$, the function $\mathbf S_n(z)$ satisfies the
jump conditions
%
\begin{equation}
\mathbf S_{n+}(z)=\mathbf S_{n-}(z) j_S(z) ,
\end{equation}
where
%
\begin{equation}
\label{eq:Sjump} j_S(z) = %
\cases{ %
\pmatrix{ 0
& 1 \vspace*{2pt}
\cr
-1 & 0 },
\qquad  \hspace*{59pt}\mbox{for } z\in(-\be,-\al) \cup(\al,
\be) , \vspace *{2pt}
\cr
\pmatrix{ e^{-i\Om_n} & 0
\vspace*{2pt}
\cr
\bigO\bigl(e^{-n^{1-\delta
}c(z)}\bigr) & e^{i\Om
_n} },
\qquad  \mbox{for } z\in(-\al,\al) , \vspace*{2pt}
\cr
\pmatrix{ 1 & \bigO\bigl(e^{-n^{1-\delta}c(z)}\bigr) \vspace*{2pt}
\cr
\bigO
\bigl(e^{-n^{1-\delta
}c(z)}\bigr) & 1},\vspace*{2pt}\cr
\quad \hspace*{125pt} \mbox{for } z \mbox{ on the
rest of } \Sg_S , } %
\end{equation}
where
%
\begin{equation}\label{eq314}
\Om_n:= \pi(n+1-2\tau) ,
\end{equation}
and $c(z)$ is a nonnegative continuous function on $\Sg_S$ which may
vanish only at the turning points $\pm\al$ and $\pm\be$.
\item As $z\to\infty$,
%
\begin{equation}
\mathbf S_n(z) = I+\frac{\mathbf S_1}{z}+\frac{\mathbf
S_2}{z^2}+\cdots.
\end{equation}
\end{itemize}
Notice that the errors in the off diagonal terms in \eqref{eq:Sjump}
are subexponential, but still smaller than any power of $n$. In the
usual method of steepest descent [\citet{Bleher-Liechty11}], these terms
are exponentially small, but our analysis is slightly different in that
we have taken the contours $\Ga_{\pm}$ to be very close to $\R$. The
reason is that in Proposition~\ref{asymptotics_of_OPs} the asymptotic
formulas are given for $z \in D(\delta,\ep,n)$, which is the region above
$\Ga_+$ and below $\Ga_-$.

\subsubsection{Model RHP} \label{sec:Model_RHP}
The model RHP appears when we drop in the jump matrix $j_S(z)$ the
terms that vanish as $n\to\infty$:
\begin{itemize}
\item
$\mathbf M(z)$ is analytic in $\C\setminus[-\be,\be]$.
\item$\mathbf M_{+}(z)=\mathbf M_{-}(z)j_M(z)$ for $z\in[-\be,\be
]$, where
%
\begin{equation}
\label{m1} j_M(z)= %
\cases{ %
\displaystyle\pmatrix{ 0
& 1 \vspace*{2pt}
\cr
-1 & 0}, %
&\quad $z\in(-\be,-\al) \cup(\al,\be) $,
\vspace*{2pt}
\cr
\displaystyle\pmatrix{ e^{-i\Om_n}, & \quad 0 \vspace*{2pt}
\cr
0 &
e^{i\Om_n} }, %
&\quad $z\in(-\al,\al) $.} %
\end{equation}
\item As $z \to\infty$,
%
\begin{equation}
\label{m2} \mathbf M(z)\sim I+\frac{\mathbf M_1}{z}+\frac{\mathbf
M_2}{z^2}+
\cdots.
\end{equation}
\end{itemize}
The solution to this RHP is described in terms of Jacobi theta
functions, and is presented in \citet{Bleher-Liechty10}, Section~8.

Consider the function $u(z)$ defined in \eqref{m5}.
This function is analytic for $z \in\C\setminus[-\be, \be]$. On that
interval it satisfies certain jump conditions
[see \citet{Bleher-Liechty10},
Section~8].
We will use the Jacobi theta functions $\th_j(z)$, $(j=3,4)$ with
elliptic nome $q$ given by \eqref{eq:def_elliptic_nome_intro}. The
solution is slightly different for $n$ odd and $n$ even. Using the
notation $\epsilon(n)$ introduced in \eqref{eq:def_hsgn}, we can
write the
solution in the following uniform way:
%
\begin{eqnarray}
\label{m16}&& \mathbf M(z) =\mathbf F(\infty)^{-1} %
\left(\matrix{ \displaystyle\frac{\gamma(z)+\gamma^{-1}(z)}{2}\frac{{\th}_3( u(z)
-\pi/4
- \pi(\tau-\epsilon(n)))}{{\th}_3( u(z) -\pi/4)} \vspace*{2pt}\cr
\displaystyle\frac{\gamma(z)-\gamma
^{-1}(z)}{2i}\frac{{\th}_3( u(z)
+ \pi/4 -\pi(\tau-\epsilon(n)))}{{\th}_3( u(z) +\pi/4)}}\right.
\nonumber
\\[-8pt]
\\[-8pt]
\nonumber
&&\hspace*{85pt}\left.\matrix{
 \displaystyle\frac{\gamma
(z)-\gamma
^{-1}(z)}{-2i}
\frac{{\th}_3( u(z) + \pi/4 +\pi(\tau-\epsilon
(n)))}{{\th
}_3( u(z) +\pi/4)} \vspace*{2pt}
\cr
\displaystyle\frac
{\gamma(z)+\gamma
^{-1}(z)}{2} \frac{{\th}_3( u(z) -\pi/4+\pi(\tau-\epsilon
(n)))}{{\th}_3(
u(z) -\pi/4)} } \right)
,
\end{eqnarray}
where
%
\begin{equation}
\label{m17} \mathbf F(\infty)= %
\pmatrix{ \displaystyle\frac{{\th}_3(\pi(\tau-\epsilon(n)))}{{\th}_3(0)} & 0
\vspace*{2pt}
\cr
0 & \displaystyle\frac{{\th}_3(\pi(\tau-\epsilon(n)))}{{\th}_3(0)} } %
.
\end{equation}
The entries of the matrix
%
\begin{equation}
\label{eq:conjugation_of_M} %
\pmatrix{ 1 & 0 \vspace*{2pt}
\cr
0 & -2\pi i }
^{-1} \mathbf M(z) %
\pmatrix{ 1 & 0
\vspace*{2pt}
\cr
0 & -2\pi i } %
,
\end{equation}
are listed in \eqref{eq:M_11_formula_intro}--\eqref{eq:M_22_formula_intro}.
Notice that the ratios of theta functions in \eqref{m16} and \eqref
{m17} become trivial when $\tau=\epsilon(n)$. The coefficient
$\mathbf
M_1$ in the expansion of $\mathbf M(z)$ at $z= \infty$ is
%
{\fontsize{10.7}{12.7}{\selectfont
\begin{equation}\qquad
\label{m20} \mathbf M_1 = %
\pmatrix{
\displaystyle\frac{\pi\be{\th}_3'(\pi(\tau-\epsilon(n)))}{4\tilde
{\mathbf K} {\th}_3(\pi(\tau-\epsilon(n)))} & \displaystyle- \frac{\be-\al}{2i} \frac
{{\th}_3(0) {\th}_4(\pi(\tau-\epsilon(n)))}{{\th}_4(0) {\th
}_3(\pi
(\tau
-\epsilon(n)))} \vspace*{2pt}
\cr
\displaystyle\frac{\be-\al}{2i} \frac{{\th
}_3(0) {\th}_4(\pi(\tau
-\epsilon(n)))}{{\th}_4(0) {\th}_3(\pi(\tau-\epsilon(n)))} &\displaystyle -\frac
{\pi\be{\th
}_3'(\pi(\tau-\epsilon(n)))}{4\tilde{\mathbf K} {\th}_3(\pi(\tau
-\epsilon(n)))} }\hspace*{-30pt} %
,
\end{equation}}}
\hspace*{-3pt}and the $(21)$-entry of the coefficient $\mathbf M_2$ is
%
\begin{equation}
\label{m21} [\mathbf M_2 ]_{21}= \frac{\pi\be(\be-\al){\th}_3(0){\th}_4'(\pi(\tau-\epsilon
(n)))}{8 i {\th
}_3(\pi(\tau-\epsilon(n))) {\th}_4(0) \tilde{\mathbf K}}.
\end{equation}

Notice that according the RHP for $\mathbf M(z)$, $\det\mathbf
M(z)$ is entire. Since $\det\mathbf M(\infty)=1$, it follows from
Liouville's theorem that $\det\mathbf M(z) \equiv1$.

\subsubsection{The local solution at \texorpdfstring{$\pm\al$}{$+-alpha$} and \texorpdfstring{$\pm\be$}{$+-beta$}}

Consider small disks $D(\pm\al, \ep)$ and $D(\pm\be, \ep)$ around
$\pm\al$ and $\pm\be$ with radius $\ep$. We seek a local parametrix
$\mathbf U(z)$ in these disks satisfying:
\begin{itemize}
\item$\mathbf U(z)$ is analytic in $\{D(\pm\al, \ep) \cup D(\pm\be,
\ep)\} \setminus\Sg_S$.
\item For $z\in\{D(\pm\al, \ep) \cup D(\pm\be, \ep)\} \cap\Sg_S$,
$\mathbf U(z)$ satisfies the jump condition $\mathbf U_{+}(z)=\mathbf
U_{-}(z)j_S(z)$.
\item On the boundary of the disks, $\mathbf U(z)$ satisfies
%
\begin{equation}
\label{pm1} \mathbf U(z)=\mathbf M(z) \bigl(I+\bigO\bigl(n^{-1}
\bigr) \bigr) ,\qquad z\in \d D(\pm\al, \ep) \cup\d D(\pm\be, \ep).
\end{equation}
\end{itemize}
The solution is given explicitly in terms of Airy functions, and we do
not describe it here.

\subsubsection{The final transformation of the RHP}\label{tt}

We now consider the contour $\Sigma_X$, which consists of the circles
$\partial D(\pm\be, \ep)$ and $\partial D(\pm\al, \ep)$, all oriented
counterclockwise, together with the parts of
$\Sigma_S \setminus\{[-\be, \al] \cup[\al,\be]\}$
which lie outside of the disks $D(\pm\be, \ep)$, $D(\pm\al, \ep)$.
Let
%
\begin{eqnarray}
\label{tt1}&& \mathbf X_n(z)\hspace*{-20pt}
\nonumber
\\[-8pt]\hspace*{-20pt}
\\[-8pt]
\nonumber
&&\qquad= %
\cases{ \mathbf
S_n(z) \mathbf M(z)^{-1}, &\quad $\mbox{for } z \mbox{ outside
the disks } D(\pm\be, \ep), D(\pm\al, \ep)$, \vspace*{2pt}
\cr
\mathbf
S_n(z) \mathbf U(z)^{-1}, & \quad $\mbox{for } z \mbox{ inside
the disks } D(\pm\be, \ep), D(\pm\al, \ep)$. }\hspace*{-2pt} %
\end{eqnarray}
Then $\mathbf X_n(z)$ satisfies a RHP with jumps on the contour $\Sg_X$
which are uniformly close to the identity matrix, and $\mathbf
X_n(\infty)=I$. The solution to this RHP is given explicitly in terms
of a Neumann series. Due to the fact that the contours $\Ga_{\pm}$ and
the real line are very close to one another (at a distance of the order
$n^{-\delta}$), we find that $\mathbf X_n(z)$ satisfies
%
\begin{equation}
\label{tt19} \mathbf X_n(z) \sim I + \bigO \biggl(
\frac{1}{n^{1-\delta
}(|z|+1)} \biggr) \qquad\mbox{as } n \to\infty,
\end{equation}
uniformly for $z\in\C\setminus\Sigma_X$, which is a weaker error
than the $\bigO(n^{-1})$ error in \citet{Bleher-Liechty11}.

\subsection{Proofs of Propositions \texorpdfstring{\protect\ref{asymptotics_of_OPs}}{3.6},
\texorpdfstring{\protect\ref{prop:asymptotics_of_OPs_outside}}{3.9} and \texorpdfstring{\protect\ref{asymptotics_normalizing_constants}}{3.10}}

We can invert the explicit transformations of the IP in different
regions of the complex plane using \eqref{tt1} and \eqref
{eq:explicit_formula_S}. The asymptotic formula \eqref{tt19} then gives
asymptotic formulas for $\mathbf P_n(z)$. Considering $z$ in the region
$D(\delta,\ep,n)$ proves Proposition~\ref{asymptotics_of_OPs}. Considering
$z\in E(\ep)$, and taking $\delta=0$ proves Proposition~\ref
{prop:asymptotics_of_OPs_outside}. For Proposition~\ref
{asymptotics_normalizing_constants}, we can invert the explicit
transformations with $\delta=0$, and Proposition~\ref
{asymptotics_normalizing_constants} then follows from \eqref{IP4},
\eqref{IP5}, and the expansions of $\mathbf M(z)$ at $z=\infty$ given
in \eqref{m20} and~\eqref{m21}.

\begin{appendix}\label{app}

\section{Construction of steepest-descent contours \texorpdfstring{$\tilde{\Gamma}$}{$tilde{Gamma}$} and
\texorpdfstring{$\tilde{\Sigma}$}{$tilde{Sigma}$}} \label{sec:critical_pt}

In this appendix, we show that the first and second derivatives of
$I(z)$, defined in \eqref{eq:defn_I_tildeI}, vanish at $z = 0$, and
construct two contours $\tilde{\Gamma}$ and $\tilde{\Sigma}$ lying in
the region $\overline{\compC_{+}} = \{ z \in\compC\mid\Im z \geq0
\}
$ and passing through $0$, such that $\tilde{\Sigma}$ is from $e^{0}
\cdot\infty$ to $e^{\pi i} \cdot\infty$ and $\tilde{\Gamma}$ is from
$M$ to $-M$ where $M > \beta$. We require that $\Re I(z)$ attains its
unique global maximum on $\tilde{\Sigma}$ at $0$, and attains its
unique global minimum on $\tilde{\Gamma}$ at~$0$. Since $\Re I(z)$ is
symmetric about the imaginary axis, we only need to construct $\tilde
{\Gamma} \cap D$ and $\tilde{\Sigma} \cap D$ where
%
\begin{equation}
D = \{ z \in\compC\mid\mbox{$\Re z \geq0$ and $\Im z \geq0$} \}
\end{equation}
and construct the other parts of them by reflection.

To simplify the notation, we take a change of variable
%
\begin{equation}
\label{eq:change_of_variable_u_z} u = \frac{z}{\alpha}.
\end{equation}
Then we have that, by \eqref{eq:defn_T_c:elliptic}, \eqref
{eq:defn_I_tildeI} and \eqref{eq:requirement_of_g'}
[\citet{Byrd-Friedman71},
140.01, page
33]
%
\begin{eqnarray}
\label{eq:intro_of_Z} I'(z) = -2\K \biggl( Z(u) - \biggl( 1 -
\frac{\E}{\K} \biggr) u \biggr)
\nonumber
\\[-8pt]
\\[-8pt]
\eqntext{\mbox{where }\displaystyle Z(u) = Z(u; k) = E(u; k) -
\frac{\E}{\K} F(u; k).}
\end{eqnarray}

\begin{rmk}
Here, the arguments of $Z(u; k)$, the Jacobi Zeta function, are
different from those in \citet{Byrd-Friedman71} such that our $u$ is
equal to $\sin\beta$ for the $\beta$ in $Z(\beta, k)$ in \citet{Byrd-Friedman71},
140.02,
03. The Jacobi Zeta function also appears in \eqref
{eq:Jacobi_zeta_in_theta}, where the arguments have same meaning as
those in \citet{Byrd-Friedman71}, 140.01, but the parameter is $\tilde
{k}$ instead of $k$.
\end{rmk}

Below we collect some results about $Z(u)$.

\begin{lem} \label{lem:about_Z}
\textup{(a)} 
$Z(u)$ is analytic in $D$,
%
\begin{eqnarray}
\label{eq:enu:lem:about_Z:a} Z'(0) = 1 - \frac{\E}{\K},
Z''(0) = 0\quad \mbox{and}\quad Z(u) = ku -
\frac{\pi i}{2\K} + \bigO\bigl(u^{-1}\bigr)
\nonumber
\\[-8pt]
\\[-8pt]
\eqntext{ \mbox{as $u \to \infty$
in $D$}.}\vspace*{-12pt}
\end{eqnarray}
\begin{longlist}[(b)]
\item[(b)]
For $x \in[0, 1]$, $Z(x)$ is a real-valued function such that
%
\begin{equation}
\label{eq:enu:lem:about_Z:b} Z(0) = Z(1) = 0\quad \mbox{and}\quad Z''(x) <
0 \qquad\mbox{for all $u \in(0, 1)$}.
\end{equation}
\item[(c)]
For $x \in[1, k^{-1}]$, $Z(x)$ is a pure imaginary-valued function
such that
%
\begin{eqnarray}
\label{eq:enu:lem:about_Z:c} \Im Z(1) = 0,\qquad
\Im Z \biggl(\frac{1}{k} \biggr) = -
\frac{\pi}{2\K} \quad\mbox{and}\quad \frac{d}{dx} \Im Z(x) < 0
\nonumber
\\[-8pt]
\\[-8pt]
\eqntext{\mbox{for all $x
\displaystyle\in \biggl(1, \frac{1}{k} \biggr)$}.}
\end{eqnarray}
\item[(d)]
For $x \in[k^{-1}, \infty)$, $Z(x) + \pi i/(2\K)$ is a real-valued
function such that
%
\begin{eqnarray}
\label{eq:enu:lem:about_Z:d}
Z \biggl(\frac{1}{k} \biggr) + \frac{\pi i}{2\K}& =& 0\quad
\mbox{and} \quad\frac{d}{dx} \biggl( Z(x) + \frac{\pi i}{2\K} \biggr) > 0,
\nonumber
\\[-8pt]
\\[-8pt]
\nonumber
\frac{d^2}{dx^2} \biggl( Z(x) + \frac{\pi i}{2\K} \biggr)&< &0 \qquad\mbox {for all
$\displaystyle x \in \biggl(\frac{1}{k}, \infty \biggr)$}.
\end{eqnarray}
\item[(e)]
For $y \in[0, \infty)$, $Z(iy)$ is a pure imaginary-valued function
such that
%
\begin{eqnarray}
\label{eq:enu:lem:about_Z:e} Z(0) = 0 \quad\mbox{and}\quad \frac{d}{dy}
 \Im Z(iy) > 0,\qquad
\frac
{d^2}{dy^2} \Im Z(iy) > 0
\nonumber
\\[-8pt]
\\[-8pt]
\eqntext{\mbox{for all $y \in(0, \infty)$}.}
\end{eqnarray}
\end{longlist}
\end{lem}

\begin{pf}
The linear term in the asymptotics in part (a) of
Lemma~\ref{lem:about_Z} is a direct consequence of the explicit formula
of $Z(u)$ in $D$,
%
\begin{equation}
\label{eq:int_formula_of_Z} Z(u) = \int^u_0
\frac{ (1 - {\E}/{\K} ) - k^2
s^2}{\sqrt
{(1 - s^2)(1 - k^2 s^2)}},
\end{equation}
which is given by \eqref{eq:intro_of_Z} and \eqref{eq:defn_of_K}. In
the integrand of \eqref{eq:int_formula_of_Z} the sign of the square
root is chosen as $\sqrt{(1 - s^2)(1 - k^2 s^2)} \sim1$ as $s$
approaches $0$ from the region $D$. To compute the constant term, it
suffices to compute the asymptotics of $Z(iy) - iky = E(iy; k) - (\E
/\K
) F(iy; k) -iky$ as $y \to+\infty$. By \citet{Gradshteyn-Ryzhik07},
3.152-1, page
279, $\lim_{y \to\infty} F(iy; k) = i\K'$, and
by the computation in equations \eqref{eq:elliptic_int_with_k} and
\eqref{eq:E-ikt_evaluated}, $\lim_{y \to\infty} E(iy; k) - iky =
i(\K'
- \E')$. Then an application of Legendre's relation \eqref{eq:Legendre}
yields the result.

From the formula \eqref{eq:int_formula_of_Z}, it is clear that: $Z(0)
= 0$; $Z(x)$ is real valued for $x \in[0, 1]$; $\Re Z(x)$ is constant
for $x \in[1, k^{-1}]$; $\Im Z(x)$ is constant for $x \in[k^{-1},
\infty)$; and $Z(iy)$ is pure imaginary for $y \in[0, \infty)$. It is
also straightforward to see that
%
\begin{equation}
Z(1) = E(1; k) - \frac{\E}{\K} F(1; k) = \E- \frac{\E}{\K} \K= 0,
\end{equation}
and with the help of \citet{Byrd-Friedman71}, 111.09, page 11, and the
Legendre's relation \eqref{eq:Legendre},
%
\begin{eqnarray}
Z\bigl(k^{-1}\bigr) &=& E\bigl(k^{-1}; k\bigr) -
\frac{\E}{\K} F\bigl(k^{-1}; k\bigr) = \E+ i\bigl(
\K' - \E '\bigr) - \frac{\E}{\K} \bigl(\K+ i
\K'\bigr)
\nonumber
\\[-8pt]
\\[-8pt]
\nonumber
&=& \frac{i}{\K}\bigl(\K\K' - \K
\E' - \E\K'\bigr) = -\frac{\pi i}{2\K}.
\end{eqnarray}
Thus, all the identities in \eqref{eq:enu:lem:about_Z:b}, \eqref
{eq:enu:lem:about_Z:c}, \eqref{eq:enu:lem:about_Z:d}, \eqref
{eq:enu:lem:about_Z:e} are all proved.

To consider the values of $Z'(0)$ and $Z''(0)$, and the inequalities
of $Z'(u)$ in \eqref{eq:enu:lem:about_Z:c}, \eqref
{eq:enu:lem:about_Z:d}, \eqref{eq:enu:lem:about_Z:e}, we write from
\eqref{eq:int_formula_of_Z}
%
\begin{equation}
\label{eq:formula_of_Z'} Z'(u) = \frac{(1 - {\E}/{\K}) - k^2 u^2}{\sqrt{(1 - u^2)(1 - k^2
u^2)}} \,ds.
\end{equation}
Note that for $u \geq1$,
%
\begin{equation}
\biggl(1 - \frac{\E}{\K} \biggr) - k^2 u^2 =
\frac{1}{\K} \int^1_0
\frac
{k^2(s^2 - u^2)}{\sqrt{(1 - s^2)(1 - k^2 s^2)}} \,ds < 0,
\end{equation}
we obtain the inequality parts of \eqref{eq:enu:lem:about_Z:c}, \eqref
{eq:enu:lem:about_Z:d}, \eqref{eq:enu:lem:about_Z:e} and the evaluation
of $Z'(0)$ and $Z''(0)$ in \eqref{eq:enu:lem:about_Z:a}.

To consider the inequalities of $Z''(u)$ in \eqref
{eq:enu:lem:about_Z:b}, \eqref{eq:enu:lem:about_Z:d} and \eqref
{eq:enu:lem:about_Z:e}, we can write $Z'(u)$ as
%
\begin{equation}
\label{eq:decreasing_of_Z'} Z'(u) = \frac{k}{\K} \sqrt{
\frac{1 - u^2}{k^{-2} - u^2}} \int^1_0
\frac
{{(s^2 - u^2)}/{(1 - u^2)}}{\sqrt{(1 - s^2)(1 - k^2 s^2)}} \,ds,
\end{equation}
for $u \in(0, 1)$ and $u \in(k^{-1}, \infty)$, where in either case
the square root is taken positive value. We observe that $Z'(u)$ is a
decreasing function on $(0, 1)$ since $(1 - u^2)/(k^{-2} - u^2)$ and
$(s^2 - u^2)/(1 - u^2)$ are both increasing, while $Z'(u)$ is also a
decreasing function on $(k^{-1}, \infty)$ by exactly the same reason.
Similarly, writing
%
\begin{equation}\label{equiv2}
\frac{d}{dy} \Im Z(iy) = \frac{k}{\K} \sqrt{1 + y^2}
{k^{-2} + y^2} \int^1_0
\frac{{(s^2 + y^2)}/{(1 + y^2)}}{\sqrt{(1 + s^2)(1 + k^2 s^2)}} \,ds,
\end{equation}
we observe that $\frac{d}{dy} \Im Z(iy)$ is increasing for all $y \in
(0, \infty)$.
This proves the inequality of $Z''(u)$ in \eqref{eq:enu:lem:about_Z:b},
\eqref{eq:enu:lem:about_Z:d} and \eqref{eq:enu:lem:about_Z:e}.
\end{pf}

\begin{lem} \label{lem:triple_zero_I}
The function $I'(z)$ has only one zero $z = 0$ in the region $D$ that
is a third-order zero, and $I^{(4)}(0) > 0$.
\end{lem}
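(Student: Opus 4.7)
My plan is to substitute $u = z/\alpha$ and reduce everything to the holomorphic function $f(u) := Z(u) - (1 - \E/\K)u$ on $D$, which by \eqref{eq:intro_of_Z} is proportional to $I'(z)$. Part \ref{enu:lem:about_Z:a} of Lemma \ref{lem:about_Z} gives immediately $f(0) = 0$, $f'(0) = Z'(0) - (1 - \E/\K) = 0$, and $f''(0) = Z''(0) = 0$, so $u = 0$ is a zero of order at least three. A short Taylor expansion of $Z'(u) = [(1-\E/\K) - k^2 u^2] \cdot (1-u^2)^{-1/2}(1-k^2 u^2)^{-1/2}$ to order $u^2$ yields $Z'''(0) = [(1 - k^2)\K - (1 + k^2)\E]/\K$, which is strictly negative thanks to the positivity identity $(1+k^2)\E - (1-k^2)\K = k^2 \int_0^1 [(1-s^2)+(1-k^2s^2)][(1-s^2)(1-k^2s^2)]^{-1/2}\,ds > 0$ already recorded in \eqref{eq:4th_derivative_of_g}. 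Differentiating the identity $I'(z) = -2\K\,f(z/\alpha)$ three more times then gives $I^{(4)}(0) = -(2\K/\alpha^3)Z'''(0) > 0$, which is the desired sign statement, and also shows the zero at $z=0$ is of exact order three.

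To rule out further zeros of $f$ in $D$ I apply the argument principle on the contour $\partial\bigl(D \cap \{\epsilon \leq |u| \leq R\}\bigr)$, and let $\epsilon \to 0$, $R \to \infty$. Parts \ref{enu:lem:about_Z:b}--\ref{enu:lem:about_Z:e} give nonvanishing of $f$ on this boundary away from $0$: on $(0,1]$ the function $f(x)$ is real and strictly negative because $f(0) = f'(0) = f''(0) = 0$ and $f''(x) = Z''(x) < 0$ on $(0,1)$; on $[1, 1/k]$ the real part $-(1-\E/\K)x$ of $f$ is strictly negative; on $[1/k, \infty)$ the imaginary part of $f$ equals the nonzero constant $-\pi/(2\K)$; and on the positive imaginary ray, writing $f(iy) = i\bigl(\Im Z(iy) - (1-\E/\K)y\bigr)$, part \ref{enu:lem:about_Z:e} together with $(d/dy)\Im Z(iy)|_{y=0} = 1-\E/\K$ and strict convexity implies $\Im Z(iy) > (1-\E/\K)y$ for all $y > 0$, so $f(iy)$ lies on the open positive imaginary axis.

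Computing the winding number of $\arg f$ along the contour, the positive real axis contributes $+\pi$ (the image starts at $\arg = \pi$, passes into the third quadrant right after $x = 1$, crosses the negative imaginary axis somewhere on $(1/k, \infty)$ as $\Re f$ flips sign, and approaches the positive real axis from below as $x \to \infty$); the large quarter-arc contributes $+\pi/2$ via the asymptotic $f(u) \sim (k - 1 + \E/\K)u$, where $k > 1 - \E/\K$ follows from the elementary identity $\E - (1-k)\K = k\int_0^1 (1-ks^2)[(1-s^2)(1-k^2s^2)]^{-1/2}\,ds > 0$; the positive imaginary ray contributes $0$; and the small arc around the origin contributes $-3\pi/2$, matching the triple zero of $f$. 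The four pieces sum to $0$, so by the argument principle $f$ has no zero in the interior of $D$. The main technical obstacle is the continuous bookkeeping of $\arg f$ along the positive real axis as $f(x)$ transitions between its three qualitatively different regimes at $x = 1$ and $x = 1/k$; once this is tracked properly, each piece of the contour contributes a well-defined rotation within a single quadrant.
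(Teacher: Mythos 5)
Your proof is correct and takes a genuinely different route from the paper. The opening reduction to $f(u) = Z(u) - (1-\E/\K)u$, the triple-zero-plus-sign argument via $Z'''(0) < 0$, and the boundary non-vanishing deduced from Lemma \ref{lem:about_Z} are all essentially what the paper does. Where you diverge is in the exclusion of further zeros: you apply the argument principle directly, tracking the winding of $\arg f$ around the boundary of a quarter-annular region and showing the contributions ($+\pi$ from the positive real ray, $+\pi/2$ from the large arc, $0$ from the imaginary ray, $-3\pi/2$ from the small arc around the triple zero) sum to zero. The paper instead argues via a homotopy: it observes that $Z$ is a homeomorphism on the boundary of a slightly different truncated region $D_R(1)$, that $Z(\partial D_R(1))$ is a Jordan curve not enclosing the origin, and then continuously deforms $Z(u) - tu$ from $t = 0$ to $t = 1-\E/\K$ to transport the "no interior zero" property, invoking a contradiction if the count ever jumps. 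Both are degree-theoretic at heart, but your version makes the winding-number count explicit and self-contained, at the cost of requiring the careful quadrant-by-quadrant bookkeeping of $\arg f$ on $(0,\infty)$ that you flag. The paper's version defers more of the work to the qualitative statements in Lemma \ref{lem:about_Z} plus the univalence observation, and is arguably terser but requires the reader to trust the homeomorphism claim for $Z$ on $\partial D_R(1)$.

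One small remark worth keeping in mind when you chain to formula \eqref{eq:d_in_Pearcey}: your direct computation gives $I^{(4)}(0) = -2\K Z'''(0)/\alpha^3 = \tfrac{2}{\alpha^3}[(1+k^2)\E - (1-k^2)\K]$, which is twice the right-hand side of \eqref{eq:4th_derivative_of_g} as printed in the paper; the sign conclusion is the same either way, but you correctly picked up the factor of $2$ that the paper's displayed formula appears to have dropped.
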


\begin{pf}
From \eqref{eq:enu:lem:about_Z:a}, it is clear that $u = 0$ is a zero
of $Z(u) - (1 - \E/\F)u$ with order at least $3$, and then by \eqref
{eq:intro_of_Z} the same holds for $I'(z)$. On the other hand,
$I^{(4)}(z) = \tilde{g}^{(4)}(z)$, and the explicit computation \eqref
{eq:4th_derivative_of_g} of $\tilde{g}^{(4)}(0)$ shows that $I^{(4)}(0)
> 0$. Below we show that the function $Z(u) - (1 - \E/\F)u$ has only
one zero $u = 0$ in $D$, and complete the proof.

We note that by the results in Lemma~\ref{lem:about_Z}, $Z(u) - (1 -
\E
/\F)u$ has no zero in either $\{ z = x \mid x > 0 \}$ or $\{ z = iy
\mid y > 0 \}$, and it does not vanish as $u \to\infty$. So to prove
that $Z(u) - (1 - \E/\F)u$ has no other zero in $D$, we define a region
(see Figure \ref{region_D_R_1})
%
\begin{equation}
D_R(1) = \bigl\{ u \in D \mid\vert u \vert\leq R \bigr\} \setminus\bigl\{ u
\in \compC\mid\mbox{$\Re u < 1$ and $\Im u < R^{-1}$} \bigr\},
\end{equation}
where $R$ is a positive number, and need only to show that for however
large $R$, $Z(u) - (1 - \E/\K)u$ has no zero in the interior of $D_R(1)$.

By the results in Lemma~\ref{lem:about_Z}, we have that if $R$ is
large enough, then $Z$ is a homeomorphic mapping on $\partial D_R(1)$.
Then by a basis result for univalent functions, $Z$ maps the interior
of $D_R(1)$ into the region enclosed by $Z(\partial D_R(1))$ that does
not contain $0$, see Figure \ref{fig:Z_partial_D_1}. Then by a continuity argument, if $Z(u) - (1 - \E/\K
)u$ has a zero in the interior of $D_R(1)$, there must be a $t \in(0,
1 - \E/K)$ such that $Z(u) - tu$ has a zero on $\partial D_R(1)$, but
by the results in Lemma~\ref{lem:about_Z}, for all such~$t$, $Z(u) -
tu$ does not vanish on $\partial D_R(1)$ given that $R$ is large
enough. Thus, we show that $Z(u) - (1 - \E/\K)u$ has no zero other than
$0$ in $D$ by contradiction.
\end{pf}
%

\begin{figure}

\includegraphics{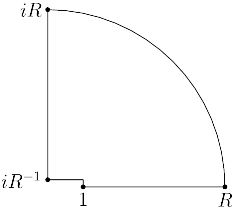}

\caption{The region $D_R(1)$.}
\label{region_D_R_1}
\end{figure}

\begin{figure}[b]

\includegraphics{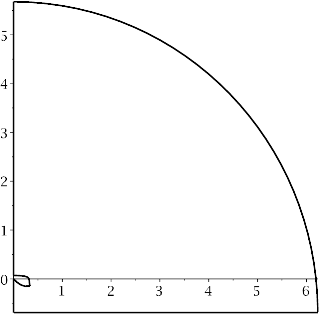}

\caption{The shape of $Z(\partial D_{R}(1))$ with $k = 0.9$ and $R = 7$.}
\label{fig:Z_partial_D_1}
\end{figure}

\begin{figure}

\includegraphics{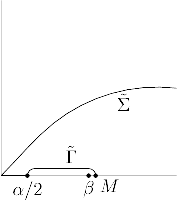}

\caption{Schematic graphs of $\tilde{\Sigma}$ and $\tilde{\Gamma}$
in $D$.}
\label{fig:Sigma_Gamma_cusp_new}
\end{figure}

Now we construct $\tilde{\Gamma}$. By \eqref{eq:enu:lem:about_Z:a} and
\eqref{eq:enu:lem:about_Z:b}, we know that $\Re(Z(u) - (1 - \E/\K)u)$
is decreasing on $[0, 1]$. By Lemma~\ref{lem:about_Z}(c) and that $(1 - \E/\K) > 0$, we also have that
$\Re
(Z(u) - (1 - \E/\K)u)$ is decreasing on $[1, k^{-1}]$. By the relation
\eqref{eq:intro_of_Z}, $\Re I(z)$ is decreasing on the interval $[0,
\beta]$. Thus, it suffices to define $\tilde{\Gamma} \cap D$ as the
interval $[0, \beta]$ if we allow $M = \beta$. Practically, for the
convenience of the asymptotic analysis in Section~\ref{sec:kernel_cusp}, we let $M$ be slightly bigger than $\beta$ and
define $\tilde{\Gamma} \cap D$ by a deformation of the interval $[0,
M]$ such that $[0, \alpha/2]$ is part of $\tilde{\Gamma} \cap D$ and
$(\alpha/2, M)$ is lifted above slightly; see Figure~\ref{fig:Sigma_Gamma_cusp_new}.

In the construction of $\tilde{\Sigma} \cap D$ and $\tilde{\Gamma}
\cap
D$, we use techniques in planar dynamical systems. Regarding $\Re I(z)$
as a function defined on the Cartesian plane whose coordinates are $\Re
z$ and $\Im z$, we define the gradient field
%
\begin{eqnarray}
\nabla\Re I(z) = \biggl( \frac{\partial}{\partial x} \Re I(z), \frac
{\partial}{\partial y} \Re
I(z) \biggr)
\nonumber
\\[-8pt]
\\[-8pt]
 \eqntext{\mbox{where } x = \Re z, y = \Im z.}
\end{eqnarray}

By Lemma~\ref{lem:about_Z}(a), (e), we have that for $y > 0$, $Z(iy) - (1 - \E/\K)iy$
is pure imaginary, and its imaginary part is positive. Then by \eqref
{eq:intro_of_Z}, we conclude that $\{ iy \mid y > 0 \}$ is an upward
flow curve of $\nabla\Re I(z)$. By Lemma~\ref{lem:about_Z}(d) and the relation \eqref{eq:intro_of_Z}, we have
that for all $x > M > \beta$, $\Im(Z(x) - (1 - \E/\K)x > 0$ and then
the gradient field $\nabla\Re I(z)$ is transversal to the interval
$[M, \infty)$ and is outward of~$D$.

Since by Lemma~\ref{lem:triple_zero_I}, $0$ is a triple zero of $I'(z)$
and $I^{(4)}(0) > 0$, there is a flow curve that ends at $0$, with
direction $\pi/4$, and we denote it as $\gamma$. Since the gradient
field $\nabla\Re I(z)$ has no singular point by Lemma~\ref
{lem:triple_zero_I}, this flow curve is from either the boundary of $D$
or $\infty$. As we showed above, the left edge of $D$ is a flow curve
and at the interval $[M, \infty)$, as part of $\partial D$, the
gradient field is outward, so the $\gamma$ cannot be from the left edge
of $D$ or $[M, \infty)$. If $\gamma$ is from $(0, M)$, then it crosses
$\tilde{\Gamma}$ at a point other than $0$, denoted by $z_0$. But by
the definition of $\tilde{\Gamma}$, $\Re I(z_0) > \Re I(0)$. On the
other hand, by the property of the flow curve $\gamma$, $\Re I(z_0) <
\Re I(0)$, and we derive a contradiction. Thus, $\gamma$ cannot be from
$\partial D$, but is from $\infty$. At last by the behavior of $\nabla
\Re I(z)$ given in Lemma~\ref{lem:about_Z}(a), we
verify that it suffices to let $\tilde{\Sigma} \cap D = \gamma$, as
shown in Figure~\ref{fig:Sigma_Gamma_cusp_new}.

\section{Proof of Proposition \texorpdfstring{\protect\ref{4541521548452154}}{1.5}}\label{sec:tacnode_equivalence}
Since $\bolds\Psi(\zeta; s)$ satisfies [\citet{Hastings-McLeod80}]
%
\begin{equation}
 \frac{\d}{\d s} \bolds\Psi(\zeta; s) = %
\pmatrix{
-i\zeta& q(s) \vspace*{2pt}
\cr
q(s) & i\zeta } %
\bolds\Psi(\zeta; s),
\end{equation}
it is easy to derive the identity that for $u, v \in\Sigma_T$,
%
\begin{eqnarray}
\label{equiv3}&& \frac{\d}{\d s} \biggl(\frac{f(u; s)g(v; s)-g(u; s)f(v;
s)}{u-v} \biggr)
\nonumber
\\[-8pt]
\\[-8pt]
\nonumber
&&\qquad=-i
\bigl(f(u; s)g(v; s)+g(u; s)f(v; s)\bigr),
\end{eqnarray}
where $f$ and $g$ are defined by $\bolds\Psi$ by \eqref{tac18}.
Hence, \eqref{eq:essential_tacnode} can be written as
%
\begin{eqnarray}
\label{equiv4} %
\tilde{K}^{\tac}_{\tau_i, \tau_j}(\xi,\eta;
\sigma) &=& \frac
{1}{4\pi
^2} \int_{\Sg_T} \,du \int
_{\Sg_T} \,dv e^{{\tau_i u^2}/{2} -
{\tau_j v^2}/{2}} e^{-i(u\xi-v\eta)}\nonumber\\
&&{}\times \int
_\sigma^\infty \,ds \bigl(f(u; s)g(v; s)+g(u; s)f(v; s)
\bigr)
\nonumber\\
&=& \int^{\infty}_{\sigma} \,ds \biggl[\biggl(
\frac{1}{2\pi} \oint _{\Sigma_T} \,du e^{{\tau_i u^2}/{2} - i\xi u} f(u; s)
\biggr)
\nonumber
\\[-8pt]
\\[-8pt]
\nonumber
&&\hspace*{39pt}{}\times
\biggl( \frac{1}{2\pi} \oint_{\Sigma_T} \,dv e^{-{\tau_i v^2}/{2} +
i\eta
v} g(v; s)
\biggr)
\\
&&\hspace*{39pt}{} + \biggl( \frac{1}{2\pi} \oint_{\Sigma_T} \,du e^{{\tau_i u^2}/{2}
- i\xi u} g(u;
s) \biggr)\nonumber\\
&&\hspace*{39pt}{}\times \biggl( \frac{1}{2\pi} \oint_{\Sigma_T} \,dv e^{-{\tau_i v^2}/{2} +
i\eta v}
f(v; s) \biggr)\biggr].\nonumber %
\end{eqnarray}

In order to relate formula \eqref{equiv4} for the tacnode kernel to the
other formula \eqref{equiv20a} defined by Airy function and related
operators, we consider the expressions for the entries of $\bolds\Psi
(\zeta; s)$ in terms of Airy functions. Introduce the functions in $x$
with parameters $\zeta$ and $s$,
%
\begin{eqnarray}
\label{equiv9} E_+(x) &=& E_+(x; \zeta, s) := e^{i(({4}/{3})
\zeta^3+(s+2x)\zeta)},
\nonumber
\\[-8pt]
\\[-8pt]
\nonumber
 E_-(x) &=& E_-(x;
\zeta, s) := e^{-i(({4}/{3})\zeta^3+(s+2x)\zeta)} = E_+(x; -\zeta, s).
\end{eqnarray}
Then the matrix entries of $\bolds\Psi(\zeta; s)$ are given by the formulas
%
\begin{eqnarray}
\Psi_{11}(\zeta; s) &= &\langle E_-, R_s+
\delta_0\rangle_0 , \qquad \Psi _{21}(\zeta; s)= -
\langle E_-, Q_s\rangle_0 , \label{equiv10}
\\
\Psi_{12}(\zeta; s) &=& -\langle E_+, Q_s
\rangle_0 , \qquad \Psi _{22}(\zeta; s) = \langle E_+,
R_s+\delta_0\rangle_0 , \label{equiv10a}
\end{eqnarray}
where the inner product $\langle\cdot, \cdot\rangle_0$, functions
$R_s$, $Q_s$, and the delta function $\delta_0$ are defined in Section~\ref{subsec:comparison_of_kernel_formulas}. The derivation of \eqref
{equiv10} is essentially given in \citet{Baik-Liechty-Schehr12}. Note
that the functions $\Phi_1(\zeta; s)$ and $\Phi_2(\zeta; s)$ in
\citet{Baik-Liechty-Schehr12},
Proposition~2.1, are the same as the functions
$\Phi^1(\zeta; s)$ and $\Phi^2(\zeta; s)$ in \citet{Claeys-Kuijlaars06},
and the entries $\Psi_{11}(\zeta; s)$ and $\Psi_{21}(\zeta; s)$ are the
same as the functions $\Phi_1(\zeta; s)$ and $\Phi_2(\zeta; s)$ in
\citet
{Claeys-Kuijlaars06}.
Using the relation given in equation (1.19) of \citet{Claeys-Kuijlaars06},
equation \eqref{equiv10} follows from Proposition 2.1 of \citet{Baik-Liechty-Schehr12}.
By the relation
\eqref{eq:symmetry_of_H-ML_solution}, \eqref{equiv10} implies~\eqref
{equiv10a}.

Consider now the integrals
%
\begin{equation}
\label{equiv11} I^{\pm}_{a, b; s}(x) := \frac{1}{2\pi}
\oint_{\Sg_T^{\pm}} e^{a\zeta
^2+ib\zeta} E_{\pm}(x; \zeta, s)\,d\zeta,
\end{equation}
where $\Sg_T^+$ (resp., $\Sg_T^-$) is the connected piece of $\Sg_T$
which lies above (resp., below) the real axis.
A simple change of variables gives that
%
\begin{eqnarray}
\label{equiv13a} %
I^+_{a, b; s}(x) &= &\frac{1}{2\pi}
\oint_{\Sigma_T^+} e^{i
({4}/{3}) \zeta^3 + a\zeta^2 + i(s+2x+b)\zeta}\,d\zeta
\nonumber
\\[-8pt]
\\[-8pt]
\nonumber
&=& -2^{-{2}/{3}} e^{- { a^3}/{24} - {a(s + 2x +
b)}/{4}} \Ai \biggl( \frac{s + 2x + b}{2^{2/3}} +
\frac{a^2}{2^{8/3}} \biggr), %
\end{eqnarray}
where we have used the integral representation of the Airy function
%
\begin{equation}
\Ai(x) = \frac{-1}{2\pi} \oint_{\Sigma_T^+} e^{({i}/{3}) \zeta
^3 +
ix \zeta} \,d\zeta.
\end{equation}
Similarly,
%
\begin{equation}
\label{equiv14} I^-_{a, b; s}(x) = 2^{-{2}/{3}}
e^{-{ a^3}/{24} - {a(s +
2x - b)}/{4}} \Ai
\biggl( \frac{s + 2x - b}{2^{2/3}} + \frac
{a^2}{2^{8/3}} \biggr).
\end{equation}

We can now write the expression \eqref{equiv4} in terms of Airy
functions and operators only, since the functions $f$ and $g$ there are
expressed by entries of $\bolds\Psi$. Notice that in the expressions
\eqref{equiv10} and \eqref{equiv10a} for the entries of the matrix
$\bolds\Psi$, the dependence on $\zeta$ lies solely in the left-hand
side of the inner products. Thus, by changing\vspace*{1pt} the order of integration,
we can write \eqref{equiv4} in terms of the integrals $I^{\pm}_{a, b;
s}(x)$. Indeed we have
%
\begin{eqnarray}
\label{equiv15} %
 \tilde{K}_{\tau_i, \tau_j}^{\tac}(
\xi, \eta; \sigma) &=& \int^\infty_\sigma \,ds \bigl[
\bigl( \bigl\langle I^-_{{\tau_i}/{2}, -\xi; s}, R_s + \delta_0
\bigr\rangle_0 + \bigl\langle I^+_{{\tau_i}/{2}, -\xi; s}, Q_s
\bigr\rangle_0 \bigr)\nonumber
\\
&&\hspace*{36pt}{} \times \bigl( -\bigl\langle I^+_{-{\tau_j}/{2}, \eta; s}, R_s + \delta
_0 \bigr\rangle_0 - \bigl\langle I^-_{-{\tau_j}/{2}, \eta; s},
Q_s \bigr\rangle_0 \bigr)
\nonumber
\\[-8pt]
\\[-8pt]
\nonumber
&&\hspace*{36pt}{} + \bigl( -\bigl\langle I^+_{{\tau_i}/{2}, -\xi; s}, R_s +
\delta_0 \bigr\rangle_0 - \bigl\langle
I^-_{{\tau_i}/{2}, -\xi; s}, Q_s \bigr\rangle_0 \bigr)
\\
&&\hspace*{36pt}{} \times \bigl( \bigl\langle I^-_{-{\tau_j}/{2}, \eta; s}, R_s + \delta
_0 \bigr\rangle_0 + \bigl\langle I^+_{-{\tau_j}/{2},\eta; s},
Q_s \bigr\rangle _0 \bigr) \bigr].\nonumber
\end{eqnarray}
Notice that in terms of the function $b_{\tau, z, \sigma}$ defined in
\eqref{equiv16},
%
\begin{eqnarray}
\label{equiv17} I^+_{\tau, z; s}(x) &=& -2^{-{2}/{3}} \pi
b_{2^{-4/3} \tau, 2^{-2/3}
z, s}(x),
\nonumber
\\[-8pt]
\\[-8pt]
\nonumber
 I^-_{\tau,z; s}(x)& =& 2^{-{2}/{3}} \pi
b_{2^{-4/3}
\tau, -2^{-2/3} z, s}(x).
\end{eqnarray}
Hence, formula \eqref{equiv15} becomes
%
\begin{eqnarray}
\label{equiv18} %
&& \tilde{K}_{\tau_i, \tau_j}^{\tac}(
\xi, \eta; \sigma) \nonumber\\
&&\qquad= 2^{-{4}/{3}} \int_\sigma^\infty
\,ds \bigl[\bigl(\langle b_{2^{-7/3} \tau_i, 2^{-2/3}
\xi,
s}, R_s +
\delta_0 \rangle_0 - \langle b_{2^{-7/3} \tau_i, -2^{-2/3}
\xi,
s},
Q_s \rangle_0 \bigr)
\nonumber\\
&&\hspace*{63pt}\qquad\quad{} \times \bigl(\langle b_{-2^{-7/3} \tau_j, 2^{-2/3} \eta, s}, R_s +
\delta_0 \rangle_0 \nonumber\\
&&\hspace*{115pt}{}- \langle b_{-2^{-7/3} \tau_j, -2^{-2/3} \eta,
s},
Q_s \rangle_0 \bigr)
\\
&&\hspace*{63pt}\qquad\quad{}+ \bigl(\langle b_{2^{-7/3} \tau_i, -2^{-2/3} \xi, s}, R_s + \delta_0
\rangle_0 - \langle b_{2^{-7/3} \tau_i, 2^{-2/3} \xi, s}, Q_s
\rangle_0 \bigr)
\nonumber\\
&&\hspace*{63pt}\qquad\quad{} \times \bigl(\langle b_{-2^{-7/3} \tau_j, -2^{-2/3} \eta, s}, R_s +
\delta_0 \rangle_0 \nonumber\\
&&\hspace*{226pt}{}- \langle b_{-2^{-7/3} \tau_j, 2^{-2/3} \eta,
s},
Q_s \rangle_0 \bigr) \bigr],\nonumber
\end{eqnarray}
which is, in terms of the function $\hat{p}_1(z; s, \tau)$ defined in
\eqref{equiv19},
%
\begin{eqnarray}
\label{equiv18b} %
 &&\tilde{K}_{\tau_i, \tau_j}^{\tac}(
\xi, \eta; \sigma) \nonumber\\
&&\qquad= 2^{-
{4}/{3}} \int^\infty_\sigma
\,ds \bigl( \hat{p}_1\bigl(-2^{-2/3}\xi; s, 2^{-7/3}
\tau_i\bigr) \hat{p}_1\bigl(-2^{-2/3}\eta; s,
-2^{-7/3}\tau_j\bigr)
\nonumber
\\[-8pt]
\\[-8pt]
\nonumber
&&\hspace*{68pt}\qquad\quad{} +\hat{p}_1\bigl(2^{-2/3}\xi; s, 2^{-7/3}
\tau_i\bigr) \hat {p}_1\bigl(2^{-2/3}\eta; s,
-2^{-7/3}\tau_j\bigr) \bigr)
\\
&&\qquad= 2^{-{2}/{3}} \tilde{\mathcal{L}}_{\tac}\bigl(2^{-2/3}\xi,
2^{-2/3}\eta; \sigma,2^{-7/3}\tau_i,
2^{-7/3}\tau_j\bigr),\nonumber
\end{eqnarray}
where $\tilde{\mathcal{L}}_{\tac}$ is defined in \eqref{equiv20}.
It is
simple to see that by \eqref{eq:nonessential_Pearcey}
%
\begin{equation}
\label{equiv18a} 2^{-{2}/{3}} \bigl( \phi_{2\cdot2^{-7/3}\tau_i, 2\cdot
2^{-7/3}\tau
_j}
\bigl(2^{-2/3}\xi, 2^{-2/3}\eta\bigr) \bigr) =
\phi_{\tau_i, \tau
_j}(\xi,\eta).
\end{equation}
Combining \eqref{equiv18} and \eqref{equiv18a} gives \eqref{equiv21},
and Proposition~\ref{4541521548452154} is proved.
\end{appendix}

\section*{Acknowledgments}
Part of this work was carried out during the workshop \textit{Random
matrices and applications} at the University of Michigan in June 2013.
The authors thank the organizers of that conference, Jinho Baik and Raj
Rao Nadakuditi, and also thank Jinho Baik and Arno Kuijlaars for
helpful comments. The authors also thank Gr\'{e}gory Schehr for
discussions and for pointing out the recent preprint \citet
{Dupic-PerezCastillo13}. D. Wang thanks Peter Forrester for discussion
on the relation to Yang--Mills theory, as well as Wexiao Shen and
Rongfeng Sun.

%

\printaddresses

\begin{thebibliography}{61}

\bibitem[\protect\citeauthoryear{Adler, Ferrari and van
Moerbeke}{2013}]{Adler-Ferrari-vanMoerbeke13}
%
\begin{barticle}[mr]
\bauthor{\bsnm{Adler},~\bfnm{Mark}\binits{M.}},
\bauthor{\bsnm{Ferrari},~\bfnm{Patrik~L.}\binits{P.~L.}} \AND
\bauthor{\bparticle{van} \bsnm{Moerbeke},~\bfnm{Pierre}\binits{P.}}
(\byear{2013}).
\btitle{Nonintersecting random walks in the neighborhood of a
symmetric tacnode}.
\bjournal{Ann. Probab.}
\bvolume{41}
\bpages{2599--2647}.
\bid{doi={10.1214/11-AOP726}, issn={0091-1798}, mr={3112926}}
\end{barticle}
%
\bptok{imsref}%
\endbibitem

\bibitem[\protect\citeauthoryear{Adler, Johansson and van
Moerbeke}{2014}]{Adler-Johansson-vanMoerbeke14}
%
\begin{barticle}[mr]
\bauthor{\bsnm{Adler},~\bfnm{Mark}\binits{M.}},
\bauthor{\bsnm{Johansson},~\bfnm{Kurt}\binits{K.}} \AND
\bauthor{\bparticle{van} \bsnm{Moerbeke},~\bfnm{Pierre}\binits{P.}}
(\byear{2014}).
\btitle{Double {A}ztec diamonds and the tacnode process}.
\bjournal{Adv. Math.}
\bvolume{252}
\bpages{518--571}.
\bid{doi={10.1016/j.aim.2013.10.012}, issn={0001-8708}, mr={3144240}}
\end{barticle}
%
\bptok{imsref}%
\endbibitem

\bibitem[\protect\citeauthoryear{Adler, Orantin and van
Moerbeke}{2010}]{Adler-Orantin-vanMoerbeke10}
%
\begin{barticle}[mr]
\bauthor{\bsnm{Adler},~\bfnm{Mark}\binits{M.}},
\bauthor{\bsnm{Orantin},~\bfnm{Nicolas}\binits{N.}} \AND
\bauthor{\bparticle{van} \bsnm{Moerbeke},~\bfnm{Pierre}\binits{P.}}
(\byear{2010}).
\btitle{Universality for the {P}earcey process}.
\bjournal{Phys. D}
\bvolume{239}
\bpages{924--941}.
\bid{doi={10.1016/j.physd.2010.01.005}, issn={0167-2789}, mr={2639611}}
\end{barticle}
%
\bptok{imsref}%
\endbibitem

\bibitem[\protect\citeauthoryear{Adler and van
Moerbeke}{2005}]{Adler-vanMoerbeke05}
%
\begin{barticle}[mr]
\bauthor{\bsnm{Adler},~\bfnm{Mark}\binits{M.}} \AND
\bauthor{\bparticle{van} \bsnm{Moerbeke},~\bfnm{Pierre}\binits{P.}}
(\byear{2005}).
\btitle{P{DE}s for the joint distributions of the {D}yson, {A}iry and
sine processes}.
\bjournal{Ann. Probab.}
\bvolume{33}
\bpages{1326--1361}.
\bid{doi={10.1214/009117905000000107}, issn={0091-1798}, mr={2150191}}
\end{barticle}
%
\bptok{imsref}%
\endbibitem

\bibitem[\protect\citeauthoryear{Adler, van Moerbeke and
Wang}{2013}]{Adler-vanMoerbeke-Wang11}
%
\begin{barticle}[mr]
\bauthor{\bsnm{Adler},~\bfnm{Mark}\binits{M.}},
\bauthor{\bparticle{van} \bsnm{Moerbeke},~\bfnm{Pierre}\binits
{P.}} \AND
\bauthor{\bsnm{Wang},~\bfnm{Dong}\binits{D.}}
(\byear{2013}).
\btitle{Random matrix minor processes related to percolation theory}.
\bjournal{Random Matrices Theory Appl.}
\bvolume{2}
\bpages{1350008, 72}.
\bid{doi={10.1142/S2010326313500081}, issn={2010-3263}, mr={3149438}}
\end{barticle}
%
\bptok{imsref}%
\endbibitem

\bibitem[\protect\citeauthoryear{Aptekarev, Bleher and
Kuijlaars}{2005}]{Aptekarev-Bleher-Kuijlaars05}
%
\begin{barticle}[mr]
\bauthor{\bsnm{Aptekarev},~\bfnm{Alexander~I.}\binits{A.~I.}},
\bauthor{\bsnm{Bleher},~\bfnm{Pavel~M.}\binits{P.~M.}} \AND
\bauthor{\bsnm{Kuijlaars},~\bfnm{Arno~B.~J.}\binits{A.~B.~J.}}
(\byear{2005}).
\btitle{Large {$n$} limit of {G}aussian random matrices with external
source. {II}}.
\bjournal{Comm. Math. Phys.}
\bvolume{259}
\bpages{367--389}.
\bid{doi={10.1007/s00220-005-1367-9}, issn={0010-3616}, mr={2172687}}
\end{barticle}
%
\bptok{imsref}%
\endbibitem

\bibitem[\protect\citeauthoryear{Baik, Liechty and
Schehr}{2012}]{Baik-Liechty-Schehr12}
%
\begin{barticle}[mr]
\bauthor{\bsnm{Baik},~\bfnm{Jinho}\binits{J.}},
\bauthor{\bsnm{Liechty},~\bfnm{Karl}\binits{K.}} \AND
\bauthor{\bsnm{Schehr},~\bfnm{Gr{\'e}gory}\binits{G.}}
(\byear{2012}).
\btitle{On the joint distribution of the maximum and its position of
the {$\mathrm{ Airy}_2$} process minus a parabola}.
\bjournal{J. Math. Phys.}
\bvolume{53}
\bpages{083303, 13}.
\bid{doi={10.1063/1.4746694}, issn={0022-2488}, mr={3012644}}
\end{barticle}
%
\bptok{imsref}%
\endbibitem

\bibitem[\protect\citeauthoryear{Baik and Suidan}{2007}]{Baik-Suidan07}
%
\begin{barticle}[mr]
\bauthor{\bsnm{Baik},~\bfnm{Jinho}\binits{J.}} \AND
\bauthor{\bsnm{Suidan},~\bfnm{Toufic~M.}\binits{T.~M.}}
(\byear{2007}).
\btitle{Random matrix central limit theorems for nonintersecting
random walks}.
\bjournal{Ann. Probab.}
\bvolume{35}
\bpages{1807--1834}.
\bid{doi={10.1214/009117906000001105}, issn={0091-1798}, mr={2349576}}
\end{barticle}
%
\bptok{imsref}%
\endbibitem

\bibitem[\protect\citeauthoryear{Baik
et~al.}{2007}]{Baik-Kriecherbauer-McLaughlin-Miller07}
%
\begin{bbook}[mr]
\bauthor{\bsnm{Baik},~\bfnm{J.}\binits{J.}},
\bauthor{\bsnm{Kriecherbauer},~\bfnm{T.}\binits{T.}},
\bauthor{\bsnm{McLaughlin},~\bfnm{K.~T.-R.}\binits{K.~T.-R.}} \AND
\bauthor{\bsnm{Miller},~\bfnm{P.~D.}\binits{P.~D.}}
(\byear{2007}).
\btitle{Discrete Orthogonal Polynomials: Asymptotics and Applications}.
\bseries{Annals of Mathematics Studies}
\bvolume{164}.
\bpublisher{Princeton Univ. Press},
\blocation{Princeton, NJ}.
\bid{mr={2283089}}
\end{bbook}
%
\bptok{imsref}%
\endbibitem

\bibitem[\protect\citeauthoryear{Bleher and
Kuijlaars}{2004}]{Bleher-Kuijlaars04}
%
\begin{barticle}[mr]
\bauthor{\bsnm{Bleher},~\bfnm{Pavel}\binits{P.}} \AND
\bauthor{\bsnm{Kuijlaars},~\bfnm{Arno~B.~J.}\binits{A.~B.~J.}}
(\byear{2004}).
\btitle{Large {$n$} limit of {G}aussian random matrices with external
source. {I}}.
\bjournal{Comm. Math. Phys.}
\bvolume{252}
\bpages{43--76}.
\bid{doi={10.1007/s00220-004-1196-2}, issn={0010-3616}, mr={2103904}}
\end{barticle}
%
\bptok{imsref}%
\endbibitem

\bibitem[\protect\citeauthoryear{Bleher and
Kuijlaars}{2007}]{Bleher-Kuijlaars07}
%
\begin{barticle}[mr]
\bauthor{\bsnm{Bleher},~\bfnm{Pavel~M.}\binits{P.~M.}} \AND
\bauthor{\bsnm{Kuijlaars},~\bfnm{Arno~B.~J.}\binits{A.~B.~J.}}
(\byear{2007}).
\btitle{Large {$n$} limit of {G}aussian random matrices with external
source. III. {D}ouble scaling limit}.
\bjournal{Comm. Math. Phys.}
\bvolume{270}
\bpages{481--517}.
\bid{doi={10.1007/s00220-006-0159-1}, issn={0010-3616}, mr={2276453}}
\end{barticle}
%
\bptok{imsref}%
\endbibitem

\bibitem[\protect\citeauthoryear{Bleher and Liechty}{2010}]{Bleher-Liechty10}
%
\begin{barticle}[mr]
\bauthor{\bsnm{Bleher},~\bfnm{Pavel}\binits{P.}} \AND
\bauthor{\bsnm{Liechty},~\bfnm{Karl}\binits{K.}}
(\byear{2010}).
\btitle{Exact solution of the six-vertex model with domain wall
boundary conditions: Antiferroelectric phase}.
\bjournal{Comm. Pure Appl. Math.}
\bvolume{63}
\bpages{779--829}.
\bid{doi={10.1002/cpa.20311}, issn={0010-3640}, mr={2649363}}
\end{barticle}
%
\bptok{imsref}%
\endbibitem

\bibitem[\protect\citeauthoryear{Bleher and Liechty}{2011}]{Bleher-Liechty11}
%
\begin{barticle}[mr]
\bauthor{\bsnm{Bleher},~\bfnm{Pavel}\binits{P.}} \AND
\bauthor{\bsnm{Liechty},~\bfnm{Karl}\binits{K.}}
(\byear{2011}).
\btitle{Uniform asymptotics for discrete orthogonal polynomials with
respect to varying exponential weights on a regular infinite lattice}.
\bjournal{Int. Math. Res. Not. IMRN}
\bvolume{2}
\bpages{342--386}.
\bid{doi={10.1093/imrn/rnq081}, issn={1073-7928}, mr={2764867}}
\end{barticle}
%
\bptok{imsref}%
\endbibitem

\bibitem[\protect\citeauthoryear{Bleher and Liechty}{2014}]{Bleher-Liechty13}
%
\begin{bbook}[mr]
\bauthor{\bsnm{Bleher},~\bfnm{Pavel}\binits{P.}} \AND
\bauthor{\bsnm{Liechty},~\bfnm{Karl}\binits{K.}}
(\byear{2014}).
\btitle{Random Matrices and the Six-Vertex Model}.
\bseries{CRM Monograph Series}
\bvolume{32}.
\bpublisher{Amer. Math. Soc.},
\blocation{Providence, RI}.
\bid{mr={3155177}}
\end{bbook}
%
\bptok{imsref}%
\endbibitem

\bibitem[\protect\citeauthoryear{Borodin and Rains}{2005}]{Borodin-Rains05}
%
\begin{barticle}[mr]
\bauthor{\bsnm{Borodin},~\bfnm{Alexei}\binits{A.}} \AND
\bauthor{\bsnm{Rains},~\bfnm{Eric~M.}\binits{E.~M.}}
(\byear{2005}).
\btitle{Eynard--{M}ehta theorem, {S}chur process, and their {P}faffian
analogs}.
\bjournal{J. Stat. Phys.}
\bvolume{121}
\bpages{291--317}.
\bid{doi={10.1007/s10955-005-7583-z}, issn={0022-4715}, mr={2185331}}
\end{barticle}
%
\bptok{imsref}%
\endbibitem

\bibitem[\protect\citeauthoryear{Byrd and Friedman}{1971}]{Byrd-Friedman71}
%
\begin{bbook}[mr]
\bauthor{\bsnm{Byrd},~\bfnm{Paul~F.}\binits{P.~F.}} \AND
\bauthor{\bsnm{Friedman},~\bfnm{Morris~D.}\binits{M.~D.}}
(\byear{1971}).
\btitle{Handbook of Elliptic Integrals for Engineers and Scientists},
\bedition{2nd} ed.
\bpublisher{Springer},
\blocation{New York}.
\bid{mr={0277773}}
\end{bbook}
%
\bptok{imsref}%
\endbibitem

\bibitem[\protect\citeauthoryear{Cardy}{2003}]{Cardy03}
%
\begin{barticle}[author]
\bauthor{\bsnm{Cardy},~\bfnm{John}\binits{J.}}
(\byear{2003}).
\btitle{Stochastic {L}oewner evolution and {D}yson's circular ensembles}.
\bjournal{J. Phys. A}
\bvolume{36}
\bpages{L379--L386}.
\bid{mr={2004294}}
\end{barticle}
%
\bptok{imsref}%
\endbibitem

\bibitem[\protect\citeauthoryear{Castillo and
Dupic}{2014}]{Dupic-PerezCastillo13}
%
\begin{barticle}[mr]
\bauthor{\bsnm{Castillo},~\bfnm{Isaac~P{\'e}rez}\binits{I.~P.}}
\AND
\bauthor{\bsnm{Dupic},~\bfnm{Thomas}\binits{T.}}
(\byear{2014}).
\btitle{Reunion probabilities of {$N$} one-dimensional random walkers
with mixed boundary conditions}.
\bjournal{J. Stat. Phys.}
\bvolume{156}
\bpages{606--616}.
\bid{doi={10.1007/s10955-014-1017-8}, issn={0022-4715}, mr={3217539}}
\end{barticle}
%
\bptok{imsref}%
\endbibitem

\bibitem[\protect\citeauthoryear{Claeys and
Kuijlaars}{2006}]{Claeys-Kuijlaars06}
%
\begin{barticle}[mr]
\bauthor{\bsnm{Claeys},~\bfnm{Tom}\binits{T.}} \AND
\bauthor{\bsnm{Kuijlaars},~\bfnm{Arno~B.~J.}\binits{A.~B.~J.}}
(\byear{2006}).
\btitle{Universality of the double scaling limit in random matrix models}.
\bjournal{Comm. Pure Appl. Math.}
\bvolume{59}
\bpages{1573--1603}.
\bid{doi={10.1002/cpa.20113}, issn={0010-3640}, mr={2254445}}
\end{barticle}
%
\bptok{imsref}%
\endbibitem

\bibitem[\protect\citeauthoryear{Corwin}{2012}]{Corwin11}
%
\begin{barticle}[mr]
\bauthor{\bsnm{Corwin},~\bfnm{Ivan}\binits{I.}}
(\byear{2012}).
\btitle{The {K}ardar--{P}arisi--{Z}hang equation and universality class}.
\bjournal{Random Matrices Theory Appl.}
\bvolume{1}
\bpages{1130001, 76}.
\bid{doi={10.1142/S2010326311300014}, issn={2010-3263}, mr={2930377}}
\end{barticle}
%
\bptok{imsref}%
\endbibitem

\bibitem[\protect\citeauthoryear{Corwin and Hammond}{2014}]{Corwin-Hammond11}
%
\begin{barticle}[mr]
\bauthor{\bsnm{Corwin},~\bfnm{Ivan}\binits{I.}} \AND
\bauthor{\bsnm{Hammond},~\bfnm{Alan}\binits{A.}}
(\byear{2014}).
\btitle{Brownian {G}ibbs property for {A}iry line ensembles}.
\bjournal{Invent. Math.}
\bvolume{195}
\bpages{441--508}.
\bid{doi={10.1007/s00222-013-0462-3}, issn={0020-9910}, mr={3152753}}
\end{barticle}
%
\bptok{imsref}%
\endbibitem

\bibitem[\protect\citeauthoryear{Deift}{1999}]{Deift99}
%
\begin{bbook}[mr]
\bauthor{\bsnm{Deift},~\bfnm{P.~A.}\binits{P.~A.}}
(\byear{1999}).
\btitle{Orthogonal Polynomials and Random Matrices: A
{R}iemann--{H}ilbert Approach}.
\bseries{Courant Lecture Notes in Mathematics}
\bvolume{3}.
\bpublisher{New York Univ., Courant Institute of Mathematical Sciences},
\blocation{New York}.
\bid{mr={1677884}}
\end{bbook}
%
\bptok{imsref}%
\endbibitem

\bibitem[\protect\citeauthoryear{Delvaux}{2013}]{Delvaux13}
%
\begin{bmisc}[author]
\bauthor{\bsnm{Delvaux},~\bfnm{Steven}\binits{S.}}
(\byear{2013}).
\bhowpublished{The tacnode kernel: Equality of {R}iemann--{H}ilbert
and {A}iry resolvent formulas.
Available at \arxivurl{arXiv:1211.4845v2}.}
\end{bmisc}
%
\bptok{imsref}%
\endbibitem

\bibitem[\protect\citeauthoryear{Delvaux, Kuijlaars and
Zhang}{2011}]{Delvaux-Kuijlaars-Zhang11}
%
\begin{barticle}[mr]
\bauthor{\bsnm{Delvaux},~\bfnm{Steven}\binits{S.}},
\bauthor{\bsnm{Kuijlaars},~\bfnm{Arno~B.~J.}\binits{A.~B.~J.}} \AND
\bauthor{\bsnm{Zhang},~\bfnm{Lun}\binits{L.}}
(\byear{2011}).
\btitle{Critical behavior of nonintersecting {B}rownian motions at a tacnode}.
\bjournal{Comm. Pure Appl. Math.}
\bvolume{64}
\bpages{1305--1383}.
\bid{doi={10.1002/cpa.20373}, issn={0010-3640}, mr={2849479}}
\end{barticle}
%
\bptok{imsref}%
\endbibitem

\bibitem[\protect\citeauthoryear{Douglas and
Kazakov}{1993}]{Douglas-Kazakov93}
%
\begin{barticle}[author]
\bauthor{\bsnm{Douglas},~\bfnm{M.~R.}\binits{M.~R.}} \AND
\bauthor{\bsnm{Kazakov},~\bfnm{V.~A.}\binits{V.~A.}}
(\byear{1993}).
\btitle{Large $N$ phase transition in continuum QCD$_2$}.
\bjournal{Phys. Lett. B}
\bvolume{319}
\bpages{219--230}.
\end{barticle}
%
\bptok{imsref}%
\endbibitem

\bibitem[\protect\citeauthoryear{Durrett}{2010}]{Durrett10}
%
\begin{bbook}[mr]
\bauthor{\bsnm{Durrett},~\bfnm{Rick}\binits{R.}}
(\byear{2010}).
\btitle{Probability: Theory and Examples},
\bedition{4th} ed.
\bpublisher{Cambridge Univ. Press},
\blocation{Cambridge}.
\bid{doi={10.1017/CBO9780511779398}, mr={2722836}}
\end{bbook}
%
\bptok{imsref}%
\endbibitem

\bibitem[\protect\citeauthoryear{Dyson}{1962}]{Dyson62}
%
\begin{barticle}[mr]
\bauthor{\bsnm{Dyson},~\bfnm{Freeman~J.}\binits{F.~J.}}
(\byear{1962}).
\btitle{A {B}rownian-motion model for the eigenvalues of a random matrix}.
\bjournal{J. Math. Phys.}
\bvolume{3}
\bpages{1191--1198}.
\bid{issn={0022-2488}, mr={0148397}}
\end{barticle}
%
\bptok{imsref}%
\endbibitem

\bibitem[\protect\citeauthoryear{Erd{\'e}lyi
et~al.}{1981}]{Erdelyi-Magnus-Oberhettigner-Tricomi81}
%
\begin{bbook}[mr]
\bauthor{\bsnm{Erd{\'e}lyi},~\bfnm{Arthur}\binits{A.}},
\bauthor{\bsnm{Magnus},~\bfnm{Wilhelm}\binits{W.}},
\bauthor{\bsnm{Oberhettinger},~\bfnm{Fritz}\binits{F.}} \AND
\bauthor{\bsnm{Tricomi},~\bfnm{Francesco~G.}\binits{F.~G.}}
(\byear{1981}).
\btitle{Higher Transcendental Functions. {V}ol. {II}}.
\bpublisher{Robert E. Krieger Publishing},
\blocation{Melbourne, FL}.
\bid{mr={0698780}}
\end{bbook}
%
\bptok{imsref}%
\endbibitem

\bibitem[\protect\citeauthoryear{Eynard and Mehta}{1998}]{Eynard-Mehta98}
%
\begin{barticle}[mr]
\bauthor{\bsnm{Eynard},~\bfnm{Bertrand}\binits{B.}} \AND
\bauthor{\bsnm{Mehta},~\bfnm{Madan~Lal}\binits{M.~L.}}
(\byear{1998}).
\btitle{Matrices coupled in a chain. I. {E}igenvalue correlations}.
\bjournal{J.~Phys. A}
\bvolume{31}
\bpages{4449--4456}.
\bid{doi={10.1088/0305-4470/31/19/010}, issn={0305-4470}, mr={1628667}}
\end{barticle}
%
\bptok{imsref}%
\endbibitem

\bibitem[\protect\citeauthoryear{Ferrari and Vet{\H
{o}}}{2012}]{Ferrari-Veto12}
%
\begin{barticle}[mr]
\bauthor{\bsnm{Ferrari},~\bfnm{Patrik~L.}\binits{P.~L.}} \AND
\bauthor{\bsnm{Vet{\H{o}}},~\bfnm{B{\'a}lint}\binits{B.}}
(\byear{2012}).
\btitle{Non-colliding {B}rownian bridges and the asymmetric tacnode process}.
\bjournal{Electron. J. Probab.}
\bvolume{17}
\bpages{17}.
\bid{doi={10.1214/EJP.v17-1811}, issn={1083-6489}, mr={2946151}}
\end{barticle}
%
\bptok{imsref}%
\endbibitem

\bibitem[\protect\citeauthoryear{Flaschka and
Newell}{1980}]{Flaschka-Newell80}
%
\begin{barticle}[mr]
\bauthor{\bsnm{Flaschka},~\bfnm{Hermann}\binits{H.}} \AND
\bauthor{\bsnm{Newell},~\bfnm{Alan~C.}\binits{A.~C.}}
(\byear{1980}).
\btitle{Monodromy- and spectrum-preserving deformations. {I}}.
\bjournal{Comm. Math. Phys.}
\bvolume{76}
\bpages{65--116}.
\bid{issn={0010-3616}, mr={0588248}}
\end{barticle}
%
\bptok{imsref}%
\endbibitem

\bibitem[\protect\citeauthoryear{Forrester}{1990}]{Forrester90}
%
\begin{barticle}[mr]
\bauthor{\bsnm{Forrester},~\bfnm{P.~J.}\binits{P.~J.}}
(\byear{1990}).
\btitle{Exact solution of the lock step model of vicious walkers}.
\bjournal{J. Phys. A}
\bvolume{23}
\bpages{1259--1273}.
\bid{issn={0305-4470}, mr={1049530}}
\end{barticle}
%
\bptok{imsref}%
\endbibitem

\bibitem[\protect\citeauthoryear{Forrester, Majumdar and
Schehr}{2011}]{Forrester-Majumdar-Schehr11}
%
\begin{barticle}[mr]
\bauthor{\bsnm{Forrester},~\bfnm{Peter~J.}\binits{P.~J.}},
\bauthor{\bsnm{Majumdar},~\bfnm{Satya~N.}\binits{S.~N.}} \AND
\bauthor{\bsnm{Schehr},~\bfnm{Gr{\'e}gory}\binits{G.}}
(\byear{2011}).
\btitle{Non-intersecting {B}rownian walkers and {Y}ang--{M}ills theory
on the sphere}.
\bjournal{Nuclear Phys. B}
\bvolume{844}
\bpages{500--526}.
\bid{doi={10.1016/j.nuclphysb.2010.11.013}, issn={0550-3213}, mr={2747559}}
\end{barticle}
%
\bptok{imsref}%
\endbibitem

\bibitem[\protect\citeauthoryear{Fulmek}{2004/07}]{Fulmek04}
%
\begin{barticle}[mr]
\bauthor{\bsnm{Fulmek},~\bfnm{Markus}\binits{M.}}
(\byear{2004/07}).
\btitle{Nonintersecting lattice paths on the cylinder}.
\bjournal{S\'em. Lothar. Combin.}
\bvolume{52}
\bpages{16 pp. (electronic)}.
\bid{issn={1286-4889}, mr={2123056}}
\end{barticle}
%
\bptok{imsref}%
\endbibitem

\bibitem[\protect\citeauthoryear{Gessel and Viennot}{1985}]{Gessel-Viennot85}
%
\begin{barticle}[mr]
\bauthor{\bsnm{Gessel},~\bfnm{Ira}\binits{I.}} \AND
\bauthor{\bsnm{Viennot},~\bfnm{G{\'e}rard}\binits{G.}}
(\byear{1985}).
\btitle{Binomial determinants, paths, and hook length formulae}.
\bjournal{Adv. Math.}
\bvolume{58}
\bpages{300--321}.
\bid{doi={10.1016/0001-8708(85)90121-5}, issn={0001-8708}, mr={0815360}}
\end{barticle}
%
\bptok{imsref}%
\endbibitem

\bibitem[\protect\citeauthoryear{Gradshteyn and
Ryzhik}{2007}]{Gradshteyn-Ryzhik07}
%
\begin{bbook}[mr]
\bauthor{\bsnm{Gradshteyn},~\bfnm{I.~S.}\binits{I.~S.}} \AND
\bauthor{\bsnm{Ryzhik},~\bfnm{I.~M.}\binits{I.~M.}}
(\byear{2007}).
\btitle{Table of Integrals, Series, and Products},
\bedition{7th} ed.
\bpublisher{Elsevier/Academic Press},
\blocation{Amsterdam}.
\bid{mr={2360010}}
\end{bbook}
%
\bptok{imsref}%
\endbibitem

\bibitem[\protect\citeauthoryear{Gross and Matytsin}{1995}]{Gross-Matytsin95}
%
\begin{barticle}[mr]
\bauthor{\bsnm{Gross},~\bfnm{David~J.}\binits{D.~J.}} \AND
\bauthor{\bsnm{Matytsin},~\bfnm{Andrei}\binits{A.}}
(\byear{1995}).
\btitle{Some properties of large-{$N$} two-dimensional {Y}ang--{M}ills theory}.
\bjournal{Nuclear Phys. B}
\bvolume{437}
\bpages{541--584}.
\bid{doi={10.1016/0550-3213(94)00570-5}, issn={0550-3213}, mr={1321333}}
\end{barticle}
%
\bptok{imsref}%
\endbibitem

\bibitem[\protect\citeauthoryear{Hastings and
McLeod}{1980}]{Hastings-McLeod80}
%
\begin{barticle}[mr]
\bauthor{\bsnm{Hastings},~\bfnm{S.~P.}\binits{S.~P.}} \AND
\bauthor{\bsnm{McLeod},~\bfnm{J.~B.}\binits{J.~B.}}
(\byear{1980}).
\btitle{A boundary value problem associated with the second {P}ainlev\'
e transcendent and the {K}orteweg--de {V}ries equation}.
\bjournal{Arch. Ration. Mech. Anal.}
\bvolume{73}
\bpages{31--51}.
\bid{doi={10.1007/BF00283254}, issn={0003-9527}, mr={0555581}}
\end{barticle}
%
\bptok{imsref}%
\endbibitem

\bibitem[\protect\citeauthoryear{Hobson and Werner}{1996}]{Hobson-Werner96}
%
\begin{barticle}[mr]
\bauthor{\bsnm{Hobson},~\bfnm{David~G.}\binits{D.~G.}} \AND
\bauthor{\bsnm{Werner},~\bfnm{Wendelin}\binits{W.}}
(\byear{1996}).
\btitle{Non-colliding {B}rownian motions on the circle}.
\bjournal{Bull. Lond. Math. Soc.}
\bvolume{28}
\bpages{643--650}.
\bid{doi={10.1112/blms/28.6.643}, issn={0024-6093}, mr={1405497}}
\end{barticle}
%
\bptok{imsref}%
\endbibitem

\bibitem[\protect\citeauthoryear{Johansson}{2005}]{Johansson05}
%
\begin{barticle}[mr]
\bauthor{\bsnm{Johansson},~\bfnm{Kurt}\binits{K.}}
(\byear{2005}).
\btitle{The arctic circle boundary and the {A}iry process}.
\bjournal{Ann. Probab.}
\bvolume{33}
\bpages{1--30}.
\bid{doi={10.1214/009117904000000937}, issn={0091-1798}, mr={2118857}}
\end{barticle}
%
\bptok{imsref}%
\endbibitem

\bibitem[\protect\citeauthoryear{Johansson}{2013}]{Johansson13}
%
\begin{barticle}[mr]
\bauthor{\bsnm{Johansson},~\bfnm{Kurt}\binits{K.}}
(\byear{2013}).
\btitle{Non-colliding {B}rownian motions and the extended tacnode process}.
\bjournal{Comm. Math. Phys.}
\bvolume{319}
\bpages{231--267}.
\bid{doi={10.1007/s00220-012-1600-2}, issn={0010-3616}, mr={3034030}}
\end{barticle}
%
\bptok{imsref}%
\endbibitem

\bibitem[\protect\citeauthoryear{Johnson, Kemp and
Kotz}{2005}]{Johnson-Kemp-Kotz05}
%
\begin{bbook}[mr]
\bauthor{\bsnm{Johnson},~\bfnm{Norman~L.}\binits{N.~L.}},
\bauthor{\bsnm{Kemp},~\bfnm{Adrienne~W.}\binits{A.~W.}} \AND
\bauthor{\bsnm{Kotz},~\bfnm{Samuel}\binits{S.}}
(\byear{2005}).
\btitle{Univariate Discrete Distributions},
\bedition{3rd} ed.
\bpublisher{Wiley},
\blocation{Hoboken, NJ}.
\bid{doi={10.1002/0471715816}, mr={2163227}}
\end{bbook}
%
\bptok{imsref}%
\endbibitem

\bibitem[\protect\citeauthoryear{Karlin and
McGregor}{1959}]{Karlin-McGregor59}
%
\begin{barticle}[mr]
\bauthor{\bsnm{Karlin},~\bfnm{Samuel}\binits{S.}} \AND
\bauthor{\bsnm{McGregor},~\bfnm{James}\binits{J.}}
(\byear{1959}).
\btitle{Coincidence probabilities}.
\bjournal{Pacific J. Math.}
\bvolume{9}
\bpages{1141--1164}.
\bid{issn={0030-8730}, mr={0114248}}
\end{barticle}
%
\bptok{imsref}%
\endbibitem

\bibitem[\protect\citeauthoryear{Katori and
Tanemura}{2007}]{Katori-Tanemura07}
%
\begin{barticle}[mr]
\bauthor{\bsnm{Katori},~\bfnm{Makoto}\binits{M.}} \AND
\bauthor{\bsnm{Tanemura},~\bfnm{Hideki}\binits{H.}}
(\byear{2007}).
\btitle{Noncolliding {B}rownian motion and determinantal processes}.
\bjournal{J. Stat. Phys.}
\bvolume{129}
\bpages{1233--1277}.
\bid{doi={10.1007/s10955-007-9421-y}, issn={0022-4715}, mr={2363394}}
\end{barticle}
%
\bptok{imsref}%
\endbibitem

\bibitem[\protect\citeauthoryear{Kemp}{1997}]{Kemp97}
%
\begin{barticle}[mr]
\bauthor{\bsnm{Kemp},~\bfnm{Adrienne~W.}\binits{A.~W.}}
(\byear{1997}).
\btitle{Characterizations of a discrete normal distribution}.
\bjournal{J. Statist. Plann. Inference}
\bvolume{63}
\bpages{223--229}.
\bid{doi={10.1016/S0378-3758(97)00020-7}, issn={0378-3758}, mr={1491581}}
\end{barticle}
%
\bptok{imsref}%
\endbibitem

\bibitem[\protect\citeauthoryear{Kuijlaars}{2000}]{Kuijlaars00}
%
\begin{barticle}[mr]
\bauthor{\bsnm{Kuijlaars},~\bfnm{A.~B.~J.}\binits{A.~B.~J.}}
(\byear{2000}).
\btitle{On the finite-gap ansatz in the continuum limit of the {T}oda lattice}.
\bjournal{Duke Math. J.}
\bvolume{104}
\bpages{433--462}.
\bid{doi={10.1215/S0012-7094-00-10434-6}, issn={0012-7094}, mr={1781478}}
\end{barticle}
%
\bptok{imsref}%
\endbibitem

\bibitem[\protect\citeauthoryear{Kuijlaars}{2010}]{Kuijlaars10}
%
\begin{binproceedings}[mr]
\bauthor{\bsnm{Kuijlaars},~\bfnm{Arno~B.~J.}\binits{A.~B.~J.}}
(\byear{2010}).
\btitle{Multiple orthogonal polynomials in random matrix theory}.
In \bbooktitle{Proceedings of the {I}nternational {C}ongress of
{M}athematicians. {V}olume {III}}
\bpages{1417--1432}.
\bpublisher{Hindustan Book Agency},
\blocation{New Delhi}.
\bid{mr={2827849}}
\end{binproceedings}
%
\bptok{imsref}%
\endbibitem

\bibitem[\protect\citeauthoryear{Kuijlaars}{2014}]{Kuijlaars14}
%
\begin{barticle}[mr]
\bauthor{\bsnm{Kuijlaars},~\bfnm{Arno}\binits{A.}}
(\byear{2014}).
\btitle{The tacnode {R}iemann--{H}ilbert problem}.
\bjournal{Constr. Approx.}
\bvolume{39}
\bpages{197--222}.
\bid{doi={10.1007/s00365-013-9225-z}, issn={0176-4276}, mr={3144386}}
\end{barticle}
%
\bptok{imsref}%
\endbibitem

\bibitem[\protect\citeauthoryear{Liechty}{2012}]{Liechty12}
%
\begin{barticle}[mr]
\bauthor{\bsnm{Liechty},~\bfnm{Karl}\binits{K.}}
(\byear{2012}).
\btitle{Nonintersecting {B}rownian motions on the half-line and
discrete {G}aussian orthogonal polynomials}.
\bjournal{J. Stat. Phys.}
\bvolume{147}
\bpages{582--622}.
\bid{doi={10.1007/s10955-012-0485-y}, issn={0022-4715}, mr={2923331}}
\end{barticle}
%
\bptok{imsref}%
\endbibitem

\bibitem[\protect\citeauthoryear{Liechty and Wang}{2013}]{Liechty-Wang14-1}
%
\begin{bmisc}[author]
\bauthor{\bsnm{Liechty},~\bfnm{Karl}\binits{K.}} \AND
\bauthor{\bsnm{Wang},~\bfnm{Dong}\binits{D.}}
(\byear{2013}).
\bhowpublished{Nonintersecting {B}rownian motions on the unit circle:
Noncritical cases.
Available at \arxivurl{arXiv:1312.7390v3}.}
\end{bmisc}
%
\bptok{imsref}%
\endbibitem

\bibitem[\protect\citeauthoryear{Lindstr{\"o}m}{1973}]{lindstrom73}
%
\begin{barticle}[mr]
\bauthor{\bsnm{Lindstr{\"o}m},~\bfnm{Bernt}\binits{B.}}
(\byear{1973}).
\btitle{On the vector representations of induced matroids}.
\bjournal{Bull. Lond. Math. Soc.}
\bvolume{5}
\bpages{85--90}.
\bid{issn={0024-6093}, mr={0335313}}
\end{barticle}
%
\bptok{imsref}%
\endbibitem

\bibitem[\protect\citeauthoryear{Okounkov and
Reshetikhin}{2003}]{Okounkov-Reshetikhin03}
%
\begin{barticle}[mr]
\bauthor{\bsnm{Okounkov},~\bfnm{Andrei}\binits{A.}} \AND
\bauthor{\bsnm{Reshetikhin},~\bfnm{Nikolai}\binits{N.}}
(\byear{2003}).
\btitle{Correlation function of {S}chur process with application to
local geometry of a random 3-dimensional {Y}oung diagram}.
\bjournal{J. Amer. Math. Soc.}
\bvolume{16}
\bpages{581--603 (electronic)}.
\bid{doi={10.1090/S0894-0347-03-00425-9}, issn={0894-0347}, mr={1969205}}
\end{barticle}
%
\bptok{imsref}%
\endbibitem

\bibitem[\protect\citeauthoryear{Okounkov and Reshetikhin}{2007}]{Okounkov07}
%
\begin{barticle}[mr]
\bauthor{\bsnm{Okounkov},~\bfnm{Andrei}\binits{A.}} \AND
\bauthor{\bsnm{Reshetikhin},~\bfnm{Nicolai}\binits{N.}}
(\byear{2007}).
\btitle{Random skew plane partitions and the {P}earcey process}.
\bjournal{Comm. Math. Phys.}
\bvolume{269}
\bpages{571--609}.
\bid{doi={10.1007/s00220-006-0128-8}, issn={0010-3616}, mr={2276355}}
\end{barticle}
%
\bptok{imsref}%
\endbibitem

\bibitem[\protect\citeauthoryear{Schehr
et~al.}{2008}]{Comtet-Majumdar-Schehr08}
%
\begin{barticle}[mr]
\bauthor{\bsnm{Schehr},~\bfnm{Gr{\'e}gory}\binits{G.}},
\bauthor{\bsnm{Majumdar},~\bfnm{Satya~N.}\binits{S.~N.}},
\bauthor{\bsnm{Comtet},~\bfnm{Alain}\binits{A.}} \AND
\bauthor{\bsnm{Randon-Furling},~\bfnm{Julien}\binits{J.}}
(\byear{2008}).
\btitle{Exact distribution of the maximal height of {$p$} vicious walkers}.
\bjournal{Phys. Rev. Lett.}
\bvolume{101}
\bpages{150601, 4}.
\bid{doi={10.1103/PhysRevLett.101.150601}, issn={0031-9007}, mr={2460718}}
\end{barticle}
%
\bptok{imsref}%
\endbibitem

\bibitem[\protect\citeauthoryear{Schehr
et~al.}{2013}]{Comtet-Forrester-Majumdar-Schehr13}
%
\begin{barticle}[mr]
\bauthor{\bsnm{Schehr},~\bfnm{Gr{\'e}gory}\binits{G.}},
\bauthor{\bsnm{Majumdar},~\bfnm{Satya~N.}\binits{S.~N.}},
\bauthor{\bsnm{Comtet},~\bfnm{Alain}\binits{A.}} \AND
\bauthor{\bsnm{Forrester},~\bfnm{Peter~J.}\binits{P.~J.}}
(\byear{2013}).
\btitle{Reunion probability of {$N$} vicious walkers: Typical and
large fluctuations for large {$N$}}.
\bjournal{J. Stat. Phys.}
\bvolume{150}
\bpages{491--530}.
\bid{doi={10.1007/s10955-012-0614-7}, issn={0022-4715}, mr={3024140}}
\end{barticle}
%
\bptok{imsref}%
\endbibitem

\bibitem[\protect\citeauthoryear{Soshnikov}{2000}]{Soshnikov00}
%
\begin{barticle}[mr]
\bauthor{\bsnm{Soshnikov},~\bfnm{A.}\binits{A.}}
(\byear{2000}).
\btitle{Determinantal random point fields}.
\bjournal{Uspekhi Mat. Nauk}
\bvolume{55}
\bpages{107--160}.
\bid{doi={10.1070/rm2000v055n05ABEH000321}, issn={0042-1316}, mr={1799012}}
\end{barticle}
%
\bptok{imsref}%
\endbibitem

\bibitem[\protect\citeauthoryear{Szab{\l}owski}{2001}]{Szablowski01}
%
\begin{barticle}[mr]
\bauthor{\bsnm{Szab{\l}owski},~\bfnm{Pawe{\l}J.}\binits{P.}}
(\byear{2001}).
\btitle{Discrete normal distribution and its relationship with
{J}acobi theta functions}.
\bjournal{Statist. Probab. Lett.}
\bvolume{52}
\bpages{289--299}.
\bid{doi={10.1016/S0167-7152(00)00223-6}, issn={0167-7152}, mr={1838217}}
\end{barticle}
%
\bptok{imsref}%
\endbibitem

\bibitem[\protect\citeauthoryear{Szeg{\H{o}}}{1975}]{Szego75}
%
\begin{bbook}[mr]
\bauthor{\bsnm{Szeg{\H{o}}},~\bfnm{G{\'a}bor}\binits{G.}}
(\byear{1975}).
\btitle{Orthogonal Polynomials},
\bedition{4th} ed.
\bpublisher{Amer. Math. Soc.},
\blocation{Providence, RI}.
\bid{mr={0372517}}
\end{bbook}
%
\bptok{imsref}%
\endbibitem

\bibitem[\protect\citeauthoryear{Tracy and Widom}{2004}]{Tracy-Widom04}
%
\begin{barticle}[mr]
\bauthor{\bsnm{Tracy},~\bfnm{Craig~A.}\binits{C.~A.}} \AND
\bauthor{\bsnm{Widom},~\bfnm{Harold}\binits{H.}}
(\byear{2004}).
\btitle{Differential equations for {D}yson processes}.
\bjournal{Comm. Math. Phys.}
\bvolume{252}
\bpages{7--41}.
\bid{doi={10.1007/s00220-004-1182-8}, issn={0010-3616}, mr={2103903}}
\end{barticle}
%
\bptok{imsref}%
\endbibitem

\bibitem[\protect\citeauthoryear{Tracy and Widom}{2006}]{Tracy-Widom06}
%
\begin{barticle}[mr]
\bauthor{\bsnm{Tracy},~\bfnm{Craig~A.}\binits{C.~A.}} \AND
\bauthor{\bsnm{Widom},~\bfnm{Harold}\binits{H.}}
(\byear{2006}).
\btitle{The {P}earcey process}.
\bjournal{Comm. Math. Phys.}
\bvolume{263}
\bpages{381--400}.
\bid{doi={10.1007/s00220-005-1506-3}, issn={0010-3616}, mr={2207649}}
\end{barticle}
%
\bptok{imsref}%
\endbibitem

\bibitem[\protect\citeauthoryear{Whittaker and
Watson}{1996}]{Watson-Whittaker96}
%
\begin{bbook}[mr]
\bauthor{\bsnm{Whittaker},~\bfnm{E.~T.}\binits{E.~T.}} \AND
\bauthor{\bsnm{Watson},~\bfnm{G.~N.}\binits{G.~N.}}
(\byear{1996}).
\btitle{A Course of Modern Analysis}.
\bpublisher{Cambridge Univ. Press},
\blocation{Cambridge}.
\bid{doi={10.1017/CBO9780511608759}, mr={1424469}}
\end{bbook}
%
\bptok{imsref}%
\endbibitem
\end{thebibliography}
\end{document}